\newcommand{\R}{{\mathbb R}}
\newcommand{\T}{{\mathbb T}}
\newcommand{\be}[1]{\begin{equation*}\label{#1}}
\newcommand{\ee}{\end{equation*}}
\newtheorem{thm}{Theorem}[section]
\newtheorem{cor}[thm]{Corollary}
\newtheorem{lem}[thm]{Lemma}
\newtheorem{rmk}[thm]{Remark}
\theoremstyle{remark} }
\begin{document}

\title[Cutoff Boltzmann equation with polynomial perturbation near Maxwellian]
{Cutoff Boltzmann equation with polynomial perturbation  near Maxwellian
}

\author[C. Cao]{Chuqi Cao}
\thanks{C. Cao: Yau Mathematical Science Center and  Beijing Institute of Mathematical Sciences and Applications, Tsinghua University, Beijing, 100084, P. R. China. \\
Email address: chuqicao@gmail.com }

\begin{abstract}
In this paper, we consider the cutoff  Boltzmann equation near Maxwellian, we proved the global existence and uniqueness for the cutoff Boltzmann equation in polynomial weighted space for all $\gamma \in (-3, 1]$. We also proved initially polynomial decay for the large velocity in $L^2$ space will induce polynomial decay rate, while initially exponential decay will induce exponential rate for the convergence. Our proof is based on newly  established  inequalities for the cutoff Boltzmann equation and semigroup techniques. Moreover, by generalizing  the $L_x^\infty L^1_v \cap L^\infty_{x, v}$ approach, we prove the global existence and uniqueness  of a mild solution to the Boltzmann equation with bounded polynomial weighted $L^\infty_{x, v}$ norm under some small condition on the initial $L^1_x L^\infty_v$ norm and entropy so that this initial data allows large amplitude oscillations. \\

\par\textbf{Keywords: } Boltzmann equation; Global existence; Polynomial weighted space; Convergence to equilibrium.\\
\par\textbf{AMS subject classifications:} 35B40, 35Q20, 47D06.
\end{abstract}

\maketitle

\setcounter{tocdepth}{1}
\tableofcontents

\section{Introduction}\label{sec1}

The Boltzmann equation reads
\begin{equation}\label{Boltzmann equation}
\partial_t F +v \cdot \nabla_x F =Q(F, F), \quad F(0, x, v) =F_0(x, v),
\end{equation}
where $F(t, x, v) \ge 0$ is a distributional functions of colliding particles which, at time $t>0$ and position $x \in \T^3$, move with velocity $v \in \R^3$. We remark that the Boltzmann equation is one of the fundamental equations of mathematical physics and is a cornerstone of statistical physics. The Boltzmann collision operator $Q$ is a bilinear operator which acts only on the velocity variable $v$, that is
\[
 Q(G,F)(v)=\int_{\R^3}\int_{\mathbb{S}^2}B(v-v_*,\sigma )(G'_*F'-G_*F)d\sigma dv_*.
\]
Let us give some explanations on the collision operator.
\begin{enumerate}
\item  We use the standard shorthand $F=F(v),G_*=G(v_*),F'=F(v'),G'_*=G(v'_*)$, where $v',v'_*$ are given by
\[
v'=\frac{v+v_*}{2}+\frac{|v-v_*|}{2}\sigma,\quad v_*'=\frac{v+v_*}{2}-\frac{|v-v_*|}{2}\sigma,\quad \sigma \in\mathbb{S}^2.
\]
 This representation follows the parametrization of set of solutions of the physical law of elastic collision:
\[
  v+v_*=v'+v'_*,  \quad |v|^2+|v_*|^2=|v'|^2+|v'_*|^2.
\]

\item  The nonnegative function $B(v-v_*,\sigma)$ in the collision operator is called the Boltzmann collision kernel. It is always assumed to depend only on $|v-v_*|$ and the deviation angle $\theta$ through $\cos\theta  := \frac{v-v_*}{|v-v_*|}\cdot\sigma$.
\item  In the present work,  our {\bf basic assumptions on the kernel $B$}  can be concluded as follows:
\begin{itemize}
  \item[$\mathbf{(A1).}$] The Boltzmann kernel $B$ takes the form: $B(v-v_*,\sigma)=|v-v_*|^\gamma b(\frac{v-v_*}{|v-v_*|}\cdot\sigma)$, where   $b$ is a nonnegative function.

\item[$\mathbf{(A2).}$] The angular function $b(\cos \theta)$  satisfies the Grad's cutoff assumption
\[
  K  \leq  b(\cos\theta)\leq   K^{-1}, \quad K>0.
\]

  \item[$\mathbf{(A3).}$]
  The parameter $\gamma$ satisfies the condition $-3<\gamma  \le 1$.

  \item[$\mathbf{(A4).}$]  Without lose of generality, we may assume that $B(v-v_*,\sigma)$ is supported in the set $0\le \theta \le \pi/2$, i.e.$\frac{v-v_*}{|v-v_*|}\cdot\sigma \ge 0$, otherwise $B$ can be replaced by its symmetrized form:
\[
\overline{B}(v-v_*,\sigma )=|v-v_*|^\gamma\big(b(\frac{v-v_*}{|v-v_*|}\cdot\sigma )+b(\frac{v-v_*}{|v-v_*|}\cdot(-\sigma))\big) \mathrm{1}_{\frac{v-v_*}{|v-v_*|}\cdot\sigma \ge0},
\]
where $\mathrm{1}_A$ is the characteristic function of the set $A$.
\end{itemize}
  \end{enumerate}
\begin{rmk}  Generally, the case $\gamma > 0$,  $\gamma = 0$, and  $\gamma < 0$ correspond to so-called hard, Maxwellian, and soft potentials respectively.
\end{rmk}

\subsection{Basic properties and the perturbation equation} We recall some basic facts on the Boltzmann equation.
 \smallskip

\noindent$\bullet$ {\bf Conservation Law.}  Formally if $F$ is the solution to the Boltzmann equation \eqref{Boltzmann equation} with initial data $F_0$, then it enjoys the conservation of mass, momentum and the energy, that is,
\begin{equation}
\label{conservation law}
\frac {d}{dt}\int_{\T^3\times{\R}^3} F(t, x, v)\varphi(v)dvdx= 0,\quad   \varphi(v)=1, v, |v|^2.
\end{equation}
For simplicity, we introduce   the normalization identities on the initial data $F_0$ which satisfies
\[
\int_{\T^3\times\R^3} F_0(x, v)\, dvdx= 1, \quad \int_{\T^3\times\R^3} F_0(x, v)\, v\, dvdx =0,
\quad \int_{\T^3\times\R^3} F_0(x, v)\, |v|^2 \, dvdx =3.
\]
This means that the  equilibrium associated to \eqref{Boltzmann equation} will be the standard Gaussian function, i.e.
\[
\mu(v) := (2\pi)^{-3/2} e^{-|v|^2/2}, 
\] 
which enjoys the same mass, momentum and energy as $F_0$. By the Boltzmann H-Theorem, the solution to the Boltzmann equation \eqref{Boltzmann equation} satisfies
\begin{equation}
\label{entropy inequality}
\int_{\T^3} \int_{\R^3} F(t, x, v) \ln F(t, x, v) dv dx \le \int_{\T^3} \int_{\R^3} F_0 (x, v) \ln F_0 (x, v) dv dx,\quad \forall t \ge 0.
\end{equation}
\smallskip

\noindent$\bullet$ {\bf Perturbation Equation.} In the perturbation framework, let $f$ be the perturbation such that
\[
F=\mu+f.
\]
The Boltzmann equation (\ref{Boltzmann equation}) becomes
\begin{equation}
\label{perturbation equation}
\partial_t f +v \cdot \nabla_x f= Q(\mu, f)+Q(f,\mu)+Q(f, f):=Lf +Q(f, f),
\end{equation}
with the linearized operator $L=Q(\mu,\cdot)+Q(\cdot,\mu)$.

\subsection{Brief review of previous results} In what follows we recall some known results on the Landau and Boltzmann equations with a focus on the topics under consideration in this paper, particularly on global existence and large-time behavior of solutions to the spatially inhomogeneous equations in the perturbation framework. For global solutions to the renormalized equation with large initial data, we mention the classical works \cite{DL, DL2, L, V2, V3, DV, AV}. For the stability of vacuum, see \cite{L2, G5, C2}  for the Landau, cutoff  and non-cutoff Boltzmann equation with moderate soft potential respectively.

We focus on the results in the perturbation framework. In the near Maxwellian framework, a key point is to characterize the dissipation property in the $L^2$ norm in $v$ for the linearized operator and further control the trilinear term in an appropriate way. More precisely, for the cutoff Boltzmann equation,  the corresponding linearized operator $L_\mu$ is self-adjoint and has the null space
\[
L_{\mu} f  = \frac {1} {\sqrt{\mu}}  (Q(\mu, \sqrt{\mu } f) + Q(\sqrt{\mu} f, \mu)  ),\quad \mbox{Null} (L_{\mu}  ) =\mbox{Span} \{\sqrt{\mu} ,\sqrt{\mu} v , \sqrt{\mu} |v|^2 \},
\]
and having the coercivity property
\[
 \quad (f, L_{\mu} f) \le -C  \Vert f\Vert^2_{L^2_{\gamma/2}}, \quad \forall f \in \hbox{Null} (L_{\mu}  )^\perp,
\]
for some constant $C>0$, such coercivity property is essential in the near Maxwellian framework.

In the near Maxwellian framework, global existence and large-time behavior of solutions to the spatially inhomogeneous equations is proved   in \cite{G2, G3, SG, SG2} for the cutoff Boltzmann equation and  in \cite{G} for the Landau equation.  For the non-cutoff Boltzmann equation it is proved in \cite{AMUXY, AMUXY2, AMUXY3, AMUXY4, GS, GS2}, see also \cite{DLSS} for a recent work. We also refer to \cite{G6, G7, G8, DL3, DLYZ, DLYZ2} for  the Vlasov-Poisson/Maxwell-Boltzmann/Landau equation near Maxwellian.   We remark here all these works above are based on the following decomposition 
\[
\partial_t f  + v \cdot \nabla_x f =L_{\mu} f + \Gamma(f, f), \quad L_{\mu} f  = \frac {1} {\sqrt{\mu}}  (Q(\mu, \sqrt{\mu } f) + Q(\sqrt{\mu} f, \mu)  ) ,\quad \Gamma(f, f) = \frac 1 { \sqrt{\mu}} Q(\sqrt{\mu} f, \sqrt{\mu}  f), 
\]
which means the result are in $\mu^{-1/2}$ weighted space.

In the near Maxwellian framework,  there are also several results for $L^\infty_{x, v}$ well-posedness results near Maxwellian. For the cutoff Boltzmann equation near Maxwellian, a $L^2-L^\infty$ approach has been introduced in  \cite{UY, G4} and apply to various contexts, see  \cite{K, GKTT} and the reference therein for example.  Global $L^\infty_{x, v}$ well-posedness result near Maxwellian is proved for the Landau equation in \cite{KGH} and for the non-cutoff Boltzmann equation in \cite{AMSY2, SS}. 
The solution with large amplitude initial data  is first proved in \cite{DHWY}  under the assumption of small  entropy, we also refer to \cite{DW} for the Boltzmann equation with large amplitude initial data in bounded domains and \cite{W} for the relativistic Boltzmann equation with large amplitude initial data.

For inhomogeneous equations  with  polynomial weighted perturbation near Maxwellian, in Gualdani-Mischler-Mouhot \cite{GMM} the authors first  prove the global existence and large-time behavior of solutions with polynomial velocity weight for the cutoff Boltzmann equation with hard potential in $L^1_vL^\infty_x(1+|v|^k), k>2$, this method is generalized to the Landau equation in \cite{CTW, CM}. The non-cutoff Boltzmann equation with hard potential is proved in \cite{HTT, AMSY}, the  soft potential case is proved in \cite{CHJ}. The cutoff Boltzmann equation with soft potential is proved in this paper.

\subsection{ Main results and notations}  Let us first introduce the function spaces and notations.

  \noindent $\bullet$  For any $p \in [1, +\infty)$, $q \in \R$ the $L^p_{q}$ norm is defined by
\[
\| f \|_{L^p_{q}}^p :  = \int_{\R^3} |f(v)|^p   \langle v \rangle^{pq} dv,
\]
where the Japanese bracket $\langle v \rangle$ is defined as $\langle v \rangle := (1+|v|^2)^{1/2}$. 

\noindent $\bullet$  For any $p \in [1, +\infty)$, $ k \in \R, a >0, b \in (0, 2)$ the  $L^p_{k, a, b}$ norm is defined by
\[
\| f  \|_{L^p_{k, a, b}}^p = \int_{\R^3} |f(v)|^p   \langle v \rangle^{pq} e^{p a \langle v \rangle^b} dv.
\]

\noindent $\bullet$  For real numbers $m, l$, we define the weighted Sobolev space $H_l^m$ by
\[
H^m_l:= \{ f(v) |  |f|_{ H^m_l}=|\langle \cdot\rangle^l \langle D\rangle^mf|_{L^2}< +\infty \},
\]
where  $a(D)$ is a pseudo-differential operator with the symbol $a(\xi)$ and it is defined as
\[
(a(D)f)(v):= \frac{1}{(2\pi)^3}  \int_{\R^3}\int_{\R^3}  e^{i(v-u)\xi}  a(\xi)  f(u)  du d\xi,
\]
and we denote $H^m := H^m_0$. The weighted Sobolev space $H^m_{k, a,  b}$ can be defined in a similar way.

\noindent $\bullet$ For function $f(x, v),x\in\T^3,v\in\R^3$, the norm $\|\cdot\|_{H^\alpha_xH^m_l}$ is defined as
\[
\|f\|_{H^\alpha_xH^m_l}:=\left(\int_{\T^3}\|\langle D_x\rangle^\alpha f(x,\cdot)\|^2_{H^m_l}dx\right)^{1/2}.
\]
If $\alpha=0$, $H^0_xH^m_l=L^2_xH^m_l$. The weighted Sobolev space $H^{\alpha}_x  H^m_{k, a,  b}$ can be defined in a similar way.

\noindent $\bullet$ We write $a \lesssim b$ indicate that there is a uniform constant $C$, which may be different on different lines, such that $a\le C b $. We use the notation $a\sim b$ whenever $a\lesssim b$ and $b\lesssim a$.  We denote $C_{a_1,a_2, \cdots, a_n}$ by a constant depending on parameters $a_1,a_2,\cdots, a_n$. Moreover, we use parameter $\epsilon$ to represent different positive numbers much less than 1 and determined in different cases.

\noindent $\bullet$ We use $(f, g)$ to denote the inner product of $f, g$ in the $v$ variable $(f, g)_{L^2_v}$ for short, we use $(f, g)_{L^2_k}$ to denote $(f, g  \langle v \rangle^{2k})_{L^2_v}$.

\noindent $\bullet$  For any function $f$ we define
\[
\|f(\theta)\|_{L^1_\theta} : =\int_{\mathbb{S}^2}f(\theta)d\sigma = 2\pi\int_0^{\pi}f(\theta)\sin\theta d\theta.
\]

\noindent $\bullet$ Gamma function and Beta function are defined by
\[
 \Gamma(x):=\int_0^\infty t^{x-1}e^{-t}dt, \quad  x>0, \quad B(p, q):=\int_0^1t^{p-1}(1-t)^{q-1}dt = \int_0^\infty \frac {t^{p-1}} {(1+t)^{p + q} }, \quad  p, q>0.
\]
We recall that Beta and Gamma functions fulfill the following properties:
\begin{equation}
\label{beta function}
B(p, q)=\frac{\Gamma (p)\Gamma (q)}{\Gamma (p+q)}, \quad B(p, q) \sim q^{-p}  \quad \hbox{if} \quad 0 < p \le 2.  
\end{equation}

\noindent $\bullet$  For the cross section $B(\cos \theta, |v-v_*|)$ with an angular cutoff, we will use the notation $Q^{\pm}$ defined as
\begin{equation}
\label{Q+-}
Q^+(f, g) = \int_{\R^3} \int_{\mathbb{S}^2} B(\cos \theta, |v_*-v|) f_*' g' dv_* d\sigma, \quad Q^-(f, g) = \int_{\R^3} \int_{\mathbb{S}^2} B(\cos \theta, |v-v_*|) f_* g dv_* d\sigma. 
\end{equation}
Note that $Q(f, g) = Q^+(f, g) -Q^-(f, g)$.

\noindent $\bullet$ For the linearized operator $L$ we have
\[
\ker(L) = \text{span} \{ \mu, v_1\mu, v_2 \mu, v_3 \mu, |v|^2 \mu \}.
\]
We define the projection onto $\ker(L)$ by
\begin{equation}
\label{projection}
P f : = \left(   \int_{\T^3} \int_{\R^3} f dv dx \right) \mu + \sum_{i=1}^3 \left(   \int_{\T^3}   \int_{\R^3} v_i f dv dx \right) v_i \mu + \left(    \int_{\T^3}  \int_{\R^3} \frac {|v|^2-3}{\sqrt{6} } f dv  dx \right) \frac  {|v|^2 -3} {\sqrt{6}} \mu.
\end{equation}

\noindent $\bullet$  For any $ k \in  \R, \gamma \in (-3, 1]$, we define
\[
\Vert f \Vert_{L^2_{k+\gamma/2, *}}^2 :=  \int_{\R^3} \int_{\R^3} \mu (v_*) |v-v_*|^\gamma |f(v)|^2 \langle v \rangle^{2k} dv dv_*.
\]
It is easily seen that $\Vert f \Vert_{L^2_{k+\gamma/2, *}} \sim \Vert f \Vert_{L^2_{k+\gamma/2}}$.

\noindent $\bullet$  During the whole paper, we will denote $N$ by 
\begin{equation}
\label{N}
N=\left\{
\begin{aligned}
&2,& \quad \gamma \in (-\frac 3 2 ,1],
\\ 
&3,& \quad \gamma \in (-\frac 5 2 ,-\frac 3 2] ,
\\ 
&4,& \quad \gamma \in (-3, -\frac 5 2].
\end{aligned}
\right.
\end{equation}

\noindent $\bullet$ We let the multi-indices $\alpha$ and $\beta$ be $\alpha=[\alpha_1,\alpha_2,\alpha_3]$, $\beta=[\beta_1,\beta_2,\beta_3]$ and define
\[
\partial^\alpha_{\beta}:=\partial^{\alpha_1}_{x_1}\partial^{\alpha_2}_{x_2}\partial^{\alpha_3}_{x_3}\partial^{\beta_1}_{v_1}\partial^{\beta_2}_{v_2}\partial^{\beta_3}_{v_3}.
\]

\noindent $\bullet$ 
If each component of $\theta$ is not greater than that of the $\overline\theta$'s, we denote by $\theta\le\overline\theta$;
$\theta<\overline\theta$ means $\theta\le\overline\theta$, and $|\theta|<|\overline\theta|$.

\noindent $\bullet$  For any $k \in \R, a>0, b \in (0, 2)$,  if $\gamma \in (-\frac 3 2, 1]$ we will denote
\begin{equation}
\label{X k 1}
\Vert f \Vert_{X_k}^2 := \sum_{|\alpha|\le 2} \Vert \langle v \rangle^{k} \partial^\alpha f\Vert_{L^2_{x, v}}^2, \quad 
\Vert f \Vert_{X_{k, a, b}}^2 := \sum_{|\alpha| \le 2} \Vert \partial^{\alpha} f \langle v \rangle^{k} e^{a \langle v \rangle^b}  \Vert_{L^2_{x, v} }^2.
\end{equation}
If $-3 <\gamma \le -\frac 3 2$ we will denote the weight function $w(\alpha, \beta) $ as
\begin{equation}
\label{weight function}
w(|\alpha|, |\beta|)  = \langle v \rangle^{k-  a|\alpha|  - b|\beta| + c}  , \quad b = 7\max \{-\gamma, 0 \}, \quad  a = b+ \min \{\gamma, 0\} = 6\max \{-\gamma, 0 \} , \quad c =  4 b.
\end{equation}
Note that $w(\alpha, \beta) =w(|\alpha|, |\beta|)$, the two notations will have the same meaning in the whole paper. We then define
\begin{equation}
\label{X k 2}
\Vert f \Vert_{X_k}^2 := \sum_{|\alpha| + |\beta| \le N} C^2_{|\alpha|, |\beta| }\Vert w(\alpha, \beta)\partial^\alpha_{\beta} f\Vert_{L^2_{x, v}}^2, \quad  \Vert f \Vert_{X_{k, a, b}}^2 := \sum_{|\alpha| + |\beta| \le  N} C_{|\alpha|, |\beta|}^2  \Vert \partial^{\alpha}_{\beta} f w (\alpha, \beta) e^{a \langle v \rangle^b} \Vert_{L^2_{x, v}}^2,
\end{equation}
where  the constant $C_{|\alpha|, |\beta|}$ satisfies
\begin{equation}
\label{constant}
C_{|\alpha|, |\beta| } \ll C_{|\alpha|, |\beta_1|}, \quad \forall |\alpha| \ge 0, \quad 0  \le |\beta_1| < |\beta|, \quad C_{|\alpha| +1,| |\beta|-1 } \gg C_{|\alpha|, |\beta|}, \quad \forall |\alpha| \ge 0, \quad |\beta| \ge 1,
\end{equation}
and we will denote $Y_{k} :=X_{k+\gamma/2}, Y_{k, a, b} :=X_{k+\gamma/2, a, b}$. We also define 
\begin{equation}
\label{Y k *}
\Vert f \Vert_{Y_{k, *}}^2 := \sum_{|\alpha|\le 2} \Vert \partial^\alpha f\Vert_{L^2_{x} L^2_{k+\gamma/2, *}}^2, \quad \gamma \in (-\frac 3 2, 1],\quad \Vert f \Vert_{Y_{k, *}}^2 := \sum_{|\alpha| + |\beta| \le N} C^2_{|\alpha|, |\beta| }\Vert w(\alpha, \beta)\partial^\alpha_{\beta} f\Vert_{L^2_{x} L^2_{\gamma/2, *}}^2,  \quad \gamma \in (-3, -\frac 3 2].
\end{equation}

\noindent $\bullet$  Define $\bar {X_0} : = H^2_xL^2_v$ if $\gamma \in (-\frac 3 2, 1]$, $\bar {X_0} : = H^N_{x, v}$ if $\gamma \in (-3, -\frac 3 2 ]$.

\noindent$\bullet$ Define the relative entropy by 
\begin{equation}
\label{entropy}
H (F(t) ) := \int_{\T^3} \int_{\R^3}  F(t, x ,v ) \ln F(t, x ,v) -\mu \ln \mu d  v d x,
\end{equation}
it is easily seen that $H (F(t)) \ge 0, \forall t \ge 0, H(\mu) =0$. 

%
%The difference come form the fact that if $\gamma \le-\frac 3 2 $, we start to need $\beta$ derivative, so the $v \cdot \nabla_x f $ force us to define not only $H^2_xH^1_v$ but $H^3_{x, v}$
%so we can state our main theorem on $L^2$. 

\medskip

\subsection{Main results} We may now state our main results.

\begin{thm}\label{T12} Consider the  Cauchy problem
\[
\partial_t f +v\cdot\nabla_x f= L f +Q(f, f), \quad \mu+f\ge 0,\quad f(0) =f_0, \quad P  f_0 = 0.
\]
\begin{itemize}
  \item  Polynomial case: For any $k \ge 6$, there exists a small constant $\epsilon_0 >0$ such that for any initial data $f_0 \in X_k$ satisfies
\[
\mu +f_0 \ge 0, \quad  Pf_0 =0, \quad  \Vert f_0 \Vert_{X_6} < \epsilon_0, \quad \Vert f_0 \Vert_{X_k}< +\infty,
\]
there exists a unique global solution $f \in L^\infty( [0, +\infty), X_k  )$ satisfies $F = \mu +f \ge 0$. Moreover if $\gamma \in [0,1]$,  we have
\[
\Vert f(t) \Vert_{X_k} \lesssim e^{-\lambda t} \Vert f_0 \Vert_{X_{k}}, \quad \forall  t \in (0, +\infty),
\]
for some constant $\lambda>0$.  If $\gamma <0$, then for any $ 6 \le k_1 < k$  we have
\[
\Vert f(t) \Vert_{X_{k_1}} \lesssim t^{-\frac {|k- k_*|} {|\gamma|} } \Vert f_0 \Vert_{X_{k}} ,\quad \forall k_* \in (k_1, k), \quad \forall  t \in (0, +\infty).
\]
\item Exponential case: For any $k \in \R, a > 0, b \in (0, 2)$,  there exists a small constant $\epsilon_0 >0$ such that for any $f_0 \in X_{k, a, b}$ satisfies
\[
\mu +f_0 \ge 0, \quad  Pf_0 =0, \quad  \Vert f_0 \Vert_{X_{k, a, b}} < \epsilon_0,
\]
there exists a unique global solution  $f \in L^\infty( [0, +\infty), X_{k, a, b}  )$ satisfies $F = \mu +f \ge 0$. Moreover if $\gamma \in [0,1]$,  we have
\[
\Vert f(t) \Vert_{X_{k, a, b}} \lesssim e^{-\lambda t}\Vert f_0 \Vert_{X_{k, a, b}}, \quad \forall  t \in (0, +\infty),
\]
for some constant $\lambda>0$.  If $\gamma <0$, then for any $ 0 < a_0 < a$  we have
\[
\Vert f(t) \Vert_{X_{k, a_0, b}} \lesssim e^{-\lambda t^{\frac {b} {b - \gamma}} }\Vert f_0 \Vert_{X_{k, a, b}}, \quad \forall  t \in (0, +\infty),
\]
for some constant $\lambda>0$. 
\end{itemize}
\end{thm}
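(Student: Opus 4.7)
The plan is to close a weighted energy estimate directly in the norm $X_k$ (resp.\ $X_{k,a,b}$), combining three ingredients: (i) dissipation from the linearized operator $L$ modulo the five-dimensional kernel spanned by $\mu,v_i\mu,|v|^2\mu$; (ii) macroscopic hypocoercivity supplied by the transport $v\cdot\nabla_x$ through Kawashima/Guo-type mixed $x/v$ correction functionals built from the projection $P$ in \eqref{projection}; and (iii) a trilinear bound controlling $Q(f,f)$ by the dissipation norm $Y_k$ with a small prefactor $\|f\|_{X_k}$. The smallness of $\|f_0\|_{X_6}$ (resp.\ $\|f_0\|_{X_{k,a,b}}$) is used precisely to absorb the nonlinear contribution into the linear dissipation, yielding a differential inequality of the shape $\frac{d}{dt}\Phi(f(t))+c\|f(t)\|_{Y_k}^2\le 0$ with $\Phi(f)\sim\|f\|_{X_k}^2$. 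Local existence in $X_k$ by a standard iteration scheme, propagation of $\mu+f\ge 0$ through the mild/Duhamel representation of the cutoff equation, and this a priori estimate then combine into a classical continuation argument for global existence and uniqueness.

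\textbf{Linear coercivity and bilinear estimates.} For $\gamma\in(-3/2,1]$ the $X_k$ norm uses only $x$-derivatives and the flat weight $\langle v\rangle^k$; the coercivity of the classical $L_\mu$ on $(\ker L_\mu)^\perp$, together with a moment-gain bound for $Q^+$ from \eqref{Q+-}, gives $(Lf,f)_{L^2_v(\langle v\rangle^{2k})}\le -c\|f\|_{L^2_{k+\gamma/2}}^2+C\|Pf\|_{L^2}^2$. For $\gamma\in(-3,-3/2]$, differentiating $Q(\mu,\cdot)$ and $Q(\cdot,\mu)$ loses powers of $\langle v\rangle$, which is why the anisotropic weight $w(|\alpha|,|\beta|)$ of \eqref{weight function} decreases as $|\alpha|+|\beta|$ grows and is augmented by the strict hierarchy \eqref{constant} on the coefficients $C_{|\alpha|,|\beta|}$; these choices let one absorb every commutator of $\partial^\alpha_\beta$ with $L$ or with $v\cdot\nabla_x$ into a strictly lower-order dissipation term weighted by a larger power of $\langle v\rangle$. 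The central trilinear estimate $|(Q(g,h),f)_{X_k}|\lesssim\|g\|_{X_k}\|h\|_{Y_k}\|f\|_{Y_k}$ is obtained from the Carleman decomposition of $Q^+$, the cancellation lemma for $Q^+-Q^-$, and the asymptotic $B(p,q)\sim q^{-p}$ in \eqref{beta function} for handling the near-diagonal singularity of $|v-v_*|^\gamma$.

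\textbf{Time decay by interpolation.} For hard potentials $\gamma\in[0,1]$ one has $Y_k\hookrightarrow X_k$ and the a priori inequality immediately gives the exponential rate $\|f(t)\|_{X_k}\lesssim e^{-\lambda t}\|f_0\|_{X_k}$ by Gr\"onwall. For soft potentials $\gamma<0$ the dissipation norm $Y_k=X_{k+\gamma/2}$ is strictly weaker than $X_k$, and there is no spectral gap on $X_k$; instead, running the energy inequality at the higher level $k$ while using the interpolation $\|f\|_{X_{k_1}}\lesssim\|f\|_{X_k}^{1-\theta}\|f\|_{X_{k+\gamma/2}}^{\theta}$ (with $\theta$ computed from the relation between $k_1$, $k_*$ and $\gamma/2$) and integrating yields the polynomial rate $t^{-|k-k_*|/|\gamma|}$ stated in the theorem. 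In the exponential case the analogous interpolation $\|f\|_{X_{k,a_0,b}}\lesssim\|f\|_{X_{k,a,b}}^{1-\theta}\|f\|_{Y_{k,a,b}}^{\theta}$, with $\theta$ chosen so that the weight loss $a-a_0>0$ is balanced against a $\langle v\rangle^{\gamma/2}$ gain, produces the stretched exponential $e^{-\lambda t^{b/(b-\gamma)}}$.

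\textbf{Main obstacle.} The hardest step is the very soft range $\gamma\in(-3,-3/2]$, where one must simultaneously (a) design the weight $w(\alpha,\beta)$ and the hierarchy \eqref{constant} so that all weight-losing commutators are controlled by strictly lower-order terms; (b) carry at least $N=3$ or $N=4$ derivatives, as prescribed in \eqref{N}, so that the $L^\infty_x$ Sobolev embedding in the trilinear estimate can be closed; and (c) preserve the interpolation structure required to pass from the energy inequality to the polynomial and stretched-exponential decay rates. Balancing these three constraints against each other, in particular against the near-integrability failure of $|v-v_*|^\gamma$ as $\gamma\to -3$, is what dictates the specific choices of parameters in \eqref{N}--\eqref{constant} and constitutes the principal technical burden of the proof.
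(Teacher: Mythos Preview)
Your plan has a genuine gap at the coercivity step. In the polynomial space $X_k$ the operator $L$ is \emph{not} self-adjoint, and what one actually obtains (see \eqref{L2k} and Corollary~\ref{C47}) is
\[
(Lf,f)_{X_k}\le -c_0\|f\|_{Y_k}^2+C_k\|f\|_{\bar X_0}^2,
\]
where the remainder carries the \emph{full} unweighted norm $\|f\|_{\bar X_0}$, not merely $\|Pf\|_{L^2}$. Since $C_k\gg c_0$, this remainder cannot be absorbed into the dissipation: for $f$ supported at low velocities the two norms are comparable and the sign is lost. A Kawashima/Guo macroscopic functional only produces $-\|Pf\|^2$; it does nothing for the microscopic low-frequency part of $\|f\|_{\bar X_0}$. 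Closing a direct energy estimate in $X_k$ along your lines would force you back to Gaussian-weighted control on $(I-P)f$, which is exactly what the theorem is designed to avoid.

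The paper's route is different: it uses the Gualdani--Mischler--Mouhot enlargement scheme. One splits $\bar L=A+B$ with $A=M\chi_R$ compactly supported in $v$, so that $B$ is genuinely dissipative in $X_k$ while $A$ maps $X_k$ boundedly into $L^2(\mu^{-1/2})$; Duhamel's formula then transfers the known spectral-gap (or sub-exponential) decay of $S_{\bar L}$ from the Gaussian space to $X_k$ (Lemma~\ref{L312}, Corollary~\ref{C48}). The nonlinear problem is closed not through a Kawashima correction but via the equivalent norm
\[
|||f|||_{X_k}^2=\eta\|f\|_{X_k}^2+\int_0^\infty\|S_{\bar L}(\tau)f\|_{\bar X_0}^2\,d\tau,
\]
whose time derivative along the flow contains the term $\int_0^\infty\frac{d}{d\tau}\|S_{\bar L}(\tau)f\|_{\bar X_0}^2\,d\tau=-\|f\|_{\bar X_0}^2$, exactly cancelling the bad remainder. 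Two further corrections: the ``cancellation lemma for $Q^+-Q^-$'' is a non-cutoff device and plays no role here---$Q^\pm$ are estimated separately, the key input being the pointwise bound $\int_{\R^3}\int_{\mathbb S^2}|v-v_*|^\gamma\langle v\rangle^k\langle v'\rangle^{-k}\langle v_*'\rangle^{-k}\,dv_*d\sigma\lesssim\langle v\rangle^\gamma$ of Lemma~\ref{L211}; and the soft-potential decay rates are obtained not by interpolating a single energy inequality but by running the $A+B$ splitting at two weight levels and optimizing over a cut-off radius $R\sim t^{1/|\gamma|}$ (proof of Lemma~\ref{L312}).
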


Several comments on the results are in order:

\noindent$\bullet$ The choice of $N$ is optimal in the sense that $N -2 $ is the minimal integer such that 
\[
\Vert Q(f, f) \Vert_{H^{N-2}_v} \le C\Vert f \Vert_{H^{N-2}_4}^2.
\]

\noindent$\bullet$ We emphasize that for global solutions in the polynomial weight space, we only need the small initial value of a given norm, more precisely we only require  the smallness of $X_6$  instead of $X_k$.

%
%\noindent$\bullet$ \underline{\it Comment on the exponential decay of solutions.} Unlike the non-cutoff Boltzmann equation, we know that for $\gamma+\beta s\ge 0$, the spectral gap exponential  weight $e^{c\langle v\rangle^\beta}$.

\medskip

By assuming the entropy is small we can prove the well-posedness for the Boltzmann equation with large amplitude initial data.

\begin{thm}\label{T13} Consider the  Cauchy problem
\[
\partial_t f +v\cdot\nabla_x f= L f +Q(f, f), \quad \mu+f_0 \ge 0,\quad f(0) =f_0, \quad P  f_0 = 0.
\] 
For any $\gamma \in (-3, 1]$, there exists a constant  $k_0 \ge 8$ such that for all $k \ge k_0$, for any $\beta \ge \max \{ 3, 3+\gamma \}$, for any fixed constant $M >1$, there exists a constant $\epsilon_0>0$ depends on $M, k , \beta$ such that if 
\begin{equation}
\label{smallness condition entropy}
\Vert \langle v \rangle^{k +\beta} f_0 \Vert_{L^\infty_{x, v}} \le M,\quad H (F_0) + \Vert \langle v \rangle^{k} f_0 \Vert_{L^1_xL_v^\infty} \le \epsilon_0,
\end{equation}
then the Boltzmann solution has a unique global mild solution $f \in L^\infty( [0, +\infty), L^\infty_{x, v} (\langle v \rangle^{k+\beta})  )$  satisfies that
\[
\Vert \langle v \rangle^{k +\beta}  f  (t)  \Vert_{L^\infty_{x, v}}\le CM^2, \quad \forall t \ge 0,
\]
where $C >0$ depends on $\gamma, k, \beta$. Moreover, for the case $\gamma \ge 0$ we have
\[
\Vert \langle v \rangle^{k + \beta} f(t) \Vert_{L^\infty_{x, v}}  \le Ce^{-\lambda t}, \quad \forall t \ge 0,
\]
for some constants $C, \lambda >0$. For the case $-3< \gamma<0 $, if we further assume $\beta \ge 6$, for any $r \in (0, 1)$ we have
\[
\Vert \langle v \rangle^k f (t) \Vert_{L^\infty_{x, v}}  \le C(1+t)^{-r}, \quad \forall t \ge 0,
\]
for some constants $C>0$.
\end{thm}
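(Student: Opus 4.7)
The strategy is to work in the mild formulation along characteristics and close a continuity argument in $L^\infty_{x,v}(\langle v\rangle^{k+\beta})$. Although the initial data may be large in this weighted norm, the smallness of $H(F_0)$ together with $\|\langle v\rangle^k f_0\|_{L^1_x L^\infty_v}$ provides a propagated smallness source that tames the nonlinear term. This generalizes the $L^\infty_x L^1_v \cap L^\infty_{x,v}$ framework of Duan-Huang-Wang-Yang to the polynomial weight setting and, in particular, to the full soft-potential range $\gamma \in (-3, 0)$.

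First I would establish local existence in $L^\infty_{x,v}(\langle v\rangle^{k+\beta})$ by a standard contraction along characteristics, using the decomposition $Lf = -\nu(v) f + Kf$ with $\nu(v) \sim \langle v\rangle^\gamma$ and $K$ a smoothing integral operator. The mild form reads
\[
 f(t,x,v) = e^{-\nu(v) t} f_0(x - vt, v) + \int_0^t e^{-\nu(v)(t-s)} \bigl[ Kf + Q(f,f) \bigr](s, x - v(t-s), v)\, ds.
\]
Setting $h := \langle v\rangle^{k+\beta} f$ and using a Carleman-type bound of the form $|Q^{\pm}(f,f)(v)| \lesssim \langle v\rangle^\gamma \|f\|_{L^\infty_v(\langle v\rangle^{k+\beta})} \|\langle v\rangle^k f\|_{L^1_v}$ (which is where $\beta \ge \max\{3, 3+\gamma\}$ enters, to absorb the exchange Jacobian), the quadratic contribution is bounded by $\nu(v)\|h\|_{L^\infty_{x,v}} \cdot \mathcal{E}(t)$, where $\mathcal{E}(t) := \|\langle v\rangle^k f(t)\|_{L^\infty_x L^1_v}$. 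The $Kf$ contribution is controlled analogously by $\mathcal{E}(t)$ modulo an exponentially small remainder from cutting the velocity integration. Thus the global $L^\infty$ bound reduces to showing that $\mathcal{E}(t)$ stays small.

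The central mechanism is to convert entropy smallness into smallness of $\mathcal{E}$. The H-theorem gives $H(F(t)) \le H(F_0) \le \epsilon_0$ for all $t \ge 0$, and the elementary pointwise inequality $x\ln x - x + 1 \gtrsim (x-1)^2 \wedge |x-1|$, combined with the bootstrap $L^\infty$ bound $|f| \le CM\langle v\rangle^{-k-\beta}$, yields $\|\langle v\rangle^k f\|_{L^1_{x,v}} \lesssim C(M)\sqrt{\epsilon_0}$. To upgrade this to the mixed norm $\mathcal{E}(t) = \|\langle v\rangle^k f\|_{L^\infty_x L^1_v}$, I would run a second mild-form estimate in the $L^1_x L^\infty_v$ norm (integrating in $x$ along characteristics rather than taking a supremum), propagating the initial smallness $\|\langle v\rangle^k f_0\|_{L^1_x L^\infty_v} \le \epsilon_0$ forward in time. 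Combining these two inputs via interpolation between $L^1$ in $x$ and $L^\infty$ in $v$ gives $\mathcal{E}(t) \lesssim C(M)\sqrt{\epsilon_0}$ on any interval where $\|h\|_{L^\infty} \le 2CM^2$, and a continuity argument in $T^* := \sup\{T : \|h\|_{L^\infty([0,T]\times\T^3\times\R^3)} \le 2CM^2\}$ closes the global bound for $\epsilon_0$ small enough in terms of $M, k, \beta$.

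For the large-time decay, when $\gamma \ge 0$ the uniform coercivity $\nu(v) \ge \nu_0 > 0$ combined with iterating the mild form twice produces the exponential rate $e^{-\lambda t}$ from the small factor $\mathcal{E}$. When $\gamma < 0$, the degeneracy of $\nu$ at infinity forces a velocity-splitting $\{\langle v\rangle \le R(t)\} \cup \{\langle v\rangle > R(t)\}$ and a trade-off: the extra weight $\beta \ge 6$ buys a polynomial decay factor $(1+t)^{-r}$ for any $r \in (0,1)$ via $\int_0^t e^{-\nu(v)(t-s)}ds \lesssim \langle v\rangle^{-\gamma}$ together with the extra integrability on large velocities. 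The main obstacle throughout is the propagation of the $L^\infty_x L^1_v$ smallness: the H-theorem alone only controls the $L^1_{x,v}$ norm of the deviation, and the passage to the mixed norm that actually feeds into the pointwise estimate requires running the mild-form scheme in two complementary norms simultaneously while exploiting the bilinear structure of $Q^{\pm}$ — this is the step that dictates the threshold $k \ge k_0 \ge 8$.
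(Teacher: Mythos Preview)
Your overall architecture (mild formulation, continuity argument on $\|h\|_{L^\infty}$, double Duhamel, entropy as the smallness source) matches the paper's. However, the central step you propose --- obtaining smallness of $\mathcal E(t)=\|\langle v\rangle^k f(t)\|_{L^\infty_x L^1_v}$ by ``interpolating'' between $\|\langle v\rangle^k f\|_{L^1_{x,v}}$ (from entropy) and a propagated $\|\langle v\rangle^k f\|_{L^1_x L^\infty_v}$ --- cannot work: both inputs carry $L^1$ in $x$, and no interpolation produces $L^\infty_x$ from two $L^1_x$-type norms. This is a genuine gap, and it is precisely where the paper's argument differs from yours.

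What the paper actually does is estimate $\int_{\R^3}|f(t,x,v)|\,dv$ directly at each fixed $x$ via the mild form (Lemma~\ref{L67}), in parallel with the double Duhamel bound on $\|h\|_{L^\infty_{x,v}}$ (Lemma~\ref{L66}). The entropy enters not through interpolation but through the change of variable $y=\tilde x - v'(s-\tau)$ in the iterated $K$-term: after cutting to a compact velocity set this turns a $dv'\,dv''$ integral into an $L^1_x$ integral with Jacobian $(s-\tau)^{-3}$, and Guo's entropy lemma (Lemma~\ref{L213}) then delivers the factor $C\lambda^{-3}(\sqrt{H(F_0)}+H(F_0))$. The hypothesis on $\|\langle v\rangle^k f_0\|_{L^1_x L^\infty_v}$ is used only for the free-streaming term $\int e^{-\nu t}|f_0(x-vt,v)|\,dv$ for $t\ge t_1$, again via the same $v\to x$ change of variable; it is not propagated in time as a norm.

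Two further points you underweight. First, the threshold $k\ge k_0$ comes not from any mixed-norm passage but from the polynomial-weight kernel bound $\int|l_k(v,v')|\,dv'\le c\,k^{-(\gamma+3)/4}\langle v\rangle^\gamma+C_k\langle v\rangle^{\gamma-2}$ (Lemma~\ref{L63}, based on inequality~\eqref{linearized inequality}); only for $k$ large does the leading coefficient in the double Duhamel drop below $\tfrac12$ and become absorbable. This is the new ingredient replacing the Gaussian-weight kernel bound of \cite{DHWY}. Second, the paper's nonlinear pointwise bound is not $\nu\|h\|_{L^\infty}\mathcal E$ but $|\langle v\rangle^\beta\Gamma_k(f,f)|\le C\nu(v)\|h\|_{L^\infty}^{(3p+1)/2p}\bigl(\int|f|\,dv\bigr)^{(p-1)/2p}$ for some $p>1$ (Lemma~\ref{L65}); the \emph{unweighted} $\int|f|\,dv$ is what the entropy mechanism above actually controls.
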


\noindent$\bullet$ \underline{\it Comment on the solutions.}  It should be pointed out that initial data satisfying the smallness condition \eqref{smallness condition entropy} are allowed to have large amplitude oscillations in the spatial variable. For instance, one  may take 
\[
F_0(x, v) = \rho_0(x) \mu (v) , \quad x \in \T^3 \times  \R^3,
\]
with $\rho_0(x) \ge 0, \rho_0 \in L^\infty_x, \rho_0 -1 \in L^1_x, \rho_0 \ln \rho_0 -\rho_0 + 1 \in L^1_x$. It is easy to verify that  \eqref{smallness condition entropy} holds if $ \|  \rho_0 \ln \rho_0 -\rho_0 + 1  \|_{L^1_x}  + \|  \rho_0 -1  \|_{L^1_x}$ is small. Even though $ \|  \rho_0 \ln \rho_0 -\rho_0 + 1  \|_{L^1_x}  + \|  \rho_0 -1  \|_{L^1_x}$ is required to be small, initial data are allowed to have large amplitude oscillations. 

%
%\noindent$\bullet$ \underline{aaa} By assuming we can extend it to the case $\R^d$ without the convergence rate. 
%

\subsection{Strategies and ideas of the proof} In this subsection, we will explain main strategies and ideas of the proof for our results.

We briefly talk on the semigroup method, this method is first  initiated in \cite{M2} and extended into an abstract setting in a famous work   by Gualdani-Mischler-Mouhot \cite{GMM},  see also its application in kinetic Fokker-Planck equation in \cite{MM}. The main idea for the $L^2$ case can be   expressed briefly as follows: Taking the case $\gamma =0$ in the homogeneous Boltzmann equation for example, first by existing results we have 
\[
(Lf, f)_{L^2 (\mu^{-1/2})} \le -\lambda \Vert f \Vert_{L^2 (\mu^{-1/2})},   \quad \Vert S_L(t) f_0 \Vert_{L^2 (\mu^{-1/2})} \le e^{-\lambda t} \Vert f_0 \Vert_{L^2 (\mu^{-1/2})}^2,
\]
for some constant $\lambda>0$. If we can prove
\begin{equation}\label{L2k}
(Lf, f)_{L^2_k} \le -C \Vert f \Vert_{L^2_k}^2 +  C_k \Vert f \Vert_{L^2}^2,
\end{equation}
then define $A =M\chi_R, B=L-A$, where $\chi \in D(\R)$ a truncation function which satisfies $\mathbb{1}_{[-1,1]} \le \chi \le \mathbb{1}_{[-2,2]}$ and we denote $\chi_a(\cdot) := \chi(\cdot/a)$ for some constant $a > 0$. Taking $M, R$ large we have
\[
(Bf, f)_{L^2_k} \le -C \Vert f \Vert_{L^2_k}^2, \quad \Vert Af \Vert_{L^2(\mu^{-1/2})} \le C \Vert f \Vert_{L^2_k},
\]
which implies 
\[
\Vert S_B(t) f \Vert_{L^2_k} \le e^{-\lambda t} \Vert f_0 \Vert_{L^2_k },
\]
by Duhamel's formula 
\[
\Vert S_L(t) \Vert_{L^2_{k} \to L^2_k }   \le \Vert S_B(t) \Vert_{L^2_k \to L^2_k} + \int_0^t \Vert S_L(s)  \Vert_{L^2 (\mu^{-1/2}) \to L^2 (\mu^{-1/2})} \Vert A \Vert_{L^2_k \to L^2(\mu^{-1/2} ) }\Vert S_B(t-s)\Vert_{L^2_k \to L^2_k} \le C e^{-\lambda t}.
\]
The rate of convergence for the linear operator $L$ is established. Define a scalar product by 
\[
((f, g)) = (f, g)_{L^2_k} + \eta\int_0^{+\infty} (S_L(\tau )f, S_L(\tau) g ) d\tau,
\]
since 
\[
\int_0^{+\infty} (S_L(\tau )f, S_L(\tau) f ) d\tau \le C \Vert f \Vert_{L^2_k}^2 \int_0^{+\infty} e^{-2 \lambda \tau } d\tau \le C  \Vert f \Vert_{L^2_k}^2,
\]
we deduce $(( \cdot , \cdot))$ is an equivalent norm to $L^2_k$. By
\[
\int_0^{+\infty} (S_L(\tau ) L f, S_L(\tau) f ) d\tau = \int_0^{+\infty} \frac d {d\tau} \Vert S_L(\tau) f \Vert_{L^2}^2 d\tau = \lim_{\tau \to \infty} \Vert S_L(\tau) f \Vert_{L^2}^2 - \Vert f \Vert_{L^2}^2 = - \Vert f \Vert_{L^2}^2, 
\]
which implies
\[
((Lf, f)) = (Lf, f)_{L^2_k} + \eta\int_0^\infty (S_L(\tau ) L f, S_L(\tau) f) d\tau = -C_1 \Vert f \Vert_{L^2_k} +(C_2 -\eta) \Vert f \Vert_{L^2} \le -C_1 \Vert f \Vert_{L^2_k}, 
\]
by  choosing  a suitable $\eta$. The estimate for the linearized operator $L$ in this equivalent norm allows us to combine with the nonlinear estimates to conclude the full convergence. In other words, one of the main works of this paper is to prove \eqref{L2k} plus some appropriate upper bounds for the nonlinear operator which consists with  the lower bound in the linearized estimate.

We briefly describe one proof for \eqref{L2k}. We observed and proved,  for any $\gamma \in (-3, 1], k >\max  \{\gamma+3, 3 \}$
\begin{equation}
\label{linearized inequality}
\int_{\R^3}\int_{\mathbb{S}^2} |v-v_*|^\gamma  \frac {\langle v \rangle^k} {\langle v' \rangle^{k}  } e^{-\frac 1 2 | v_*' |^{2} }dv_* d\sigma \le  \frac c {k^{\frac {\gamma+3} 4}} \langle   v \rangle^{\gamma}  +C_k \langle v \rangle^{\gamma-2}, \quad \forall v \in \R^3,
\end{equation}
where the constant $c$ is independent of $k$. The proof of \eqref{linearized inequality} can be seen in Lemma \ref{L62}. By \eqref{linearized inequality}, under the cutoff assumption (A2), for the $Q^+$ term we have
\begin{align}
\label{estimate example Q+}
\nonumber
|(Q^+ (\mu, f), f \langle v \rangle^{2k}) |  \le&  \int_{\R^3} \int_{\R^3}\int_{\mathbb{S}^2}b(\cos \theta)  |v-v_*|^\gamma  |f(v') | e^{-\frac 1 2\langle v_*' \rangle^{2} } |f(v)| \langle v \rangle^{2k} dv_* dv d\sigma
\\ \nonumber
\le & C \left(\int_{\R^3} \int_{\R^3}\int_{\mathbb{S}^2} |v-v_*|^\gamma  |f(v')|^2 \langle v' \rangle^{2k} e^{-\frac 1 2\langle v_*' \rangle^{2} }dv_* dv d\sigma \right)^{1/2}
\\ \nonumber
&\left( \int_{\R^3} \int_{\R^3}\int_{\mathbb{S}^2} |v-v_*|^\gamma \frac {\langle v \rangle^{2k}} {\langle v' \rangle^{2k}  } e^{-\frac 1 2\langle v_*' \rangle^{2} }  |f(v)|^2 \langle v \rangle^{2k} dv_* dv d\sigma \right)^{1/2}
\\ 
\le& \frac {c_2} {k^{\frac {\gamma+3} 4}}  \Vert f \Vert_{L^2_{k+\gamma/2}}^2 + C_k  \Vert f\Vert_{L^2_{k+\gamma/2-1}}^2, 
\end{align}
for some constant $c_2>0$ independent of $k$, the $(Q^+( f, \mu), f \langle v \rangle^{2k} )$ term can be estimated by the same way. For the $Q^-$ term we easily compute
\[
-Q^-(f, \mu) - Q^{-}(\mu, f) =  \int_{\R^3} \int_{\mathbb{S}^2} |v-v_*|^\gamma  b(\cos \theta)( \mu (v_*) f(v) + \mu(v) f(v_* )  ) dv_* d\sigma  \le -  c_1 \langle v \rangle^\gamma f(v) + C\mu(v) \Vert f \Vert_{L^1_4}, 
\]
so we have
\[
-(Q^-(f, \mu) +  Q^{-}(\mu, f) , f \langle v \rangle^{2k} ) \le - c_1 \Vert f \Vert_{L^2_{k+\gamma/2}}^2 + C_k  \Vert f\Vert_{L^2_{6}}^2 . 
\]
Gathering the two terms we have
\[
(Lf, f \langle v \rangle^{2k}) \le -(c_1 -\frac {2c_2} {k^{\frac {\gamma+3} 4}} )\Vert f \Vert_{L^2_{k+\gamma/2}}^2 + C_k  \Vert f\Vert_{L^2_{k+\gamma/2-1}}^2.
\]
If  $k$ is larger than some constant $k_0$ such that $2c_2  < c_1{k_0^{\frac {\gamma+3} 4}}$, \eqref{L2k}  is thus proved. To get an optimal $k$ for the polynomial weight case,  we will use the pre-post collisional change of variables to give more precise computation in this paper.

We remark here that \eqref{linearized inequality} also serves an important point in the proof for Boltzmann equation with large amplitude initial data. In \cite{DHWY}, the following inequality
\[
\int_{\R^3}\int_{\mathbb{S}^2} |v-v_*|^\gamma  \frac {e^{-\frac 1 4 | v |^{2} }} {e^{-\frac 1 4 | v' |^{2} }  } e^{-\frac 1 2 | v_*' |^{2} }dv_* d\sigma \le  C_k \langle v \rangle^{\gamma-2},
\] 
plays a key role in the proof for Boltzmann equation with large amplitude initial data for the $\mu^{-1/2}$ case.  The inequality \eqref{linearized inequality} can be seen as its polynomial version.

For the upper bound, we observed and proved such fact that
\begin{equation}
\label{nonlinear upper bound}
\int_{\R^3}\int_{\mathbb{S}^2} |v-v_*|^\gamma  \frac {\langle v \rangle^k} {\langle v' \rangle^{k}  \langle v_*' \rangle^{k} } dv_* d\sigma \le  C_k \langle   v \rangle^{\gamma}, \quad \forall v \in \R^3,
\end{equation}
which plays an essential role in the proof of the upper bound. Compared to the $\mu^{-1/2}$ case, if we replace $\langle v \rangle^k $   by $\mu^{-\frac  1 2} = e^{\frac 1 4 |v|^2}$ in  \eqref{nonlinear upper bound}, we easily seen that
\[
\int_{\R^3}\int_{\mathbb{S}^2} |v-v_*|^\gamma  \frac {  e^{\frac 1 4 |v|^2}  } {  e^{\frac 1 4 |v'|^2}    e^{\frac 1 4 |v_*'|^2} } dv_* d\sigma = \int_{\R^3} \int_{\mathbb{S}^2} |v-v_*|^\gamma e^{-\frac 1 4 |v_*|^2}  dv_* d\sigma \le C \langle v \rangle^\gamma. 
\]

Now we explain the strategy of the proof for  the Boltzmann equation with  large amplitude initial data. For the polynomial weight case, let $f = \langle v \rangle^{-k} (F- \mu) $ in \eqref{Boltzmann equation}, then $f$ satisfies 
\[
\partial_t f  + v \cdot \nabla_x f   =L_{k} f + \Gamma_k (f, f), 
\]
with
\[
L_k f = \langle v \rangle^k Q( f \langle v \rangle^{ - k}   , \mu  ) + \langle v \rangle^{k}  Q(\mu, f \langle v \rangle^{ - k }   ),  \quad \Gamma_k (f, f) = \langle v \rangle^{k}  Q( f \langle v \rangle^{-k} ,  f \langle v \rangle^{- k }  ). 
\]
For any $k  \ge \max \{3, 3+\gamma \}$, like \cite{DHWY}, instead of estimate the nonlinear term in the following way
\[
|\langle v \rangle^\beta \Gamma_k(f, f)  |\le C \langle v \rangle^\gamma \Vert \langle v \rangle^\beta f \Vert_{L^\infty}^2,
\]
we prove a new estimate 
\[
|\langle v \rangle^\beta \Gamma_k(f, f)  |\le C \langle v \rangle^\gamma \Vert \langle v \rangle^\beta f \Vert_{L^\infty}^{2 -a} \left (\int_{\R^3} |f(t, x, v)|  dv\right)^a,
\]
for some constant $0<a<1$. Second,  we observe that under the condition
\[
H (F_0) + \Vert \langle v \rangle^{k} f_0 \Vert_{L^1_xL_v^\infty} \le \epsilon_0,
\]
we can prove that $\int_{\R^3} |f(t, x, v)|  dv$ will be small after some positive time even if it could be initially large. This observation is the key point to control the nonlinear term  $\Gamma_k(f, f)$, we can finally obtain the uniform  $L^\infty_{x, v}$ estimate under the smallness of $\Vert \langle v \rangle^{k}  f_0 \Vert_{L^1_xL^\infty_v}$ and $H (F_0)$ so that initial data is allowed to have large amplitude oscillations.

\subsection{Remark on the Cutoff assumption (A2)}
Compared to the  cutoff assumption (A2), in many text people define the cutoff assumption in the following way:
\begin{equation}
\label{cutoff assumption 2}
K \le \int_{\mathbb{S}^2} b(\cos\theta) d \theta \le K^{-1}
\end{equation}
which is weaker than assumption (A2). But assumption (A2) plays an important in our proof, for example, if we want to use \eqref{linearized inequality} to prove \eqref{estimate example Q+}, the assumption \eqref{cutoff assumption 2} is not enough, we need to assume that (A2) holds.  

In the proof of Theorem \ref{T12}, we use the prepost-collisional change of variable instead of using inequalities like \eqref{linearized inequality}, thus under assumption \eqref{cutoff assumption 2}, Theorem \ref{T12} is still true,  actually Theorem \ref{T12} can be proved for the Boltzmann equation without cutoff, see \cite{CHJ}.  

But for Theorem \ref{T13} we need to assume that (A2) holds since technically we need to use inequality like \eqref{linearized inequality} to prove it.

 \subsection{Organization of the paper}
Technical tools and lemmas are listed in Section \ref{section2}. Section \ref{section3} is devoted to the upper bounds  and coercivity estimate on collision operator $Q$.  In Section \ref{section4} we will prove estimates for the inhomogeneous equation. We  obtain  global well-posedness and rate of convergence  in $L^2$ in  Section \ref{section5}.  Section \ref{section6}  and Section \ref{section7} are devoted to the proof for the Boltzmann equation with large amplitude initial data. In Section \ref{section6} we prove  global well-posedness for the Boltzmann equation with large amplitude initial data and in Section \ref{section7} we establish rate of convergence to the equilibrium.

\section{Preliminaries}\label{section2}

In later analysis, we often use two types of change of variables below. 
\begin{lem} (\cite{ADVW}) For any smooth function $f$ we have \\
(1) (Regular change of variables)
\[
\int_{\R^3} \int_{\mathbb{S}^2} b(\cos \theta) |v-v_*|^\gamma f(v') d \sigma dv= \int_{\R^3} \int_{\mathbb{S}^2} b(\cos \theta)\frac 1 {\cos^{3+\gamma} (\theta/2)} |v-v_*|^\gamma f(v) d\sigma dv.
\]
(2) (Singular change of variables)
\[
\int_{\R^3} \int_{\mathbb{S}^2} b(\cos \theta) |v-v_*|^\gamma f(v') d \sigma dv_* = \int_{\R^3} \int_{\mathbb{S}^2} b(\cos \theta)\frac 1 {\sin^{3+\gamma} (\theta/2)} |v-v_*|^\gamma f(v_*) d\sigma dv_* .
\]
\end{lem}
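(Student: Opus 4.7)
I will prove both identities by a two-step reduction: a change of variables in the velocity at fixed $\sigma$, followed by a reparametrization of $\sigma$ on the unit sphere. The two cases are structurally parallel (cosine replaced by sine), so I describe (1) in detail and then indicate the adjustment for (2).

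Fix $v_*$ and $\sigma$, and consider the map $v\mapsto v' = \tfrac{v+v_*}{2}+\tfrac{|v-v_*|}{2}\sigma$. With $\hat u=(v-v_*)/|v-v_*|$, direct differentiation gives the Jacobian matrix $\tfrac{1}{2}(I+\sigma\hat u^{\top})$, so by the matrix determinant lemma
\[
\left|\frac{\partial v'}{\partial v}\right|=\frac{1+\hat u\cdot\sigma}{8}=\frac{\cos^{2}(\theta/2)}{4}.
\]
One also verifies $|v'-v_*|=|v-v_*|\cos(\theta/2)$ and that the new angle $\alpha:=\angle(v'-v_*,\sigma)$ equals $\theta/2$. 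Substituting $v\mapsto v'$ on the left-hand side of (1) and relabelling the new integration variable back as $v$ yields
\[
\mathrm{LHS}= 4\int_{\R^{3}}\int_{\mathbb{S}^{2}}\frac{b(\cos 2\alpha)}{\cos^{\gamma+2}\alpha}\,|v-v_*|^{\gamma}f(v)\,dv\,d\sigma,
\]
where now $\alpha=\angle(v-v_*,\sigma)\in[0,\pi/4]$ by the cutoff (A4).

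For the second step, keep $v$ and $v_*$ fixed and reparametrize $\sigma$ by angle-doubling about the axis $\hat u$: let $\widetilde\sigma$ have polar angle $\widetilde\theta:=2\alpha$ from $\hat u$ and the same azimuth as $\sigma$. In spherical coordinates centred at $\hat u$,
\[
d\widetilde\sigma=\sin\widetilde\theta\,d\widetilde\theta\,d\varphi=4\sin\alpha\cos\alpha\,d\alpha\,d\varphi=4\cos\alpha\,d\sigma,
\]
so $d\sigma=d\widetilde\sigma/(4\cos(\widetilde\theta/2))$. Inserting this cancels the factor $4$ and raises the exponent in the denominator from $\cos^{\gamma+2}$ to $\cos^{\gamma+3}$; after renaming $\widetilde\sigma\mapsto\sigma$ and $\widetilde\theta\mapsto\theta$, this is exactly the right-hand side of (1). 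The cutoff (A4) guarantees that the angle-doubling is a bijection between the $\alpha$-cap $[0,\pi/4]$ and the cutoff cone $[0,\pi/2]$.

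The proof of (2) is entirely analogous with cosines and sines interchanged. The map $v_*\mapsto v'$ at fixed $v,\sigma$ has Jacobian $\tfrac{1-\hat u\cdot\sigma}{8}=\tfrac{\sin^{2}(\theta/2)}{4}$ and satisfies $|v'-v|=|v-v_*|\sin(\theta/2)$; after the $v_*$-substitution one extracts a factor $4/\sin^{\gamma+2}(\theta_{\mathrm{old}}/2)$, with $\sin(\theta_{\mathrm{old}}/2)=\cos\psi$ and $\psi=\angle(v_*-v,\sigma)$ in the new variables. Doubling the polar angle about the opposite axis $-\hat u$ (sending $\psi\mapsto 2\psi$) produces the extra $1/\sin(\theta/2)$ needed to upgrade the denominator to $\sin^{\gamma+3}$, and the resulting polar angle $\widetilde\theta=\pi-2\psi\in[0,\pi/2]$ measured from $\hat u$ lies precisely in the cutoff range of the right-hand side. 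The main bookkeeping hurdle I expect is verifying that these $\sigma$-reparametrizations remain genuine diffeomorphisms on the supported portion of $\mathbb{S}^{2}$; this is guaranteed under (A4) by the angle computations above.
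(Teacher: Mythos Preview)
Your argument is correct and is essentially the standard proof from \cite{ADVW}; note that the paper itself does not give a proof of this lemma but merely cites that reference. The two-step scheme you use---first the velocity substitution $v\mapsto v'$ (resp.\ $v_*\mapsto v'$) at fixed $\sigma$, yielding the Jacobian $\cos^2(\theta/2)/4$ (resp.\ $\sin^2(\theta/2)/4$) together with the half-angle relation, then the angle-doubling reparametrization on $\mathbb{S}^2$ to restore the full $[0,\pi/2]$ cone---is exactly the classical derivation, and your bookkeeping (in particular $\alpha=\theta/2$, $\cos\psi=\sin(\theta/2)$, and the surface measure relation $d\tilde\sigma=4\cos\alpha\,d\sigma$) is accurate. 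The only point worth tightening is the injectivity of $v\mapsto v'$ on the half-space $\{\hat u\cdot\sigma\ge 0\}$: this follows because $v_1'=v_2'$ forces $u_1-u_2$ parallel to $\sigma$, and then the sign constraint from (A4) rules out the nontrivial solution. With that check, the proof is complete.
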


\begin{lem}\label{L22}For any smooth function $f, g, h, b$, for any constant $\gamma \in\R$, we have
\begin{equation*}
\begin{aligned}
&\left( \int_{\R^3}\int_{\R^3} \int_{\mathbb{S}^2}b( \cos \theta) |v-v_*|^\gamma f_* g h' dv dv_* d\sigma  \right)^2
\\
\le & \left( \int_{\mathbb{S}^2} b(\cos \theta) \sin^{-\frac {3+\gamma} 2} \frac \theta 2   d\sigma\right)^{2} \int_{\R^3}\int_{\R^3}  |v-v_*|^\gamma |f_*|^2 |g| dv dv_*   \int_{\R^3}\int_{\R^3}  |v-v'|^\gamma |g| |h'|^2dv dv'.
\end{aligned}
\end{equation*}
Similarly
\begin{equation*}
\begin{aligned}
& \left(  \int_{\R^3}\int_{\R^3} \int_{\mathbb{S}^2}b(  \cos \theta) |v-v_*|^\gamma f_* g h' dv dv_* d\sigma \right)^2
\\
\le & \left( \int_{\mathbb{S}^2} b(\cos \theta) \cos^{-\frac {3+\gamma} 2} \frac \theta 2   d\sigma \right)^{2}  \int_{\R^3}\int_{\R^3}  |v-v_*|^\gamma |f_*| |g|^2 dv dv_* \int_{\R^3}\int_{\R^3}  |v_*-v'|^\gamma |f_*| |h'|^2dv_* dv'.
\end{aligned}
\end{equation*}
\end{lem}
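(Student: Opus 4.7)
The plan is a Cauchy--Schwarz splitting of the trilinear form in $L^2(d\sigma\,dv\,dv_*)$ with a carefully chosen angular weight $\alpha(\theta)$, followed by application of the two change-of-variables formulas just recalled. For the first inequality, I would write the integrand as the product $AB$ with
\[
A := b^{1/2}(\cos\theta)\,\alpha(\theta)\, |v-v_*|^{\gamma/2} |f_*|\, |g|^{1/2},\qquad B := b^{1/2}(\cos\theta)\,\alpha^{-1}(\theta)\, |v-v_*|^{\gamma/2} |g|^{1/2}\, |h'|,
\]
so that Cauchy--Schwarz gives $|I|^2\le\iiint A^2\,d\sigma\,dv\,dv_*\cdot\iiint B^2\,d\sigma\,dv\,dv_*$, where $I$ denotes the trilinear form on the left-hand side.

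In the first factor the $\sigma$-integral decouples immediately, producing
\[
\iiint A^2\,d\sigma\,dv\,dv_* = \Big(\int_{\mathbb{S}^2}b(\cos\theta)\alpha^2(\theta)\,d\sigma\Big) \iint |v-v_*|^\gamma |f_*|^2 |g|\,dv\,dv_*.
\]
For the second factor, I would fix $v$ and apply the singular change of variables to the inner integral over $(v_*,\sigma)$, absorbing the angular weight $\alpha^{-2}(\theta)$ into a modified kernel $\tilde b(\cos\theta):=b(\cos\theta)\alpha^{-2}(\theta)$. This converts $|h(v')|^2$ into $|h(v_*)|^2$ at the price of an extra factor $\sin^{-(3+\gamma)}(\theta/2)$; after renaming the dummy variable $v_*\to v'$ in the resulting integral, one obtains
\[
\iiint B^2\,d\sigma\,dv\,dv_* = \Big(\int_{\mathbb{S}^2}b(\cos\theta)\alpha^{-2}(\theta)\sin^{-(3+\gamma)}(\theta/2)\,d\sigma\Big)\iint |v-v'|^\gamma |g|\,|h'|^2\,dv\,dv'.
\]
The choice $\alpha(\theta):=\sin^{-(3+\gamma)/4}(\theta/2)$, dictated by the equality case of Cauchy--Schwarz applied to the two angular integrals, makes both of them equal to $\int_{\mathbb{S}^2}b(\cos\theta)\sin^{-(3+\gamma)/2}(\theta/2)\,d\sigma$, and the first inequality follows.

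The second inequality is proved in the same manner, with two modifications: one distributes the factor $|f_*|$ rather than $|g|$ between $A$ and $B$ (i.e.\ $A\propto|f_*|^{1/2}|g|$ and $B\propto|f_*|^{1/2}|h'|$), and one uses the regular change of variables (freezing $v_*$ and changing $(v,\sigma)$) in $\iiint B^2$; this produces $\cos^{-(3+\gamma)}(\theta/2)$ and $|v_*-v'|^\gamma$ after relabeling, and the corresponding optimal weight is $\alpha(\theta)=\cos^{-(3+\gamma)/4}(\theta/2)$. I do not expect any serious obstacle: the structure of the claimed bound essentially dictates the splitting, and the angular exponents $\pm(3+\gamma)/4$ are forced by the equality case of Cauchy--Schwarz. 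The only points requiring slight care are the consistent relabeling of dummy variables after each change of variables and the observation that $\alpha^{-2}(\theta)$ can be absorbed into the angular function so that the preceding lemma applies verbatim.
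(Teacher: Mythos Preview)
Your proposal is correct and follows essentially the same approach as the paper: a weighted Cauchy--Schwarz splitting with $\alpha(\theta)=\sin^{-(3+\gamma)/4}(\theta/2)$ followed by the singular change of variables for the first inequality, and the analogous argument with $\cos^{-(3+\gamma)/4}(\theta/2)$ and the regular change of variables for the second. The paper is simply more terse, writing down the Cauchy--Schwarz step with the weight already inserted rather than deriving it by optimization.
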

\begin{proof}
For the first inequality, by Cauchy-Schwarz inequality and singular change of variables
\begin{equation*}
\begin{aligned} 
&\left(\int_{\R^3}\int_{\R^3} \int_{\mathbb{S}^2}b(\cos \theta) |v-v_*|^\gamma f_* g h' dv dv_* d\sigma  \right)^2
\\
\le  &\int_{\R^3}\int_{\R^3}  \int_{\mathbb{S}^2} b(\cos \theta) \sin^{-\frac {3+\gamma} 2} \frac \theta 2  |v-v_*|^\gamma |f_*|^2 |g| dv dv_*  d\sigma
\int_{\R^3}\int_{\R^3}  \int_{\mathbb{S}^2} b(\cos \theta) \sin^{\frac {3+\gamma} 2} \frac \theta 2 |v-v_*|^\gamma | |g| |h'|^2 dv dv_*  d\sigma 
\\
\le& \left(\int_{\mathbb{S}^2} b(\cos \theta) \sin^{-\frac {3+\gamma} 2} \frac \theta 2 d\sigma  \right)^2\int_{\R^3}\int_{\R^3}  |v-v_*|^\gamma |f_*|^2 |g| dv dv_* \int_{\R^3}\int_{\R^3}  |v-v'|^\gamma |g| |h'|^2dv dv',
\end{aligned}
\end{equation*}
which finishes the proof. The second inequality can be proved similarly by  regular change of variables.
\end{proof}

\begin{lem}\label{L23}(\cite{V}, Section 1.4)(Pre-post collisional change of variables)
For smooth function $F$ we have
\[
\int_{\R^3}\int_{\R^3} \int_{\mathbb{S}^2 }F(v, v_*, v', v_*') B(|v-v_*|, \cos \theta) dv dv_* d \sigma =\int_{\R^3}\int_{\R^3} \int_{\mathbb{S}^2 }F(v', v_*', v, v_*) B(|v-v_*|, \cos \theta) dv dv_* d \sigma .
\]
\end{lem}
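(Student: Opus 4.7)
The plan is to realize the identity as a change of variables $T:(v,v_*,\sigma)\mapsto(v',v_*',\sigma')$ on $\R^3\times\R^3\times\mathbb{S}^2$ and check that $T$ preserves both the measure $dv\,dv_*\,d\sigma$ and the kernel $B(|v-v_*|,\cos\theta)$. Here one must first specify what the new ``$\sigma$'' variable is: if we wish to regard $(v',v_*')$ as the pre-collisional pair and $(v,v_*)$ as the post-collisional pair, then the relation $v=\tfrac{v'+v_*'}{2}+\tfrac{|v'-v_*'|}{2}\sigma'$ together with the identity $|v'-v_*'|=|v-v_*|$ (conservation of kinetic energy) forces
\[
\sigma' \;=\; \frac{v-v_*}{|v-v_*|}.
\]
Thus $T$ is defined on the full-measure open set $\{v\neq v_*\}\times\mathbb{S}^2$.

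Next I would verify three algebraic/geometric facts. \textbf{(i) Involution:} applying $T$ once more to $(v',v_*',\sigma')$ and using $v'-v_*'=|v-v_*|\sigma$ yields new coordinates $\tfrac{v+v_*}{2}\pm\tfrac{|v-v_*|}{2}\cdot\tfrac{v-v_*}{|v-v_*|}$, which recover $(v,v_*)$, and the new direction $(v'-v_*')/|v'-v_*'|$ equals $\sigma$; hence $T\circ T=\mathrm{Id}$. \textbf{(ii) Invariance of the kernel:} since $|v'-v_*'|=|v-v_*|$ and since the new deviation cosine is
\[
\frac{v'-v_*'}{|v'-v_*'|}\cdot\sigma' \;=\; \sigma\cdot\frac{v-v_*}{|v-v_*|} \;=\; \cos\theta,
\]
the symbol $B(|v-v_*|,\cos\theta)$ is $T$-invariant. \textbf{(iii) Jacobian equal to one:} here the cleanest route is to first change $(v,v_*)$ to centre-of-mass/relative coordinates $G=\tfrac{v+v_*}{2}$, $u=v-v_*$ (Jacobian $1$), and to note that $T$ then reads $G\mapsto G$, $(u,\sigma)\mapsto(|u|\sigma,\,u/|u|)$. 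In spherical coordinates $u=r\omega$ with $r=|u|$, $\omega\in\mathbb{S}^2$, the map becomes $(r,\omega,\sigma)\mapsto(r,\sigma,\omega)$, which is just a coordinate swap of the two sphere variables while keeping $r$ fixed; it is manifestly measure preserving on $(0,\infty)\times\mathbb{S}^2\times\mathbb{S}^2$ with measure $r^2\,dr\,d\omega\,d\sigma$, i.e.\ $du\,d\sigma=dv-v_*\,d\sigma$. Transporting back via the centre-of-mass change confirms that $dv\,dv_*\,d\sigma$ is preserved.

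With these three facts in hand the lemma follows at once: writing $\mathcal I$ for the left-hand integral,
\[
\mathcal I=\int F(v,v_*,v',v_*')\,B(|v-v_*|,\cos\theta)\,dv\,dv_*\,d\sigma
=\int F\bigl(T^{-1}(v',v_*',\sigma')\bigr)\,B\,dv'\,dv_*'\,d\sigma',
\]
and the involution property $T^{-1}=T$ together with the identification of the ``new'' primed velocities as $(v,v_*)$ rewrites the right side as the right-hand integral of the lemma. The only genuine obstacle is the Jacobian computation; the spherical-swap trick above is what makes it painless and avoids a brute-force $6\times6$ determinant calculation. One small care point is the set $\{v=v_*\}$ where $\sigma'$ is undefined, but this set has Lebesgue measure zero in $\R^3\times\R^3$, so it does not affect either integral.
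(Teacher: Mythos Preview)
Your argument is correct and complete: the involution property, kernel invariance, and the Jacobian computation via the centre-of-mass/relative coordinates and the spherical swap $(r,\omega,\sigma)\mapsto(r,\sigma,\omega)$ are exactly the standard route. The paper itself does not give a proof of this lemma but simply quotes it from Villani's review \cite{V}, Section~1.4; your write-up is essentially the argument one finds there.
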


\begin{lem} (Hardy-Littlewood-Sobolev inequality) (\cite{LL}, Chapter 4)
Let $1< p, r < +\infty$ and $0<\lambda<d$ with $1/p+\lambda/d+1/r = 2$.  Then there exists a constant $C(n,\lambda, p)$, such that for all smooth function $f$, $h$ we have
\[
\int_{\R^d} \int_{\R^d} f(x) |x-y|^{-\lambda}  h(y) dx dy \le C(n,\lambda, p) \Vert f \Vert_{L^p} \Vert h \Vert_{L^r} .
\]
\end{lem}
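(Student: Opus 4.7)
The plan is to derive the Hardy--Littlewood--Sobolev inequality from the weak Young convolution inequality combined with H\"older's inequality. The starting observation is that the Riesz kernel $K(x) = |x|^{-\lambda}$ does not belong to any $L^s(\R^d)$, but it does lie in the Marcinkiewicz space $L^{s,\infty}$ for $s = d/\lambda$: indeed $|\{x : |x|^{-\lambda}>t\}|$ equals the volume of a ball of radius $t^{-1/\lambda}$ and hence scales as $t^{-d/\lambda}$, so $\|K\|_{L^{d/\lambda,\infty}}$ is finite. Thus the bilinear form in the statement can be read as a pairing against a convolution operator with a weak-$L^s$ kernel.

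First I would prove a weak Young-type inequality: for $1<r<d/(d-\lambda)$, the convolution operator $Th := K\ast h$ is bounded $L^r \to L^q$ with $1/q + 1 = 1/r + \lambda/d$. This is the Marcinkiewicz real interpolation argument: split $K = K\mathbb{1}_{|K|\le s} + K\mathbb{1}_{|K|>s}$ at a level $s$ chosen depending on the truncation level at which we measure the distribution function of $Th$, estimate the low part via classical Young (against $L^\infty$) and the high part via classical Young against $L^1$ on truncated kernels, then optimize in $s$ to obtain a weak-type bound, and finally upgrade to a strong-type $L^r\to L^q$ bound through the Marcinkiewicz interpolation theorem. Second, by H\"older's inequality
\[
I := \int_{\R^d} f(x)(K\ast h)(x)\,dx \le \|f\|_{L^{q'}}\|K\ast h\|_{L^q},
\]
and one chooses $q' = p$. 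Matching the exponents forces $1/p+1/r+\lambda/d = 2$, which is exactly the scaling condition in the hypothesis; combined with Step~1 this yields $I\le C\|f\|_{L^p}\|h\|_{L^r}$. Finally I would verify that the assumptions $1<p,r<\infty$ and $0<\lambda<d$, together with the scaling relation, place $r$ strictly inside the admissible range $1<r<d/(d-\lambda)$ (equivalently $1<p<d/\lambda$), so the interpolation step applies.

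The main obstacle is genuinely the weak Young estimate. The Riesz kernel sits precisely at the borderline where the ordinary Young inequality $L^s \ast L^r \hookrightarrow L^q$ fails, and one must replace it by a weak-$L^s$ version. The technical work lies in the Marcinkiewicz interpolation argument, specifically the optimization of the truncation level $s=s(t)$ and the careful bookkeeping for the distribution function of $K\ast h$ near both endpoints. An alternative route, the one pursued in Lieb--Loss, is to first invoke the Riesz rearrangement inequality to reduce to radial decreasing $f$ and $h$ (since $K$ is already its own symmetric decreasing rearrangement) and then exploit radial symmetry for a direct computation; this avoids interpolation but transfers the burden to proving the rearrangement inequality. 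Since the statement asks only for the \emph{existence} of a constant $C(n,\lambda,p)$ and not its sharp value, in either approach we avoid the deeper part of the theory (the Lieb sharp constant obtained via conformal invariance).
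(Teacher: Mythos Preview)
The paper does not supply its own proof of this lemma; it is stated as a citation to Lieb--Loss, Chapter~4, and used as a black box elsewhere in the paper. Your proposal via the weak Young inequality and Marcinkiewicz interpolation is a correct and standard route to the result, and you have correctly identified the alternative (Riesz rearrangement) approach that the cited reference actually uses; there is nothing further to compare.
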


The following can be seen as a weak version of Hardy-Littlewood-Sobolev inequality when $q = +\infty$  which is useful in the following proof. 

\begin{lem}\label{L25} Suppose $\gamma \in (-3, 0)$, then for any smooth function $f$, the following estimate holds: \\ 
If $\gamma \in (-\frac 3 2, 0)$, we have
\[
\sup_{v \in \R^3}\int_{\R^3} |v-v_*|^{\gamma} |f| (v_*) dv_* \lesssim \Vert f \Vert_{L^1}^{1 + \frac { 2 \gamma} 3} \Vert f \Vert_{L^2}^{-\frac  {  2 \gamma} 3} .
\]
If $\gamma \in (-2, 0)$, we have
\[
\sup_{v \in \R^3}\int_{\R^3} |v-v_*|^{\gamma } |f| (v_*) dv_* \lesssim \Vert f \Vert_{L^1}^{ 1 + \frac { \gamma } 2} \Vert f \Vert_{L^3}^{- \frac  { \gamma } 2} . 
\]
If $\gamma \in ( -3, 0)$, we have
\[
\sup_{v \in \R^3}\int_{\R^3} |v-v_*|^{\gamma} |f| (v_*) dv_* \lesssim \Vert f \Vert_{L^1}^{1 + \frac \gamma 3} \Vert f \Vert_{L^\infty}^{ - \frac \gamma 3} . 
\]
\end{lem}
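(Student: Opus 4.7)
The plan is to prove all three estimates by the same elementary splitting argument: for each $v$, decompose the integral into a near part $\{v_* : |v-v_*| \le R\}$ and a far part $\{v_* : |v-v_*| > R\}$, estimate each piece, and then optimize the free parameter $R$.

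First, I would handle the far part uniformly for all three cases. Since $\gamma < 0$, on $\{|v-v_*| > R\}$ we bound $|v-v_*|^\gamma \le R^\gamma$ and pull this out, giving $\int_{|v-v_*|>R} |v-v_*|^\gamma |f|(v_*)\,dv_* \le R^\gamma \Vert f \Vert_{L^1}$. For the near part I would apply H\"older with the conjugate exponent of the target Lebesgue norm of $f$: for case (3) simply $L^\infty$, for case (2) pair $L^3$ with $L^{3/2}$, and for case (1) pair $L^2$ with $L^2$. In each case the integrand $|v-v_*|^\gamma$ raised to the appropriate H\"older exponent must be locally integrable on $B(v,R)$, which forces exactly the stated restriction on $\gamma$: $3\gamma/2 > -3$ gives $\gamma > -2$ and $2\gamma > -3$ gives $\gamma > -3/2$. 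Computing the resulting $R$-power in spherical coordinates, the near part is bounded by a constant times $\Vert f \Vert_{L^\infty} R^{3+\gamma}$, $\Vert f \Vert_{L^3} R^{2+\gamma}$, and $\Vert f \Vert_{L^2} R^{3/2+\gamma}$ in cases (3), (2), (1) respectively.

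Then I balance the two contributions by choosing $R$ so that the near and far bounds are equal: this gives $R^3 = \Vert f \Vert_{L^1}/\Vert f \Vert_{L^\infty}$, $R^2 = \Vert f \Vert_{L^1}/\Vert f \Vert_{L^3}$, and $R^{3/2} = \Vert f \Vert_{L^1}/\Vert f \Vert_{L^2}$ in the three cases. Substituting these back into $R^\gamma \Vert f \Vert_{L^1}$ and simplifying the exponents produces exactly the interpolation-type right-hand sides in the statement, e.g.\ $\Vert f \Vert_{L^1}^{1+\gamma/3} \Vert f \Vert_{L^\infty}^{-\gamma/3}$ for case (3). Since the construction is independent of $v$, taking the supremum in $v$ yields the claimed bounds.

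There is no real obstacle: the argument is the standard near/far splitting used in weak-type Hardy--Littlewood--Sobolev estimates, and the exponents work out precisely because each H\"older integrability requirement is equivalent to one of the three hypotheses on $\gamma$. The only mild bookkeeping is to verify that the exponent of $\Vert f \Vert_{L^1}$ that emerges after optimization is indeed $1+\gamma/r'$ where $r'$ is the H\"older conjugate used, which is a one-line algebra check in each case.
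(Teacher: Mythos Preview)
Your proposal is correct and matches the paper's own proof essentially line by line: the paper also splits at a radius $\lambda$, bounds the far part by $\lambda^\gamma\Vert f\Vert_{L^1}$, applies H\"older (with $L^2$, $L^3$, or $L^\infty$) on the near part to get $\lambda^{3/2+\gamma}\Vert f\Vert_{L^2}$, $\lambda^{2+\gamma}\Vert f\Vert_{L^3}$, $\lambda^{3+\gamma}\Vert f\Vert_{L^\infty}$, and then chooses $\lambda$ to balance the two pieces. The only cosmetic addition in the paper is the remark that one may assume $f\not\equiv 0$ so the optimizing choice of $\lambda$ is well defined.
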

\begin{proof} 
Assume $f$ does not equal to $0$ otherwise the estimate is trivial. Let $\lambda > 0$ be a constant to be determined. We divide the integral into two regions $|v-v_*| \le \lambda $ and $|v-v_*| > \lambda $ we have
\[
\int_{\R^3} |v-v_*|^{\gamma } |f| (v_*) dv_*  \le \int_{|v-v_*| \le \lambda} |v-v_*|^{\gamma } |f| (v_*) dv_* +\int_{|v-v_*| > \lambda } |v-v_*|^{\gamma } |f| (v_*) dv_* .
\]
The second part is bounded  by
\[
\int_{|v-v_*| > \lambda } |v-v_*|^{\gamma  } |f| (v_*) dv_*  \lesssim \lambda^{\gamma } \Vert f  \Vert_{L^1}.
\]
For the first part we divided it into three cases,  for $\gamma \in ( -\frac 3 2, 0)$, by Cauchy-Schwarz inequality we have
\[
\int_{|v-v_*| \le \lambda} |v-v_*|^{ \gamma  } |f| (v_*) dv_* \le \left( \int_{|v-v_*| \le \lambda} |v-v_*|^{2 \gamma  } dv_* \right)^{\frac 1 2}\Vert f \Vert_{L^2} \lesssim \lambda^{\frac 3 2 + \gamma } \Vert f \Vert_{L^2}.
\]
For $\gamma \in (-2,  0)$, by H\"older's inequality we have
\[
\int_{|v-v_*| \le \lambda} |v-v_*|^{ \gamma } |f| (v_*) dv_* \le \left( \int_{|v-v_*| \le \lambda} |v-v_*|^{\frac 3 2 \gamma} dv_* \right)^{\frac 2 3}\Vert f \Vert_{L^3} \lesssim \lambda^{2 + \gamma } \Vert f \Vert_{L^3}. 
\]
For $\gamma \in (-3 , 0)$, we have
\[
\int_{|v-v_*| \le \lambda} |v-v_*|^{\gamma } |f| (v_*) dv_* \le \int_{|v-v_*| \le \lambda} |v-v_*|^{ \gamma } dv_* \Vert f \Vert_{L^\infty} \lesssim \lambda^{3 + \gamma } \Vert f \Vert_{L^\infty},
\]
so the proof is ended by taking $\lambda = \Vert f \Vert_{L^1}^{  \frac  2 3} \Vert f \Vert_{L^2}^{ - \frac 2 3},  \Vert f \Vert_{L^1}^{\frac  1 2} \Vert f \Vert_{L^3}^{ - \frac 1 2}, \Vert f \Vert_{L^1}^{\frac  1 3} \Vert f \Vert_{L^\infty}^{-\frac 1 3}$ respectively.
\end{proof}

\begin{lem}\label{L26}
For any smooth function $f, g$, for any $\gamma \in (-2, 0)$ we have
\[
\mathcal{R} : = \int_{\R^3} \int_{\R^3} |v-v_*|^\gamma |f(v_*)|^2  |g(v) |^2 dv dv_* \lesssim \min_{m+ n =1}  \{ \| f \|_{H^m_{\gamma/2} }^2 \|  g \|_{H^n_2 }^2,    \|  f \|_{H^m_2 }^2 \| g \|_{H^n_{\gamma/2} }^2 \}.
\]
Similarly For any $\gamma \in (-3, -2]$ we have
\[
\mathcal{R} : =  \int_{\R^3} \int_{\R^3} |v-v_*|^\gamma |f(v_*)|^2  |g(v) |^2 dv dv_* \lesssim \min_{m + n =2}  \{ \| f \Vert_{H^m_{\gamma/2} }^2 \| g \|_{H^n_2 }^2, \| f \Vert_{H^m_{2} }^2 \| g \|_{H^n_{\gamma/2 } }^2 \}.
\]
\end{lem}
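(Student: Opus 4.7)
My plan is to transfer the weights from $v_*$ onto $v$ via the triangle inequality and then invoke Hardy--Littlewood--Sobolev. Since $-\gamma>0$ and $\langle v_*\rangle \le \langle v\rangle + |v-v_*|$, the elementary bound $\langle v_*\rangle^{-\gamma} \le C(\langle v\rangle^{-\gamma} + |v-v_*|^{-\gamma})$ holds, so writing $|f(v_*)|^2 = |\tilde f(v_*)|^2 \langle v_*\rangle^{-\gamma}$ with $\tilde f := \langle v\rangle^{\gamma/2} f$ splits $\mathcal R \lesssim \mathcal R_1 + \mathcal R_2$, where
\[
\mathcal R_1 = \int\!\!\int |v-v_*|^\gamma |\tilde f(v_*)|^2 \langle v\rangle^{-\gamma}|g(v)|^2 \, dv\, dv_*, \qquad \mathcal R_2 = \|f\|_{L^2_{\gamma/2}}^2 \|g\|_{L^2}^2.
\]
The easy piece $\mathcal R_2$ is dispatched by $\|g\|_{L^2} \le \|g\|_{L^2_2}$, which is trivially dominated by both sides of the claim.

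For $\mathcal R_1$ I set $\hat g := \langle v\rangle^{-\gamma/2} g$ to absorb the outer weight, giving $\mathcal R_1 = \int\!\!\int |v-v_*|^\gamma |\tilde f(v_*)|^2 |\hat g(v)|^2 \, dv\, dv_*$. Hardy--Littlewood--Sobolev with $1<p,r<\infty$ and $1/p+1/r = 2+\gamma/3$ yields $\mathcal R_1 \lesssim \|\tilde f\|_{L^{2p}}^2 \|\hat g\|_{L^{2r}}^2$, and the $\R^3$ Sobolev embedding $H^{s} \hookrightarrow L^{2q}$ with $s = \tfrac32 - \tfrac{3}{2q}$ converts this to
\[
\mathcal R_1 \lesssim \|\tilde f\|_{H^{m'}}^2 \|\hat g\|_{H^{n'}}^2
\]
for any admissible split $m' + n' = -\gamma/2$ with $0 < m', n' < \tfrac32$. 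For $\gamma \in (-2,0)$ the total regularity $-\gamma/2$ is strictly less than $1$, fitting the $m+n=1$ budget; for $\gamma \in (-3,-2]$ it is less than $\tfrac32$, requiring the larger budget $m+n=2$. A standard commutator estimate between $\langle D\rangle^{m'}$ and the weight then converts $\|\tilde f\|_{H^{m'}}$ into $\|f\|_{H^{m'}_{\gamma/2}} \le \|f\|_{H^{m}_{\gamma/2}}$, and likewise $\|\hat g\|_{H^{n'}} \lesssim \|g\|_{H^{n'}_{-\gamma/2}} \le \|g\|_{H^n_2}$, where the last inequality uses $-\gamma/2 \le 2$ (which holds for $\gamma>-4$). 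The symmetric member of the minimum, with weights on $g$ in place of $f$, follows by the same argument with $f$ and $g$ interchanged throughout.

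The main obstacle will be calibrating the HLS exponents $p,r$ so that the required Sobolev orders $m',n'$ fit strictly inside $(0,\tfrac32)$ while respecting the prescribed regularity budget $m+n$; this is precisely the mechanism forcing the jump from $m+n=1$ to $m+n=2$ at $\gamma=-2$, where $-\gamma/2$ crosses the threshold $1$ and one can no longer keep both $m',n'$ strictly positive within a regularity budget of $1$. A secondary technical point is verifying that the commutator $[\langle D\rangle^{m'}, \langle v\rangle^{\gamma/2}]$ is of strictly lower order for possibly fractional $m'$, so the weighted-norm equivalence $\|\langle D\rangle^{m'}(\langle v\rangle^{\gamma/2}f)\|_{L^2} \sim \|\langle v\rangle^{\gamma/2}\langle D\rangle^{m'}f\|_{L^2}$ holds cleanly; this is standard from pseudodifferential calculus but must be recorded.
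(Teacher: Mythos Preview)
Your weight-shifting step and the split $\mathcal{R}\lesssim\mathcal{R}_1+\mathcal{R}_2$ match the paper exactly. The gap is in your treatment of $\mathcal{R}_1$ via Hardy--Littlewood--Sobolev alone. The indices $m,n$ in the statement are nonnegative \emph{integers} (this is how the lemma is invoked in Lemmas~\ref{L33}--\ref{L34}), so for $m+n=1$ you must cover the endpoint pairs $(m,n)=(0,1)$ and $(1,0)$, and for $m+n=2$ also $(0,2)$ and $(2,0)$. Your HLS bound $\mathcal{R}_1\lesssim\|\tilde f\|_{L^{2p}}^2\|\hat g\|_{L^{2r}}^2$ is available only for $p,r>1$, which after Sobolev embedding forces $m',n'>0$. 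Having the \emph{total} $m'+n'=-\gamma/2$ fit inside the budget $m+n$ is not enough: to conclude $\|f\|_{H^{m'}_{\gamma/2}}\le\|f\|_{H^m_{\gamma/2}}$ and $\|g\|_{H^{n'}_2}\le\|g\|_{H^n_2}$ you need $m'\le m$ \emph{and} $n'\le n$ separately, and this is impossible when one of $m,n$ vanishes. Thus your argument establishes only the interior split $(m,n)=(1,1)$ in the second regime and none of the required pairs in the first.

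The paper handles the endpoints by replacing HLS with the pointwise bound of Lemma~\ref{L25}: for $\gamma\in(-2,0)$ one has
\[
\sup_{v}\int_{\R^3}|v-v_*|^\gamma|\tilde f(v_*)|^2\,dv_*\ \lesssim\ \|\tilde f\|_{L^2}^{2+\gamma}\|\tilde f\|_{L^6}^{-\gamma}\ \lesssim\ \|f\|_{H^1_{\gamma/2}}^2,
\]
and integrating the remaining factor $|\hat g(v)|^2$ in $v$ gives $(m,n)=(1,0)$ directly; taking the supremum in $v_*$ instead gives $(0,1)$. For $\gamma\in(-3,-2]$ the same mechanism with $L^\infty$ in place of $L^3$ yields $(2,0)$ and $(0,2)$, while your HLS route (which the paper also uses) handles $(1,1)$. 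The sup-based estimate is precisely what allows all the regularity to be placed on a single function, bypassing the $p,r>1$ constraint of HLS; it also avoids the fractional commutator issue you flagged.
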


\begin{proof}
Without loss of generality we only prove that
\[
\mathcal{R} \lesssim \min_{m + n =1}  \{ \| f \Vert_{H^m_{\gamma/2} }^2 \| g \|_{H^n_2 }^2\}, \quad \gamma \in (-2, 0) , \quad \mathcal{R} \lesssim \min_{m + n =2}  \{ \| f \Vert_{H^m_{\gamma/2} }^2 \| g \|_{H^n_2 }^2\}, \quad \gamma \in (-3, -2].
\]
For both $\gamma \in (-2, 0)$ and $\gamma \in (-3, -2]$,  by
\[
C\langle v_* \rangle^{ - \gamma }  \le   \langle v- v_* \rangle^{  - \gamma } \langle v \rangle^{-\gamma   }, 
\]
we compute
\begin{equation*}
\begin{aligned}
\mathcal{R}   \lesssim & \int_{\R^3} \int_{\R^3} |v-v_*|^\gamma \langle v- v_* \rangle^{  - \gamma }   |f(v_*)|^2 \langle v_* \rangle^{\gamma}|g(v) |^2 \langle v \rangle^{-\gamma   }  dv dv_* 
\\
\lesssim & \int_{\R^3} \int_{\R^3}  (1+ |v-v_*|^\gamma) |f(v_*)|^2 \langle v_* \rangle^{\gamma}|g(v) |^2 \langle v \rangle^{-\gamma   }  dv dv_*  : = I_1 +I_2.
\end{aligned}
\end{equation*}
For the $I_1$ term, for both $\gamma \in (-2, 0)$ and $\gamma \in (-3, -2]$ we easily compute
\[
I_1 \lesssim \Vert f \Vert_{L^2_{\gamma/2}}^2 \Vert g \Vert_{L^2_{- \gamma/2 }}^2 \lesssim \Vert f \Vert_{L^2_{\gamma/2}}^2 \Vert g \Vert_{L^2_{2}}^2. 
\]
For the $I_2$ term, when $\gamma \in (-2, 0)$ by Lemma \ref{L25} we have
\begin{equation*}
\begin{aligned}
I_2 \lesssim \left(\sup_{v \in \R^3}\int_{\R^3} |v-v_*|^{\gamma} |f (v_*)|^2 \langle v_* \rangle^{\gamma}  dv_*  \right) \|  g  \| _{L^2_{-\gamma/2}}^2 &\lesssim \| f^2 \|_{L^1_{\gamma}}^{1 + \frac {  \gamma} 2} \| f^2 \|_{L^3_{\gamma}}^{-\frac  { \gamma} 2}   \| g \|_{L^2_2}^2  
\\
&\lesssim \| f \|_{L^2_{\gamma/2 }}^{2 +  \gamma }  \| f \|_{L^6_{\gamma/2}}^{  - \gamma }   \| g \|_{L^2_2}^2   \lesssim \| f \|_{H^1_{\gamma/2}}^2 \| g \|_{L^2_2}^2,
\end{aligned}
\end{equation*}
similarly 
\begin{equation*}
\begin{aligned}
I_2 \lesssim \left(\sup_{v_* \in \R^3}\int_{\R^3} |v-v_*|^{\gamma} |g (v)|^2 \langle v \rangle^{-\gamma}   dv  \right) \|  f  \| _{L^2_{\gamma/2}}^2   \lesssim \| f \|_{L^2_{\gamma/2}}^2  \| g \|_{H^1_{2}}^2 ,
\end{aligned}
\end{equation*}
the case $\gamma \in (-2, 0)$ is thus proved. When $\gamma \in (-3, -2]$,  by Lemma \ref{L25} we have
\begin{equation*}
\begin{aligned}
I_2   \lesssim \left(\sup_{v \in \R^3}\int_{\R^3} |v-v_*|^{\gamma} |f (v_*)|^2 \langle v_* \rangle^{\gamma}    dv_*  \right) \|  g  \| _{L^2_{-\gamma/2}}^2 \lesssim& \| f^2 \|_{L^1_{\gamma}}^{1 + \frac {  \gamma} 3} \| f^2 \|_{L^\infty_{\gamma}}^{ - \frac  { \gamma} 3}   \| g \|_{L^2_2}^2  
\\
\lesssim& \| f \|_{L^2_{\gamma/2 }}^{2 +  \frac {2\gamma} 3 }  \| f \|_{L^\infty_{\gamma/2}}^{ - \frac  {2\gamma}  3 }   \| g \|_{L^2_2}^2   \lesssim \| f \|_{H^2_{\gamma/2}}^2 \| g \|_{L^2_2}^2 ,
\end{aligned}
\end{equation*}
similarly 
\[
I_2  \lesssim \left(\sup_{v_* \in \R^3}\int_{\R^3} |v-v_*|^{\gamma} |g (v)|^2 \langle v \rangle^{-\gamma}   dv  \right) \|  f  \| _{L^2_{\gamma/2}}^2 \lesssim \| f \|_{L^2_{\gamma/2}}^2   \| g \|_{H^2_{2}}^2  ,
\]
and by Hardy-Littlewood-Sobolev inequality
\[
I_2  \lesssim \Vert f^2 \Vert_{L^p_{\gamma}} \Vert g^2 \Vert_{L^3_{-\gamma}} \lesssim \Vert f\Vert_{L^{2p}_{\gamma/2}}^2 \Vert g \Vert_{L^6_{-\gamma/2}}^2 \lesssim \Vert f\Vert_{H^1_{\gamma/2}}^2 \Vert g \Vert_{H^1_2}^2,
\]
where $p = \frac {3} {5+\gamma}$ implies $2p \in [2, 3)$. So the case $\gamma \in (-3, -2]$ is proved  by combining the three cases. 
\end{proof}

We will use the following representation of $v'$ which can be proved directly. We have
\begin{equation}
\label{representation v'}
\langle v' \rangle^2 = \langle v \rangle^2 \cos^2 \frac \theta 2 + \langle v_*\rangle^2 \sin^2 \frac \theta 2+ 2 \cos\frac \theta 2 \sin \frac \theta 2 |v-v_*| v \cdot \omega,   \quad \omega \perp (v-v_*), \quad v \cdot w = v_* \cdot w,
\end{equation}
where $\omega = \frac {\sigma - (\sigma \cdot k)k } {|\sigma - (\sigma \cdot k)k |}$ with $k = \frac {v-v_*} {|v-v_*|}$. We have the following estimate for the term $\langle v '\rangle^k$.

\begin{lem}\label{L27}
For any constant $k \ge 4$ we have
\begin{equation}
\label{v' estimate 1}
\langle v' \rangle^k =\sin^k \frac \theta 2 \langle v_* \rangle^k + R_1 +R_2, \quad |R_1| \le C_k \sin^2 \frac \theta 2 \langle v_* \rangle^{k-1}  \langle v \rangle, \quad |R_2 |  \le C_k   \langle v \rangle^k,
\end{equation}
for some constant $C_k>0$. We also have
\begin{equation}
\label{v' estimate 2}
\langle v' \rangle^{k} -\langle v \rangle^{k} \cos^{k} \frac \theta 2 =  k  \langle v \rangle^{k-2} \cos^{k-1} \frac \theta 2 \sin \frac \theta 2 |v-v_*| (v \cdot  \omega) + L_1 +L_2,
\end{equation}
with 
\[
|L_1| \le C_k\sin^{k-2} \frac \theta 2 \langle  v_* \rangle^{k} \langle v \rangle^2 ,\quad |L_2| \le C_k \langle v \rangle^{k-2}  \langle v_* \rangle^4 \sin^2 \frac \theta 2,
\]
for some constant $C_k>0$, in particular
\[
\langle v' \rangle^k = \cos^k \frac \theta 2 \langle v \rangle^k + Q_1, \quad |Q_1| \le  C_k  \langle v_* \rangle^k  \langle v \rangle^{k-1}.
\]
\end{lem}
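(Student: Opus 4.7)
The plan is to prove both expansions by Taylor-expanding $x\mapsto x^{k/2}$ around a suitable reference value and then controlling the remainders by splitting velocity-angle space into regimes determined by which of $A:=\cos^2\frac{\theta}{2}\langle v\rangle^2$ and $B:=\sin^2\frac{\theta}{2}\langle v_*\rangle^2$ dominates.  A recurring auxiliary bound is $|v-v_*|\,|v\cdot\omega|\le 2\langle v\rangle\langle v_*\rangle$, which follows from $v\cdot\omega=v_*\cdot\omega$ and $\omega\perp(v-v_*)$ (so $|v\cdot\omega|\le\min(|v|,|v_*|)$) together with $|v-v_*|\le\langle v\rangle+\langle v_*\rangle$; with $\delta:=2\cos\frac{\theta}{2}\sin\frac{\theta}{2}|v-v_*|(v\cdot\omega)$ this yields $|\delta|\le 4\sin\frac{\theta}{2}\langle v\rangle\langle v_*\rangle$.

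For \eqref{v' estimate 1}, set $\eta:=\langle v'\rangle^2-B=\cos^2\frac{\theta}{2}\langle v\rangle^2+\delta$.  The mean value theorem gives $\langle v'\rangle^k-B^{k/2}=\frac{k}{2}\eta\int_0^1(B+s\eta)^{k/2-1}ds$, and I split according to the size of $\sin\frac{\theta}{2}\langle v_*\rangle$ versus $\langle v\rangle$.  When $\sin\frac{\theta}{2}\langle v_*\rangle\le\langle v\rangle$, both $B$ and $\langle v'\rangle^2$ are controlled by $C\langle v\rangle^2$, so the integrand is $\le C_k\langle v\rangle^{k-2}$ and $|\eta|\le C\langle v\rangle^2$, giving $|\langle v'\rangle^k-B^{k/2}|\le C_k\langle v\rangle^k$, which becomes $R_2$.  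In the opposite regime one has $B\ge\langle v\rangle^2\ge 1$, and the chain $|\delta|\le 4\sqrt{B\langle v\rangle^2}\le 4B$ forces $B+s\eta\in[1,6B]$, so $(B+s\eta)^{k/2-1}\le C_k\sin^{k-2}\frac{\theta}{2}\langle v_*\rangle^{k-2}$.  Combined with $|\eta|\le\langle v\rangle^2+4\sin\frac{\theta}{2}\langle v\rangle\langle v_*\rangle$, the bound $\langle v\rangle/\langle v_*\rangle<\sin\frac{\theta}{2}$ (yielding $\sin^{k-2}\frac{\theta}{2}\langle v_*\rangle^{k-2}\langle v\rangle^2\le\sin^2\frac{\theta}{2}\langle v_*\rangle^{k-1}\langle v\rangle$ since $k\ge 4$) and $\sin^{k-1}\le\sin^2$ collapse everything to $\le C_k\sin^2\frac{\theta}{2}\langle v_*\rangle^{k-1}\langle v\rangle$, which is $R_1$.

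For \eqref{v' estimate 2}, I apply second-order Taylor around $A$:
\[
\langle v'\rangle^k=A^{k/2}+\tfrac{k}{2}A^{k/2-1}r'+R_T,\qquad R_T=\tfrac{k(k-2)}{4}\int_0^1(1-s)(A+sr')^{k/2-2}r'^2\,ds,
\]
with $r'=\sin^2\frac{\theta}{2}\langle v_*\rangle^2+\delta$.  The $\delta$-piece of the linear term reproduces exactly the principal correction $k\cos^{k-1}\frac{\theta}{2}\sin\frac{\theta}{2}\langle v\rangle^{k-2}|v-v_*|(v\cdot\omega)$, and the $\sin^2\frac{\theta}{2}\langle v_*\rangle^2$-piece is trivially $\le C_k L_2$.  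The remainder $R_T$ requires a three-regime analysis with $r_1:=\sin^2\frac{\theta}{2}\langle v_*\rangle^2$.  (A) If $A\ge r_1/2$, then $\sin\frac{\theta}{2}\langle v_*\rangle\le\sqrt 2\langle v\rangle$ and hence $\langle v'\rangle\le C\langle v\rangle$, so $(A+sr')^{k/2-2}\le C_k\langle v\rangle^{k-4}$; combined with $r'^2\le 2\sin^4\frac{\theta}{2}\langle v_*\rangle^4+32\sin^2\frac{\theta}{2}\langle v\rangle^2\langle v_*\rangle^2$ (keeping every $\sin$-factor), this gives $R_T\le C_k L_2$.  (B1) If $A<r_1/2$ and $\cos\frac{\theta}{2}\ge 1/2$, then $\langle v\rangle\le\sqrt 2\sin\frac{\theta}{2}\langle v_*\rangle$ forces $\langle v'\rangle\le C\sin\frac{\theta}{2}\langle v_*\rangle$, so $(A+sr')^{k/2-2}\le C_k\sin^{k-4}\frac{\theta}{2}\langle v_*\rangle^{k-4}$; both resulting pieces fit into $L_1$.  (B2) If $\cos\frac{\theta}{2}<1/2$, then $\sin\frac{\theta}{2}>\sqrt 3/2$ is bounded below, so $\sin^m\frac{\theta}{2}\le C_k\sin^{k-2}\frac{\theta}{2}$ for every $m\ge 0$; a crude bound $(A+sr')^{k/2-2}\le C_k(\langle v\rangle^{k-4}+\langle v_*\rangle^{k-4})$ is enough and each resulting term falls into $L_1$ or $L_2$.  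The ``in particular'' bound $|Q_1|\le C_k\langle v_*\rangle^k\langle v\rangle^{k-1}$ then follows by summing the principal correction (bounded by $C_k\langle v\rangle^{k-1}\langle v_*\rangle\le C_k\langle v\rangle^{k-1}\langle v_*\rangle^k$) and the $L_i$ bounds (each $\le C_k\langle v\rangle^{k-1}\langle v_*\rangle^k$ when $k\ge 4$).

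The main obstacle is controlling $R_T$ in the regime where $\sin\frac{\theta}{2}\langle v_*\rangle$ dominates $\langle v\rangle$ while $\sin\frac{\theta}{2}$ itself is small, since the naive estimate $(A+sr')^{k/2-2}\le C_k(\langle v\rangle^{k-4}+\langle v_*\rangle^{k-4})$ discards the $\sin^{k-2}\frac{\theta}{2}$-decay that $L_1$ supplies.  The three-regime split in \eqref{v' estimate 2} is designed precisely so that in the critical regime (B1) the sharper control $\langle v'\rangle\le C\sin\frac{\theta}{2}\langle v_*\rangle$ recovers exactly this missing $\sin^{k-2}\frac{\theta}{2}$-factor.
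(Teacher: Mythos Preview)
Your proof is correct and follows the same overall strategy as the paper: apply the mean value theorem (first order for \eqref{v' estimate 1}, second order for \eqref{v' estimate 2}) to $x\mapsto x^{k/2}$ around $B=\sin^2\tfrac{\theta}{2}\langle v_*\rangle^2$ and $A=\cos^2\tfrac{\theta}{2}\langle v\rangle^2$ respectively, then control the remainders using $|v-v_*|\,|v\cdot\omega|\le 2|v||v_*|$.

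The only difference is in how the remainder integrand is bounded. The paper avoids any case distinction by first using the AM--GM bound $4\sin\tfrac{\theta}{2}|v||v_*|\le 2\langle v\rangle^2+2\sin^2\tfrac{\theta}{2}\langle v_*\rangle^2$ to get a uniform pointwise control of the form $A+sr'\le C(\langle v\rangle^2+\sin^2\tfrac{\theta}{2}\langle v_*\rangle^2)$, and then applies the elementary inequality $(x+y)^m\le C_m(x^m+y^m)$ to split into the two pieces that become $L_1$ and $L_2$ (and similarly $R_1,R_2$). Your regime-splitting on $A$ versus $r_1$ and on $\cos\tfrac{\theta}{2}\gtrless\tfrac12$ achieves the same thing by hand: in each regime one of $x,y$ dominates and the binomial inequality becomes trivial. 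Both executions are equivalent; the paper's is a few lines shorter, while yours makes more explicit where the $\sin^{k-2}\tfrac{\theta}{2}$ factor in $L_1$ originates (your regime (B1)).
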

\begin{proof} 

For simplicity, we prove the result for $\langle v' \rangle^{2k}$. By \eqref{representation v'} and mean value theorem we have
\begin{equation*}
\begin{aligned}
\langle v' \rangle^{2k} -\langle v_* \rangle^{2k} \sin^{2k} \frac \theta 2 =& k \int_0^1 \left(\langle v_* \rangle^2 \sin^2 \frac \theta 2 + t (\langle v\rangle^2 \cos^2 \frac \theta 2+ 2 \cos\frac \theta 2 \sin \frac \theta 2 |v-v_*| v \cdot \omega) \right)^{k-1} dt
\\
& \times \left (\langle v \rangle^2 \cos ^2 \frac \theta 2+ 2 \cos\frac \theta 2 \sin \frac \theta 2 |v-v_*| v \cdot \omega  \right) ,
\end{aligned}
\end{equation*}
since $v \cdot w = v_* \cdot w$ which implies $|v-v_*| |v \cdot \omega| \le |v| |v_*\cdot \omega| + |v_*| |v \cdot \omega| \le 2|v| |v_*|$, so we have
\[
\langle v\rangle^2 \cos^2 \frac \theta 2+ 2 \cos\frac \theta 2 \sin \frac \theta 2 |v-v_*| v \cdot \omega \le  \langle v \rangle^2 + 4 \sin  \frac \theta 2 |v| |v_*| \le 3 \langle v \rangle^2 + 2 \sin^2  \frac \theta 2 \langle v_* \rangle^2,
\]
so if $2k - 1 \ge 2$ we have
\begin{equation*}
\begin{aligned}
\langle v' \rangle^{2k} -\langle v_* \rangle^{2k} \sin^{2k} \frac \theta 2 \le & C_k (\sin^2 \frac \theta 2 \langle v_* \rangle^2  +  \langle v \rangle^2 )^{k-1} (  \langle v \rangle^2 + \sin  \frac \theta 2 |v_*| |v|   ) 
\\
\le &C_k  (\sin^{2k-2} \frac \theta 2 \langle v_* \rangle^{2k-2}  +  \langle v \rangle^{2k-2}) ( \sin \frac \theta 2 \langle v_* \rangle + \langle v \rangle )\langle v \rangle 
\\
\le &C_k \sin^2 \frac \theta 2 \langle v_* \rangle^{2k-1}  \langle v \rangle + C_k \langle v \rangle^{2k},
\end{aligned}
\end{equation*}
\eqref{v' estimate 1} is thus proved. For \eqref{v' estimate 2},  using \eqref{representation v'} and the mean value theorem twice we have
\begin{equation*}
\begin{aligned}
&\langle v' \rangle^{2k} -\langle v \rangle^{2k} \cos^{2k} \frac \theta 2 - 2k  (\langle v \rangle^{2} \cos^2 \frac \theta 2 )^{k-1} \cos \frac \theta 2 \sin \frac \theta 2 |v-v_*| (v \cdot \omega)
\\
=& k (\langle v \rangle^{2} \cos^2 \frac \theta 2 )^{k-1} \langle  v_* \rangle^2 \sin^2  \frac \theta 2\\
& + k ( k - 1 )\int_0^1 (1-t )\left(\langle v \rangle^2 \cos^2 \frac \theta 2 + t (\langle v_*\rangle^2 \sin^2 \frac \theta 2+ 2 \cos\frac \theta 2 \sin \frac \theta 2 |v-v_*| v \cdot \omega) \right)^{k-2} dt
\\
& \times \left (\langle v_*\rangle^2 \sin^2 \frac \theta 2+ 2 \cos\frac \theta 2 \sin \frac \theta 2 |v-v_*| v \cdot \omega  \right)^2  : =I_1 +I_2.
\end{aligned}
\end{equation*}
For $I_1$ we easily deduce 
\[
I_1 \le C_k \langle v \rangle^{2k-2} \langle v_* \rangle^{2} \sin^2 \frac \theta 2,
\]
since $|v-v_*| |v \cdot \omega|  \le 2|v| |v_*|$, we have
\[
\langle v_*\rangle^2 \sin^2 \frac \theta 2+ 2 \cos\frac \theta 2 \sin \frac \theta 2 |v-v_*| v \cdot \omega \le  \langle v_* \rangle^2\sin^2 \frac \theta 2 +  4|v||v_*| \sin \frac \theta 2 \le 3 \langle v_* \rangle^2\sin^2 \frac \theta 2 +  2\langle v \rangle^2,
\]
so we have
\begin{equation*}
\begin{aligned}
I_2 \le& C_k  (\langle v_* \rangle^2\sin^2 \frac \theta 2 +  4|v||v_*| \sin \frac \theta 2 )^2  (3 \langle v_* \rangle^2\sin^2 \frac \theta 2 +  3\langle v \rangle^2)^{k-2}
\\
\le & C_k\sin^2 \frac \theta 2 \langle  v_* \rangle^{2}  (\langle v_* \rangle^2 +\langle v \rangle^2) (\langle v_* \rangle^{2k-4}   \sin^{2k-4} \frac \theta 2 +  \langle v \rangle^{2k-4} )
\\
\le & C_k\sin^2 \frac \theta 2 \langle  v_* \rangle^{4} \langle v \rangle^2 (\langle v_* \rangle^{2k-4}   \sin^{2k-4} \frac \theta 2 +  \langle v \rangle^{2k-4} )
\\
\le & C_k\sin^{2k-2} \frac \theta 2 \langle  v_* \rangle^{2k} \langle v \rangle^2 +  C_k \langle v \rangle^{2k-2}  \langle v_* \rangle^4 \sin^2 \frac \theta 2, 
\end{aligned}
\end{equation*}
if $2k \ge 4$, so the theorem is thus proved. 
\end{proof}

\begin{lem}\label{L28} For any smooth function $g$ and $f$, if $\gamma > -3$ we have
\[
\int_{\R^3} \int_{\R^3} |v-v_*|^\gamma g_* f dv_* dv \le C\Vert g \Vert_{L^2_{ l}} \Vert f \Vert_{L^2_{ l}}^2 ,
\]
with $l = \max \{\gamma+2, \frac 3 2 \}$.
\end{lem}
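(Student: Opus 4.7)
My plan is to recast the integral by introducing the auxiliary function
\[
h(v) := \int_{\R^3}|v-v_*|^\gamma |g(v_*)|\,dv_*,
\]
so that the left-hand side is dominated by $\int |f(v)|\,h(v)\,dv$. (I read the right-hand side as a genuinely bilinear bound $C\|g\|_{L^2_l}\|f\|_{L^2_l}$; the exponent $2$ on $\|f\|_{L^2_l}$ in the statement appears to be a typographical slip, since otherwise the inequality is not homogeneous in $f$.) A first weighted Cauchy--Schwarz with the pair $\langle v\rangle^{\pm l}$ then gives $\int |f|\,h\,dv \le \|f\|_{L^2_l}\|h\|_{L^2_{-l}}$, reducing everything to showing $\|h\|_{L^2_{-l}}\le C\|g\|_{L^2_l}$.

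The second step is a pointwise Cauchy--Schwarz inside $h$ which splits the kernel symmetrically as $|v-v_*|^\gamma=|v-v_*|^{\gamma/2}\cdot|v-v_*|^{\gamma/2}$ and pairs with the weight $\langle v_*\rangle^l$:
\[
h(v)^2\le\Bigl(\int|v-v_*|^\gamma\langle v_*\rangle^{-2l}dv_*\Bigr)\Bigl(\int|v-v_*|^\gamma g(v_*)^2\langle v_*\rangle^{2l}dv_*\Bigr)=:A(v)\,G(v).
\]
The crucial feature is that \emph{both} factors carry only a single power of $|v-v_*|^\gamma$, so the near-diagonal integrability requirement is $\gamma>-3$ (not $2\gamma>-3$), matching the hypothesis exactly. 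I will bound $A(v)$ by the usual near/far split: in the near region $|v-v_*|\le 1$ one has $\langle v_*\rangle\sim\langle v\rangle$ and $\int_{|v-v_*|\le 1}|v-v_*|^\gamma dv_*$ is finite because $\gamma>-3$, giving a contribution $\lesssim\langle v\rangle^{-2l}$; in the far region $|v-v_*|>1$ one uses $|v-v_*|^\gamma\le 1$ for $\gamma<0$ (requiring $l>3/2$) to get a uniform bound, and $|v-v_*|^\gamma\lesssim\langle v\rangle^\gamma\langle v_*\rangle^\gamma$ for $\gamma\ge 0$ (requiring $l>\gamma+3/2$) to get $\lesssim\langle v\rangle^\gamma$. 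Both thresholds are covered by the stated choice $l=\max\{\gamma+2,3/2\}$, yielding $A(v)\le C\langle v\rangle^{\gamma^+}$ with $\gamma^+:=\max(\gamma,0)$.

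Plugging $A(v)\le C\langle v\rangle^{\gamma^+}$ into $\|h\|_{L^2_{-l}}^2 = \int\langle v\rangle^{-2l}h(v)^2\,dv$ and exchanging the order of integration reduces the final step to a mirror kernel integral $D(v_*):=\int|v-v_*|^\gamma\langle v\rangle^{\gamma^+-2l}\,dv$, controlled by the identical near/far analysis. The main obstacle I foresee is the range $\gamma>0$: there $A$ and $D$ grow as $\langle\cdot\rangle^\gamma$ and threaten to inflate the weight on $g$ beyond $\langle v_*\rangle^{2l}$. I plan to sidestep this by treating $\gamma\ge 0$ separately through the direct inequality $|v-v_*|^\gamma\le 2^\gamma(\langle v\rangle^\gamma+\langle v_*\rangle^\gamma)$, which decouples the variables and lets the weighted embedding $L^1_\gamma\hookrightarrow L^2_{\gamma+2}$ (valid by Cauchy--Schwarz against $\int\langle v\rangle^{-4}dv<\infty$) finish the job; the $h$-reformulation above then cleanly handles the singular range $-3<\gamma<0$, where the hypothesis $\gamma>-3$ is used in an essential, and essentially sharp, way.
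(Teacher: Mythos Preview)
Your treatment of $\gamma\ge 0$ coincides with the paper's: split $|v-v_*|^\gamma\lesssim\langle v\rangle^\gamma+\langle v_*\rangle^\gamma$, then embed $L^1_\gamma\hookleftarrow L^2_{\gamma+2}$. Your reading of the exponent $2$ as a typo is also correct; the paper's own proof produces the bilinear bound $C\|g\|_{L^2_l}\|f\|_{L^2_l}$.

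For $-3<\gamma<0$ the approaches diverge, and yours has a gap. You state that the far-region bound for $A(v)=\int|v-v_*|^\gamma\langle v_*\rangle^{-2l}dv_*$ via $|v-v_*|^\gamma\le 1$ requires $l>3/2$, and then assert that ``both thresholds are covered by the stated choice $l=\max\{\gamma+2,3/2\}$.'' That assertion is false on the range $\gamma\in(-3,-1/2]$, where the maximum is exactly $3/2$: then $2l=3$ and $\int_{\R^3}\langle v_*\rangle^{-3}dv_*=\infty$, so your $A(v)$ is not uniformly bounded and the Schur step collapses. This is precisely the regime where the hypothesis $\gamma>-3$ is most delicate, so the gap is not cosmetic.

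The paper handles $-3<\gamma<0$ in one line via Hardy--Littlewood--Sobolev: with $\tfrac{1}{p}=\tfrac{1}{2}(2+\tfrac{\gamma}{3})\in(\tfrac{1}{2},1)$ one gets $\iint|v-v_*|^\gamma g_*f\lesssim\|f\|_{L^p}\|g\|_{L^p}$, and the embedding $L^2_{3/2}\hookrightarrow L^p$ (H\"older against $\langle v\rangle^{-3/2}\in L^{6/(3+\gamma)}$, finite since $\gamma<0$) finishes. This works at the endpoint $l=3/2$ without any strict inequality. Your Schur route can be rescued---do not discard $|v-v_*|^\gamma$ in the far region; keeping it gives $A(v)\lesssim\langle v\rangle^\gamma\log\langle v\rangle$ when $2l=3$, and the extra $\langle v\rangle^\gamma$ then makes the mirror integral $D(v_*)$ genuinely convergent---but as written your argument does not close.
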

\begin{proof}
We split it into two cases $\gamma \ge 0$ and $\gamma<0$, for the case $\gamma\ge 0$, we easily compute
\[
\int_{\R^3} \int_{\R^3} |v-v_*|^\gamma g_* f dv_* dv\lesssim \Vert f \Vert_{L^{1}_{\gamma}}\Vert g \Vert_{L^{1}_{\gamma}}\lesssim \Vert f \Vert_{L^{2}_{\gamma+2}}\Vert g \Vert_{L^{2}_{\gamma+2}}.
\]
For the case $\gamma<0$, by Hardy-Littlewood-Sobolev inequality we have 
\[
\int_{\R^3} \int_{\R^3} |v-v_*|^\gamma g_* f dv_* dv  \lesssim \Vert f \Vert_{L^{p}} \Vert g \Vert_{L^{p}}\lesssim \Vert f \Vert_{L^{2}_{3/2}}\Vert g \Vert_{L^{2}_{3/2}},
\]
where $\frac 1 p =\frac 1 2(2 + \frac \gamma 3) \in (\frac 1 2, 1)$ implies $p \in (1, 2)$. The proof is thus finished by gathering the two cases.
\end{proof}

\begin{lem}\label{L29}
For any $\gamma >-3$, $k >\max \{ 3, 3+\gamma \}$ we have
\[
\int_{\R^3} |v-v_*|^\gamma \langle v_* \rangle^{-k} dv_*\le \frac c {k} \langle v \rangle^\gamma + C_k \langle v \rangle^{\gamma-2}.
\]
\end{lem}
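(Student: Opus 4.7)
The plan is to split the $v_*$-integration so as to isolate the leading $\langle v \rangle^\gamma$ behaviour from a lower-order remainder. The key quantitative input is the identity
\[
\int_{\R^3} \langle v_* \rangle^{-k} dv_* = 2\pi\, B\!\left(\tfrac{3}{2},\tfrac{k-3}{2}\right),
\]
obtained by polar coordinates and the substitution $t = |v_*|^2$, combined with the Beta function asymptotic \eqref{beta function} (applied with $p = 3/2$) to bound this quantity by $C\,k^{-3/2} \le c/k$ with $c$ independent of $k$; an analogous estimate gives $\int_{\R^3} \langle v_* \rangle^{\gamma-k} dv_* \le c/k$ when $\gamma \ge 0$ and $k > 3+\gamma$.

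First dispose of the case $|v| \le 1$, which is trivial since $\langle v \rangle^\gamma$ and $\langle v \rangle^{\gamma-2}$ are both comparable to positive constants while the integral is bounded uniformly in $v$ by a constant $C_k$ (using $\gamma > -3$ near the singularity $v_* = v$ and $k > 3 + \max\{\gamma, 0\}$ to ensure integrability at infinity). For $|v| \ge 1$, split $\R^3_{v_*} = \{|v_*| \le |v|/2\} \cup \{|v_*| > |v|/2\}$. On the far region $\{|v_*| \le |v|/2\}$ the triangle inequality yields $|v-v_*| \sim \langle v \rangle$, so $|v-v_*|^\gamma \le C_\gamma \langle v \rangle^\gamma$ when $\gamma \le 0$, while for $\gamma \ge 0$ one uses $|v-v_*|^\gamma \le C(\langle v \rangle^\gamma + \langle v_* \rangle^\gamma)$; integration against $\langle v_* \rangle^{-k}$ then produces the principal $(c/k) \langle v \rangle^\gamma$ contribution via the Beta estimate above. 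On the region $\{|v_*| > |v|/2\}$ one has $\langle v_* \rangle \gtrsim \langle v \rangle$, so I would factor out $\langle v_* \rangle^{-k} \le C \langle v \rangle^{-k}$ and split once more at $|v - v_*| = 1$: the near-diagonal piece contributes $C_\gamma \langle v \rangle^{-k}$ by integrability of $|\cdot|^\gamma$ over the unit ball, while the outer piece is controlled by integrating $\langle v_* \rangle^{-k}$ (or $\langle v_* \rangle^{\gamma-k}$ when $\gamma \ge 0$) over $\{|v_*| > |v|/2\}$ in polar coordinates, using $k > \max\{3, 3+\gamma\}$; the total remainder from this region is of order $C_k \langle v \rangle^{\gamma + 3 - k}$.

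The main obstacle is absorbing this remainder into the target bound $(c/k)\langle v \rangle^\gamma + C_k \langle v \rangle^{\gamma-2}$, because when $k$ is only slightly larger than $\max\{3, 3+\gamma\}$ the exponent $\gamma + 3 - k$ lies strictly between $\gamma$ and $\gamma - 2$. I would resolve this by the elementary inequality
\[
\langle v \rangle^{3-k} \le \epsilon + C_{\epsilon, k}\, \langle v \rangle^{-2},
\]
obtained by splitting at the threshold $\langle v \rangle = \epsilon^{-1/(k-3)}$ (well-defined since $k > 3$) and using $\langle v \rangle^{5-k} \le \max\{1, \epsilon^{-(5-k)/(k-3)}\}$ on the inner piece. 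Multiplying through by $\langle v \rangle^\gamma$ and taking $\epsilon$ proportional to $1/k$ yields $\langle v \rangle^{\gamma + 3 - k} \le (c'/k) \langle v \rangle^\gamma + C_k \langle v \rangle^{\gamma - 2}$, which combines cleanly with the main term to finish the proof.
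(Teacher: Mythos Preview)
Your overall strategy matches the paper's: isolate the principal $\langle v\rangle^\gamma$ contribution via the Beta-function bound on $\int\langle v_*\rangle^{-k}\,dv_*$, push the rest into a remainder, and absorb the remainder by the interpolation $\langle v\rangle^{3-k}\le\epsilon+C_{\epsilon,k}\langle v\rangle^{-2}$. The paper, however, makes its secondary split in $|v-v_*|$ (at $\langle v\rangle/4$) rather than in $|v_*|$, and this difference matters for the remainder when $\gamma<0$.

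There is a gap in your treatment of the region $\{|v_*|>|v|/2\}$ for $\gamma<0$. After your tertiary split at $|v-v_*|=1$, on the outer piece you bound $|v-v_*|^\gamma\le 1$ and integrate $\langle v_*\rangle^{-k}$ over $\{|v_*|>|v|/2\}$, which produces $C_k\langle v\rangle^{3-k}$, \emph{not} $C_k\langle v\rangle^{\gamma+3-k}$ as you state. Since $\gamma<0$, the former is strictly larger, and your interpolation inequality only absorbs $\langle v\rangle^{\gamma+3-k}$: multiplying $\langle v\rangle^{3-k}\le\epsilon+C_{\epsilon,k}\langle v\rangle^{-2}$ by $\langle v\rangle^\gamma$ controls $\langle v\rangle^{\gamma+3-k}$, but to bound $\langle v\rangle^{3-k}$ by $\epsilon\langle v\rangle^\gamma+C_k\langle v\rangle^{\gamma-2}$ you would need $3-k\le\gamma$, i.e.\ $k\ge 3-\gamma$, which is strictly stronger than the hypothesis $k>3$ when $\gamma<0$.

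The fix is to place the tertiary cut at $|v-v_*|=\langle v\rangle$ rather than at $1$. Then on $\{|v-v_*|\ge\langle v\rangle\}$ you genuinely have $|v-v_*|^\gamma\le\langle v\rangle^\gamma$, giving $\langle v\rangle^\gamma\int_{|v_*|>|v|/2}\langle v_*\rangle^{-k}\,dv_*\le C_k\langle v\rangle^{\gamma+3-k}$; on $\{|v-v_*|\le\langle v\rangle\}$ you pull out $\langle v_*\rangle^{-k}\le C\langle v\rangle^{-k}$ and integrate $|v-v_*|^\gamma$ over the ball of radius $\langle v\rangle$ to get $C\langle v\rangle^{\gamma+3}\cdot\langle v\rangle^{-k}$. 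Both now match your claimed remainder, and the rest of your argument goes through. This is precisely what the paper's secondary split at $|v-v_*|=\langle v\rangle/4$ achieves in one step: the near region is already a ball of radius $\sim\langle v\rangle$, so no tertiary split is needed.
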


\begin{proof} For the  case $\gamma > 0$, we have
\[
\int_{\R^3} |v-v_*|^\gamma \langle v_* \rangle^{-k} dv_* \le C\int_{\R^3}( |v|^\gamma+ |v_*|^\gamma )\langle v_* \rangle^{-k} dv_*,
\]
It is easily seen that
\[
\int_{\R^3} |v_*|^\gamma \langle v_* \rangle^{-k} dv_* \le C_k \le \frac c {k} \langle v \rangle^\gamma + C_k \langle v \rangle^{\gamma-2},
\]
recall the theory of beta function  \eqref{beta function} we have
\[
\langle v \rangle^\gamma \int_{\R^3}  \langle v_* \rangle^{-k} dv_*  \le C \langle v \rangle^\gamma  \int_{0}^{+\infty} r^2 \frac 1 {(1+r^2)^{k/2}} dr \le \frac c {k^{3/2}}   \langle v \rangle^\gamma,
\]
so the case $\gamma \in  [0, 1]$ is thus proved. For the case $\gamma \in (-3, 0)$, If $|v| \le \frac 1 2$, we have $|v_*| +\frac 1 2 \le 1 + |v-v_*|$, so we have
\[
\int_{\R^3} |v-v_*|^\gamma \langle v_* \rangle^{-k} dv_* =\int_{\R^3} |v_*|^\gamma \langle v- v_* \rangle^{-k} dv_* \le C_k \int_{\R^3} |v_*|^\gamma \langle  v_* \rangle^{-k} dv_* \le C_k \le C_k \langle v \rangle^{\gamma-2}.
\]
Consider now $|v| > \frac 1 2$ and split the integral into two regions $|v - v_*| > \langle v \rangle /4$ and $|v - v_*| \le \langle v \rangle /4$. For the first region we obtain
\[
\int_{|v-v_*| > \frac {\langle v \rangle} 4 } |v-v_*|^\gamma \langle v_* \rangle^{-k} dv_* \le C \langle v \rangle^\gamma \int_{\R^3}  \langle v_* \rangle^{-k} dv_*  \le C \langle v \rangle^\gamma  \int_{0}^\infty r^2 \frac 1 {(1+r^2)^{k/2}} dr \le \frac c {k^{3/2}}  \langle v \rangle^\gamma.
\]
For the second region, $|v| > 1/2$ and $|v - v_*| \le \langle v \rangle /4$ imply $| v_*| \ge \langle v \rangle /8$, hence
\[
\int_{|v-v_*| \le \frac {\langle v \rangle} 4 } |v-v_*|^\gamma \langle v_* \rangle^{-k} dv_* \le C_k \langle v \rangle^{-k} \int_{|v-v_*| \le \frac {\langle v \rangle} 4 } |v-v_*|^\gamma dv_*   \le C_k \langle v \rangle^{-k+\gamma+3} \le \frac c {k} \langle v \rangle^\gamma + C_k \langle v \rangle^{\gamma-2},
\]
so the theorem is thus proved. 
\end{proof}

We introduce a change of variable which will be used frequently. 

\begin{lem}\label{L210} (\cite{S}, Lemma A.1.) 
For any non negative function $F$ in terms of $v_*, v, r =|v-v_*|, \theta, v', v_*'$, we have
\[
\int_{\R^3} \int_{\mathbb{S}^2} F d \sigma dv_* = 4 \int_{\R^3} \frac 1 {|v' -v|} \int_{ \{ w: w \cdot   ( v'-v)  =0 \}  } F \frac 1 {r} d w  d v',
\]
where $r, \theta, v_*', v_*$ in the left hand side is changed to 
\[
r=\sqrt{|v'-v|^2 +|w|^2 }, \quad \cos (\theta/2) = |w|/r, \quad v_*' = v+w, \quad v_* = v' +w,
\]
respectively.
\end{lem}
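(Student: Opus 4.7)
The plan is to compute the Jacobian of the change of variables $(v_*, \sigma) \mapsto (v', w)$ (with $v$ held fixed) by splitting it into a radial and an angular piece. First I would write $v_* = v + r\mu$ with $r \ge 0$ and $\mu \in \mathbb{S}^2$, and parametrize $\sigma$ by the polar angle $\theta$ (measured so that $\cos\theta = (v-v_*)\cdot\sigma/|v-v_*| = -\mu\cdot\sigma$) and azimuth $\phi$ around $\mu$, so that $dv_*\, d\sigma = r^2 \sin\theta\, dr\, d\theta\, d\mu\, d\phi$. From the defining formula for $v'$, one immediately has
\[
v' - v = \tfrac{r}{2}(\mu + \sigma), \qquad w := v_*' - v = \tfrac{r}{2}(\mu - \sigma),
\]
so that $\rho := |v'-v| = r\sin(\theta/2)$ and $s := |w| = r\cos(\theta/2)$; these identities give the relations $r = \sqrt{\rho^2 + s^2}$, $\cos(\theta/2) = s/r$, $v_*' = v+w$, $v_* = v'+w$ asserted in the lemma.

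For the radial piece I would compute $|\partial(\rho,s)/\partial(r,\theta)| = r/2$ and use $\sin\theta = 2\sin(\theta/2)\cos(\theta/2) = 2\rho s/r^2$ to obtain
\[
r^2 \sin\theta\, dr\, d\theta = \frac{4\rho s}{r}\, d\rho\, ds.
\]
For the angular piece I would set $\hat A := (v'-v)/\rho$ and $\hat B := w/s$; by construction $(\hat A,\hat B)$ is an orthonormal $2$-frame in $\R^3$, i.e.\ a point of the Stiefel manifold $V_2(\R^3) \cong SO(3)$, and the inversion
\[
\mu = \sin(\theta/2)\hat A + \cos(\theta/2)\hat B, \qquad \sigma = \sin(\theta/2)\hat A - \cos(\theta/2)\hat B
\]
shows that for each fixed $\theta$ the map $(\mu,\phi) \mapsto (\hat A,\hat B)$ is an $SO(3)$-equivariant diffeomorphism between two parametrizations of $V_2(\R^3)$. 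I would then identify $d\mu\, d\phi = d\hat A\, d\hat B$ via uniqueness of the invariant Haar measure: both are $SO(3)$-invariant, and both integrate to $4\pi\cdot 2\pi = 8\pi^2$.

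To finish, I would write $dv' = \rho^2\, d\rho\, d\hat A$ in spherical coordinates centred at $v$ and $dw = s\, ds\, d\hat B$ in polar coordinates on the plane perpendicular to $v'-v$, giving $dv'\, dw = \rho^2 s\, d\rho\, ds\, d\hat A\, d\hat B$. Combining the radial and angular identities yields
\[
dv_*\, d\sigma = \frac{4\rho s}{r}\, d\rho\, ds\, d\hat A\, d\hat B = \frac{4}{r\rho}\, dv'\, dw = \frac{4}{r\,|v'-v|}\, dv'\, dw,
\]
which is exactly the stated formula.

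The step I expect to be the main obstacle is the angular identity $d\mu\, d\phi = d\hat A\, d\hat B$: the Haar-measure argument is clean, but one needs to ensure that the two parametrizations really induce the same invariant measure and not just proportional ones, which is why the total-mass check is essential. A more pedestrian backup is to fix local orthonormal frames and compute the $3\times 3$ Jacobian of $(\mu,\phi) \mapsto (\hat A,\hat B)$ directly; it evaluates to $1$ on the nose, at the cost of a somewhat tedious determinant computation.
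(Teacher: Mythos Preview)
Your argument is correct. The paper does not prove this lemma at all; it simply cites \cite{S}, Lemma A.1, so there is no ``paper's proof'' to compare against. Your computation is a clean, self-contained derivation of the change of variables: the radial Jacobian $|\partial(\rho,s)/\partial(r,\theta)|=r/2$ and the identity $r^2\sin\theta\,dr\,d\theta = \tfrac{4\rho s}{r}\,d\rho\,ds$ are right on the nose, and the identification $d\mu\,d\phi = d\hat A\,d\hat B$ via the Haar-measure argument on $V_2(\R^3)\cong SO(3)$ is sound (both measures are $SO(3)$-invariant with total mass $8\pi^2$, hence equal). The only cosmetic point is that the backup ``pedestrian'' Jacobian computation is unnecessary once the Haar argument is in place; you can drop that remark.
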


\begin{lem}\label{L211}
For any $-3 < \gamma \le 2$, for any constant $k>\max \{3, 3+\gamma \}$  we have
\[
I := \int_{\R^3} \int_{\mathbb{S}^2} |v-v_*|^\gamma \frac {\langle v \rangle^k} { \langle v' \rangle^k \langle v_*' \rangle^k}  dv_* d\sigma \le  C_k \langle v \rangle^\gamma,
\]
for all $v \in \R^d$. In fact for $\gamma \in [0, 1]$ we can prove a stronger estimate
\[
I := \int_{\R^3} \int_{\mathbb{S}^2} |v-v_*|^\gamma \frac {\langle v \rangle^k} { \langle v' \rangle^k \langle v_*' \rangle^k}  dv_* d\sigma \le\frac c k \langle v \rangle^\gamma  +   C_k \langle v \rangle^{\gamma-2}.
\]
\end{lem}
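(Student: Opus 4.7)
I would begin by applying the change-of-variables identity of Lemma~\ref{L210} with $F(v_*,\sigma) = |v-v_*|^\gamma \langle v\rangle^k/(\langle v'\rangle^k \langle v_*'\rangle^k)$, using the parametrization $v_*=v'+w$, $v_*'=v+w$, $w\perp v'-v$, $r=|v-v_*|=\sqrt{|v'-v|^2+|w|^2}$. This rewrites the integral as
\[
I = 4\langle v\rangle^k\int_{\R^3}\frac{dv'}{|v'-v|\,\langle v'\rangle^k}\int_{w\perp(v'-v)}\frac{r^{\gamma-1}}{\langle v+w\rangle^k}\,dw,
\]
separating the three weights $|v'-v|^{-1}$, $\langle v'\rangle^{-k}$, $\langle v+w\rangle^{-k}$.

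Next I would estimate the inner two-dimensional integral. For $\gamma\le 1$ the bound $r^{\gamma-1}\le |v'-v|^{\gamma-1}$ (since $r\ge|v'-v|$) reduces it to $|v'-v|^{\gamma-1}\int_{w\perp(v'-v)}\langle v+w\rangle^{-k}\,dw$. The latter planar integral is evaluated explicitly by decomposing $v$ into components parallel and perpendicular to $\hat\eta:=(v'-v)/|v'-v|$ and passing to polar coordinates, yielding $\frac{2\pi}{(k-2)(1+(v\cdot\hat\eta)^2)^{(k-2)/2}}$. For $\gamma\in(1,2]$, the splitting $r^{\gamma-1}\le |v'-v|^{\gamma-1}+|w|^{\gamma-1}$ produces an analogous extra term. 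Passing to spherical coordinates $v'-v=\rho\hat\eta$, this reduces $I$ to
\[
I \lesssim \frac{\langle v\rangle^k}{k}\int_{\mathbb{S}^2}\frac{d\hat\eta}{(1+(v\cdot\hat\eta)^2)^{(k-2)/2}}\int_0^\infty\frac{\rho^\gamma\,d\rho}{\langle v+\rho\hat\eta\rangle^k}.
\]

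The remaining one-dimensional integral is handled via the identity $\langle v+\rho\hat\eta\rangle^2 = (\rho+a)^2 + c$ with $a:=v\cdot\hat\eta$ and $c:=1+|v|^2-a^2$: for $a\ge 0$ the Beta integral $\int_0^\infty t^\gamma(t^2+c)^{-k/2}\,dt = \tfrac{1}{2}c^{(\gamma+1-k)/2}B(\tfrac{\gamma+1}{2},\tfrac{k-\gamma-1}{2})$ applies directly, while for $a<0$ a shift $t=\rho+a$ combined with the splitting $(t-a)^\gamma\le C(|t|^\gamma+|a|^\gamma)$ produces an additional contribution $|a|^\gamma c^{(1-k)/2}B(\tfrac{1}{2},\tfrac{k-1}{2})$ carrying the $|v|^\gamma$ factor needed for large $|v|$. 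Substituting $\mu=v\cdot\hat\eta$ in the angular integral, the integrand concentrates near $\mu=\pm|v|$ (a region of width $\sim 1/|v|$ in which $(1+\mu^2)^{-(k-2)/2}$ is balanced by the shifted Beta factor), producing the bound $I\le C_k\langle v\rangle^\gamma$; for $\gamma\in[0,1]$ the asymptotic $B(p,q)\sim q^{-p}$ from \eqref{beta function} refines the constants and yields the sharper $\frac{c}{k}\langle v\rangle^\gamma+C_k\langle v\rangle^{\gamma-2}$.

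The main obstacle I anticipate is the case $\gamma\in(-3,0)$: the factor $\rho^\gamma$ has an integrable but delicate singularity at $\rho=0$, and the crude bound $r^{\gamma-1}\le |v'-v|^{\gamma-1}$ does not by itself produce the required $\langle v\rangle^\gamma$-decay at large $|v|$. I would handle this by splitting the $v'$-integration at $|v'-v|=1$: on $\{|v'-v|\le 1\}$, $\langle v'\rangle\sim\langle v\rangle$ and Lemma~\ref{L29} applied to $\int|v-v'|^\gamma\langle v'\rangle^{-k}\,dv'$ gives the desired bound directly; on $\{|v'-v|\ge 1\}$ the singularity is absent and the analysis above goes through cleanly.
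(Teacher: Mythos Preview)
Your opening move---apply Lemma~\ref{L210}, then evaluate the planar $w$-integral as $\tfrac{2\pi}{k-2}(1+(v\cdot\hat\eta)^2)^{-(k-2)/2}$---is exactly the paper's. For $\gamma\in[0,2]$ you then treat the residual $(\rho,\hat\eta)$-integral by completing the square $\langle v+\rho\hat\eta\rangle^2=(\rho+a)^2+c$ and invoking Beta integrals, whereas the paper performs a three-way case split on $|v_\perp|=|v\cdot\hat\eta|$ versus $|v|$ (the regions $|v_\perp|\ge(1-\tfrac1k)|v|$, $|v_\perp|\le\tfrac1k|v|$, and in between). Both routes are viable in this range; yours is a legitimate alternative, while the paper's split makes the $k$-dependence in the sharpened bound somewhat more explicit.

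For $\gamma\in(-3,0)$ there is a genuine gap. First, for $\gamma\le-1$ the crude bound $r^{\gamma-1}\le|v'-v|^{\gamma-1}$ leaves the radial factor $\rho^{\gamma}$, and $\int_0^{1}\rho^{\gamma}\,d\rho$ diverges; your fix on $\{|v'-v|\le1\}$ appeals to Lemma~\ref{L29} for $\int|v-v'|^{\gamma}\langle v'\rangle^{-k}\,dv'$, but after the crude bound and the $|v'-v|^{-1}$ from Lemma~\ref{L210} the actual exponent is $\gamma-2$, not $\gamma$, so Lemma~\ref{L29} (which needs exponent $>-3$) does not apply. Second, even on $\{|v'-v|\ge1\}$ your Beta-integral analysis does not deliver $\langle v\rangle^{\gamma}$ decay: when $\hat\eta\approx-v/|v|$ one has $a=v\cdot\hat\eta\approx-|v|$ and $c=1+|v|^2-a^2=O(1)$, so $\int_1^\infty\rho^{\gamma}((\rho+a)^2+c)^{-k/2}\,d\rho=O(1)$, and the remaining factors $\langle v\rangle^k\cdot(1+a^2)^{-(k-2)/2}\cdot|\{\mu:c\lesssim1\}|\sim\langle v\rangle^k\cdot|v|^{2-k}\cdot|v|^{-2}$ combine to $O(1)$ rather than $O(\langle v\rangle^{\gamma})$. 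The paper sidesteps both problems by a different device: for $\gamma<0$ it first uses $\langle v\rangle^k\le C_k(\langle v'\rangle^k+\langle v_*'\rangle^k)$ to replace the ratio $\langle v\rangle^k/(\langle v'\rangle^k\langle v_*'\rangle^k)$ by $\langle v'\rangle^{-k}$ alone; after Lemma~\ref{L210} the planar integral $\int_{\R^2}r^{\gamma-1}\,dw$ (now with no $\langle v+w\rangle^{-k}$ weight) scales to a constant times $|v'-v|^{\gamma+1}$, and one lands directly on $\int|v'-v|^{\gamma}\langle v'\rangle^{-k}\,dv'$, where Lemma~\ref{L29} gives $C_k\langle v\rangle^{\gamma}$.
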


\begin{proof}
If $|v| \le 1$, since $\langle v_* \rangle \le \langle v_*' \rangle \langle v' \rangle$, we have
\[
I \le  2^k\int_{\R^3} \int_{\mathbb{S}^2} |v-v_*|^\gamma \frac {1} { \langle v_* \rangle^k}  dv_* d\sigma \le  C_k \langle v \rangle^{\gamma-2},
\]
we focus on the case $|v| \ge 1$ later. We first prove the case $\gamma \in [0, 1]$. By Lemma \ref{L210} and since $0 \le  \gamma \le 1$, we have
\begin{equation*}
\begin{aligned}
I =& \int_{\R^3}  \frac 1 {|v' -v|}  \frac {\langle v \rangle^k} { \langle v' \rangle^k }   \int_{ \{ w: w \cdot   ( v'-v)  =0 \}  }  \frac 1 {\sqrt{|v'-v|^2 +|w|^2 }} (\sqrt{|v'-v|^2 + |w|^2})^{\gamma}  \langle v+w \rangle^{-k} d w  dv'
\\
\le& \int_{\R^3}  \frac 1 {|v' -v|^{2-\gamma}}  \frac {\langle v \rangle^k} { \langle v' \rangle^k }   \int_{ \{ w: w \cdot   ( v'-v)  =0 \}  } \langle v+w \rangle^{-k} d w  dv'.
\end{aligned}
\end{equation*}
We first split $v$ into two parts
\begin{equation}
\label{decomposition v}
v = v_\perp +v_\parallel , \quad v_\perp =  \frac {v \cdot ( v'-v)} {|v-v'|^2}(v'-v), \quad |v|^2 =|v_\perp|^2 +|v_\parallel |^2, \quad  v_\perp \perp v_\parallel  , \quad v_\parallel \parallel  w, \quad |v +w|^2  = |w+v_\parallel|^2 +|v_\perp|^2. 
\end{equation}
Then the integral becomes
\[
 \int_{ \{ w: w \cdot   ( v'-v)  =0 \}  } \langle v+w \rangle^{-k} d w = \int_{\R^2} \frac 1 { (1+ |w+v_\parallel|^2 +|v_\perp|^2 )^{k/2} } d w =  \int_{\R^2} \frac 1 { (1+ |w|^2 +|v_\perp |^2 )^{k/2} } d w,
\]
making a change of variable $w =  \sqrt{1 + |v_\perp|^2} x $ we have
\[
 \int_{\R^2} \frac 1 { (1+ |w|^2 +|v_\perp|^2 )^{k/2} } d w = (1 + |v_\perp|^2)^{1-\frac k 2} \int_{\R^2} \frac 1 { (1+ |x|^2)^{k/2} } d x = 2\pi \frac {1} {k-2}  (1 + |v_\perp|^2)^{1-\frac k 2},
\]
so $I$ turns to
\[
I \le 2\pi \frac {1} {k-2}  \int_{\R^3}  \frac 1 {|v' -v|^{2-\gamma}}  \frac {\langle v \rangle^k} { \langle v' \rangle^k } \frac 1 {\langle v_\perp \rangle^{k-2} } dv', \quad |v_\perp| = \frac {|v \cdot ( v'-v)|} {|v-v'|}.
\]
Since  $|v_\perp| \le |v|$, so we split it into three cases $|v_\perp| \ge (1- \frac {1} k )|v|$,  $|v_\perp| \le \frac {|v|} k$ and $\frac 1 k |v| \le  |v_\perp| \le (1- \frac {1} k )|v|$.  For the case  $|v_\perp| \ge (1- \frac {1} k )|v|$ we have
\[
\langle v \rangle^{k-2} \le ( 1 + \frac {1} {k-1} )^{k-2}  \langle v_\perp \rangle^{k-2} \le e \langle v_\perp \rangle^{k-2},
\]
together with Lemma \ref{L29} we have
\[
I \le \frac c {k} \langle v \rangle^2 \int_{\R^3} \frac 1 {|v' -v|^{2-\gamma}}  \frac {1} { \langle v' \rangle^k } dv' \le   \frac c {k^2} \langle v \rangle^2 \langle v \rangle^{\gamma-2}  +C_k  \langle v \rangle^2 \langle v \rangle^{\gamma-4} \le \frac c k \langle v \rangle^{\gamma} +C_k \langle v \rangle^{\gamma-2}.  
\]
For the case $|v_\perp| \le \frac {1} k |v|$ we have
\[
|v'|^2 = |v-v'|^2 +|v|^2 +2 v\cdot(v'-v ) \ge  |v-v'|^2 +|v|^2 - 2 |v-v'| |v_\perp|   \ge  (1-\frac 1 k) (|v-v'|^2 +|v|^2 ),
\]
which implies
\begin{equation*}
\begin{aligned}
I \le&\frac c k \int_{\R^3}  \frac 1 {|v' -v|^{2-\gamma}}  \frac {\langle v \rangle^k} { (1+   |v-v'|^2 +|v|^2  )^{k/2} } \frac 1 {\langle v_\perp \rangle^{k-2} } dv'
\\
\le&\frac c k  \int_{\R^3}  \frac 1 {|v' -v|^{2-\gamma}}  \frac {1} { (1+  \frac   {|v-v'|^2} {1+|v|^2}   )^{k/2} } \frac 1 {\langle v_\perp \rangle^{k-2} }dv'.
\end{aligned}
\end{equation*}
We make the change of variables 
\begin{equation}
\label{change of variable r theta}
r = |v-v'|, \quad v \cdot (v'-v) =r |v| \cos \theta, \quad dv' = r^2 \sin \theta d r d\theta d \phi, \quad |v_\perp|^2 = |v|^2\cos^2  \theta, \quad |v_\parallel|^2 =|v|^2 \sin^2 \theta.
\end{equation}
So $I$ turns to 
\begin{equation*}
\begin{aligned}
I \le& \frac c k  2\pi \int_{0}^\infty r^\gamma \frac {1} { (1+  \frac   {r^2} {1+|v|^2}   )^{k/2} }\int_0^\pi \frac 1 {\langle |v|^2\cos^2\theta\rangle^{k-2} } \sin \theta dr d\theta .
\end{aligned}
\end{equation*}
Making another change of variables
\begin{equation}
\label{change of variable x y}
r =\sqrt{1+|v|^2} x, \quad y=|v| \cos\theta, \quad dy=|v| \sin\theta d\theta,
\end{equation}
then by \eqref{beta function} we deduce
\begin{equation*}
\begin{aligned}
I \le& \frac c k  2\pi  \frac 1 {|v|} \langle v \rangle^{\gamma+1} \int_{0}^\infty x^\gamma \frac {1} { (1+ |x|^2 )^{k/2} } \int_{-|v|}^{|v|} \frac 1 {\langle y \rangle^{k-2}} dy dx 
\\
\le &\frac c k  2\pi  \langle v \rangle^\gamma \int_{0}^\infty x^\gamma \frac {1} { (1+ |x|^2 )^{k/2} } dx \int_{-\infty}^{+\infty} \frac 1 {\langle y \rangle^{k-2}} dy \le\frac c k  2\pi \langle v \rangle^\gamma.
\end{aligned}
\end{equation*}
For the case $\frac 1 k |v| \le |v_\perp| \le (1-\frac 1 k)|v| $ we have
\[
\langle v \rangle^{k-2} \le  k^{k-2}  \langle v_\perp \rangle^{k-2} \le C_k \langle v_\perp \rangle^{k-2}, \quad  |v'|^2\ge  |v-v'|^2 +|v|^2 - 2 |v-v'| |v_\perp|   \ge  \frac 1 k (|v-v'|^2 +|v|^2 ),
\]
still make the same change of variables as \eqref{change of variable r theta} and \eqref{change of variable x y} we have
\begin{equation*}
\begin{aligned}
I \le& C_k \langle v \rangle^2 \int_{\R^3} \frac 1 {|v' -v|^{2-\gamma}}  \frac {1} { (1+   |v-v'|^2 +|v|^2  )^{k/2} } dv' 
\\
\le&C_k \langle v \rangle^{2-k}\int_{\R^3}  \frac 1 {|v' -v|^{2-\gamma}}  \frac {1} { (1+  \frac   {|v-v'|^2} {1+|v|^2}   )^{k/2} } dv'
\\
 \le& C_k   2\pi\langle v \rangle^{2-k} \int_{0}^\infty r^\gamma \frac {1} { (1+  \frac   {r^2} {1+|v|^2}   )^{k/2} } dr d\theta 
 \\
 \le& C_k   2\pi\langle v \rangle^{2 + \gamma - k} \int_{0}^\infty x^\gamma \frac {1} { (1+  x^2  )^{k/2} } dx
 \le  \frac c k \langle v \rangle^{\gamma} +C_k \langle v \rangle^{\gamma-2},
\end{aligned}
\end{equation*}
the case $\gamma \in [0, 1]$ is thus proved. For  the case $\gamma \in (-3, 0)$, since  $\langle v \rangle^k \le C_k \langle v' \rangle^k +  C_k \langle v_*' \rangle^k$ , we have
\[
I \le  C_k\int_{\R^3} \int_{\mathbb{S}^2} |v-v_*|^\gamma \frac {1} { \langle v' \rangle^k}  dv_* d\sigma +  C_k\int_{\R^3} \int_{\mathbb{S}^2} |v-v_*|^\gamma \frac {1} { \langle v_*' \rangle^k}  dv_* d\sigma :=I_1 +I_2 . 
\]
We only prove the term $I_1$ since the $I_2$ term is easily achieved by interchange $v'$ and $v_*'$. For the $I_1$ term  by Lemma \ref{L210} we have
\begin{equation*}
\begin{aligned}
I_1  \le &C_k \int_{\R^3}  \frac 1 {|v' -v|}\frac {1} { \langle v' \rangle^k}  \int_{ \{ w: w \cdot   ( v'-v)  =0 \}  }  \frac 1 {\sqrt{|v'-v|^2 +|w|^2 }} (\sqrt{|v'-v|^2 + |w|^2})^{\gamma}   d w  dv'
\\
\le &C_k  \int_{\R^3}  \frac 1 {|v' -v|} \frac {1} { \langle v' \rangle^k}   \int_{ \R^2 }  \frac 1 {(|v'-v|^2 +|w|^2 )^{\frac {1-\gamma} 2} }   d w  dv',
\end{aligned}
\end{equation*}
by a change of variable $w = |v-v'| x$ we deduce
\[
I_1 \le C_k  \int_{\R^3}  |v' -v|^\gamma \frac {1} { \langle v' \rangle^k}   \int_{ \R^2 }  \frac 1 {(1+|x|^2 )^{\frac {1-\gamma} 2} }   dx  dv'
\le C_k \int_{\R^3}  |v' -v|^\gamma \frac {1} { \langle v' \rangle^k}    dv',
\le C_k \langle v \rangle^\gamma,
\]
since  $\gamma<0$ implies $1-\gamma>1$ so $(1+|x|^2)^{- \frac {1-\gamma} 2} $ is integrable, the case $\gamma \in (-3, 0)$ is thus proved. For the case $\gamma \in [1, 2]$, since $1 \le \gamma$, we have
\begin{equation*}
\begin{aligned}
I =& \int_{\R^3}  \frac 1 {|v' -v|}  \frac {\langle v \rangle^k} { \langle v' \rangle^k }   \int_{ \{ w: w \cdot   ( v'-v)  =0 \}  }  \frac 1 {\sqrt{|v'-v|^2 +|w|^2 }} (\sqrt{|v'-v|^2 + |w|^2})^{\gamma}  \langle v+w \rangle^{-k} d w  dv'
\\
\le&C \int_{\R^3}  \frac 1 {|v' -v|^{2-\gamma}}  \frac {\langle v \rangle^k} { \langle v' \rangle^k }   \int_{ \{ w: w \cdot   ( v'-v)  =0 \}  } \langle v+w \rangle^{-k} d w  dv'
\\
&+ C\int_{\R^3}  \frac 1 {|v' -v|} \frac {\langle v \rangle^k} { \langle v' \rangle^k }   \int_{ \{ w: w \cdot   ( v'-v)  =0 \}  } |w|^{\gamma-1} \langle v+w \rangle^{-k} d w  dv' :=I_1 +I_2,
\end{aligned}
\end{equation*}
since $\gamma \le 2$, the $I_1$ term is the same as the term $I$ in the case $\gamma \in [0, 1]$ so can be estimated by the same way. We now focus on the $I_2$ term, we have
\begin{equation*}
\begin{aligned}
 \int_{ \{ w: w \cdot   ( v'-v)  =0 \}  } |w|^{\gamma-1} \langle v+w \rangle^{-k} d w =& \int_{\R^2} \frac { |w|^{\gamma-1} } { (1+ |w+v_\parallel|^2 +|v_\perp|^2 )^{k/2} } d w 
\\ 
\le & C \int_{\R^2} \frac { |w|^{\gamma-1} +|v_\parallel|^{\gamma-1} }{ (1+ |w|^2 +|v_\perp |^2 )^{k/2} } d w.
\end{aligned}
\end{equation*}
Making the change of variable $|w|= \sqrt{1 + |v_\perp|^2} x $ we have
\begin{equation*}
\begin{aligned}
\int_{\R^2} \frac { |w|^{\gamma-1} +|v_\parallel|^{\gamma-1} }{ (1+ |w|^2 +|v_\perp |^2 )^{k/2} } d w  \le & |v_\parallel|^{\gamma-1} (1 + |v_\perp|^2)^{1-\frac k 2} \int_{\R^2} \frac 1 { (1+ |x|^2)^{k/2} } dx
\\
&+ \langle v_\perp  \rangle^{\gamma-1} (1 + |v_\perp|^2)^{1-\frac k 2} \int_{\R^2} \frac {|x|^\gamma} { (1+ |x|^2)^{k/2} } d x
\\
&\le C_k  \langle v \rangle^{\gamma-1} (1 + |v_\perp|^2)^{1-\frac k 2} ,
\end{aligned}
\end{equation*}
we decude
\[
I_2\le C_k  \langle v \rangle^{\gamma-1} \int_{\R^3} \frac 1 {|v' -v|}  \frac {\langle v \rangle^k} { \langle v' \rangle^k } \frac 1 {\langle v_\perp \rangle^{k-2} } dv', 
\]
it is easily seen that $I_2$ also be estimated  by the same way as the term $I$ in the case $\gamma \in [0, 1]$, so the proof is thus finished. 
\end{proof}

For the exponential weight case, we have a better estimate for the linearized operator.

\begin{lem}\label{L212}
For any $-3 < \gamma \le 1$, for any constant $a > 0$ and $b \in (0, 2)$ we have
\[
I := \int_{\R^3} \int_{\mathbb{S}^2} |v-v_*|^\gamma \frac {e^{a   \langle v \rangle^b}} {e^{ a  \langle v' \rangle^b}} e^{-\frac 1 2 |v_*'|^2 } dv_* d\sigma \le  C_{a , b} \langle v \rangle^{\gamma - \frac {b (\gamma+3)} 4},
\]
for all $v \in \R^d$. 

\end{lem}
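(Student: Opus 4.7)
The plan is to adapt the strategy used in the proof of Lemma \ref{L211}, replacing the polynomial ratio $\langle v\rangle^k/\langle v'\rangle^k$ by the exponential ratio $e^{a\langle v\rangle^b}/e^{a\langle v'\rangle^b}$ and exploiting the Gaussian factor $e^{-|v_*'|^2/2}$. The Gaussian keeps $v_*' = v+w$ close to $0$, which together with energy conservation and concavity of $x\mapsto x^{b/2}$ (valid for $b\le 2$) will provide the extra polynomial decay beyond the baseline $\langle v\rangle^\gamma$. For $|v|\le 1$ the bound is trivial (subadditivity $\langle v\rangle^b\le \langle v'\rangle^b+\langle v_*'\rangle^b$ combined with $b<2$ gives $I\le C_{a,b}$), so assume $|v|\ge 1$ throughout.

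I would begin by applying Lemma \ref{L210} with $v_*' = v+w$, $w\perp(v'-v)$, $r=\sqrt{|v'-v|^2+|w|^2}$ to write
\[
I = 4\int_{\R^3}\frac{1}{|v'-v|}\cdot\frac{e^{a\langle v\rangle^b}}{e^{a\langle v'\rangle^b}}\int_{\{w\perp(v'-v)\}}(|v'-v|^2+|w|^2)^{(\gamma-1)/2} e^{-|v+w|^2/2}\,dw\,dv'.
\]
Decomposing $v=v_\perp+v_\parallel$ along/perpendicular to $v'-v$ gives $|v+w|^2 = |v_\perp|^2 + |v_\parallel+w|^2$; a shift $\tilde w = w+v_\parallel$ in the $2$D plane concentrates $e^{-|\tilde w|^2/2}$ near $\tilde w=0$, and splitting the $\tilde w$-region (the part $|\tilde w|>|v_\parallel|/2$ giving an exponentially small remainder) combined with $\gamma\le 1$ yields the refined bound
\[
\int_{\{w\perp(v'-v)\}}(|v'-v|^2+|w|^2)^{(\gamma-1)/2}e^{-|v+w|^2/2}\,dw\lesssim (|v'-v|^2+|v_\parallel|^2)^{(\gamma-1)/2}\,e^{-|v_\perp|^2/2}.
\]
Passing to spherical coordinates centered at $v$ with $r=|v'-v|$ and $y=|v|\cos\theta$ (so $|v_\perp|=|y|$, $|v_\parallel|^2=|v|^2-y^2$, $dv'=2\pi r^2\,dr\,dy/|v|$, and $\langle v'\rangle^2 = \langle v\rangle^2 + r^2 + 2ry$) produces
\[
I\lesssim \frac{1}{|v|}\int_0^\infty\!\int_{-|v|}^{|v|} r\,(r^2+|v|^2-y^2)^{(\gamma-1)/2}\,\frac{e^{a\langle v\rangle^b}}{e^{a\langle v'\rangle^b}}\,e^{-y^2/2}\,dy\,dr.
\]

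Finally I would split the $(r,y)$-domain to control $\frac{e^{a\langle v\rangle^b}}{e^{a\langle v'\rangle^b}}$. On the main region $y\ge -r/4$ one has $\langle v'\rangle^2\ge\langle v\rangle^2+r^2/2$, and concavity of $x\mapsto x^{b/2}$ yields $\langle v'\rangle^b-\langle v\rangle^b\ge c_b\min(\langle v\rangle^{b-2}r^2,\,r^b)$, producing Gaussian-in-$r$ suppression on scale $\langle v\rangle^{1-b/2}$; restricting further to $|y|\le|v|/2$ so that $(r^2+|v|^2-y^2)^{(\gamma-1)/2}\lesssim\langle v\rangle^{\gamma-1}$, the main contribution is
\[
\frac{\langle v\rangle^{\gamma-1}}{|v|}\int_0^\infty r\,e^{-c\langle v\rangle^{b-2}r^2}\,dr\cdot\int_{\R}e^{-y^2/2}\,dy\;\sim\;\langle v\rangle^{\gamma-b},
\]
which is dominated by $\langle v\rangle^{\gamma-b(\gamma+3)/4}$ since $\gamma\le 1$. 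The main technical obstacle is the complementary ``dangerous'' region $y<-r/4$, where $\langle v'\rangle$ can drop below $\langle v\rangle$ and the exponential ratio exceeds $1$; here one combines $e^{-y^2/2}$ with the concavity estimate $\langle v\rangle^b-\langle v'\rangle^b\lesssim r|y|\langle v\rangle^{b-2}$ (valid as long as $\langle v'\rangle\gtrsim\langle v\rangle$, i.e.\ $|y|\le |v|/2$) so that the Gaussian in $y$ absorbs the potential growth of the exponential ratio for $|v|$ large; the subregion $|y|>|v|/2$ is killed by $e^{-y^2/2}\le e^{-|v|^2/8}$, and $r>\langle v\rangle$ gives an exponentially small contribution via $\langle v'\rangle^b\ge c r^b$. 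These subregions contribute at most $\langle v\rangle^{\gamma-2}$, which is again dominated by $\langle v\rangle^{\gamma-b(\gamma+3)/4}$ for $b\le 2$ and $\gamma\le 1$, completing the proof.
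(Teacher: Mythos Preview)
Your overall architecture---apply Lemma~\ref{L210}, decompose $v=v_\perp+v_\parallel$, pass to the $(r,y)$ coordinates of \eqref{change of variable r theta}--\eqref{change of variable x y}, and exploit concavity of $x\mapsto x^{b/2}$---matches the paper. The execution, however, diverges at two points, and one of them contains a real error.

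\textbf{A genuine gap in the $w$-integral estimate.} Your displayed bound
\[
\int_{\{w\perp(v'-v)\}}(|v'-v|^2+|w|^2)^{(\gamma-1)/2}e^{-|v+w|^2/2}\,dw\;\lesssim\;(|v'-v|^2+|v_\parallel|^2)^{(\gamma-1)/2}\,e^{-|v_\perp|^2/2}
\]
is false for $\gamma\le -1$. Take $\gamma=-2$, $|v_\perp|=0$, $|v_\parallel|=1$, $|v'-v|=\epsilon\to 0$: the left side contains $\int_{|w|<1}(\epsilon^2+|w|^2)^{-3/2}\,dw\sim\epsilon^{-1}$, while the right side is $\sim 1$. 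The issue is that your ``exponentially small remainder'' from $|\tilde w|>|v_\parallel|/2$ includes the point $\tilde w=v_\parallel$ (i.e.\ $w=0$), where the kinetic factor is singular; the remainder is \emph{not} dominated by the main term. This is fixable---the remainder carries $e^{-|v_\parallel|^2/16}$, and combined with $e^{-|v_\perp|^2/2}$ it gives $e^{-c|v|^2}$, so after the $v'$-integration it contributes only $C_{a,b}e^{-c'|v|^2}$---but you must carry it as a separate term rather than absorb it into the displayed inequality. The paper avoids this entirely by first applying the AM--GM bound $(|v'-v|^2+|w|^2)^{(\gamma-1)/2}\le|v'-v|^{(\gamma-1)/2}|w|^{(\gamma-1)/2}$ (valid since $\gamma\le 1$), which makes the $w$-integral $\int|w|^{(\gamma-1)/2}e^{-|v_\parallel+w|^2/2}\,dw\lesssim\langle v_\parallel\rangle^{(\gamma-1)/2}$ cleanly for all $\gamma>-3$.

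\textbf{A different route for the exponential ratio.} Your region split $\{y\ge -r/4\}$ versus the ``dangerous'' region $\{y<-r/4\}$, with separate concavity estimates on each, works but is more laborious than the paper's device. The paper instead observes that $\tfrac{1}{4}y^2+C_{a,b}\ge a(4+4y^2)^{b/2}$ and uses subadditivity $(A+B)^{b/2}\le A^{b/2}+B^{b/2}$ to absorb the worst-case deficit $\langle v\rangle^b-\langle v'\rangle^b$ (coming from $2ry<0$) directly into the Gaussian $e^{-y^2/2}$; this handles both signs of $y$ at once and reduces everything to the single scalar integral $\int_0^\infty x^{(\gamma+1)/2}e^{a\langle v\rangle^b(1-(1+x^2/2)^{b/2})}\,dx$, whose scaling immediately gives the exponent $-\tfrac{b(\gamma+3)}{4}$. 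Your route, once the $w$-integral gap is repaired, actually yields the stronger main-region bound $\langle v\rangle^{\gamma-b}$ (since $(\gamma+3)/4\le 1$), at the cost of a more intricate case analysis.
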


\begin{proof}
Since $\gamma - 1 \le 0$, by Lemma \ref{L210} we have
\begin{equation*}
\begin{aligned}
I & = 4\int_{\R^3} \frac 1 {|v' -v|}      \frac {e^{ a\langle v \rangle^b }} {e^{ a   \langle v' \rangle^b}}   \int_{ \{ \omega: \omega \cdot   ( v'-v)  =0 \}  }  \frac 1 {\sqrt{|v'-v|^2 +|w|^2 }} (\sqrt{|v'-v|^2 + |w|^2})^{\gamma}  e^{- \frac {|v+w|^2} 2}  d w  dv'
\\
& \le  4\int_{\R^3} \frac 1 {|v' -v|^{\frac {3-\gamma} 2 }} \frac {e^{a  \langle v \rangle^b }} {e^{a   \langle v' \rangle^b }}  \int_{ \{ \omega: \omega \cdot   ( v'-v)  =0 \}  } |w|^{\frac {\gamma-1} 2 } e^{- \frac {|v+w|^2} 2}  d w dv'.
\end{aligned}
\end{equation*}
Recall the decomposition \eqref{decomposition v} we have
\[
I \le  4\int_{\R^3} \frac 1 {|v' -v|^{\frac {3-\gamma} 2 }}    \frac {e^{a \langle v \rangle^b }} {e^{a \langle v' \rangle^b }}  e^{- \frac {|v_\perp|^2}  2 }  \int_{ \{ \omega: \omega \cdot   ( v'-v)  =0 \}  } |w|^{\frac {\gamma-1} 2 } e^{- \frac {|v_\parallel +w|^2}  2 } d w  dv'.
\]
Since $\frac {\gamma-1} 2 > -2$, we have
\[
\int_{ \{ \omega: \omega \cdot   ( v'-v)  =0 \}  } |w|^{\frac {\gamma-1} 2 } e^{- \frac {|v_\parallel +w|^2}  2 }  d w = \int_{\R^2 } |w - v_\parallel |^{\frac {\gamma-1} 2 } e^{- \frac {|w|^2}  2 }  d  w \le C \langle v_\parallel \rangle^{\frac {\gamma-1} 2 },
\]
which implies
\[
I \le  C \int_{\R^3}  |v' -v|^{\frac {\gamma-3} 2 } \frac {e^{a    \langle v \rangle^b }} {e^{a \langle v' \rangle^b }}    e^{- \frac {|v_\perp|^2}  2 } \langle v_\parallel \rangle^{\frac {\gamma-1} 2 } dv' .
\]
If $|v| \le 1$, since $\frac {\gamma-3} 2  > -3 $ it is easily seen that
\[
I \le  C_{a, b } \int_{\R^3}  |v' -v|^{\frac {\gamma-3} 2 }  \frac {1} {e^{a \langle v' \rangle^b }}   d v' \le  C_{a  , b } \le  C_{a , b } \langle v \rangle^{\gamma - \frac {b (\gamma+3)} 4},
\]
so we focus on the $|v|>1$ case. We split it into two case $|v_\perp| \ge \frac {|v|} 2$ and $|v_\perp| <\frac {|v|} 2$. For the case $|v_\perp| \ge \frac {|v|} 2$ we easily compute
\[
I \le  C \int_{\R^3}  |v' -v|^{\frac {\gamma-3} 2 } \frac {e^{a  \langle v \rangle^b }} {e^{a \langle v' \rangle^b  }}    e^{- \frac {|v|^2}  8 }  dv'  \le  C_{a  , b}  e^{- \frac {|v|^2}  {16} }  \int_{\R^3}  |v' -v|^{\frac {\gamma-3} 2 } \frac {1} {e^{a  \langle v' \rangle^b }}    dv'   \le  C_{a  , b } \langle v \rangle^{\gamma - b}. 
\]
For the case $|v_\perp| <\frac {|v|} 2$ we have $|v_\parallel| \ge \frac {|v|} 2$ which implies $\langle v_\parallel \rangle^{\frac {\gamma-1} 2} \le C \langle v\rangle^{\frac {\gamma-1} 2}$, so we have
\[
I \le   C \langle v \rangle^{\frac {\gamma-1} 2 }\int_{ \R^3 }  |v' -v|^{\frac {\gamma-3} 2 } \frac {e^{a \langle v \rangle^b }} {e^{a  \langle v' \rangle^b }}    e^{- \frac {|v_\perp|^2}  2 }  dv' .
\]
Still take the change of variables \eqref{change of variable r theta}, since $|v'|^2 = |v-v'|^2 +|v|^2 +2 v\cdot(v'-v ) $,  $I$ turns to 
\[
I \le C  \langle v  \rangle^{\frac {\gamma-1} 2 }   \int_{0}^{+\infty} \int_0^{\pi}  r^{\frac {\gamma+1} 2} e^{a (1+|v|^2)^{\frac b 2} - a (1+ |v|^2 +r^2 + 2r|v| \cos \theta  )^{\frac b 2}  }  e^{- \frac {|v|^2 \cos^2 \theta}  2 }     \sin \theta dr d \theta. 
\]
Since $0< b <2$, there exists a constant $C_{a, b}$ such that
\[
\frac {|x|^2} 4 +C_{a, b} > 4a \langle x \rangle ^b, \quad \forall x \in \R, \quad 
x^{\frac b 2} + y^\frac b 2 \ge (x+y)^\frac b 2, \quad \forall x, y \ge 0, 
\]
which implies that 
\[
\frac {|v|^2 \cos^2 \theta}  4 +C_{a, b } \ge  a (4+4|v|^2 \cos^2 \theta  )^{\frac b 2}, 
\]
and
\[
 a (1+ |v|^2 +r^2 +2r|v| \cos \theta  )^{\frac b 2} + a (4+4|v|^2 \cos^2 \theta  )^{\frac b 2} \ge a  (1+ |v|^2 +r^2 + 2r|v| \cos \theta +4|v|^2 \cos^2 \theta  )^{\frac b 2} \ge  a ( 1+|v|^2 +\frac 1 2 r^2)^{\frac b 2}.
\]
So we have
\[
I \le C_{a, b } \langle v \rangle^{\frac {\gamma-1} 2 } \int_{0}^{+\infty} \int_0^{\pi}   r^{\frac {\gamma+1} 2} e^{a (1+|v|^2)^{\frac b 2} - a  (1+ |v|^2 +\frac {r^2} 2  )^{\frac b 2}  }  e^{- \frac {|v|^2 \cos^2 \theta}  4 }  \sin \theta  dr d \theta.
\]
As before, taking another change of variables \eqref{change of variable x y} we have
\begin{equation*}
\begin{aligned}
I \le& C_{a, b } \frac 1 {|v|} \langle v \rangle^{\gamma+1  } \int_{0}^{+\infty}   x^{\frac {\gamma+1} 2} e^{a (1+|v|^2)^{\frac b 2} - a (1+ |v|^2 +\frac {x^2 (1+|v|^2) } 2  )^{\frac b 2}  }  dx \int_{-|v|}^{|v|} e^{- \frac {y^2}  4 } d y  
\\
\le & C_{a, b } \langle v \rangle^{\gamma  } \int_{0}^{+\infty}   x^{\frac {\gamma+1} 2} e^{a (1+|v|^2)^{\frac b 2}(1 -  (1 +\frac {x^2  } 2  )^{\frac b 2} ) }  dx \int_{-\infty}^{+\infty} e^{- \frac {y^2}  4 } d y 
\\
\le & C_{a, b } \langle v \rangle^{\gamma  } \int_{0}^{+\infty}   x^{\frac {\gamma+1} 2} e^{a \langle v \rangle^b (1 -  (1 +\frac {x^2  } 2  )^{\frac b 2} ) }  dx.
\end{aligned}
\end{equation*}
We split it into to two case $x \ge 2$ and $x \in [0,  2 ]$.  For the case $x \ge 2$ we have $ (1 +\frac {x^2  } 2  )^{\frac b 2}  -1   \ge C_b \langle x \rangle^{b }$ for some $C_b>0$. Together with $\langle v \rangle^b + \langle x \rangle^b \le  2\langle x \rangle^b \langle v \rangle^b$ implies
\begin{equation*}
\begin{aligned}
\int_{2}^{+\infty}   x^{\frac {\gamma+1} 2} e^{a \langle v \rangle^b (1 -  (1 +\frac {x^2  } 2  )^{\frac b 2} ) }  dx \le & C_b \int_{2}^{+\infty}   x^{\frac {\gamma+1} 2} e^{ - a C_b \langle v \rangle^b  \langle x \rangle^b } dx  
\\
\le& C_b e^{- \frac a 2 C_b \langle v \rangle^b}\int_{0}^{+\infty}   x^{\frac {\gamma+1} 2} e^{ - \frac a  2 C_b  \langle x \rangle^b } dx \le C_{a, b} \langle v \rangle^{ -b}.
\end{aligned}
\end{equation*}
For the case $x \in [0, 2]$, we have $(1 +\frac {x^2  } 2  )^{\frac b 2}  -1  \ge C_b x^2$ for some constant $C_b>0$. Making the change of variable $z = \langle v \rangle^{\frac b 2} x$, since $\frac {\gamma+1} 2 >-1$ we have
\begin{equation*}
\begin{aligned}
\int_{0}^{ 2}   x^{\frac {\gamma+1} 2} e^{a \langle v \rangle^b (1 -  (1 +\frac {x^2  } 2  )^{\frac b 2} ) }  dx \le& C_b \int_{0}^{+\infty}   x^{\frac {\gamma+1} 2}   e^{ - a C_b \langle v \rangle^b   x ^2 } dx   
\\
\le & C_b \langle v \rangle^{- \frac {b(\gamma +  3)} 4 }   \int_{0}^{+\infty}   z^{\frac {\gamma+1} 2}  e^{ - a  C_b | z |^b } dz \le C_{a, b} \langle v \rangle^{- \frac {b (\gamma+3)} 4 },
\end{aligned}
\end{equation*}
so the theorem is thus proved by gathering the terms together. 
\end{proof}

We introduce the following lemma about relative entropy which will be used later.

\begin{lem} (\cite{G9}, \cite{DHWY}, Lemma 2.7) \label{L213} For any smooth function $F$ satisfies \eqref{conservation law} and \eqref{entropy inequality},  we have
\begin{equation*}
\begin{aligned}
&\int_{\T^3} \int_{\R^3} \frac {|F(t, x, v) -\mu(v)|^2 } {4 \mu(v)} 1_{|F(t, x, v -\mu(v)| \le \mu(v)} dv dx 
\\
+&\int_{\T^3} \int_{\R^3} \frac  1 4  |F(t, x, v) -\mu(v)| 1_{|F(t, x, v) -\mu(v)| \ge \mu(v)} dv dx  \le H ( F_0 ),
\end{aligned}
\end{equation*}
where the relative entropy $H$ is defined in \eqref{entropy}.

 The following estimate is established for the weight function $w(\alpha, \beta)$ defined in \eqref{weight function}.
\begin{lem}
Suppose $|\alpha|, |\beta|$ non-negative integers, $k \in \R$, the weight function $w(\alpha, \beta)$ satisfies the following properties
\[
w(|\alpha|, |\beta|) \le  w(|\alpha|, |\beta_1|), \quad w(|\alpha|, |\beta|)   \le  w(|\alpha_1|, |\beta|), \quad \forall 0 \le |\alpha_1| < |\alpha|, \quad 0 \le|\beta_1 | < |\beta|,
\]
and
\begin{equation}
\label{w v nabla x}
w(|\alpha|, |\beta|) \le \langle v \rangle^{\min \{ \gamma, 0 \} } w (|\alpha|+1, |\beta|-1 ), \quad \forall |\alpha| \ge 0, \quad \forall |\beta| \ge 1,
\end{equation}
which implies
\[
\langle v \rangle^{k} \le \min_{|\alpha| +|\beta| \le 4} \{ w(|\alpha|, |\beta|) \}.
\]
We also have 
\begin{equation}
\label{ab3}
\max_{|\alpha|+|\beta| =3}w^2 (\alpha, \beta)  \le  w(0, 2) w(1, 2), 
\end{equation}
and 
\begin{equation}
\label{ab4}
\max_{|\alpha|+|\beta| =4}w^2 (\alpha, \beta) \le \min \{ w(1, 2)^{4/5} w(2, 2)^{6/5}, w(1, 2) w(2, 2), w(0, 3) w(1, 3)  \}.
\end{equation}
\end{lem}

\begin{proof}
The first is just form the fact that $a,  b \ge 0$. \eqref{w v nabla x} just follows form the fact that
\[
k- a|\alpha|  - b|\beta| + c \le k - a(|\alpha| + 1)  - b ( |\beta| - 1)  +  c  + \min \{ \gamma, 0\}  \quad \Longleftrightarrow  \quad  b -a +\min \{ \gamma, 0\} \ge 0,
\]
and we conclude form the definition of $a$ and $b$. Then third statement is just by
\[
\langle v \rangle^{k}  =  w(0, 4) = \min_{|\alpha| +|\beta| \le 4} \{ w(|\alpha|, |\beta|) \}.
\]
For equation \eqref{ab3} we easily compute 
\[
\max_{|\alpha|+|\beta| =3}w^2 (\alpha, \beta)  = w^2(3, 0) = 2k-6a +2c \le 2k - a -4b +2c =  w(0, 2) w(1, 2). 
\]
For equation \eqref{ab3} and\eqref{ab4}, since $7a=6b$, we easily compute 
\[
\max_{|\alpha|+|\beta| =4}w^2 (\alpha, \beta) = w^2(4, 0)  = 2k - 8a +2c  \le 2k- \frac {16 } 5 a - 4b  +2c =  w(1, 2)^{4/5} w(2, 2)^{6/5}, 
\]
and
\[
\max_{|\alpha|+|\beta| =4}w^2 (\alpha, \beta) = w^2(4, 0) =  2k - 8a  +2c  = 2k - 6b - a  +2c=   w(0, 3) w(1, 3) \le w(1, 2) w(2, 2), 
\]
so the proof of the lemma is thus finished. 
\end{proof}

\end{lem}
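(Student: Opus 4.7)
The plan is to verify each of the five stated inequalities by straightforward comparison of the exponents of $\langle v \rangle$, using the explicit formula $w(|\alpha|, |\beta|) = \langle v \rangle^{k - a|\alpha| - b|\beta| + c}$ together with the numerical relations
\[
a = 6\max\{-\gamma, 0\}, \quad b = 7\max\{-\gamma, 0\}, \quad c = 4b, \quad a - b = \min\{\gamma, 0\}, \quad 7a = 6b,
\]
all of which follow immediately from \eqref{weight function}. Since $\langle v \rangle \ge 1$, every inequality $w^{p} \le w_1 w_2 \cdots$ reduces to a linear inequality among the exponents.

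First I would dispatch the monotonicity statements and the mixed inequality \eqref{w v nabla x}. Both monotonicity assertions are immediate from $a \ge 0$ and $b \ge 0$, because the exponent $k - a|\alpha| - b|\beta| + c$ is non-increasing in $|\alpha|$ and in $|\beta|$. The inequality \eqref{w v nabla x} is equivalent to
\[
(k - a|\alpha| - b|\beta| + c) - \min\{\gamma, 0\} \le k - a(|\alpha|+1) - b(|\beta|-1) + c,
\]
which simplifies to $a - b \le \min\{\gamma, 0\}$; but $a - b = \min\{\gamma, 0\}$ holds with equality by construction.

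Next, since $a \le b$, the affine map $(|\alpha|, |\beta|) \mapsto a|\alpha| + b|\beta|$ on the simplex $|\alpha| + |\beta| = n$ is maximized at $(0, n)$ and minimized at $(n, 0)$. Consequently $\min_{|\alpha| + |\beta| \le 4} w(|\alpha|, |\beta|)$ is attained at $(0, 4)$ with value $\langle v \rangle^{k - 4b + c} = \langle v \rangle^{k}$ (using $c = 4b$), giving the third statement. By the same monotonicity, the maxima in \eqref{ab3} and \eqref{ab4} are attained at $(3, 0)$ and $(4, 0)$ respectively.

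Finally, \eqref{ab3} and the three parts of \eqref{ab4} become linear inequalities in $(a, b)$. For \eqref{ab3}, $w^2(3, 0) \le w(0, 2) w(1, 2)$ reduces to $-6a \le -a - 4b$, i.e.\ $5a \ge 4b$, which reads $30\max\{-\gamma, 0\} \ge 28\max\{-\gamma, 0\}$, true. For \eqref{ab4} one needs
\[
w^2(4, 0) \le w(1, 2)^{4/5} w(2, 2)^{6/5}, \quad w^2(4, 0) \le w(1, 2) w(2, 2), \quad w^2(4, 0) \le w(0, 3) w(1, 3),
\]
which translate respectively into $5b \le 6a$, $5a \ge 4b$, and $7a \ge 6b$; substituting the explicit values, these become $35 \le 36$, $30 \ge 28$, and $42 \ge 42$. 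There is nothing substantive to do beyond bookkeeping; the only interesting point is that the third estimate in \eqref{ab4} holds with equality, which is why the numerical coefficients $6$ and $7$ in the definition of $a$ and $b$ must be chosen so that $7a = 6b$ — this is the only genuine design constraint, and it appears to be the entire reason for the peculiar choice of $a$ and $b$ made in \eqref{weight function}.
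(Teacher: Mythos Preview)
Your proof is correct and takes essentially the same approach as the paper: both reduce every inequality to a comparison of exponents and then check the resulting linear relations using $a = 6\max\{-\gamma,0\}$, $b = 7\max\{-\gamma,0\}$, and $7a = 6b$. The only cosmetic difference is that the paper handles the last two items of \eqref{ab4} by first noting the equality $w^2(4,0) = w(0,3)w(1,3)$ (from $7a=6b$) and then using $a \le b$ to deduce $w(0,3)w(1,3) \le w(1,2)w(2,2)$, whereas you verify each of the three bounds in \eqref{ab4} separately; your additional remark explaining why the choice $7a=6b$ is forced is a nice touch not present in the paper.
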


Next we prove a lemma related to the exponential weight case.

\begin{lem} \label{L215}
For any $a >0, b \in (0, 2), k \ge 0$, define $f(x) : = a ( 1+ x )^{\frac b 2} -  \frac k 2  \ln (1 + x),  x \ge 0,  f(0)=a$, then the following two statements holds
\[
 f(c) \le f(d) +C_{k, a, b}, \quad \forall 0 \le c \le d, \quad  f(c + d ) \le f(c) +f(d) +C_{k, a, b} , \quad \forall c, d \ge 0,
\]
for some constant $C_{k, a, b}>0$ independent of $c, d$. 
\end{lem}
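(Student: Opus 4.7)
The plan is to exploit the two-term structure of $f(x) = a(1+x)^{b/2} - \tfrac{k}{2}\ln(1+x)$ together with the fact that $0<b<2$, so that the power term is concave and superlogarithmic and therefore eventually dominates. For the first claim, I would differentiate $f'(x) = \tfrac{ab}{2}(1+x)^{b/2-1} - \tfrac{k}{2(1+x)}$ and observe that, because $b/2-1 > -1$, the first term beats the second past some $x_0 = x_0(k,a,b)$. Hence $f$ is continuous on $[0,\infty)$, eventually strictly increasing, and tends to $+\infty$, so it attains a finite global minimum $m_1$ and a finite maximum $M_1$ on $[0,x_0]$. For $0 \le c \le d$: either both $c,d \ge x_0$, in which case monotonicity gives $f(c) \le f(d)$ directly; or $c \le x_0$, in which case $f(c) \le M_1$ and $f(d) \ge m_1$, yielding $f(c) \le f(d) + (M_1-m_1)$, which gives the first claim.

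For the second claim, I would rewrite
\[
f(c)+f(d)-f(c+d) = a\bigl[(1+c)^{b/2}+(1+d)^{b/2}-(1+c+d)^{b/2}\bigr] - \tfrac{k}{2}\ln\frac{(1+c)(1+d)}{1+c+d}
\]
and, by symmetry, assume without loss of generality $c \le d$. The log ratio equals $1 + \tfrac{cd}{1+c+d}$ and is bounded by $1+c$ via $\tfrac{cd}{1+c+d} \le \tfrac{cd}{d} = c$, so the log term is at most $\tfrac{k}{2}\ln(1+c)$. For the power piece, differentiating in $d$ gives $\tfrac{b}{2}\bigl[(1+d)^{b/2-1}-(1+c+d)^{b/2-1}\bigr]\ge 0$, so it is nondecreasing in $d$; restricting to $d \ge c$ therefore yields
\[
(1+c)^{b/2}+(1+d)^{b/2}-(1+c+d)^{b/2} \ge 2(1+c)^{b/2}-(1+2c)^{b/2} \ge C_b (1+c)^{b/2},
\]
with $C_b := 2-2^{b/2}>0$, since the ratio $2-\bigl(\tfrac{1+2c}{1+c}\bigr)^{b/2}$ decreases monotonically from $1$ at $c=0$ to $2-2^{b/2}$ as $c\to\infty$. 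Combining,
\[
f(c)+f(d)-f(c+d) \ge aC_b(1+c)^{b/2} - \tfrac{k}{2}\ln(1+c),
\]
and since $(1+c)^{b/2}$ dominates $\ln(1+c)$, the right-hand side is a continuous function of $c\ge 0$ tending to $+\infty$, hence bounded below by some $-C_{k,a,b}$.

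The main obstacle is the wrong-signed logarithm: the plain subadditivity of $(1+x)^{b/2}$ alone (which gives only the constant $a$) cannot absorb the unboundedness of $\tfrac{(1+c)(1+d)}{1+c+d}$. The decisive step is the WLOG reduction to $c \le d$, which simultaneously trades the log for $\ln(1+c)$ and produces a subadditivity defect of order $(1+c)^{b/2}$; the power-versus-logarithm growth disparity ($b>0$) then closes the estimate uniformly in $c,d$.
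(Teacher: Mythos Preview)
Your proof is correct. The first claim is handled essentially as in the paper: both arguments use that $f'$ is eventually positive (since $b/2-1>-1$), so $f$ has a single valley and the difference $f(c)-f(d)$ for $c\le d$ is controlled by the drop from $f(0)$ to the minimum.

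For the second claim the routes diverge. The paper observes that $f''(x)=a\tfrac{b}{2}(\tfrac{b}{2}-1)(1+x)^{b/2-2}+\tfrac{k}{2}(1+x)^{-2}$ is eventually negative (because $0<b/2<1$), and uses the integral identity
\[
f(c+d)-f(c)-f(d)+f(0)=\int_0^d\!\!\int_0^c f''(x+s)\,dx\,ds,
\]
so the only positive contribution comes from the compact region where $f''\ge 0$, giving the bound immediately. Your approach instead splits $f$ into its power and logarithmic parts, reduces by symmetry to $c\le d$, and extracts a \emph{quantitative} subadditivity defect $aC_b(1+c)^{b/2}$ from the power term which then absorbs the logarithmic loss $\tfrac{k}{2}\ln(1+c)$. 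The paper's argument is slicker and works for any function whose second derivative has a single sign change from $+$ to $-$; yours is more explicit and in fact yields the stronger information that $f(c)+f(d)-f(c+d)\to+\infty$ as $\min(c,d)\to\infty$, which is more than the lemma requires but a nice structural byproduct.
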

\begin{proof}
We easily compute that 
\[
f ' (x) =  a \frac b 2 ( 1 + x )^{\frac b 2-1} -  \frac k 2  \frac 1 {1 + x}, \quad x  \in \R, \quad  x \ge 0,
\]
we easily we have there exists a constant $e$ which may depend on $k, a, b$ such that
\[
f'(x) <0 ,  \quad \hbox{if} \quad  x \le e, \quad  f'(x) \ge 0,  \quad \hbox{if} \quad x \ge e,
\]
so we have
\[
\max_{ 0 \le c \le d} f(c) -f(d) =f(0) -f(e) \le  C_{k, a, b} , \quad \hbox{if} \quad e >0,  \quad \max_{ 0 \le c \le d } f(c) -f(d) = 0, \quad \hbox{if} \quad e \le 0,
\]
so the first statement is thus proved. For the second statement since 
\[
f '' (x) = a  \frac b 2(\frac b  2 -1) (1 + x )^{b/2-2} +  \frac k 2  \frac 1 {(1+x)^2}, \quad  x \ge 0,
\]
Since $0 < b/2 <1$, there exists a constant $f$ which may depend on $k, a, b$ such that
\[
f'' (x) \ge 0 , \quad \hbox{if} \quad   x \le f, \quad  f''(x) < 0,  \quad \hbox{if} \quad  x \ge f.
\]
Thus for any  $c, d \ge 0$, we easily compute that
\[
f(c + d) -f(c)  - f(d) + f(0) = \int_0^d \int_{0}^c f''( x + s) ds dx \le \int_0^{\max \{f, 0\}} \int_0^{\max \{f, 0\}}  |f''(x+s)| ds dx \le C_{k, a, b},
\]
so the second statement is thus proved since $f(0 )= a$. 
\end{proof}

\begin{lem}\label{L216} For any constant $k \in \R, a >0, b \in (0, 2)$ we have
\[
\frac {e^{a\langle v \rangle^b }} {e^{ a \langle v' \rangle^b} e^{a\langle v_*' \rangle^b} }   \le C_{k, a, b} \frac {\langle v \rangle^k} {\langle v_* \rangle^{k} \langle v_*' \rangle^{k}},
\]
for some constant $C_{k, a, b}>0$.
\end{lem}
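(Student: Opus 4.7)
The plan is to reduce everything to a single scalar inequality on the auxiliary function
\[
f(x) := a(1+x)^{b/2} - \tfrac{k}{2}\log(1+x), \qquad x\ge 0,
\]
which is exactly the function treated by Lemma~\ref{L215}. The reason this is the right object to track is that
\[
e^{f(|v|^2)} = \frac{e^{a\langle v\rangle^b}}{\langle v\rangle^k},
\]
so the target inequality becomes, up to a multiplicative constant, the three-term estimate $f(|v|^2) \le f(|v'|^2) + f(|v_*'|^2) + \mathrm{const}$, a statement purely about the allowed magnitudes at a binary collision.

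The kinematic input I would use is just energy conservation $|v|^2+|v_*|^2=|v'|^2+|v_*'|^2$, from which one immediately extracts the single scalar inequality $|v|^2 \le |v'|^2+|v_*'|^2$. I would then chain the two parts of Lemma~\ref{L215}: first the monotonicity $f(c)\le f(d)+C_{k,a,b}$ applied with $c=|v|^2$, $d=|v'|^2+|v_*'|^2$ to obtain $f(|v|^2)\le f(|v'|^2+|v_*'|^2)+C_{k,a,b}$; then the sub-additivity $f(c+d)\le f(c)+f(d)+C_{k,a,b}$ applied with $c=|v'|^2$, $d=|v_*'|^2$ to obtain $f(|v'|^2+|v_*'|^2)\le f(|v'|^2)+f(|v_*'|^2)+C_{k,a,b}$. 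Adding the two estimates, unfolding the definition of $f$, and exponentiating then yields
\[
\frac{e^{a\langle v\rangle^b}}{e^{a\langle v'\rangle^b}e^{a\langle v_*'\rangle^b}} \le e^{2C_{k,a,b}}\,\frac{\langle v\rangle^k}{\langle v'\rangle^k\langle v_*'\rangle^k},
\]
which is the stated estimate; to match the denominator $\langle v_*\rangle^k\langle v_*'\rangle^k$ one then uses the parallel energy-conservation bound $\langle v_*\rangle^2\le \langle v'\rangle^2+\langle v_*'\rangle^2$, together with the elementary fact $(x+y)^k\le C_k(x^k+y^k)$ for $k\ge 0$, to trade one factor of $\langle v'\rangle$ for $\langle v_*\rangle$ modulo a harmless multiplicative constant absorbed into $C_{k,a,b}$.

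The main obstacle is not any single step but the bookkeeping of the constants coming out of Lemma~\ref{L215}: chaining its monotonicity and sub-additivity statements accumulates several additive errors that must all be absorbed into one final $C_{k,a,b}$ depending only on $k,a,b$ and not on $v,v_*,v',v_*'$. It is worth noting that conservation of momentum plays no role in this argument—the whole proof is driven by the scalar energy relation among $|v|,|v_*|,|v'|,|v_*'|$—which is why the right abstraction is precisely the one-variable function $f$ of Lemma~\ref{L215} and not any genuinely multi-dimensional estimate.
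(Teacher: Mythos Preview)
Your core argument---reducing to the scalar function $f(x)=a(1+x)^{b/2}-\tfrac{k}{2}\log(1+x)$ and chaining the monotonicity and sub-additivity parts of Lemma~\ref{L215} via the energy relation $|v|^2\le |v'|^2+|v_*'|^2$---is exactly what the paper does, and it correctly produces
\[
\frac{e^{a\langle v\rangle^b}}{e^{a\langle v'\rangle^b}e^{a\langle v_*'\rangle^b}}\le C_{k,a,b}\,\frac{\langle v\rangle^k}{\langle v'\rangle^k\langle v_*'\rangle^k}.
\]
This is precisely the inequality the paper's own proof establishes and the form actually used downstream (Lemma~\ref{L37}); the $\langle v_*\rangle$ in the printed statement is a typo for $\langle v'\rangle$.

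Your final step, attempting to pass from the $\langle v'\rangle$ version to the literal $\langle v_*\rangle$ version, is where the argument breaks. The ingredients you cite, $\langle v_*\rangle^2\le\langle v'\rangle^2+\langle v_*'\rangle^2$ together with $(x+y)^k\le C_k(x^k+y^k)$, only give an \emph{upper} bound $\langle v_*\rangle^k\le C_k(\langle v'\rangle^k+\langle v_*'\rangle^k)$. To replace $\langle v'\rangle^{-k}$ by $\langle v_*\rangle^{-k}$ you would instead need a uniform \emph{lower} bound $\langle v'\rangle\gtrsim\langle v_*\rangle$, which is false: take $v=0$, $|v_*|$ large, and $\theta$ near $0$, so that $v'\approx 0$ while $\langle v_*\rangle$ is arbitrarily large. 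Indeed, for $k<0$ the inequality as literally printed fails at this same configuration (the left side stays bounded below while the right side tends to $0$), so no argument can rescue it. You should simply stop after obtaining the $\langle v'\rangle^k\langle v_*'\rangle^k$ form.

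One small omission: Lemma~\ref{L215} is stated only for $k\ge 0$, so your chain covers only that range. The paper handles $k\le 0$ separately in one line, using $\langle v\rangle\le\langle v'\rangle\langle v_*'\rangle$ (from $1+|v|^2\le(1+|v'|^2)(1+|v_*'|^2)$) and the fact that the exponential ratio on the left is $\le 1$.
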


\begin{proof}
The case $k \le 0$ is easy, since $b \in (0, 2) $ we easily have
\[
\frac {e^{a \langle v \rangle^b }} {e^{ a \langle v' \rangle^b } e^{a \langle v_*' \rangle^b } }   \le 1 \le  \frac {\langle v \rangle^k} {\langle v_* \rangle^{k} \langle v_*' \rangle^{k}}.
\]
We focus on the case $k \ge 0$  later. This is equivalent to prove that 
\[
a\langle v \rangle^{b} - k \ln \langle v \rangle \le a\langle v' \rangle^{b } - k \ln \langle v' \rangle + a\langle v_*' \rangle^{b} - k \ln \langle v_*' \rangle  +C_{k, a, b},
\]
for some constant $C_{k, a, b}$ independent of $v$. By Lemma \ref{L215} we have
\begin{equation*}
\begin{aligned}
a \langle v \rangle^{b } - k \ln \langle v \rangle =f(|v|^2) \le& f(|v_*'|^2 +|v'|^2) +C_{k, a, b } 
\\
\le& f(|v_*'|^2) +f(|v'|^2) +C_{k, a, b} = a \langle v' \rangle^{b} - k \ln \langle v' \rangle + a \langle v_*' \rangle^{b} - k \ln \langle v_*' \rangle  +C_{k, a, b},
\end{aligned}
\end{equation*}
where $f$ is defined in Lemma \ref{L215}, so the proof is thus finished. 
\end{proof}

We also recall some basic interpolation on $x$, the proof is elementary and thus omitted. 
\begin{lem}
For any non-negative integer $m, n$,  for any function $f$, for any constant $ k \in \R$ we have
\[
\Vert f g \Vert_{H^2_x} \lesssim \min_{m + n = 2} \{ \Vert f \Vert_{H^m_x} \Vert g \Vert_{H^n_x}  \},
\]
and 
\begin{equation}
\label{Linfty}
\Vert f \Vert_{L^\infty_x L^2_v} \lesssim \Vert f \Vert_{H^{8/5}_x L^2_v} \le \Vert f \langle v \rangle^{\frac 3 2 k}\Vert_{H^1_{x}L^2_v}^{2/5} \Vert f\langle v \rangle^{-k} \Vert_{H^2_xL^2_x }^{3/5} \lesssim \Vert f \langle v \rangle^{\frac 3 2 k} \Vert_{H^1_{x}L^2_v}+ \Vert f \langle v \rangle^{-k}  \Vert_{H^2_xL^2_v} ,
\end{equation}
also we have
\begin{equation}
\label{L3}
\Vert f  \Vert_{L^3_xL^2_v}  \lesssim \Vert f  \Vert_{H^{1/2}_x L^2_v } \lesssim\Vert f \langle v \rangle^{k} \Vert_{L^2_{x}}+ \Vert f \langle v \rangle^{-k} \Vert_{H^1_x}.
\end{equation}
\end{lem}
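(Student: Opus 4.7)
The plan is to treat the three inequalities as standard consequences of Sobolev embeddings on $\T^3$ combined with Leibniz and H\"older, exactly as the author asserts, presenting them in the order they appear.

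For the product estimate $\|fg\|_{H^2_x}\lesssim\min_{m+n=2}\{\|f\|_{H^m_x}\|g\|_{H^n_x}\}$ I would first apply the Leibniz rule to each $\partial^\alpha(fg)$ with $|\alpha|\le 2$ and decompose into a finite sum of terms $(\partial^{\alpha_1}f)(\partial^{\alpha_2}g)$ with $|\alpha_1|+|\alpha_2|\le 2$. Each such term is estimated by H\"older in $x$, placing one factor in $L^\infty$ and the other in $L^2$ (or both in $L^4$ when $|\alpha_1|=|\alpha_2|=1$), and invoking the Sobolev embeddings $H^2(\T^3)\hookrightarrow L^\infty(\T^3)$ (valid because $2>3/2$) and $H^1(\T^3)\hookrightarrow L^4(\T^3)$. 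The flexibility in placing the $L^\infty$ factor on either $f$ or $g$ is precisely the source of the minimum over the split $m+n=2$.

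For the embedding $\|f\|_{L^\infty_xL^2_v}\lesssim\|f\|_{H^{8/5}_xL^2_v}$, the idea is to fix $v$, apply the scalar Sobolev embedding $H^{8/5}(\T^3)\hookrightarrow L^\infty(\T^3)$ (which holds since $8/5>3/2$), square in $v$, integrate in $v$, and then swap the $v$-integral and the $x$-Bessel potential, using that $\langle D_x\rangle^{8/5}$ acts only on $x$ and hence commutes with integration in $v$. The interpolation step $\|\cdot\|_{H^{8/5}_xL^2_v}\le\|f\langle v\rangle^{3k/2}\|_{H^1_xL^2_v}^{2/5}\|f\langle v\rangle^{-k}\|_{H^2_xL^2_v}^{3/5}$ is the complex (or real) interpolation inequality for the $L^2$-based Sobolev scale $H^s_x$, with the decomposition of $f$ into $(f\langle v\rangle^{3k/2})^{2/5}\cdot(f\langle v\rangle^{-k})^{3/5}$ chosen so that $8/5=\tfrac{2}{5}\cdot 1+\tfrac{3}{5}\cdot 2$ at the level of derivatives and $0=\tfrac{2}{5}(3k/2)+\tfrac{3}{5}(-k)$ at the level of the $v$-weight. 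The final $\lesssim$ is Young's inequality $a^{2/5}b^{3/5}\le a+b$.

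For the $L^3_xL^2_v$ estimate the recipe is identical with different exponents: $H^{1/2}(\T^3)\hookrightarrow L^3(\T^3)$ by Sobolev since $1/3=1/2-(1/2)/3$, and then one interpolates $H^{1/2}$ between $L^2$ and $H^1$ at the midpoint $1/2=\tfrac{1}{2}\cdot 0+\tfrac{1}{2}\cdot 1$, choosing weights $\langle v\rangle^k$ and $\langle v\rangle^{-k}$ so that $\tfrac{1}{2}k+\tfrac{1}{2}(-k)=0$. The only mildly nontrivial bookkeeping is to verify these matching identities on the differential order and on the $v$-weight so that the interpolation exponents come out exactly as $(2/5,3/5)$ and $(1/2,1/2)$, respectively; no genuine obstacle arises since all the underlying inequalities are textbook.
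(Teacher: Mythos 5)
The paper itself offers no proof (it is dismissed as ``elementary and thus omitted''), so the only question is whether your sketch closes. Your treatment of \eqref{Linfty} and \eqref{L3} is essentially correct and is surely the intended argument: apply the scalar embeddings $H^{8/5}(\T^3)\hookrightarrow L^\infty(\T^3)$ and $H^{1/2}(\T^3)\hookrightarrow L^3(\T^3)$ at fixed $v$ and then integrate in $v$ (for \eqref{L3} you should say explicitly that you also need Minkowski's inequality to pass from $L^3_xL^2_v$ to $L^2_vL^3_x$, which is legitimate since $3\ge 2$). The interpolation steps are better justified not by a pointwise ``decomposition of $f$'' (the $H^s_x$ norm is not an $L^p$ norm, so pointwise H\"older does not apply directly) but by Plancherel in $x$ plus H\"older in $\xi$ and in $v$: one checks $\langle\xi\rangle^{16/5}=(\langle\xi\rangle^{2})^{2/5}(\langle\xi\rangle^{4})^{3/5}$ and $1=(\langle v\rangle^{3k})^{2/5}(\langle v\rangle^{-2k})^{3/5}$, which is exactly your exponent bookkeeping, and the same scheme with exponents $(1/2,1/2)$ gives \eqref{L3}.

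The first product estimate is where your proposal has a genuine gap. As printed, $\Vert fg\Vert_{H^2_x}\lesssim\min_{m+n=2}\Vert f\Vert_{H^m_x}\Vert g\Vert_{H^n_x}$ cannot be proved because it is false: take $g\equiv 1$ and $f\in L^2(\T^3)\setminus H^2(\T^3)$; the split $(m,n)=(0,2)$ makes the right-hand side finite while $fg=f\notin H^2_x$. Your Leibniz argument does not, and cannot, produce the minimum: in the expansion of $\partial^\alpha(fg)$ with $|\alpha|=2$ the terms $(\partial^\alpha f)g$ and $f(\partial^\alpha g)$ put both derivatives on a single factor, so they force $\Vert f\Vert_{H^2_x}$ (respectively $\Vert g\Vert_{H^2_x}$) no matter where you place the $L^\infty$ norm; the ``flexibility in placing the $L^\infty$ factor'' only concerns the underived factor and at best yields the algebra bound $\Vert fg\Vert_{H^2_x}\lesssim\Vert f\Vert_{H^2_x}\Vert g\Vert_{H^2_x}$. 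The statement actually used in the paper (proof of Lemma \ref{L41}, where the product of the two scalar functions of $x$ is measured in $L^2_x$ after Cauchy--Schwarz against the third factor) is $\Vert fg\Vert_{L^2_x}\lesssim\min_{m+n=2}\Vert f\Vert_{H^m_x}\Vert g\Vert_{H^n_x}$, i.e.\ the left-hand norm should be $L^2_x$; that version follows at once from H\"older together with $H^2(\T^3)\hookrightarrow L^\infty$ for the splits $(0,2)$ and $(2,0)$ and $H^1(\T^3)\hookrightarrow L^4$ for $(1,1)$, with no Leibniz rule at all. You should prove this corrected statement (or flag the misprint); as written, your first paragraph asserts a false inequality and the mechanism you propose for the minimum does not close.
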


\section{Linearized and nonlinear estimate for the Boltzmann operator}\label{section3}

In this chapter we will prove linearized and nonlinear estimate for the collision operator $Q$. 
\begin{lem}\label{L31}
For any $-3 <\gamma \le 1$, for any $k \ge 4$, $h, g$ smooth, we have
\begin{equation*}
\begin{aligned} 
|(Q (h, \mu), g \langle v \rangle^{2k}) | \le &  \Vert b(\cos \theta) \sin^{k-\frac {3+\gamma} 2} \frac \theta 2 \Vert_{L^1_\theta}   \Vert  h   \Vert_{L^2_{ k+\gamma/2, * } }\Vert g \Vert_{ {L}^2_{ k+\gamma/2, * } } + C_k \Vert h \Vert_{L^2_{k+\gamma/2-1/2}}\Vert g \Vert_{L^2_{k+\gamma/2-1/2}}
\\
\le & \Vert b(\cos \theta) \sin^{k-2} \frac \theta 2 \Vert_{L^1_\theta}   \Vert  h   \Vert_{L^2_{ k+\gamma/2, * } }\Vert g \Vert_{ {L}^2_{ k+\gamma/2, * } } + C_k \Vert h \Vert_{L^2_{k+\gamma/2-1/2}}\Vert g \Vert_{L^2_{k+\gamma/2-1/2}},
\end{aligned}
\end{equation*}
for some constant $C_k>0$.
\end{lem}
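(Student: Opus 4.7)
The plan is to split $Q(h,\mu) = Q^+(h,\mu) - Q^-(h,\mu)$ and handle the two halves separately. The loss term $Q^-$ is the easy part: the factor $\mu(v)$ provides Gaussian decay of $\mu(v)\langle v\rangle^{2k}$, so a straight Cauchy--Schwarz argument, combined with the standard bound $\int|v-v_*|^\gamma\mu(v)\,dv\lesssim\langle v_*\rangle^\gamma$ (valid for $\gamma\in(-3,1]$), controls $|(Q^-(h,\mu), g\langle v\rangle^{2k})|$ by $C_k\|h\|_{L^2_{k+\gamma/2-1/2}}\|g\|_{L^2_{k+\gamma/2-1/2}}$, which is absorbed into the stated error term.

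For $Q^+$, I first apply the pre-post collisional change of variables from Lemma \ref{L23} to rewrite
\[
(Q^+(h,\mu), g\langle v\rangle^{2k}) = \int b(\cos\theta)\,|v-v_*|^\gamma\,h(v_*)\,\mu(v)\,g(v')\,\langle v'\rangle^{2k}\,d\sigma\,dv_*\,dv.
\]
Writing $\langle v'\rangle^{2k}=\langle v'\rangle^k\cdot\langle v'\rangle^k$ and applying Lemma \ref{L27} to just one factor through $\sin^k(\theta/2)\langle v_*\rangle^k=\langle v'\rangle^k-R_1-R_2$, the integral splits as
\[
\underbrace{\int b\sin^k(\tfrac\theta 2)\,|v-v_*|^\gamma\,\bigl(h(v_*)\langle v_*\rangle^k\bigr)\mu(v)\bigl(g(v')\langle v'\rangle^k\bigr)\,d\sigma\,dv_*\,dv}_{I_{\mathrm{main}}}+I_{\mathrm{err}},
\]
where $I_{\mathrm{err}}$ carries the factor $(R_1+R_2)\langle v'\rangle^k$. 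For $I_{\mathrm{main}}$ I invoke Lemma \ref{L22} (first form) with $\tilde b=b\sin^k(\theta/2)$, $f_*=h\langle\cdot\rangle^k$, $g=\mu(v)$, and $h'=g\langle\cdot\rangle^k$: the angular $L^1_\theta$-factor becomes exactly $\|b\sin^{k-(3+\gamma)/2}(\theta/2)\|_{L^1_\theta}$, while the two spatial factors, after using $\int|v-v_*|^\gamma\mu(v)\,dv\lesssim\langle v_*\rangle^\gamma$ together with the equivalence $L^2_{k+\gamma/2,*}\sim L^2_{k+\gamma/2}$ noted in the preliminaries, reassemble to $\|h\|_{L^2_{k+\gamma/2,*}}\|g\|_{L^2_{k+\gamma/2,*}}$, giving exactly the leading term on the right-hand side.

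For $I_{\mathrm{err}}$, the pointwise bounds $|R_1|\lesssim\sin^2(\theta/2)\langle v_*\rangle^{k-1}\langle v\rangle$ and $|R_2|\lesssim\langle v\rangle^k$ supply either extra angular integrability or extra powers of $\langle v\rangle$ that the Gaussian $\mu(v)$ can absorb via $\mu(v)\langle v\rangle^k\lesssim\mu^{1/2}(v)$. The remaining $\langle v'\rangle^k$ on the $g$-side is then itself bounded by $C_k\bigl(\sin^k(\theta/2)\langle v_*\rangle^k+\langle v\rangle^k\bigr)$, allowing another application of Lemma \ref{L22}. The asymmetric weight distributions that naturally arise (of the form $\|h\|_{L^2_{k-1+\gamma/2}}\|g\|_{L^2_{k+\gamma/2}}$) are rebalanced by distributing a half-power symmetrically through $\langle v_*\rangle^{1/2}\lesssim\langle v'\rangle^{1/2}+\langle v_*'\rangle^{1/2}$ (from $\langle v_*\rangle^2\le\langle v'\rangle^2+\langle v_*'\rangle^2$) together with $\langle v_*'\rangle\le\langle v\rangle+|v-v_*|\cos(\theta/2)$. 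This bookkeeping is the main technical obstacle, since we must place the half-derivative loss evenly on $h$ and $g$ to land in $\|h\|_{L^2_{k+\gamma/2-1/2}}\|g\|_{L^2_{k+\gamma/2-1/2}}$ rather than in an asymmetric norm. Finally, since $\gamma\le 1$ gives $-(3+\gamma)/2\ge-2$ and $\sin(\theta/2)\le 1$, we have $\sin^{k-(3+\gamma)/2}(\theta/2)\le\sin^{k-2}(\theta/2)$ on the support of $b$, which yields the second, coarser form of the inequality at once.
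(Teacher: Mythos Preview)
Your approach is essentially the paper's: split $Q=Q^+-Q^-$, apply the pre-post change of variables to $Q^+$, expand one factor of $\langle v'\rangle^k$ via Lemma~\ref{L27}, treat the main piece by the first inequality of Lemma~\ref{L22}, and absorb the $R_1,R_2$ remainders (together with $Q^-$) into the error norm by redistributing half-powers between $h$ and $g$. One small point worth tightening: for the leading term you do not need the detour through $\int|v-v_*|^\gamma\mu(v)\,dv\lesssim\langle v_*\rangle^\gamma$ and the equivalence $L^2_{k+\gamma/2,*}\sim L^2_{k+\gamma/2}$, which would cost an uncontrolled multiplicative constant in front of $\|b\sin^{k-(3+\gamma)/2}(\theta/2)\|_{L^1_\theta}$; the two spatial factors emerging from Lemma~\ref{L22} are already, after relabeling, \emph{exactly} $\|h\|_{L^2_{k+\gamma/2,*}}$ and $\|g\|_{L^2_{k+\gamma/2,*}}$ by the very definition of that starred norm, so the sharp constant in the statement comes for free.
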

\begin{proof}
We first compute $Q^+$, by pre-post collisional change of variables and Lemma \ref{L27} we have
\begin{equation*}
\begin{aligned} 
(Q^+ (h, \mu), g \langle v \rangle^{2k}) =&\int_{\R^3}\int_{\R^3}\int_{\mathbb{S}^2} |v-v_*|^\gamma b(\cos \theta) h(v_*) \mu(v) g(v') \langle v' \rangle^{2k}  dv dv_* d \sigma
\\
=& \int_{\R^3}\int_{\R^3}\int_{\mathbb{S}^2}  |v-v_*|^\gamma  b(\cos \theta) \sin^k \frac \theta 2 h(v_*) \langle v_* \rangle^k \mu(v) g(v') \langle v' \rangle^k dv dv_* d \sigma
\\
&+ \int_{\R^3}\int_{\R^3}\int_{\mathbb{S}^2} |v-v_*|^\gamma  b(\cos \theta)  h(v_*) \mu(v)   g(v') \langle v' \rangle^k   R_1dv dv_* d \sigma
\\
&+ \int_{\R^3}\int_{\R^3}\int_{\mathbb{S}^2}  |v-v_*|^\gamma  b(\cos \theta)   h(v_*)  \mu(v)   g(v') \langle v' \rangle^k R_2 dv dv_* d \sigma
:= \Gamma_1 +\Gamma_2+\Gamma_3,
\end{aligned}
\end{equation*}
where 
\[
|R_1| \le C_k \sin^2 \frac \theta 2 \langle v_* \rangle^{k-1}  \langle v \rangle, \quad |R_2 |  \le C_k \langle v \rangle^k.
\] 
For the $\Gamma_1$ term,  by the first inequality of Lemma \ref{L22}
\begin{equation*}
\begin{aligned}
|\Gamma_1| \le &\int_{\mathbb{S}^2} b(\cos \theta) \sin^{k-\frac {3+\gamma} 2} \frac \theta 2   d\sigma \left(\int_{\R^3}\int_{\R^3}  |v-v_*|^\gamma |h_*|^2 \langle v_* \rangle^{2k} \mu dv dv_* \right)^{1/2}  
\\
&\left(\int_{\R^3}\int_{\R^3}  |v-v'|^\gamma \mu |g'|^2\langle v' \rangle^{2k}dv dv' \right)^{1/2}  =  \int_{\mathbb{S}^2} b(\cos \theta) \sin^{k-\frac {3+\gamma} 2} \frac \theta 2   d\sigma \Vert h \Vert_{L^2_{k+\gamma/2, *}}\Vert g \Vert_{L^2_{k+\gamma/2, *}}.
\end{aligned}
\end{equation*}
For the $\Gamma_2$ term, since $\frac {3+\gamma} 2 \le 2$, by Lemma \ref{L22} we have
\begin{equation*}
\begin{aligned}
|\Gamma_2| \lesssim& \int_{\R^3} \int_{\R^3} \int_{\mathbb{S}^2} b(\cos \theta) \sin^{2}  \frac \theta 2 |v-v_*|^\gamma  |h_*| \langle v_* \rangle^{k-1}  \mu  \langle v \rangle   |g'|\langle v' \rangle^k dvdv_* d\sigma
\\
\lesssim& \int_{\R^3} \int_{\R^3} \int_{\mathbb{S}^2} b(\cos \theta) \sin^{2}  \frac \theta 2  |v-v_*|^\gamma   |h_*|   \langle v_* \rangle^{k-1/2} \mu \langle v \rangle^{2}   |g'|\langle v' \rangle^{k-1/2} dvdv_* d\sigma
\\
\lesssim & \int_{\mathbb{S}^2} b(\cos \theta) \sin^{2-\frac {3+\gamma} 2} \frac \theta 2 d\sigma  \left( \int_{\R^3}\int_{\R^3}  |v-v_*|^\gamma |h_*|^2 \langle v_* \rangle^{2k-1} \langle v \rangle^{2}\mu  dv dv_*  \right)^{1/2} 
\\
&\left(\int_{\R^3}\int_{\R^3}  |v-v'|^\gamma \langle v \rangle^{2} \mu  |g'|^2\langle v' \rangle^{2k-1}dv dv' \right)^{1/2}
\lesssim \Vert h \Vert_{L^2_{k+\gamma/2-1/2}}\Vert g \Vert_{L^2_{k+\gamma/2-1/2}}.
\end{aligned}
\end{equation*}
For the $\Gamma_3$ term, since $k-1 \ge 3$, by Lemma \ref{L27} we have
\[
\langle v' \rangle^{k-1} \le C_k \sin^2 \frac \theta 2  \langle v_* \rangle^{k-1} + C_k \langle v \rangle^{k-1},
\]
thus we split $\Gamma_3$ into 
\begin{equation*}
\begin{aligned} 
|\Gamma_3|  \lesssim& \int_{\R^3}\int_{\R^3}\int_{\mathbb{S}^2}  |v-v_*|^\gamma  b(\cos \theta)   |h_*|  \mu \langle v \rangle^k   |g'| \langle v' \rangle^k dv dv_* d \sigma
\\
\lesssim&\int_{\R^3} \int_{\R^3} \int_{\mathbb{S}^2} b(\cos \theta) \sin^{2}  \frac \theta 2 |v-v_*|^\gamma  |h_*| \langle v_* \rangle^{k-1}  \mu  \langle v \rangle^{k}   |g'|\langle v' \rangle dvdv_* d\sigma
\\
&+\int_{\R^3} \int_{\R^3} \int_{\mathbb{S}^2} b(\cos \theta)  |v-v_*|^\gamma  |h_*|   \mu  \langle v \rangle^{2k-1}   |g'|  \langle v' \rangle   dv dv_* d\sigma : =\Gamma_{31} + \Gamma_{32},
\end{aligned}
\end{equation*}
the $\Gamma_{31}$ term can be proved the same way as the $\Gamma_2$ term. For $\Gamma_{32}$ term,  by the regular change of variables and Lemma \ref{L28} if $ k \ge 4$ we have 
\begin{equation*}
\begin{aligned}
|\Gamma_{32}| &\lesssim \int_{\R^3} \int_{\R^3} \int_{\mathbb{S}^2} b(\cos \theta) |v-v_*|^\gamma |h_*| \mu \langle v \rangle^{2k -1 } |g'| \langle v' \rangle   dvdv_* d\sigma
\\
&\lesssim \int_{\R^3} \int_{\R^3}  \int_{\mathbb{S}^2}b(\cos \theta) |v-v_*|^\gamma |h_*| \langle v_* \rangle^{1/2}  \mu \langle v \rangle^{ 2 k  -1 /2}  |g'| \langle v' \rangle^{1/2} dvdv_*  d \sigma
\\
&\lesssim\int_{\mathbb{S}^2} b(\cos \theta) \cos^{-(3 +\gamma ) } \frac \theta 2\int_{\R^3}\int_{\R^3}  |v'-v_*|^\gamma |h_*| \langle v_* \rangle^{1/2 }  |g'| \langle v' \rangle^{1/2  }  dv' dv_*   
\\
&\lesssim \Vert h \Vert_{L^{2}_{k+\gamma/2-1/2}}\Vert g \Vert_{L^{2}_{k+\gamma/2-1/2}},
\end{aligned}
\end{equation*}
the $Q^+$ term  is thus proved.  For the $Q^-$ part by Lemma \ref{L28} we have
\begin{equation*}
\begin{aligned} 
|(Q^-(h, \mu ), g \langle \cdot \rangle^{2k})| = & \left| \int_{\R^3}\int_{\R^3}\int_{\mathbb{S}^2} |v-v_*|^\gamma  b(\cos \theta) h(v_*) \mu(v)   g(v) \langle v \rangle^{2k}  dv dv_* d \sigma \right|
\\
\lesssim &\int_{\R^3}\int_{\R^3}\int_{\mathbb{S}^2} |v-v_*|^\gamma  b(\cos \theta) |h(v_*)|  |g(v)| dv dv_* d \sigma\lesssim  \Vert h \Vert_{L^2_{k+\gamma/2-1/2}}\Vert g \Vert_{L^2_{k+\gamma/2-1/2}},
\end{aligned}
\end{equation*}
 the theorem is thus proved by combing the $Q^+$ and $Q^-$ term. 
\end{proof}

\begin{lem}\label{L32}
Suppose that $- 3< \gamma \le 1$, then for any smooth function $f$ we have
\begin{equation*}
\begin{aligned}
(Q(\mu ,  f), f \langle v \rangle^{2k} )\le&  -\Vert  b(\cos \theta) (1- \cos^{k- \frac {3+ \gamma} 2} \frac \theta 2  )\Vert_{L^1_\theta}\Vert f \Vert_{L^2_{k+\gamma/2 ,  * }}^2 + C_{k}  \Vert f \Vert_{L^2_{k+\gamma/2-1/2}}^2,
\end{aligned}
\end{equation*}
for some constant $C_k>0$. In particular if $ k  \ge 4$  we have
\begin{equation*}
\begin{aligned}
(Q(\mu ,  f), f \langle v \rangle^{2k} )\le&  -\Vert  b(\cos \theta)  \sin^2 \frac \theta 2\Vert_{L^1_\theta}\Vert f \Vert_{L^2_{k+\gamma/2 ,  * }}^2 + C_{k}  \Vert f \Vert_{L^2_{k+\gamma/2-1/2}}^2.
\end{aligned}
\end{equation*}
\end{lem}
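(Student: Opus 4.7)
The plan is to decompose $Q=Q^+-Q^-$, handle the loss term trivially, and attack the gain term via the pre-post collisional change of variable (Lemma~\ref{L23}) combined with the cosine-type expansion of $\langle v'\rangle^k$ given by \eqref{v' estimate 2}. The structure is dual to the proof of Lemma~\ref{L31}: since $\mu$ now sits in the first slot, the relevant expansion is $\langle v'\rangle^k=\cos^k(\theta/2)\langle v\rangle^k+(\text{correction})$ rather than $\sin^k(\theta/2)\langle v_*\rangle^k+(\text{correction})$, and consequently one invokes the \emph{second} inequality of Lemma~\ref{L22} (carrying the $\cos^{-(3+\gamma)/2}$ factor) in place of the first.

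The loss piece is immediate from the very definition of $\Vert\cdot\Vert_{L^2_{k+\gamma/2,*}}$, namely
\[
(-Q^-(\mu,f),f\langle v\rangle^{2k})=-\Vert b\Vert_{L^1_\theta}\Vert f\Vert^2_{L^2_{k+\gamma/2,*}}.
\]
For the gain piece, Lemma~\ref{L23} rewrites it as $\int B\,\mu(v_*)f(v)f(v')\langle v'\rangle^{2k}\,d\sigma dv_* dv$. I would split $\langle v'\rangle^{2k}=\langle v'\rangle^k\cdot\langle v'\rangle^k$ and apply \eqref{v' estimate 2} to exactly one factor, isolating the main piece $\cos^k(\theta/2)\langle v\rangle^k\langle v'\rangle^k$. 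Feeding this into the second inequality of Lemma~\ref{L22} with $f_*=\mu(v_*)$, $g=f\langle v\rangle^k$, $h'=f'\langle v'\rangle^k$ and the modified angular weight $\tilde b(\cos\theta)=b(\cos\theta)\cos^k(\theta/2)$ makes both double integrals on the right equal to $\Vert f\Vert^2_{L^2_{k+\gamma/2,*}}$ (the one carrying $|v_*-v'|^\gamma$ after simply relabeling $v'\mapsto v$), yielding the bound $\Vert b\cos^{k-(3+\gamma)/2}(\theta/2)\Vert_{L^1_\theta}\Vert f\Vert^2_{L^2_{k+\gamma/2,*}}$ on the main piece. Combined with the loss term this produces the $1-\cos^{k-(3+\gamma)/2}(\theta/2)$ dissipation coefficient appearing in the statement.

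The three remainder pieces coming from \eqref{v' estimate 2} each carry a sine factor: $\sin^{k-2}(\theta/2)$ from $L_1$, $\sin^2(\theta/2)$ from $L_2$, and $\sin(\theta/2)$ from the linear-in-$\omega$ term after using the elementary bound $|v-v_*||v\cdot\omega|\le 2|v||v_*|$ (which follows from $v\cdot\omega=v_*\cdot\omega$). In each case I would estimate the outer $\langle v'\rangle^k$ by $C(\langle v\rangle^k+\langle v_*\rangle^k)$, absorb the extra $\langle v_*\rangle^j$ factors into $\mu(v_*)^{1/2}$, and run the appropriate inequality in Lemma~\ref{L22}. Since $k\ge 4$ and $\gamma\le 1$, the angular integrals $\int b(\cos\theta)\sin^j(\theta/2)\cos^{-(3+\gamma)/2}(\theta/2)\,d\sigma$ with $j\ge 1$ are finite, and the loss of half a power of $\langle v\rangle$ in the Cauchy--Schwarz splitting produces the advertised remainder bound $C_k\Vert f\Vert^2_{L^2_{k+\gamma/2-1/2}}$.

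The ``in particular'' statement for $k\ge 4$ is then immediate: since $\gamma\le 1$ gives $k-(3+\gamma)/2\ge 2$, one has $\cos^{k-(3+\gamma)/2}(\theta/2)\le\cos^2(\theta/2)=1-\sin^2(\theta/2)$ pointwise, whence $1-\cos^{k-(3+\gamma)/2}(\theta/2)\ge\sin^2(\theta/2)$. The main obstacle in the argument is the linear-in-$\omega$ remainder, which a priori has only one factor of $\sin(\theta/2)$ rather than $\sin^2(\theta/2)$; it is saved because the spherical Jacobian $d\sigma=2\sin(\theta/2)\cos(\theta/2)\,d\theta\,d\phi$ supplies an additional $\sin(\theta/2)$, reducing the critical angular integral to $\int b\cos(\theta/2)\sin^{(1-\gamma)/2}(\theta/2)\,d\theta$, which is finite for every $\gamma\in(-3,1]$.
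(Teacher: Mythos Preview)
Your overall strategy is the same as the paper's: handle $Q^-$ directly, apply the pre-post change of variables to $Q^+$, expand $\langle v'\rangle^k$ via Lemma~\ref{L27}, and use the second inequality of Lemma~\ref{L22} to extract the $\cos^{k-(3+\gamma)/2}(\theta/2)$ coefficient on the leading piece. The leading term and the $Q^-$ term are handled correctly, and the ``in particular'' claim follows as you say.

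The gap is in your remainder estimate. After replacing $\langle v'\rangle^k$ by $\langle v\rangle^k$ in your split $\langle v'\rangle^k\le C(\langle v\rangle^k+\langle v_*\rangle^k)$, the weights become unbalanced: for, say, the linear-in-$\omega$ piece you obtain an integrand of order $b\sin(\theta/2)|v-v_*|^\gamma\mu_*\langle v_*\rangle\,|f(v)|\,\langle v\rangle^{2k-1}\,|f(v')|$, with weight $\langle v\rangle^{2k-1}$ on $f(v)$ and none on $f(v')$. The second inequality of Lemma~\ref{L22} then returns $\|f\|_{L^2_{2k-1+\gamma/2}}\|f\|_{L^2_{\gamma/2}}$, which is not bounded by $C_k\|f\|_{L^2_{k+\gamma/2-1/2}}^2$. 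The same imbalance occurs for the $L_1$ and $L_2$ pieces. Your phrase ``loss of half a power in the Cauchy--Schwarz splitting'' does not come for free here; the half-power must be \emph{engineered} before applying Lemma~\ref{L22}.

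The paper fixes this in two ways that simplify the argument. First, it uses only the coarse consequence of \eqref{v' estimate 2}, namely $\langle v'\rangle^k=\cos^k(\theta/2)\langle v\rangle^k+Q_1$ with $|Q_1|\le C_k\langle v_*\rangle^k\langle v\rangle^{k-1}$, so there is a single remainder $\Gamma_2$ rather than three. Second, instead of splitting the outstanding factor $\langle v'\rangle^k$, it peels off exactly half a power via $\langle v'\rangle^{1/2}\le C\langle v\rangle^{1/2}\langle v_*\rangle^{1/2}$ (from $|v'|^2\le|v|^2+|v_*|^2$), turning $\langle v_*\rangle^k\langle v\rangle^{k-1}\langle v'\rangle^k$ into $\langle v_*\rangle^{k+1/2}\langle v\rangle^{k-1/2}\langle v'\rangle^{k-1/2}$. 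Now the second inequality of Lemma~\ref{L22} gives the balanced bound $C_k\|f\|_{L^2_{k+\gamma/2-1/2}}^2$ directly. Your route could be salvaged by moving weight back from $v$ to $v'$ via $\langle v\rangle\lesssim\langle v'\rangle\langle v_*\rangle$ (which holds under (A4) since $|v-v'|\le|v-v_*|/\sqrt2$), but that renders the initial split pointless.

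Lastly, your ``main obstacle'' paragraph about angular integrability is a red herring in the cutoff setting: under (A2) the kernel $b$ is bounded and under (A4) one has $\cos(\theta/2)\ge1/\sqrt2$, so every angular integral of the form $\int_{\mathbb{S}^2}b(\cos\theta)\sin^j(\theta/2)\cos^{-(3+\gamma)/2}(\theta/2)\,d\sigma$ with $j\ge0$ is trivially finite.
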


\begin{proof}
We first compute $Q^+$, by pre-post collisional change of variables and Lemma \ref{L27}
we have
\begin{equation*}
\begin{aligned} 
(Q^+ (\mu, f), f \langle v \rangle^{2k}) =&\int_{\R^3}\int_{\R^3}\int_{\mathbb{S}^2} |v-v_*|^\gamma b(\cos \theta) \mu(v_*) f(v) f(v') \langle v' \rangle^{2k}  dv dv_* d \sigma
\\
=& \int_{\R^3}\int_{\R^3}\int_{\mathbb{S}^2} |v-v_*|^\gamma  b(\cos \theta) \cos^k \frac \theta 2 \mu(v_*) f (v) \langle v \rangle^k  f(v') \langle v' \rangle^k dv dv_* d \sigma
\\
&+ \int_{\R^3}\int_{\R^3}\int_{\mathbb{S}^2} |v-v_*|^\gamma  b(\cos \theta)  \mu(v_*)  f(v)  g(v') \langle v' \rangle^k  Q_1 dv dv_* d \sigma
: = \Gamma_1 +\Gamma_2,
\end{aligned}
\end{equation*}
where $|Q_1| \le  C_k  \langle v_* \rangle^k  \langle v \rangle^{k-1}$. 
For the $\Gamma_1$ term,  by the second inequality of Lemma \ref{L22} we have
\begin{equation*}
\begin{aligned}
|\Gamma_1| &\le \int_{\mathbb{S}^2} b(\cos \theta) \cos^{k-\frac {3+\gamma} 2} \frac \theta 2   d\sigma \int_{\R^3}\int_{\R^3}  |v-v_*|^\gamma \mu(v_*)  |f|^2 \langle v \rangle^{2k}dv dv_*.
\end{aligned}
\end{equation*}
For the $\Gamma_2$ term, we compute
\begin{equation*}
\begin{aligned}
|\Gamma_2| \lesssim& \int_{\R^3} \int_{\R^3} \int_{\mathbb{S}^2} b(\cos \theta) |v-v_*|^\gamma \mu_*   \langle v_* \rangle^k   |f| \langle v \rangle^{k-1}  |f'|\langle v' \rangle^k dvdv_* d\sigma
\\
\lesssim& \int_{\R^3} \int_{\R^3} \int_{\mathbb{S}^2} b(\cos \theta)  |v-v_*|^\gamma  \mu_* \langle v_* \rangle^{k + 1/2} |f| \langle v \rangle^{k- 1/2}    |f'|   \langle v' \rangle^{k-1/2} dvdv_* d\sigma
\\
\lesssim &  \int_{\mathbb{S}^2} b(\cos \theta) \cos^{-\frac {3+\gamma} 2} \frac \theta 2   d\sigma \left( \int_{\R^3}\int_{\R^3}  |v-v_*|^\gamma\langle v_* \rangle^{k+1/2}  \mu_*  |f|^2 \langle v \rangle^{2k-1}  dv dv_*  \right)^{1/2} 
\\
&\left(\int_{\R^3}\int_{\R^3}  |v-v'|^\gamma \langle v_* \rangle^{ k + 1/2 } \mu_* |f'|^2\langle v' \rangle^{2k-1}dv dv' \right)^{1/2}
\lesssim \Vert f \Vert_{L^2_{k+\gamma/2-1/2}}^2.
\end{aligned}
\end{equation*}
It is easily seen that
\[
( - Q^- (\mu, f), f \langle v \rangle^{2k})  = - \int_{\mathbb{S}^2} b(\cos \theta)    d\sigma \int_{\R^3}\int_{\R^3}  |v-v_*|^\gamma \mu(v_*)  |f|^2 \langle v \rangle^{2k}dv dv_*,
\]
so the proof is finished by gathering the three terms. 
\end{proof}

Then we come to estimate the nonlinear operator $Q^+$.

\begin{lem}\label{L33}
For any $\gamma \in(-3, 1]$,  for smooth function $f, g, h$, recall $N$ is defined in \eqref{N}, for any $k\ge 4$ we have
\[
|( Q^+(f, g), h \langle v \rangle^{2k}) | \le C_k  \min_{m+n =  N-2} \{\Vert f \Vert_{H^m_{k+\gamma/2}} \Vert g \Vert_{H^n_4}  \}  \Vert h\Vert_{L^2_{k+\gamma/2}}+ C_k \min_{m_1 +n_1  = N-2} \{ \Vert g \Vert_{H^{m_1}_{k+\gamma/2}} \Vert f \Vert_{H^{n_1}_4}  \} \Vert h\Vert_{L^2_{k+\gamma/2}}, 
\]
where $m, n, m_1, n_1$ are nonnegative integers.
\end{lem}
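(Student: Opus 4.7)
The plan is to estimate $(Q^+(f, g), h\langle v\rangle^{2k})$ by first transferring the weight $\langle v\rangle^{2k}$ from the post-collisional velocity onto the pre-collisional velocity $v'$ via the pre-post collisional change of variables of Lemma~\ref{L23}, obtaining
\[
(Q^+(f, g), h\langle v\rangle^{2k}) = \iiint b(\cos\theta)|v-v_*|^\gamma f_* g h' \langle v'\rangle^{2k}\, dv\, dv_*\, d\sigma.
\]
Next, I would split $\langle v'\rangle^{2k} = \langle v'\rangle^k \cdot \langle v'\rangle^k$ and apply Lemma~\ref{L27} to one of the two factors, replacing it by $\sin^k(\theta/2)\langle v_*\rangle^k + \langle v\rangle^k$ (with the $R_1$ remainder $\lesssim \sin^2(\theta/2)\langle v_*\rangle^{k-1}\langle v\rangle$ absorbed into these two main pieces since $k\ge 4$). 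This yields two model integrals: in one, the factor $\langle v_*\rangle^k$ loads extra polynomial weight onto $f_*$, producing the branch $\|f\|_{H^m_{k+\gamma/2}}\|g\|_{H^n_4}$ of the claimed bound; in the other, the factor $\langle v\rangle^k$ loads onto $g$, producing the symmetric branch. In both cases, the remaining $\langle v'\rangle^k$ stays attached to $h'$ and will eventually supply the $\langle v\rangle^{k+\gamma/2}$ weight on $h$ once a $\langle \cdot\rangle^{\gamma/2}$ is extracted from the kernel factor $|v-v_*|^\gamma$ by Cauchy--Schwarz.

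To each model integral I then apply Cauchy--Schwarz in the triple collision integral as in Lemma~\ref{L22} (the first inequality, whose angular factor $\int b\sin^{-(3+\gamma)/2}(\theta/2)\, d\sigma$ is finite since $\gamma<1$ and $b$ satisfies (A2)). This separates the integral into the product of a factor of the shape $\iint|v-v_*|^\gamma F_*^2 |g|\, dv\, dv_*$ (with $F = f\langle\cdot\rangle^{k+\gamma/2}$ in the first branch, $F=g\langle\cdot\rangle^{k+\gamma/2}$ in the second) and a factor of the shape $\iint|v-v'|^\gamma |g|\, H'^2\, dv\, dv'$ with $H = h\langle\cdot\rangle^{k+\gamma/2}$. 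The latter, after a regular change of variables $v\mapsto v'$ from Lemma~\ref{L21}, is controlled by $\|h\|_{L^2_{k+\gamma/2}}^2$ times a pointwise-in-$v'$ bound on $\int|v-v'|^\gamma|g(v)|\, dv$, while the former is of the same convolution type with $F^2$ playing the role of $|h|^2$.

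The final step is to control the weighted convolution $\sup_{v}\int|v-v_*|^\gamma|g(v_*)|\, dv_*$ (and its symmetric analogue), and this is where the integer $N$ enters through Lemma~\ref{L25} and its Sobolev upgrades. For $\gamma\in(-3/2,1]$ the $L^2$-branch of Lemma~\ref{L25}, $\|g\|_{L^1}^{1+2\gamma/3}\|g\|_{L^2}^{-2\gamma/3}$, requires no Sobolev derivative, yielding $m+n = 0 = N-2$; for $\gamma\in(-5/2,-3/2]$ the $L^3$-branch combined with the embedding $H^{1/2}\hookrightarrow L^3$ forces one derivative distributed between $f$ and $g$, yielding $m+n=1=N-2$; and for $\gamma\in(-3,-5/2]$ the $L^\infty$-branch together with $H^2\hookrightarrow L^\infty$ forces two derivatives, yielding $m+n=2=N-2$. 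The hard part I expect is the precise bookkeeping of weights throughout these manipulations---ensuring that exactly $\langle v\rangle^{k+\gamma/2}$ lands on the heavy side and $\langle v\rangle^4$ (rather than some larger weight depending on $k$) on the light side, while the angular integrals remain finite---since the interplay of $|v-v_*|^\gamma$, the decomposition from Lemma~\ref{L27}, the Cauchy--Schwarz split, and the regular/singular angular changes of variables must be orchestrated so that all exponents add up correctly to match the stated norms.
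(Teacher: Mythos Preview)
Your route diverges from the paper's and contains a real gap. The paper does \emph{not} first apply the pre-post change and then split $\langle v'\rangle^{2k}$ via Lemma~\ref{L27}. Instead it applies Cauchy--Schwarz directly to the original form
\[
\iiint |v-v_*|^\gamma b(\cos\theta)\,|f(v_*')||g(v')||h(v)|\langle v\rangle^{2k}\,dv\,dv_*\,d\sigma,
\]
splitting the weight so that one factor carries $|v-v_*|^{2\gamma}$ (or $|v-v_*|^{\gamma-1/2}$, $|v-v_*|^{\gamma}$ in the softer ranges) times the ratio $\langle v\rangle^{m}/(\langle v'\rangle^{m}\langle v_*'\rangle^{m})$ together with $|h(v)|^2$. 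The key estimate for this factor is Lemma~\ref{L211}, which yields exactly $\langle v\rangle^{2\gamma}$ after integration in $v_*,\sigma$, producing $\|h\|_{L^2_{k+\gamma/2}}$ on the nose. The other factor, after pre-post change, reduces to $\iint|v-v_*|^{\gamma'}|f(v_*)|^2|g(v)|^2\langle\cdot\rangle^{\cdots}$ and is handled by Lemma~\ref{L26}, which is precisely the tool that distributes the $N-2$ Sobolev derivatives.

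The gap in your plan is the $h$-side weight. After your use of Lemma~\ref{L22}, the $h$ factor is $\iint|v-v'|^\gamma |g(v)|\,|h(v')|^2\langle v'\rangle^{2k}\,dv\,dv'$. To land on $\|h\|_{L^2_{k+\gamma/2}}^2$ you need $\int|v-v'|^\gamma|g(v)|\,dv\lesssim C(g)\,\langle v'\rangle^\gamma$, i.e.\ \emph{decay} in $v'$ when $\gamma<0$. Lemma~\ref{L25} gives only a uniform bound $\sup_{v'}\int|v-v'|^\gamma|g|\,dv<\infty$, with no decay; plugging that in yields $\|h\|_{L^2_k}$ rather than the stated $\|h\|_{L^2_{k+\gamma/2}}$. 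Your phrase ``extracting $\langle\cdot\rangle^{\gamma/2}$ from $|v-v_*|^\gamma$ by Cauchy--Schwarz'' does not correspond to any step in the L22 mechanism (both L22 factors carry the full $|v-v_*|^\gamma$). To repair this you would have to insert the Peetre-type inequality $\langle v'\rangle^{-\gamma}\lesssim\langle v-v'\rangle^{-\gamma}\langle v\rangle^{-\gamma}$ and then redo the singular part, essentially rederiving Lemma~\ref{L26}; but at that point you are closer to the paper's argument, and it is more efficient to bypass the pre-post change entirely and use Lemma~\ref{L211} for the $h$ factor as the paper does.
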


\begin{proof}
We first prove the case $-\frac 3 2 < \gamma \le 1$, we have
\begin{equation*}
\begin{aligned}
|( Q^+(f, g), h \langle v \rangle^{2k})  | \le &   \int_{\R^3} \int_{\R^3}\int_{\mathbb{S}^2}  |v - v_* |^\gamma b(\cos \theta) |f(v_*')| | g(v')| |h(v) | \langle v \rangle^{2k}   dv dv_* d \sigma
\\
\le& \left(\int_{\R^3} \int_{\R^3}\int_{\mathbb{S}^2} b(\cos \theta ) \langle v \rangle^{2k-6+\gamma}  \langle v' \rangle^6 \langle v_*' \rangle^6     |f(v_*')|^2 | g(v')|^2 dv dv_* d \sigma   \right)^{1/2}
\\
&\left(\int_{\R^3} \int_{\R^3}\int_{\mathbb{S}^2}  |v - v_* |^{2\gamma}   b(\cos \theta ) \frac {\langle v \rangle^6} { \langle v' \rangle^6 \langle v_*' \rangle^6}   \langle v \rangle^{2k-\gamma} |h(v)|^2  dv dv_* d \sigma   \right)^{1/2},
\end{aligned}
\end{equation*}
Since $-3< 2\gamma \le 2$, by Lemma \ref{L211} we have
\[
\int_{\R^3} \int_{\R^3}\int_{\mathbb{S}^2}  |v - v_* |^{2\gamma}  b(\cos \theta ) \frac {\langle v \rangle^6} { \langle v' \rangle^6 \langle v_*' \rangle^6}   \langle v \rangle^{2k-\gamma} |h(v)|^2  dv dv_* d \sigma \le C_k \Vert h \Vert_{L^2_{k+\gamma/2}}^2.
\]
Since $2k-6+\gamma \ge 0$, by  pre-post collisional change of variables we have
\begin{equation*}
\begin{aligned}
&\int_{\R^3} \int_{\R^3}\int_{\mathbb{S}^2}  b(\cos \theta ) \langle v \rangle^{2k - 6 + \gamma}  \langle v' \rangle^6 \langle v_*' \rangle^6     |f(v_*')|^2 | g(v')|^2 dv dv_* d \sigma
\\
= &\int_{\R^3} \int_{\R^3}\int_{\mathbb{S}^2}  b(\cos \theta ) \langle v' \rangle^{2k - 6 + \gamma}  \langle v \rangle^6 \langle v_* \rangle^6     |f(v_*)|^2 | g(v)|^2 dv dv_* d \sigma 
\\
\lesssim & \int_{\R^3} \int_{\R^3}\int_{\mathbb{S}^2} b(\cos \theta )  \langle v \rangle^{2k + \gamma} \langle v_* \rangle^6     |f(v_*)|^2 | g(v)|^2 dv dv_* d \sigma
\\
&+\int_{\R^3} \int_{\R^3}\int_{\mathbb{S}^2}  b(\cos \theta )  \langle v \rangle^{6} \langle v_* \rangle^{2k+\gamma}     |f(v_*)|^2 | g(v)|^2 dv dv_* d \sigma
\\
\lesssim&  \Vert f \Vert_{L^2_{3}}^2 \Vert g \Vert_{L^2_{k+\gamma/2}}^2  +  \Vert g \Vert_{L^2_{3}}^2 \Vert f \Vert_{L^2_{k+\gamma/2}}^2 ,
\end{aligned}
\end{equation*}
so the case $\gamma \in (-\frac 3 2, 1]$ is proved.  For the case $\gamma \in (-\frac 5 2, -\frac 3 2]$,  we have $-3< \gamma-\frac 1 2 <0, -2 < \gamma + \frac 1 2 <0$, hence
\begin{equation*}
\begin{aligned}
|( Q^+(f, g), h \langle v \rangle^{2k})  | \le &   \int_{\R^3} \int_{\R^3}\int_{\mathbb{S}^2} |v-v_*|^\gamma b(\cos \theta) |f(v_*')| | g(v')| |h(v) | \langle v \rangle^{2k}   dv dv_* d \sigma
\\
\le& \left(\int_{\R^3} \int_{\R^3}\int_{\mathbb{S}^2} |v - v_*|^{\gamma+\frac 1 2} b(\cos \theta ) \langle v \rangle^{2k-\frac 9  2 }  \langle v' \rangle^4 \langle v_*' \rangle^4     |f(v_*')|^2 | g(v')|^2 dv dv_* d \sigma   \right)^{1/2}
\\
&\left(\int_{\R^3} \int_{\R^3}\int_{\mathbb{S}^2} |v - v_*|^{\gamma-\frac 1 2} b(\cos \theta ) \frac {\langle v \rangle^4} { \langle v' \rangle^4 \langle v_*' \rangle^4}   \langle v \rangle^{2k  + \frac 1 2 } |h(v)|^2  dv dv_* d \sigma   \right)^{1/2},
\end{aligned}
\end{equation*}
still by Lemma \ref{L211} we have
\[
\int_{\R^3} \int_{\R^3}\int_{\mathbb{S}^2} |v - v_*|^{\gamma-\frac 1 2} b(\cos \theta ) \frac {\langle v \rangle^4} { \langle v' \rangle^4 \langle v_*' \rangle^4}   \langle v \rangle^{2k + \frac 1 2 } |h(v)|^2  dv dv_* d \sigma  \le C_k \Vert h \Vert_{L^2_{k+\gamma/2}}^2,
\]
similarly we have 
\begin{equation*}
\begin{aligned}
&\int_{\R^3} \int_{\R^3}\int_{\mathbb{S}^2} |v - v_* |^{\gamma + \frac 1 2} b(\cos \theta ) \langle v' \rangle^{2k- \frac 9 2}  \langle v \rangle^4 \langle v_* \rangle^4     |f(v_*)|^2 | g(v)|^2 dv dv_* d \sigma
\\
\lesssim & \int_{\R^3} \int_{\R^3}\int_{\mathbb{S}^2} |v - v_* |^{\gamma + \frac 1 2} b(\cos \theta )  \langle v \rangle^{2k  - \frac 1 2 } \langle v_* \rangle^4     |f(v_*)|^2 | g(v)|^2 dv dv_* d \sigma
\\
&+\int_{\R^3} \int_{\R^3}\int_{\mathbb{S}^2}  |v - v_* |^{\gamma + \frac 1 2} b(\cos \theta )  \langle v \rangle^{4} \langle v_* \rangle^{2k   - \frac 1 2}     |f(v_*)|^2 | g(v)|^2 dv dv_* d \sigma.
:=I_1 +I_2.
\end{aligned}
\end{equation*}
Since $\gamma + \frac 1 2 \in (-2, 0)$ by Lemma \ref{L26}  we have
\[
I_1 \lesssim \min_{ m + n =1} \{ \Vert f \Vert_{H^m_4}^2  \Vert g \Vert_{H^n_{k+\gamma/2} }^2 \}, \quad I_2 \lesssim \min_{ m + n  =1} \{  \Vert f \Vert_{H^m_{k+\gamma/2} }^2   \Vert g \Vert_{H^n_4}^2 \},
\]
so the case $\gamma \in (-\frac 5 2, -\frac 3 2 ]$ is proved. For the case $\gamma \in (-3, 0)$ we have
\begin{equation*}
\begin{aligned}
|( Q^+(f, g), h \langle v \rangle^{2k})  | \le &   \int_{\R^3} \int_{\R^3}\int_{\mathbb{S}^2} |v - v_* |^\gamma b(\cos \theta) |f(v_*')| | g(v')| |h(v) | \langle v \rangle^{2k}   dv dv_* d \sigma
\\
\le& \left(\int_{\R^3} \int_{\R^3}\int_{\mathbb{S}^2} |v - v_* |^\gamma b(\cos \theta ) \langle v \rangle^{2k-4}  \langle v' \rangle^4 \langle v_*' \rangle^4     |f(v_*')|^2 | g(v')|^2 dv dv_* d \sigma   \right)^{1/2}
\\
&\left(\int_{\R^3} \int_{\R^3}\int_{\mathbb{S}^2} |v - v_* |^\gamma b(\cos \theta ) \frac {\langle v \rangle^4} { \langle v' \rangle^4 \langle v_*' \rangle^4}   \langle v \rangle^{2k} |h(v)|^2  dv dv_* d \sigma   \right)^{1/2}, 
\end{aligned}
\end{equation*}
still by Lemma \ref{L211} we have
\[
\int_{\R^3} \int_{\R^3}\int_{\mathbb{S}^2} |v - v_*|^{\gamma} b(\cos \theta ) \frac {\langle v \rangle^4} { \langle v' \rangle^4 \langle v_*' \rangle^4}   \langle v \rangle^{2k  } |h(v)|^2  dv dv_* d \sigma  \le C_k \Vert h \Vert_{L^2_{k+\gamma/2}}^2,
\]
and  by pre-post collisional change of variables we have
\begin{equation*}
\begin{aligned}
&\int_{\R^3} \int_{\R^3}\int_{\mathbb{S}^2} |v - v_* |^\gamma b(\cos \theta ) \langle v' \rangle^{2k-4}  \langle v \rangle^4 \langle v_* \rangle^4     |f(v_*)|^2 | g(v)|^2 dv dv_* d \sigma
\\
\lesssim & \int_{\R^3} \int_{\R^3}\int_{\mathbb{S}^2} |v - v_* |^\gamma b(\cos \theta )  \langle v \rangle^{2k} \langle v_* \rangle^4     |f(v_*)|^2 | g(v)|^2 dv dv_* d \sigma
\\
&+\int_{\R^3} \int_{\R^3}\int_{\mathbb{S}^2}  |v - v_* |^\gamma b(\cos \theta )  \langle v \rangle^{4} \langle v_* \rangle^{2k}     |f(v_*)|^2 | g(v)|^2 dv dv_* d \sigma:=I_1 +I_2.
\end{aligned}
\end{equation*}
Since $\gamma \in (-3, -\frac 5 2]$ by Lemma \ref{L25} we have
\[
I_1 \lesssim \min_{m + n =2} \{ \Vert f \Vert_{H^m_4}^2  \Vert g \Vert_{H^n_{k+\gamma/2} }^2 \}, \quad I_2 \lesssim \min_{m + n =2}  \{ \Vert f \Vert_{H^m_{k+\gamma/2} }^2   \Vert g \Vert_{H^n_4}^2 \},
\]
thus the theorem is proved by gathering all the terms together.
\end{proof}

The estimate for the $Q^-$ term is similar.
\begin{lem} \label{L34}
For any $\gamma \in(-3, 1]$,  for smooth function $f, g, h$, recall $N$ is defined in \eqref{N}, for any $k\ge 4$ we have
\[
|( Q^-(f, g), h \langle v \rangle^{2k}) | \le C_k \min_{m+n = N-2}  \{ \Vert f \Vert_{H^m_3}  \Vert g \Vert_{H^n_{k+\gamma/2}}  \} \Vert h\Vert_{L^2_{k+\gamma/2}},
\]
where $m, n$ are nonnegative integers.
\end{lem}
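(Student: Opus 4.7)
Since $Q^-$ has no post-collisional velocities, integrating out $\sigma$ via the cutoff assumption (A2) directly gives
\[
|(Q^-(f,g),h\langle v\rangle^{2k})| \le \|b\|_{L^1_\theta}\int_{\R^3}|g(v)||h(v)|\langle v\rangle^{2k}\phi(v)\,dv,
\]
where $\phi(v):=\int_{\R^3}|v-v_*|^\gamma |f(v_*)|\,dv_*$. Cauchy--Schwarz in $v$ with the split $\langle v\rangle^{2k}=\langle v\rangle^{k+\gamma/2}\cdot\langle v\rangle^{k-\gamma/2}$ then yields
\[
|(Q^-(f,g),h\langle v\rangle^{2k})| \le C\|h\|_{L^2_{k+\gamma/2}}\Bigl(\int |g|^2\phi^2 \langle v\rangle^{2k-\gamma}\,dv\Bigr)^{1/2},
\]
so the problem reduces to bounding the remaining factor by $C\|f\|_{H^m_3}\|g\|_{H^n_{k+\gamma/2}}$ for each admissible pair $(m,n)$ with $m+n=N-2$.

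The extremal pair $(m,n)=(N-2,0)$ follows from the pointwise estimate $\phi(v)\le C\langle v\rangle^\gamma\|f\|_{H^{N-2}_3}$, which I would prove by splitting the $v_*$-integral at the scale $|v-v_*|=\langle v\rangle/2$. In the far region $|v-v_*|^\gamma\lesssim\langle v\rangle^\gamma$ (trivially for $\gamma\ge 0$ and since $|v-v_*|\gtrsim\langle v\rangle$ for $\gamma\le 0$), giving a contribution $\le C\langle v\rangle^\gamma\|f\|_{L^1}\le C\langle v\rangle^\gamma\|f\|_{L^2_3}$. In the near region $\langle v_*\rangle\sim\langle v\rangle$, and H\"older's inequality with exponent $p$ satisfying $\gamma p>-3$, combined with the weight $\langle v_*\rangle^{-3}\lesssim\langle v\rangle^{-3}$, gives
\[
\int_{|v-v_*|\le \langle v\rangle/2}|v-v_*|^\gamma|f(v_*)|\,dv_*\le C\langle v\rangle^{3/p+\gamma-3}\|f\|_{L^{p'}_3}\le C\langle v\rangle^\gamma\|f\|_{L^{p'}_3}
\]
since $3/p-3\le 0$ when $p\ge 1$. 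Choosing $(p,p')=(2,2),(6/5,6),(1,\infty)$ in the three ranges $\gamma\in(-3/2,1]$, $(-5/2,-3/2]$, $(-3,-5/2]$ and using Sobolev embedding $H^{N-2}(\R^3)\hookrightarrow L^{p'}(\R^3)$ (with $N-2=0,1,2$ matching the three cases of \eqref{N}) bounds $\|f\|_{L^{p'}_3}\le C\|f\|_{H^{N-2}_3}$. Substituting $\phi^2\le C\|f\|_{H^{N-2}_3}^2\langle v\rangle^{2\gamma}$ converts $\int|g|^2\phi^2\langle v\rangle^{2k-\gamma}\,dv$ into $C\|f\|_{H^{N-2}_3}^2\|g\|_{L^2_{k+\gamma/2}}^2$, closing this case.

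For the remaining pairs $(m,n)$ with $m<N-2$, I would not rely on a pointwise bound on $\phi$ but instead apply Lemma \ref{L26} directly. Inserting a weight $\langle v_*\rangle^{-s}\langle v_*\rangle^{s}$ with $2s>\max\{3,3+\gamma\}$ into the Cauchy--Schwarz, bounding $\int|v-v_*|^\gamma\langle v_*\rangle^{-2s}\,dv_*\le C\langle v\rangle^\gamma$ by Lemma \ref{L29}, and collecting the remaining factors, the second integral takes the form $\int\int|v-v_*|^\gamma F_*^2 G^2\,dv\,dv_*$ with $F=f\langle v\rangle^s$ and $G=g\langle v\rangle^{k}$, which is exactly the setting of Lemma \ref{L26}. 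Using the $\|F\|_{H^m_2}^2\|G\|_{H^n_{\gamma/2}}^2$ option produces the target weight $\langle v\rangle^{k+\gamma/2}$ on $g$ and inherits the $\min_{m+n=N-2}$ structure. The main technical obstacle is the delicate matching of the threshold $s>3/2$ required by Lemma \ref{L29} with the target weight $\langle v\rangle^3$ on $f$: this forces $s$ close to $3/2$, leaves only a marginal gap in the $f$-weight, and must be handled case by case in the three $\gamma$-regimes defining $N$.
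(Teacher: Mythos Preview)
Your sequential Cauchy--Schwarz (first in $v$ to split off $h$, then in $v_*$ inside $\phi^2$) lands on the same bilinear form $\int\int|v-v_*|^\gamma |f_*|^2\langle v_*\rangle^{2s}|g|^2\langle v\rangle^{2k}\,dvdv_*$ that the paper reaches via a single Cauchy--Schwarz in $(v,v_*,\sigma)$, and both then invoke Lemma~\ref{L26}. So at the structural level the two arguments coincide.

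There is, however, a genuine gap in the sub-range $\gamma\in(-\tfrac52,-2]$, where $N-2=1$. Lemma~\ref{L26} applied with the raw exponent $\gamma$ falls into its $(-3,-2]$ case and returns only $\min_{m+n=2}$, which does not give the pair $(m,n)=(0,1)$ you need. The paper's fix is to shift the kernel exponent inside the Cauchy--Schwarz split: it puts $|v-v_*|^{\gamma+1/2}$ on the $(f,g)$ factor, so that $\gamma+\tfrac12\in(-2,0)$ and Lemma~\ref{L26} gives $m+n=1$, while $|v-v_*|^{\gamma-1/2}$ (still with exponent $>-3$) goes with the $h$ factor. Your scheme admits the same shift in the second Cauchy--Schwarz on $\phi^2$, but as written (``apply Lemma~\ref{L26} directly'') the argument does not close on this sub-range.

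On the weight issue you flagged: the constraint $2s>3$ from Lemma~\ref{L29} together with the additional $\langle\cdot\rangle^2$ built into Lemma~\ref{L26} forces $\|f\|_{H^m_{s+2}}$ with $s+2>\tfrac72$, so the stated $H^m_3$ is out of reach by this route. In fact the paper's own proof takes $s=2$ and records $\|f\|_{H^m_4}$ in the soft-potential cases (consistent with the weight in Lemma~\ref{L33} and Corollary~\ref{C35}); the ``$3$'' in the statement appears to be a slip. Your separate pointwise bound $\phi(v)\le C\langle v\rangle^\gamma\|f\|_{H^{N-2}_3}$ for the extremal pair is correct and is actually sharper on that pair than the paper's uniform treatment; one small correction: for $\gamma>0$ the far-region claim $|v-v_*|^\gamma\lesssim\langle v\rangle^\gamma$ is false, but the global inequality $|v-v_*|^\gamma\lesssim\langle v\rangle^\gamma\langle v_*\rangle^\gamma$ suffices there.
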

\begin{proof}
The proof is similar to the $Q^+$ case,  first for the case $\gamma \in (-\frac 3 2, 1]$ we have
\begin{equation*}
\begin{aligned}
|( Q^-(f, g), h \langle v \rangle^{2k})  | \le &   \int_{\R^3} \int_{\R^3}\int_{\mathbb{S}^2} |v - v_* |^\gamma b(\cos \theta) |f(v_*)| | g(v)| |h(v) | \langle v \rangle^{2k}   dv dv_* d \sigma
\\
\le& \left(\int_{\R^3} \int_{\R^3}\int_{\mathbb{S}^2}  b(\cos \theta ) \langle v \rangle^{2k+\gamma}  \langle v_* \rangle^6     |f(v_*)|^2 | g(v)|^2 dv dv_* d \sigma   \right)^{1/2}
\\
&\left(\int_{\R^3} \int_{\R^3}\int_{\mathbb{S}^2} |v - v_* |^{2\gamma} b(\cos \theta ) \frac {1} { \langle v_* \rangle^6 }   \langle v \rangle^{2k-\gamma} |h(v)|^2  dv dv_* d \sigma   \right)^{1/2},
\end{aligned}
\end{equation*}
it is easily seen that
\[
\int_{\R^3} \int_{\R^3}\int_{\mathbb{S}^2}  b(\cos \theta ) \langle v \rangle^{2k+\gamma}  \langle v_* \rangle^6     |f(v_*)|^2 | g(v)|^2 dv dv_* d \sigma \le C_k \Vert f \Vert_{L^2_3}^2 \Vert g \Vert_{L^2_{k+\gamma/2}}^2,
\]
since $2\gamma \in (-3, 2]$ we have
\[
\int_{\R^3} \int_{\R^3}\int_{\mathbb{S}^2} |v - v_* |^{2\gamma} b(\cos \theta ) \frac {1} { \langle v_* \rangle^6 }   \langle v \rangle^{2k-\gamma} |h(v)|^2  dv dv_* d \sigma \le C_k \Vert h \Vert_{L^2_{k+\gamma/2}}^2,
\]
so  the case $\gamma \in (-\frac 3 2, 1]$ is thus proved. For the case $\gamma \in (-\frac 5 2,  -\frac 3 2 ] $,  we have $-3< \gamma-\frac 1 2 <0, -2 < \gamma + \frac 1 2 <0$, hence
\begin{equation*}
\begin{aligned}
|( Q^- (f, g), h \langle v \rangle^{2k})  | \le &    \left(\int_{\R^3} \int_{\R^3}\int_{\mathbb{S}^2} |v - v_*|^{\gamma+\frac 1 2} b(\cos \theta ) \langle v \rangle^{2k-\frac 1  2 }  \langle v_* \rangle^4     |f(v_*)|^2 | g(v)|^2 dv dv_* d \sigma   \right)^{1/2}
\\
&\left(\int_{\R^3} \int_{\R^3}\int_{\mathbb{S}^2} |v - v_*|^{\gamma-\frac 1 2} b(\cos \theta ) \frac {1} { \langle v_* \rangle^4}   \langle v \rangle^{2k  + \frac 1 2 } |h(v)|^2  dv dv_* d \sigma   \right)^{1/2},
\end{aligned}
\end{equation*}
since  $-3< \gamma-\frac 1 2 <0$, it is easily seen that
\[
\int_{\R^3} \int_{\R^3}\int_{\mathbb{S}^2} |v - v_*|^{\gamma-\frac 1 2} b(\cos \theta ) \frac {1} { \langle v_* \rangle^4}   \langle v \rangle^{2k  + \frac 1 2 } |h(v)|^2  dv dv_* d \sigma  \le C_k \Vert h \Vert_{L^2_{k+\gamma/2}}^2,
\]
since $-2< \gamma-\frac 1 2 <0$, by Lemma \ref{L26} we have
\[
\int_{\R^3} \int_{\R^3}\int_{\mathbb{S}^2} |v - v_*|^{\gamma+\frac 1 2} b(\cos \theta ) \langle v \rangle^{2k-\frac 1  2 }  \langle v_* \rangle^4     |f(v_*)|^2 | g(v)|^2 dv dv_* d \sigma  
\lesssim \min_{m + n = 1}  \{ \Vert f \Vert_{H^m_4}^2  \Vert g \Vert_{H^n_{k+\gamma/2} }^2 \},
\]
so the case $\gamma \in (-\frac 5 2, -\frac 3 2]$ is proved. For the case $\gamma \in (-3, -\frac 5 2] $ we have
\begin{equation*}
\begin{aligned}
|( Q^-(f, g), h \langle v \rangle^{2k})  | \le &   \left(\int_{\R^3} \int_{\R^3}\int_{\mathbb{S}^2} |v - v_* |^{\gamma} b(\cos \theta ) \langle v \rangle^{2k}  \langle v_* \rangle^4     |f(v_*)|^2 | g(v)|^2 dv dv_* d \sigma   \right)^{1/2}
\\
&\left(\int_{\R^3} \int_{\R^3}\int_{\mathbb{S}^2} |v - v_* |^{\gamma} b(\cos \theta ) \frac {1} { \langle v_* \rangle^4 }   \langle v \rangle^{2k} |h(v)|^2  dv dv_* d \sigma   \right)^{1/2},
\end{aligned}
\end{equation*}
it is easily seen that
\[
\int_{\R^3} \int_{\R^3}\int_{\mathbb{S}^2} |v - v_* |^{\gamma} b(\cos \theta ) \frac {1} { \langle v_* \rangle^4 }   \langle v \rangle^{2k} |h(v)|^2  dv dv_* d \sigma   \le C_k \Vert h \Vert_{L^2_{k+\gamma/2}}^2,
\]
and by Lemma \ref{L26}
\[
\int_{\R^3} \int_{\R^3}\int_{\mathbb{S}^2} |v - v_* |^{\gamma} b(\cos \theta ) \langle v \rangle^{2k}  \langle v_* \rangle^4     |f(v_*)|^2 | g(v)|^2 dv dv_* d \sigma  \lesssim \min_{ m + n =2} \{ \Vert f \Vert_{H^m_4}^2  \Vert g \Vert_{H^n_{k+\gamma/2} }^2 \},
\]
the lemma is proved by gathering all the terms together. 
\end{proof}

\begin{cor}\label{C35}
For any $\gamma \in(-3, 1]$,  for smooth function $f, g, h$ , for any $k\ge 4$ we have
\[
|( Q(f, g), h \langle v \rangle^{2k}) | \le C_k \min_
{ m + n  = N-2} \{ \Vert f \Vert_{H^m_{k+\gamma/2}} \Vert g \Vert_{H^n_4}  +  \Vert g \Vert_{H^m_{k+\gamma/2}} \Vert f \Vert_{H^n_4}   \} \Vert h\Vert_{L^2_{k+\gamma/2}},
\]
with $m, n$  nonnegative integers.
\end{cor}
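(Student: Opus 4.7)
The plan is to derive the corollary as a direct combination of Lemma \ref{L33} and Lemma \ref{L34} via the splitting $Q(f,g) = Q^+(f,g) - Q^-(f,g)$ and the triangle inequality. Concretely, I would write
\[
|(Q(f, g), h \langle v \rangle^{2k})| \le |(Q^+(f, g), h \langle v \rangle^{2k})| + |(Q^-(f, g), h \langle v \rangle^{2k})|
\]
and then invoke the two lemmas separately on each piece.

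First, Lemma \ref{L33} already produces exactly the symmetric right-hand side
\[
C_k \min_{m+n = N-2}\{\|f\|_{H^m_{k+\gamma/2}}\|g\|_{H^n_4}\}\|h\|_{L^2_{k+\gamma/2}} + C_k \min_{m+n=N-2}\{\|g\|_{H^m_{k+\gamma/2}}\|f\|_{H^n_4}\}\|h\|_{L^2_{k+\gamma/2}},
\]
so the $Q^+$ contribution is accounted for with no further work.

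For the $Q^-$ piece, Lemma \ref{L34} delivers
\[
|(Q^-(f,g), h \langle v \rangle^{2k})| \le C_k \min_{m+n=N-2}\{\|f\|_{H^m_3}\|g\|_{H^n_{k+\gamma/2}}\}\|h\|_{L^2_{k+\gamma/2}}.
\]
Here I would use the trivial monotonicity $\|f\|_{H^m_3} \le \|f\|_{H^m_4}$ (since $\langle v \rangle^3 \le \langle v \rangle^4$) together with the symmetry of the index set $\{(m,n) : m+n = N-2\}$ under $m \leftrightarrow n$ to rewrite this bound as $C_k \min_{m+n = N-2}\{\|g\|_{H^m_{k+\gamma/2}}\|f\|_{H^n_4}\}\|h\|_{L^2_{k+\gamma/2}}$, which is already one of the terms appearing in the claimed bound.

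Combining the two estimates yields the corollary with the combined constant $C_k$. Since both supporting lemmas have already been proved, I do not expect any genuine obstacle; the only small point to verify is the relabelling argument ensuring that the asymmetric $Q^-$ bound embeds into the symmetrised form used on the right-hand side, which follows immediately from monotonicity of the polynomial weight in the $H^m_l$ norm.
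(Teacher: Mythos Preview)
Your proposal is correct and matches the paper's own proof, which is the one-line remark ``By taking $m = m_1, n = n_1$ in Lemma~\ref{L33} and Lemma~\ref{L34} the proof is finished.'' Your write-up simply makes explicit the two small points that the paper leaves implicit: the monotonicity $\|f\|_{H^m_3}\le\|f\|_{H^m_4}$ and the relabelling $(m,n)\leftrightarrow(n,m)$ needed to fold the $Q^-$ bound into the symmetrised right-hand side.
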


\begin{proof}
By taking $m = m_1, n=n_1$ in Lemma \ref{L33} and Lemma \ref{L34} the proof is finished. 
\end{proof}

\begin{cor}\label{C36}
For any $-3 < \gamma \le 1$, for any $|\beta| \le 4$, for any $f, g$ smooth we have
\[
|(Q(f, \partial_\beta \mu  ), g \langle  v \rangle^{2k} ) | + |(Q(\partial_\beta \mu, f  ), g \langle  v \rangle^{2k} ) |\le C_k \Vert f \Vert_{L^2_{k+\gamma/2}}  \Vert g \Vert_{L^2_{k+\gamma/2}},
\]
for some constant $C_k >0$. 
\end{cor}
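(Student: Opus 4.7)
The plan is to view Corollary \ref{C36} as a direct consequence of Lemmas \ref{L31} and \ref{L32}, using the fact that the only property of $\mu$ actually exploited in those proofs is its Gaussian decay, a property shared by $\partial_\beta\mu$ for any multi-index $\beta$.

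First I would record the key observation: for any $|\beta|\le 4$, one can write $\partial_\beta\mu(v)=P_\beta(v)\mu(v)$ with $P_\beta$ a polynomial of degree $|\beta|$, hence
\[
|\partial_\beta\mu(v)|\le C_\beta\,\mu(v)^{1/2}= C_\beta (2\pi)^{-3/2} e^{-|v|^2/4},
\]
so $\partial_\beta\mu$ enjoys (uniformly in $|\beta|\le 4$) Gaussian decay. Since $|\beta|\le 4$ ranges over a finite set, a single constant $C_k$ can be extracted in the end.

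For the term $|(Q(f,\partial_\beta\mu),g\langle v\rangle^{2k})|$, I would revisit the proof of Lemma \ref{L31} verbatim, replacing $h$ by $f$ and $\mu$ by $\partial_\beta\mu$. In that proof, $\mu$ appears only inside integrals of the form $\int|v-v_*|^\gamma|\mu(v_*)||f(v)|^2\langle v\rangle^{2k}\,dvdv_*$ and in the expansion $\langle v'\rangle^k=\sin^k(\theta/2)\langle v_*\rangle^k+R_1+R_2$ (Lemma \ref{L27}), neither of which uses any feature of $\mu$ beyond Gaussian decay; the Cauchy-Schwarz steps via Lemma \ref{L22} go through unchanged. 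This yields
\[
|(Q(f,\partial_\beta\mu),g\langle v\rangle^{2k})|\le C_k \|f\|_{L^2_{k+\gamma/2,*}}\|g\|_{L^2_{k+\gamma/2,*}}+C_k\|f\|_{L^2_{k+\gamma/2-1/2}}\|g\|_{L^2_{k+\gamma/2-1/2}},
\]
which, using $\|\cdot\|_{L^2_{k+\gamma/2,*}}\sim\|\cdot\|_{L^2_{k+\gamma/2}}$ and the trivial embedding $L^2_{k+\gamma/2}\hookrightarrow L^2_{k+\gamma/2-1/2}$ in the reverse direction of weights, is absorbed into $C_k\|f\|_{L^2_{k+\gamma/2}}\|g\|_{L^2_{k+\gamma/2}}$.

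For the term $|(Q(\partial_\beta\mu,f),g\langle v\rangle^{2k})|$, I would split into $Q^+$ and $Q^-$. The $Q^-$ piece is direct: using assumption (A2) and Lemma \ref{L29} applied to $|\partial_\beta\mu|\le C_\beta e^{-|v|^2/4}$, one gets $\int|v-v_*|^\gamma|\partial_\beta\mu(v_*)|\,dv_*\le C_\beta\langle v\rangle^\gamma$, and then Cauchy-Schwarz in $v$ yields the claim. For the $Q^+$ piece I would follow the proof of Lemma \ref{L32} essentially unchanged: apply the pre-post collisional change of variables (Lemma \ref{L23}) to bring the expression to $\int B\,\partial_\beta\mu(v_*)f(v)g(v')\langle v'\rangle^{2k}\,dvdv_*d\sigma$, expand $\langle v'\rangle^{2k}=\cos^{2k}(\theta/2)\langle v\rangle^{2k}+Q_1$ via Lemma \ref{L27}, and apply the second inequality of Lemma \ref{L22} (adapted to two distinct functions $f,g$ by putting $|\partial_\beta\mu(v_*)|^{1/2}$ with each factor) to separate $f$ and $g$ into the $\|\cdot\|_{L^2_{k+\gamma/2,*}}$-type integrals; the remainder $Q_1$ contributes a harmless $\|f\|_{L^2_{k+\gamma/2-1/2}}\|g\|_{L^2_{k+\gamma/2-1/2}}$ term by exactly the same argument as $\Gamma_2$ in Lemma \ref{L32}.

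No step constitutes a genuine obstacle: the corollary is essentially a re-reading of the two preceding lemmas with $\mu$ swapped for $\partial_\beta\mu$, and the only bookkeeping required is to verify that the Cauchy-Schwarz splittings still produce two distinct factors $f$ and $g$ (rather than $f$ twice, which was the form in Lemma \ref{L32}), a modification that is purely cosmetic since the Cauchy-Schwarz inequality in Lemma \ref{L22} is itself stated for three different functions $f,g,h$.
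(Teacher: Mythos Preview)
Your approach is correct but differs from the paper's. The paper derives Corollary \ref{C36} as an immediate consequence of the general trilinear estimates in Lemmas \ref{L33} and \ref{L34}: one specializes one argument to $\partial_\beta\mu$ and chooses the indices $m=0$, $n_1=0$ in the minimum so that only $\|f\|_{L^2_{k+\gamma/2}}$, $\|g\|_{L^2_{k+\gamma/2}}$ and (finite) Sobolev norms of $\partial_\beta\mu$ remain. You instead revisit the proofs of Lemmas \ref{L31} and \ref{L32}, replacing $\mu$ by $\partial_\beta\mu$ and observing that only Gaussian decay is ever used; the adaptation from $(Q(\mu,f),f\langle v\rangle^{2k})$ to $(Q(\partial_\beta\mu,f),g\langle v\rangle^{2k})$ with $f\neq g$ is indeed cosmetic since Lemma \ref{L22} is stated for three distinct functions. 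Both routes work; the paper's is a one-line application of already-proven nonlinear bounds, while yours stays within the linearized estimates and avoids invoking the heavier machinery of Lemmas \ref{L211} and \ref{L26} (used inside \ref{L33}/\ref{L34}), at the cost of re-inspecting earlier proofs. One minor remark: in your treatment of $Q^+(\partial_\beta\mu,f)$ there is no need to split $|\partial_\beta\mu|^{1/2}$ between the two Cauchy--Schwarz factors --- the second inequality of Lemma \ref{L22} already places the full $|\partial_\beta\mu(v_*)|$ in both integrals on the right-hand side.
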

\begin{proof} By taking $ m =  n_1 =0$ in Lemma \ref{L33} and Lemma \ref{L34} we have

\begin{equation*}
\begin{aligned}
&| (Q(f, \partial_\beta \mu  ), g \langle  v \rangle^{2k} ) |  + |(Q(\partial_\beta \mu, f  ), g \langle  v \rangle^{2k} ) | 
\\
\le & C_k \Vert f \Vert_{L^2_{k+\gamma/2}} \Vert \partial_\beta \mu  \Vert_{H^n_5}  \Vert g\Vert_{L^2_{k+\gamma/2}}+ C_k \Vert \partial_\beta \mu  \Vert_{H^{m_1}_{k+\gamma/2}} \Vert f \Vert_{L^2_5}  \Vert g\Vert_{L^2_{k+\gamma/2}}   \le  C_k \Vert f \Vert_{L^2_{k+\gamma/2}}  \Vert g \Vert_{L^2_{k+\gamma/2}},
\end{aligned}
\end{equation*}
so the proof is ended.
\end{proof}

The estimate for the exponential weight case is similar. 

\begin{lem}\label{L37}
For any $\gamma \in(-3, 1]$,  for smooth function $f, g, h$ , for any $ k \in \R, a > 0 ,b \in (0, 2) $ we have
\begin{equation*}
\begin{aligned}
|( Q(f, g ), h \langle v \rangle^{2k} e^{2a\langle v \rangle^b}) | \le& C_{k, a, b}   \min_{ m + n  = N-2}  \{ \Vert f \Vert_{H^m_{k+\gamma/2, a, b}} \Vert g \Vert_{H^n_{k - 96 , a, b}}  + \Vert g  \Vert_{H^{m}_{k+\gamma/2, a, b}} \Vert f  \Vert_{H^{n}_{k - 96, a, b}}   \} \Vert h  \Vert_{L^2_{k+\gamma/2, a, b}},
\end{aligned}
\end{equation*}
with $m, n$ nonnegative integers. 
\end{lem}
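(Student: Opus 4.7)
The plan is to mimic the proof of Corollary \ref{C35} (the polynomial weight case, which combines Lemma \ref{L33} and Lemma \ref{L34}), with the additional tool of Lemma \ref{L216} to handle the transfer of exponential weight between collision velocities $v, v_*, v', v_*'$. Write $Q = Q^+ - Q^-$ and treat the two pieces separately.

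For $Q^+$, I would split $e^{2a\langle v \rangle^b} = e^{a\langle v \rangle^b} \cdot e^{a\langle v \rangle^b}$ and apply the same Cauchy--Schwarz decomposition as in Lemma \ref{L33} (splitting into the three sub-cases $\gamma\in(-\tfrac32,1]$, $\gamma\in(-\tfrac52,-\tfrac32]$, $\gamma\in(-3,-\tfrac52]$, so as to invoke the correct form of Lemma \ref{L211} and Lemma \ref{L26}). This reduces the estimate to a product $\mathcal{I}_1^{1/2}\mathcal{I}_2^{1/2}$ in which $\mathcal{I}_2$ is the polynomially-weighted integral of $|h(v)|^2$ carrying the full factor $e^{2a\langle v \rangle^b}$, while $\mathcal{I}_1$ collects $|f(v_*')|^2|g(v')|^2$ together with the remaining $e^{2a\langle v \rangle^b}$ and various polynomial weights.

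In $\mathcal{I}_2$ the exponential factor sits on $v$ and does not interact with the angular/velocity kernel estimates, so Lemma \ref{L211} applies exactly as in the polynomial case and yields $\mathcal{I}_2 \le C_{k,a,b}\Vert h\Vert_{L^2_{k+\gamma/2,a,b}}^2$. In $\mathcal{I}_1$, the challenge is that $e^{2a\langle v \rangle^b}$ is attached to $v$ while the other factors live at $v', v_*'$. The key step is to invoke the inequality behind Lemma \ref{L216}, i.e.\ the consequence of Lemma \ref{L215} together with conservation of energy $|v|^2+|v_*|^2=|v'|^2+|v_*'|^2$, in the equivalent form
\[
e^{2a\langle v \rangle^b} \;\le\; C_{k_0,a,b}\, e^{2a\langle v' \rangle^b}\, e^{2a\langle v_*' \rangle^b}\, \frac{\langle v \rangle^{k_0}}{\langle v' \rangle^{k_0}\langle v_*' \rangle^{k_0}},
\]
valid for any $k_0 \ge 0$. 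Choosing $k_0$ on the order of $48$ (so that the combined cost across the two halves of the Cauchy--Schwarz splitting is at most $96$) absorbs every polynomial factor produced by the splitting. Applying the pre-post collisional change of variables (Lemma \ref{L23}) then transforms $\mathcal{I}_1$ into an integral of $|f(v_*)|^2|g(v)|^2$ weighted by $e^{2a\langle v_* \rangle^b}e^{2a\langle v \rangle^b}$ and polynomial weights whose indices have dropped by at most $96$. At this point the same Hardy--Littlewood--Sobolev / Lemma \ref{L26} argument used in the three $\gamma$ sub-cases of Lemma \ref{L33} closes the bound and produces exactly
\[
\mathcal{I}_1 \;\lesssim\; \min_{m+n=N-2}\bigl\{\Vert f\Vert_{H^m_{k+\gamma/2,a,b}}^2\Vert g\Vert_{H^n_{k-96,a,b}}^2 + \Vert g\Vert_{H^m_{k+\gamma/2,a,b}}^2\Vert f\Vert_{H^n_{k-96,a,b}}^2\bigr\}.
\]

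For $Q^-$ no collisional change of variable is needed, since only $v$ and $v_*$ appear in $f(v_*)g(v)h(v)$; the exponential weight $e^{2a\langle v \rangle^b}$ stays on $h(v)$ and a direct exponential-weighted analogue of Lemma \ref{L34} via Cauchy--Schwarz and Lemma \ref{L26} suffices. The main obstacle I anticipate is the careful bookkeeping of the polynomial weights under the transfer step: one must simultaneously ensure that $k_0$ is large enough to absorb all the auxiliary Sobolev losses (which in the case $\gamma\in(-3,-\tfrac32]$ consume up to $N-2 = 2$ derivatives and require polynomial indices up to $4$ on $f$ or $g$ via Lemma \ref{L26}), and yet that the net loss does not exceed the allowed degradation $96$ in the polynomial index. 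Everything else is a mechanical exponential-weight parallel of the polynomial argument.
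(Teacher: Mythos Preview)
Your approach is correct but more laborious than the paper's. Rather than re-running the Cauchy--Schwarz decomposition of Lemmas~\ref{L33}--\ref{L34} with exponential weights inserted at each step, the paper performs a one-step reduction to the polynomial case already proved. Writing $\langle v\rangle^{2k}e^{2a\langle v\rangle^b} = (\langle v\rangle^{k-100}e^{a\langle v\rangle^b})\cdot(\langle v\rangle^{k+100}e^{a\langle v\rangle^b})$ and invoking Lemma~\ref{L216} in the form
\[
\langle v\rangle^{k-100}e^{a\langle v\rangle^b}\ \le\ C_{k,a,b}\,\langle v'_*\rangle^{k-100}e^{a\langle v'_*\rangle^b}\,\langle v'\rangle^{k-100}e^{a\langle v'\rangle^b},
\]
one bounds the $Q^+$ pairing by $(Q^+(\tilde f,\tilde g),\tilde h\,\langle v\rangle^{200})$ with $\tilde u := u\,\langle v\rangle^{k-100}e^{a\langle v\rangle^b}$, and then applies Lemma~\ref{L33} directly with polynomial index $100$. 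The $Q^-$ term needs no collisional transfer: the exponential weight stays on $v$ and one writes $(Q^-(f,g),h\,\langle v\rangle^{2k}e^{2a\langle v\rangle^b}) = (Q^-(f,\tilde g),\tilde h\,\langle v\rangle^{200})$ and appeals to Corollary~\ref{C35}. The loss $96=100-4$ is simply the weight~$4$ sitting on the ``easy'' factor in Lemma~\ref{L33}; your choice of $k_0\approx 48$ plays the same role.

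Your route reaches the same conclusion but reproves the polynomial machinery from scratch; the paper's substitution avoids that duplication entirely. One minor advantage of your version is that the $v$-derivatives on $f,g$ (needed when $\gamma\le -\tfrac32$, so $N-2\ge 1$) are handled transparently inside Lemma~\ref{L26}, whereas the paper's reduction implicitly relies on the product rule to pass from $\|\tilde f\|_{H^m_{100+\gamma/2}}$ to $\|f\|_{H^m_{k+\gamma/2,a,b}}$; since $b<2$ and $m\le 2$, each derivative hitting the weight costs fewer than two polynomial powers, which is comfortably absorbed by the slack of~$96$.
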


\begin{proof} For the case $\gamma \in (-\frac 3 2, 1]$, by Lemma \ref{L216} we have
\[
\langle v \rangle^{k - 100 } e^{a \langle v \rangle^b}  \le C_{k, a, b}\langle v_*' \rangle^{k - 100   } e^{a \langle v_*' \rangle^b}  \langle v' \rangle^{k - 100 } e^{a \langle v' \rangle^b} ,
\] 
which implies
\begin{equation*}
\begin{aligned}
&|( Q^+(f, g), h e^{2 a \langle v \rangle^b} \langle v \rangle^{2k})  
\\
\le &  \int_{\R^3} \int_{\R^3}\int_{\mathbb{S}^2} |v - v_* |^\gamma b(\cos \theta) |f(v_*')| | g(v')|    |h(v) | \langle v \rangle^{2k} e^{2 a \langle v \rangle^b } dv dv_* d \sigma
\\
\le &  C_{k, a, b}\int_{\R^3} \int_{\R^3}\int_{\mathbb{S}^2} |v - v_* |^\gamma b(\cos \theta) |f(v_*')| \langle v_*' \rangle^{k - 100  } e^{a \langle v_*' \rangle^b} | g(v')|  \langle v' \rangle^{k - 100  } e^{a \langle v' \rangle^b}    |h(v) | \langle v \rangle^{k + 100 } e^{a \langle v \rangle^b } dv dv_* d \sigma
\\
 = & (Q^+ ( f \langle v \rangle^{k - 100 } e^{a \langle v \rangle^b}, g  \langle v \rangle^{k- 100 } e^{a \langle v \rangle^b}     ), ( h \langle v \rangle^{k - 100} e^{a \langle v \rangle^b}  ) \langle v \rangle^{200}),
\end{aligned}
\end{equation*}
 taking $ k = 100$  in Lemma \ref{L33} the $Q^+$ part is proved. Similarly for the $Q^-$ term we have
\[
( Q^-(f, g), h e^{2\alpha \langle v \rangle^\beta} \langle v \rangle^{2k}) = ( Q^-(f, g \langle v \rangle^{k-100 } e^{\alpha \langle v \rangle^\beta}  ), (h \langle v \rangle^{k - 100 } e^{\alpha \langle v \rangle^\beta} )\langle v \rangle^{200}),
\]
we conclude by taking $k = 100$ in Corollary \ref{C35}.
\end{proof}

\begin{cor}\label{C38}
For any $-3 < \gamma \le 1$, for any $|\beta| \le 4$, for any $f, g$ smooth we have
\[
|(Q(f, \partial_\beta \mu  ), g \langle  v \rangle^{2k}  e^{a \langle v \rangle^b}   ) | + |(Q(\partial_\beta \mu, f  ), g \langle  v \rangle^{2k}  e^{\alpha \langle v \rangle^\beta}  ) |\le  C_{k, a, b}  \Vert f \Vert_{L^2_{k+\gamma/2, a, b}}  \Vert g \Vert_{L^2_{k+\gamma/2, a, b}},
\]
for some constant $C_{k, a, b} >0$. 
\end{cor}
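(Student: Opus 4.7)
The plan is to deduce this corollary directly from Lemma \ref{L37} in exactly the same way that Corollary \ref{C36} is deduced from Lemma \ref{L33} and Lemma \ref{L34}. The only new ingredient we need is the observation that for the Maxwellian derivatives $\partial_\beta \mu$ with $|\beta| \le 4$, the norms $\Vert \partial_\beta \mu \Vert_{H^n_{j, a, b}}$ are finite constants for every nonnegative integer $n$, every $j \in \R$, every $a > 0$ and every $b \in (0, 2)$. This is immediate because $\partial_\beta \mu$ has Gaussian decay of the form $\lesssim p_\beta(v) e^{-|v|^2/2}$ with $p_\beta$ a polynomial, while $\langle v \rangle^j e^{a \langle v \rangle^b}$ grows only subgaussian when $b < 2$, so $\langle v \rangle^j e^{a \langle v \rangle^b} \partial_\beta \mu$ is rapidly decaying in $v$, and likewise for each of its $v$-derivatives.

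First I would apply Lemma \ref{L37} to bound $|(Q(f, \partial_\beta \mu), g \langle v \rangle^{2k} e^{2a\langle v\rangle^b})|$ by
\[
C_{k,a,b} \min_{m+n = N-2} \bigl\{ \Vert f \Vert_{H^m_{k+\gamma/2, a, b}} \Vert \partial_\beta \mu \Vert_{H^n_{k-96, a, b}} + \Vert \partial_\beta \mu \Vert_{H^m_{k+\gamma/2, a, b}} \Vert f \Vert_{H^n_{k-96, a, b}} \bigr\} \Vert g \Vert_{L^2_{k+\gamma/2, a, b}}.
\]
Next, in the first summand I would select the exponents $m = 0$ and $n = N-2$ so that all derivatives fall on the Maxwellian factor; in the second summand I would make the opposite choice $m = N-2$ and $n = 0$. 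In both cases the factor carrying derivatives of $\partial_\beta \mu$ is a finite constant $C_{k,a,b,\beta}$ by the observation above, and the remaining factor involving $f$ reduces to either $\Vert f \Vert_{L^2_{k+\gamma/2, a, b}}$ directly or to $\Vert f \Vert_{L^2_{k-96, a, b}}$, which is controlled by $\Vert f \Vert_{L^2_{k+\gamma/2, a, b}}$ since the condition $\gamma > -3$ forces $k - 96 \le k + \gamma/2$ and hence $\langle v \rangle^{k-96} \le \langle v \rangle^{k+\gamma/2}$.

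The term $|(Q(\partial_\beta \mu, f), g \langle v \rangle^{2k} e^{2a\langle v\rangle^b})|$ is handled identically by interchanging the roles of $f$ and $\partial_\beta \mu$ in the application of Lemma \ref{L37}; the bilinear bound of that lemma is symmetric in its two arguments, so the identical bookkeeping produces the same constant times $\Vert f \Vert_{L^2_{k+\gamma/2, a, b}} \Vert g \Vert_{L^2_{k+\gamma/2, a, b}}$. Adding the two estimates yields the corollary. There is no real obstacle here since all the hard work has already been done in Lemma \ref{L37}; the only point worth checking carefully is that the choice $b \in (0, 2)$ is essential for $\Vert \partial_\beta \mu \Vert_{H^n_{j, a, b}} < \infty$, as the Gaussian $e^{-|v|^2/2}$ must dominate the subgaussian weight $e^{a\langle v \rangle^b}$.
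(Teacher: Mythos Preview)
Your proposal is correct and follows essentially the same approach as the paper, which simply states that the proof is similar to Corollary~\ref{C36} and omits the details. You correctly invoke Lemma~\ref{L37} in place of Lemma~\ref{L33}/\ref{L34}, make the appropriate choices of $(m,n)$ to place all derivatives on $\partial_\beta\mu$, and note that the subgaussian weight $e^{a\langle v\rangle^b}$ with $b<2$ is dominated by the Gaussian decay of $\partial_\beta\mu$; the bound $\|f\|_{L^2_{k-96,a,b}}\le\|f\|_{L^2_{k+\gamma/2,a,b}}$ is indeed immediate since $\gamma>-3$.
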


\begin{proof}
The proof is similar as Lemma \ref{C36} thus omitted. 
\end{proof}

Next we come to prove the linearized part for the exponential weight case.
\begin{lem}\label{L39}

For any $-3 <\gamma \le 1$, for any $k \in \R, a>0, b \in (0, 2) $ and  $f$ smooth, we have
\[
|(Q^+ (\mu, f), f \langle v \rangle^{2k} e^{2a \langle v \rangle^b}   ) | + |(Q^+ (f, \mu), f \langle v \rangle^{2k} e^{2a \langle v \rangle^b }   ) | \le  C_{k, a, b} \Vert f  \Vert_{L^2_{k+\gamma/2, a , b}}\Vert f \Vert_{L^2_{k+\gamma/2-b (\gamma+3)/ 4, a, b}},
\]
for some constant $C_k>0$.
\end{lem}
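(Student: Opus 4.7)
The plan is to prove both inequalities by a symmetric Cauchy--Schwarz whose two factors are controlled, respectively, by the pre-post collisional change of variables (Lemma~\ref{L23}) and by the sharp decay estimate of Lemma~\ref{L212}. I will carry out the argument for $(Q^+(\mu,f),f\langle v\rangle^{2k}e^{2a\langle v\rangle^b})$; the companion term $(Q^+(f,\mu),f\langle v\rangle^{2k}e^{2a\langle v\rangle^b})$ is identical after swapping the roles of the two post-collisional velocities $v'$ and $v_*'$ (since now $\mu$ sits at $v'$ rather than at $v_*'$).

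Writing $w(v) := \langle v\rangle^{k}e^{a\langle v\rangle^{b}}$ for brevity, I split the integrand $b|v-v_*|^\gamma \mu(v_*')f(v')f(v)w^2(v)$ symmetrically as $A\cdot B$, where
\[
A := b^{1/2}|v-v_*|^{\gamma/2}\mu(v_*')^{1/2}|f(v')|\,w(v'),\qquad
B := b^{1/2}|v-v_*|^{\gamma/2}\mu(v_*')^{1/2}|f(v)|\,\frac{w^2(v)}{w(v')}.
\]
Cauchy--Schwarz then gives $|(Q^+(\mu,f),f w^2)|\le (\iiint A^2)^{1/2}(\iiint B^2)^{1/2}$. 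For the first factor, the pre-post change of variables $(v,v_*)\leftrightarrow(v',v_*')$ sends $\mu(v_*')|f(v')|^2 w^2(v')$ to $\mu(v_*)|f(v)|^2 w^2(v)$ while leaving $b|v-v_*|^\gamma$ invariant; integrating $\sigma$ out by Grad's cutoff and applying $\int\mu(v_*)|v-v_*|^\gamma dv_* \lesssim \langle v\rangle^\gamma$ then yields $\iiint A^2 \lesssim \|f\|_{L^2_{k+\gamma/2,a,b}}^2$, producing the first norm in the claimed estimate.

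For the second factor, I pull the $(v_*,\sigma)$-integral outside and factor out $w^{-2}(v)$ to write
\[
\iiint B^2 = \int|f|^2 w^4(v)\,J(v)\,dv,\qquad J(v) = \frac{C}{w^2(v)}\iint b|v-v_*|^\gamma\,\frac{\langle v\rangle^{2k}}{\langle v'\rangle^{2k}}\,\frac{e^{2a\langle v\rangle^b}}{e^{2a\langle v'\rangle^b}}\,e^{-|v_*'|^2/2}\,d\sigma\,dv_*.
\]
The expression inside $J$ is exactly the integrand of Lemma~\ref{L212} (applied with $2a$ in place of $a$), decorated by the polynomial ratio $\langle v\rangle^{2k}/\langle v'\rangle^{2k}$. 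I will dispose of that ratio using the collision identity $\langle v\rangle^2\le \langle v'\rangle^2+\langle v_*'\rangle^2$: for $k\ge 0$ this gives $\langle v\rangle^{2k}\lesssim_k \langle v'\rangle^{2k}+\langle v_*'\rangle^{2k}$ and the extra factor $\langle v_*'\rangle^{2k}$ is absorbed by the Gaussian $e^{-|v_*'|^2/2}$; for $k<0$ one uses the dual inequality $\langle v'\rangle^{-2k}\lesssim_k \langle v\rangle^{-2k}+\langle v_*'\rangle^{-2k}$, again swallowed by the Gaussian tail. Lemma~\ref{L212} then gives $J(v) \le C_{k,a,b}\,\langle v\rangle^{\gamma-b(\gamma+3)/4}/w^2(v)$, so that after cancellation with $w^4$ one obtains $\iiint B^2 \le C_{k,a,b}\,\|f\|_{L^2_{k+\gamma/2-b(\gamma+3)/4,a,b}}^2$, and multiplying the two square roots delivers the stated bound.

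The only nontrivial step is the polynomial absorption inside Lemma~\ref{L212}; once that is in hand the entire argument is routine Cauchy--Schwarz plus pre-post change of variables. The main obstacle I anticipate is keeping careful track of how the weight $w^2(v)/w^2(v')$ interacts with the Gaussian $e^{-|v_*'|^2/2}$ across all signs of $k$; everything else is bookkeeping.
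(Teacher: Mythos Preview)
Your proposal is correct and follows essentially the same route as the paper: Cauchy--Schwarz, the pre-post change of variables for the $f(v')$ factor, and Lemma~\ref{L212} for the remaining factor. The only cosmetic difference is that the paper first invokes Lemma~\ref{L216} to split the full weight $\langle v\rangle^{k}e^{\frac12 a\langle v\rangle^{b}}\lesssim \langle v'\rangle^{k}e^{\frac12 a\langle v'\rangle^{b}}\,\langle v_*'\rangle^{k}e^{\frac12 a\langle v_*'\rangle^{b}}$, so that the polynomial part is already distributed before Cauchy--Schwarz, whereas you keep the ratio $\langle v\rangle^{2k}/\langle v'\rangle^{2k}$ intact and dispose of it afterwards; your ``dual inequality'' for $k<0$ does hold here because assumption~(A4) forces $\sin(\theta/2)\le\cos(\theta/2)$, hence $|v'-v|\le|v_*'-v|$ and $|v'|\le 2|v|+|v_*'|$.
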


\begin{proof}
For the first term, by Lemma \ref{L216} we have
\[
\langle v \rangle^{k} e^{\frac 1 2 a \langle v \rangle^b} \lesssim \langle v_*' \rangle^{k} e^{\frac 1 2 a \langle v_*' \rangle^b}  \langle v' \rangle^{k} e^{\frac 1 2 a \langle v' \rangle^b} ,
\]
together with Lemma \ref{L212}   we have
\begin{equation*}
\begin{aligned}
&|( Q^+(f, \mu), f \langle v \rangle^{2k} e^{2a  \langle v \rangle^b}   )  | 
\\
\lesssim &   \int_{\R^3} \int_{\R^3}\int_{\mathbb{S}^2} |v - v_* |^\gamma b(\cos \theta) | \mu (v_*')| |f (v')| |f(v) | \langle v \rangle^{2k} e^{2 a \langle v \rangle^b}   dv dv_* d \sigma
\\
\lesssim &   \int_{\R^3} \int_{\R^3}\int_{\mathbb{S}^2} |v - v_* |^\gamma b(\cos \theta) e^{-\frac 1 2 |v_*' |^2} \langle v_*' \rangle^{k} e^{\frac 1 2 a  \langle v_*'  \rangle^b}   |f (v')| \langle v' \rangle^{k} e^{\frac 1 2 a \langle v '\rangle^b}   |f(v) |   \langle v \rangle^{k} e^{\frac 3 2 a \langle v \rangle^b }   dv dv_* d \sigma
\\
\lesssim & \left(\int_{\R^3} \int_{\R^3}\int_{\mathbb{S}^2} |v - v_* |^\gamma b(\cos \theta )  e^{-\frac 1 2 |v_*'|^2} e^{a \langle v_*' \rangle^b } \langle v_*' \rangle^{2k}  e^{2 a \langle v' \rangle^b } \langle v ' \rangle^{2k}    |f(v')|^2  dv dv_* d \sigma   \right)^{1/2}
\\
&\left(\int_{\R^3} \int_{\R^3}\int_{\mathbb{S}^2} |v - v_* |^\gamma b(\cos \theta ) \frac { e^{a  \langle v \rangle^b } } {  e^{a \langle v '\rangle^b }}   e^{-\frac 1 2 |v_*'|^2} e^{2a  \langle v \rangle^b } \langle v \rangle^{2k} | f (v)|^2  dv dv_* d \sigma   \right)^{1/2}
\\
\lesssim &  C_{k, a, b } \Vert f  \Vert_{L^2_{k+\gamma/2, a, b }}\Vert f \Vert_{L^2_{k+\gamma/2 - b (\gamma+3)/ 4, a, b}}, 
\end{aligned}
\end{equation*}
the second term can be proved  by changing  $v'$ and $v_* '$, so the proof is thus finished. 
\end{proof}

\begin{lem}\label{L310}
For any $-3 <\gamma \le 1$, for any $k\ge 0, a>0, b  \in (0, 2) $, $f$ smooth, we have
\[
 - (Q^- (\mu, f), f \langle v \rangle^{2k} e^{2 a \langle v \rangle^b}   )  -  (Q^- (f, \mu), f \langle v \rangle^{2k} e^{2 a \langle v \rangle^b}   )  \le  -C_1 \Vert f  \Vert_{L^2_{k + \gamma/2, a, b}}^2 + C_{k, a, b} \Vert f \Vert_{L^2_3}^2,
\]
for some constants $C_1, C_{k, a, b} > 0$. 
\end{lem}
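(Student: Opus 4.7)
The strategy is to treat the two terms separately: the first gives the coercive gain, while the second is absorbed into the low-order norm $\Vert f\Vert_{L^2_3}$ by exploiting that the Maxwellian dominates the sub-Gaussian weight.

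First I would observe that $Q^-(\mu,f)(v) = f(v)\,\nu_\mu(v)$, where
\[
\nu_\mu(v) := \int_{\R^3}\int_{\mathbb{S}^2}|v-v_*|^\gamma b(\cos\theta)\mu(v_*)\,d\sigma dv_*,
\]
and establish the pointwise lower bound $\nu_\mu(v)\ge c_0 \langle v\rangle^\gamma$ for some $c_0>0$. Assumption (A2) provides $\int_{\mathbb{S}^2}b(\cos\theta)\,d\sigma\gtrsim 1$. For the radial factor, restricting to $|v_*|\le |v|/2$ gives $|v-v_*|^\gamma\gtrsim\langle v\rangle^\gamma$ for $|v|$ large (whether $\gamma\ge0$ or $\gamma<0$), while for $|v|$ bounded the integral $\int|v-v_*|^\gamma\mu(v_*)dv_*$ is a continuous strictly positive function of $v$, using $\gamma>-3$ for local integrability. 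Hence
\[
-(Q^-(\mu,f),f\langle v\rangle^{2k}e^{2a\langle v\rangle^b}) \le -c_0\Vert f\Vert^2_{L^2_{k+\gamma/2,a,b}}.
\]

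Second I would handle $(Q^-(f,\mu),f\langle v\rangle^{2k}e^{2a\langle v\rangle^b})$. After integrating $b$ over $\sigma$ (bounded by (A2)), the weight $\mu(v)\langle v\rangle^{2k}e^{2a\langle v\rangle^b}$ satisfies the key pointwise bound
\[
\mu(v)\langle v\rangle^{2k}e^{2a\langle v\rangle^b}\le C_{k,a,b}\,\mu(v)^{1/2},
\]
since $b<2$ means any sub-Gaussian exponential is dominated by a fraction of $\mu^{-1}$. The task reduces to estimating $\iint|v-v_*|^\gamma |f(v_*)||f(v)|\mu(v)^{1/2}dv_*dv$. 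I would apply a weighted Cauchy--Schwarz by splitting as $\langle v_*\rangle^{l}\cdot\langle v_*\rangle^{-l}$ with $l$ large enough so that Lemma \ref{L29} yields $\int|v-v_*|^\gamma\langle v_*\rangle^{-2l}dv_*\lesssim\langle v\rangle^\gamma$, while $\int|v-v_*|^\gamma\mu(v)^{1/2}dv\lesssim\langle v_*\rangle^{\max\{0,\gamma\}}$ is elementary. Both surviving integrals are then absorbed into $\Vert f\Vert^2_{L^2_3}$ thanks to the remaining rapid decay of $\mu^{1/2}$ and the polynomial factors.

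Combining the two contributions completes the proof. The main obstacle is really the sharp coercivity lower bound on $\nu_\mu$, which relies crucially on (A2); the control of $(Q^-(f,\mu),\cdot)$ is then a routine weighted Cauchy--Schwarz once one recognises that sub-Gaussian weights $e^{2a\langle v\rangle^b}$ with $b<2$ are harmless against $\mu$.
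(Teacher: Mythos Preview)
Your proposal is correct and follows essentially the same approach as the paper: the coercive gain comes from $-(Q^-(\mu,f),f\langle v\rangle^{2k}e^{2a\langle v\rangle^b})=-\int |f|^2\nu_\mu(v)\langle v\rangle^{2k}e^{2a\langle v\rangle^b}dv$ via the pointwise bound $\nu_\mu(v)\gtrsim\langle v\rangle^\gamma$, while the cross term $(Q^-(f,\mu),\cdot)$ is controlled by noting $\mu(v)\langle v\rangle^{2k}e^{2a\langle v\rangle^b}\le C_{k,a,b}$ (since $b<2$) and then applying a weighted Cauchy--Schwarz/Hardy--Littlewood--Sobolev estimate to $\iint|v-v_*|^\gamma|f_*||f|\,dv_*dv$. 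The paper phrases the second step as a direct appeal to Lemma~\ref{L28}, which is exactly your weighted Cauchy--Schwarz written out once and for all.
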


\begin{proof}
It is esaily seen that
\[
( - Q^- (\mu, f), f \langle v \rangle^{2k}e^{2 a \langle v \rangle^b} )  = - \int_{\mathbb{S}^2} b(\cos \theta)    d\sigma \int_{\R^3}\int_{\R^3}  |v-v_*|^\gamma \mu(v_*)  |f|^2 \langle v \rangle^{2k}   e^{2 a \langle v \rangle^b }    dv dv_* ,
\]
and  by Lemma \ref{L28} we have
\begin{equation*}
\begin{aligned} 
|(Q^-(f, \mu ), f \langle \cdot \rangle^{2k} e^{2 a \langle v \rangle^b} )| = & \left| \int_{\R^3}\int_{\R^3}\int_{\mathbb{S}^2} |v-v_*|^\gamma  b(\cos \theta) f(v_*) \mu(v)   f(v) \langle v \rangle^{2k}     e^{2 a  \langle v \rangle^b }  dv dv_* d \sigma \right|
\\
\lesssim &\int_{\R^3}\int_{\R^3}\int_{\mathbb{S}^2} |v-v_*|^\gamma  b(\cos \theta) |f(v_*)|  |f(v)| dv dv_* d \sigma
\lesssim  \Vert f \Vert_{L^2_{3}}^2.
\end{aligned}
\end{equation*}
Gathering the two terms, the lemma is thus proved. 
\end{proof}

\begin{cor}\label{C311}
For any $-3 <\gamma \le 1$, for any $k \in   \R , a > 0, b \in (0, 2) $ and  $f$ smooth, we have
\[
(Q (\mu, f), f \langle v \rangle^{2k} e^{2 a \langle v \rangle^b }   )  + (Q (f, \mu), f \langle v \rangle^{2k} e^{2 a \langle v \rangle^b }   )  \le  -C_1 \Vert f \Vert_{L^2_{k + \gamma/2, a, b }}^2 + C_{k, a, b}\Vert f \Vert_{L^2}^2,
\]
for some constants $C_1, C_{k, a , b} > 0$. 
\end{cor}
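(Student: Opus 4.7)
The plan is to combine Lemma \ref{L39} (which controls $Q^+$) and Lemma \ref{L310} (which provides the coercive estimate for $Q^-$), and then absorb the non-coercive remainders using the gain of weight that appears in the $Q^+$ bound.

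First, I would add the two estimates. Lemma \ref{L310} already gives
\[
-(Q^-(\mu,f),f\langle v\rangle^{2k}e^{2a\langle v\rangle^b}) - (Q^-(f,\mu),f\langle v\rangle^{2k}e^{2a\langle v\rangle^b}) \le -C_1\|f\|_{L^2_{k+\gamma/2,a,b}}^2 + C_{k,a,b}\|f\|_{L^2_3}^2,
\]
which is the only source of a negative contribution. Lemma \ref{L39} controls the $Q^+$ part by $C_{k,a,b}\|f\|_{L^2_{k+\gamma/2,a,b}}\|f\|_{L^2_{k+\gamma/2-b(\gamma+3)/4,a,b}}$, and via Cauchy--Schwarz I would bound this by $\epsilon\|f\|_{L^2_{k+\gamma/2,a,b}}^2 + C_{k,a,b,\epsilon}\|f\|_{L^2_{k+\gamma/2-b(\gamma+3)/4,a,b}}^2$ for any $\epsilon>0$.

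The main point, and the small technical obstacle, is then to show that the weaker norm $\|f\|_{L^2_{k+\gamma/2-b(\gamma+3)/4,a,b}}^2$ can itself be interpolated between $\|f\|_{L^2_{k+\gamma/2,a,b}}^2$ and $\|f\|_{L^2}^2$ with an arbitrarily small prefactor on the first term. Since $-3<\gamma\le 1$ and $b>0$, the exponent $\kappa:=b(\gamma+3)/2$ is strictly positive, so the weight ratio
\[
\frac{\langle v\rangle^{2(k+\gamma/2-b(\gamma+3)/4)}e^{2a\langle v\rangle^b}}{\langle v\rangle^{2(k+\gamma/2)}e^{2a\langle v\rangle^b}} = \langle v\rangle^{-\kappa}
\]
tends to $0$ as $|v|\to\infty$. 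Splitting the integral on $\{|v|\le R\}$ and $\{|v|>R\}$, the first region gives a contribution bounded by $C_{R,k,a,b}\|f\|_{L^2}^2$, while the second gives a contribution bounded by $\langle R\rangle^{-\kappa}\|f\|_{L^2_{k+\gamma/2,a,b}}^2$, which can be made smaller than any prescribed $\epsilon'>0$ by taking $R$ large. The same truncation argument handles the $\|f\|_{L^2_3}^2$ term coming from Lemma \ref{L310}: for $|v|$ large, $\langle v\rangle^6$ is exponentially smaller than $\langle v\rangle^{2(k+\gamma/2)}e^{2a\langle v\rangle^b}$, so $\|f\|_{L^2_3}^2 \le \epsilon\|f\|_{L^2_{k+\gamma/2,a,b}}^2 + C_{k,a,b,\epsilon}\|f\|_{L^2}^2$.

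Finally I would collect everything, choosing $\epsilon$ and $R$ small/large enough so that all terms proportional to $\|f\|_{L^2_{k+\gamma/2,a,b}}^2$ coming from $Q^+$ and from the $\|f\|_{L^2_3}^2$ remainder are absorbed into $-C_1\|f\|_{L^2_{k+\gamma/2,a,b}}^2$, leaving a strictly negative coefficient (renamed $-C_1$) in front of the coercive norm and a constant $C_{k,a,b}$ in front of $\|f\|_{L^2}^2$. This yields the stated inequality. No serious obstacle is expected beyond the careful bookkeeping of the interpolation and the sign condition on $\kappa$, which is guaranteed precisely by the hypothesis $-3<\gamma\le 1$ with $b\in(0,2)$.
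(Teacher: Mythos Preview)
Your proposal is correct and follows exactly the intended route: the paper states Corollary~\ref{C311} without proof, and the natural derivation is precisely to combine Lemma~\ref{L39} and Lemma~\ref{L310}, use Young's inequality on the $Q^+$ bound, and then interpolate the remainder norms $\|f\|_{L^2_{k+\gamma/2-b(\gamma+3)/4,a,b}}$ and $\|f\|_{L^2_3}$ against $\|f\|_{L^2_{k+\gamma/2,a,b}}$ and $\|f\|_{L^2}$ via the $|v|\le R$/$|v|>R$ splitting you describe. The key positivity $b(\gamma+3)/2>0$ is exactly what makes the absorption work, and your treatment of the $\|f\|_{L^2_3}^2$ term via the exponential weight is the right observation.
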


\begin{lem}\label{L312}
For any $-3 <\gamma \le 1$,   $f$ smooth, define 
\[
\bar{L} f = - v \cdot \nabla_x f+  Q(\mu, f  ) +  Q(f, \mu),
\]
then for any $k > 4$ we have
\begin{equation}
\label{poly weight estimate}
(\bar{L} f, f\langle v \rangle^{2k})_{L^2_{x, v}} \le -  C \Vert f \Vert_{L^2_x L^2_{k+\gamma/2}}^2 + C_k  \Vert f \Vert_{L^2_x L^2_v}^2, 
\end{equation}
for some constants $C, C_k>0$. For any $k \in \R, a >0, b  \in (0, 2) $ we have
\begin{equation}
\label{exp weight estimate}
(\bar{L} f , f \langle v \rangle^{2k} e^{2a \langle v \rangle^b }   )_{L^2_{x, v}}  \le  -C_1 \Vert f  \Vert_{L^2_x L^2_{k + \gamma/2, a, b }}^2 + C_{k, a, b}\Vert f \Vert_{L^2_x L^2_v}^2,
\end{equation}
for some constants $C_1, C_{k, a, b }>0$. As a consequence for the solution to the inhomogeneous Boltzmann  equation 
\[
\partial_t f =\bar{L} f   = - v \cdot \nabla_x f + Q(f, \mu) + Q(\mu, f) , \quad f|_{t=0} = f_0.
\]
If $\gamma \in [0, 1]$, for any $k >4$ we have
\[
\Vert f - P f\Vert_{L^2_x L^2_k} \lesssim e^{-\lambda t}   \Vert f_0 - P f_0 \Vert_{L^2_x L^2_k},
\]
for some constant $\lambda>0$.  If $\gamma \in (-3, 0)$, for any $4 < k_0 <  k$  we have
\[
\Vert f - P f\Vert_{L^2_x L^2_{k_0}} \lesssim \langle  t \rangle^{-\frac {k -k_* } {|\gamma| }}\Vert f_0 - P f_0 \Vert_{L^2_x L^2_k} ,\quad \forall k_* \in (k_0, k).
\]
For the exponential weight, if $\gamma \in [0, 1]$, for any $k \in \R, 0< a, b \in (0, 2)$ we have
\[
\Vert f - P f\Vert_{L^2_x L^2_{k, a , b}} \lesssim  e^{- \lambda t }\Vert f_0 - P f_0 \Vert_{L^2_x L^2_{k, a, b}},
\]
for some constant $\lambda>0$.  If $\gamma \in (-3, 0)$, for any $k \in \R, 0 < a_0 < a, b \in (0, 2)$ we have
\[
\Vert f - P f\Vert_{L^2_x L^2_{k, a_0 , b}} \lesssim  e^{- \lambda t^{\frac b {b - \gamma}  } }\Vert f_0 - P f_0 \Vert_{L^2_x L^2_{k, a, b}},
\]
for some constant $\lambda > 0$.
\end{lem}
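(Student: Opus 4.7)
\medskip

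\noindent\textbf{Proof plan.} The first reduction is that the transport operator $-v\cdot\nabla_x$ is skew-adjoint on $\T^3$ with respect to any velocity weight, so its contribution to $(\bar L f, f\langle v\rangle^{2k})_{L^2_{x,v}}$ (and to the exponential weighted version) vanishes after integration by parts in $x$. Thus \eqref{poly weight estimate} and \eqref{exp weight estimate} reduce to purely velocity estimates for $L f=Q(\mu,f)+Q(f,\mu)$, integrated in $x$.

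For the polynomial estimate \eqref{poly weight estimate}, I apply Lemma~\ref{L31} with $h=g=f$ together with the $k\ge 4$ version of Lemma~\ref{L32}. This yields
\[
(Lf,f\langle v\rangle^{2k})_{L^2_v}\le -\bigl(\|b(\cos\theta)\sin^2\tfrac{\theta}{2}\|_{L^1_\theta}-\|b(\cos\theta)\sin^{k-2}\tfrac{\theta}{2}\|_{L^1_\theta}\bigr)\|f\|_{L^2_{k+\gamma/2,*}}^{2}+C_k\|f\|_{L^2_{k+\gamma/2-1/2}}^{2}.
\]
Since $\sin(\theta/2)\le 1/\sqrt 2$ on the support of $b$ (using (A4)), we have $\sin^{k-2}\tfrac{\theta}{2}\le 2^{-(k-4)/2}\sin^{2}\tfrac{\theta}{2}$, so the bracket is bounded below by a positive constant as soon as $k>4$. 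Using $\|\cdot\|_{L^2_{k+\gamma/2,*}}\sim\|\cdot\|_{L^2_{k+\gamma/2}}$ and absorbing the lower order term via the interpolation $\|f\|_{L^2_{k+\gamma/2-1/2}}^{2}\le\epsilon\|f\|_{L^2_{k+\gamma/2}}^{2}+C_\epsilon\|f\|_{L^2_v}^{2}$, integrating in $x$ yields \eqref{poly weight estimate}. Inequality \eqref{exp weight estimate} is an immediate consequence of Corollary~\ref{C311} integrated in $x$, combined with the same absorption argument.

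For the decay rates, I adopt the semigroup/enlargement strategy sketched in the introduction. The hypocoercivity result for $\bar L$ in the reference Gaussian-weighted space $L^2_xL^2(\mu^{-1/2})$ (on $(\mathrm{Id}-P)f$) is classical for the cutoff case, giving $\|S_{\bar L}(t)(\mathrm{Id}-P)f_0\|_{L^2_xL^2(\mu^{-1/2})}\lesssim e^{-\lambda_0 t}\|f_0\|_{L^2_xL^2(\mu^{-1/2})}$ for all $\gamma\in(-3,1]$. I then decompose $\bar L=A+B$ with $A=M\chi_R(v)$ a truncation (smooth cut-off in $v$ of size $R$ and height $M$) and $B=\bar L-A$. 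Choosing $R,M$ large, the dissipative estimate \eqref{poly weight estimate} and its exponential analogue \eqref{exp weight estimate} give $(Bf,f\langle v\rangle^{2k})_{L^2_{x,v}}\le -C\|f\|^{2}_{L^2_xL^2_{k+\gamma/2}}$ (respectively with the exponential weight); simultaneously $A:L^2_xL^2_k\to L^2_xL^2(\mu^{-1/2})$ is bounded. In the hard-potential case $\gamma\in[0,1]$ we have $\|f\|_{L^2_{k+\gamma/2}}\ge\|f\|_{L^2_k}$, which produces pure exponential decay of $S_B$ in $L^2_xL^2_k$, and Duhamel $S_{\bar L}=S_B+\int_0^tS_{\bar L}(s)AS_B(t-s)\,ds$ together with the known Gaussian-weighted decay transports the exponential rate to the polynomial space, orthogonal to $P$. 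The exponential weight case is identical.

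In the soft-potential case $\gamma<0$, the dissipation norm $\|f\|_{L^2_{k+\gamma/2}}$ is strictly weaker than $\|f\|_{L^2_k}$, so $S_B$ only enjoys a polynomial rate. The key interpolation is
\[
\|f\|_{L^2_{k_*+\gamma/2}}\ge \|f\|_{L^2_{k_0}}^{1+\theta}\|f\|_{L^2_k}^{-\theta},\qquad \theta=\tfrac{k_*-k_0}{k-k_0-\gamma/2-\cdots},
\]
which, plugged into the differential inequality coming from \eqref{poly weight estimate}, yields an ODE of the form $\tfrac{d}{dt}y\le -C y^{1+\eta}$ and hence the algebraic rate $t^{-(k-k_*)/|\gamma|}$ after optimizing in $k_*$; a parallel computation using $e^{ax^b}$ and the fact that the gap $\|\cdot\|_{L^2_{k,a,b}}$ vs.\ $\|\cdot\|_{L^2_{k+\gamma/2,a,b}}$ now costs a factor $\langle v\rangle^{|\gamma|}$ gives the stretched exponential $e^{-\lambda t^{b/(b-\gamma)}}$ after losing an arbitrarily small portion of the exponential weight (from $a$ to $a_0<a$). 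The main obstacle I anticipate is keeping track of the bookkeeping in this last interpolation argument, in particular checking that the $P$-projection commutes suitably with the cut-off splitting so that the decay is truly on $(\mathrm{Id}-P)f$ and not polluted by macroscopic components.
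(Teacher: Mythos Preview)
Your treatment of the dissipative inequalities \eqref{poly weight estimate} and \eqref{exp weight estimate} is correct and matches the paper: the paper combines Lemma~\ref{L31} and Lemma~\ref{L32} exactly as you do, and obtains \eqref{exp weight estimate} from Corollary~\ref{C311}. Your argument for the positivity of the bracket via $\sin(\theta/2)\le 1/\sqrt{2}$ on $\theta\in[0,\pi/2]$ is a clean alternative to the paper's factorization $\sin^2\tfrac{\theta}{2}(1-\sin^{k-4}\tfrac{\theta}{2})>0$.

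There is one factual error in the decay part. For soft potentials $\gamma<0$ the reference decay in the Gaussian space is \emph{not} exponential: the linearized operator has zero spectral gap in $L^2_xL^2(\mu^{-1/2})$, and the known result (Strain--Guo) is a stretched exponential
\[
\Vert (I-P)S_{\bar L}(t)f\Vert_{L^2_xL^2(\mu^{-1/2})}\lesssim e^{-\lambda t^{2/(2-\gamma)}}\Vert f\Vert_{L^2_xL^2(\mu^{-3/4})},
\]
requiring a loss of Gaussian weight on the initial datum. This is exactly what the paper invokes. Fortunately the error does not break your Duhamel argument: since $A$ has compact velocity support one has $A:L^2_xL^2_k\to L^2_xL^2(\mu^{-3/4})$ for free, and the stretched exponential is integrable, so convolving it against the polynomial (or stretched-exponential) decay of $S_B$ still returns the latter rate.

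For the $S_B$ decay when $\gamma<0$ your H\"older-interpolation/nonlinear ODE route is a legitimate alternative to the paper's $R$-splitting. The paper instead uses the elementary inequality
\[
\langle R\rangle^{\gamma}\|f\|_{L^2_{k_0}}^{2}\le\|f\|_{L^2_{k_0+\gamma/2}}^{2}+\langle R\rangle^{-2(k-k_0)+\gamma}\|f\|_{L^2_{k}}^{2},
\]
integrates the resulting linear differential inequality, and optimizes over $R=R(t)$; this is why the statement carries the auxiliary $k_*\in(k_0,k)$. Your ODE $y'\le -Cy^{1+\eta}$ with $\eta=\tfrac{|\gamma|/2}{k-k_0}$ in fact yields the endpoint rate $t^{-(k-k_0)/|\gamma|}$ directly, so it is slightly sharper. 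However, the ``parallel computation'' you allude to for the exponential weight does \emph{not} go through by H\"older interpolation: the weight $\langle v\rangle^{k}e^{a_0\langle v\rangle^{b}}$ is not a geometric mean of $\langle v\rangle^{k+\gamma/2}e^{a_0\langle v\rangle^{b}}$ and $\langle v\rangle^{k}e^{a\langle v\rangle^{b}}$, so no nonlinear ODE of the above type arises. For that case you must use the paper's $R$-splitting
\[
\langle R\rangle^{\gamma}\|f\|_{L^2_{k,a_0,b}}^{2}\le\|f\|_{L^2_{k+\gamma/2,a_0,b}}^{2}+\langle R\rangle^{\gamma}e^{-(a-a_0)\langle R\rangle^{b}}\|f\|_{L^2_{k,a,b}}^{2},
\]
and then choose $\langle R\rangle\sim t^{1/(b-\gamma)}$ to produce $e^{-\lambda t^{b/(b-\gamma)}}$.
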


\begin{proof}  The exponential case \eqref{exp weight estimate} just follows from Corollary \ref{C311} above. We only prove the  polynomial case \eqref{poly weight estimate}.  
By Lemma \ref{L31} and \ref{L32} we have
\[
(Lf, f \langle v \rangle^{2k}) = -\Vert  b(\cos \theta) (\sin^2 \frac \theta 2  -\sin^{k- 2} \frac \theta 2 )\Vert_{L^1_\theta} \Vert f \Vert_{L^2_{k+\gamma/2, *} } + C_k \Vert f \Vert_{L^2_{k+\gamma/2-1/2}}^2,
\]
if $k >4$, then 
\[
\sin^2  \frac \theta 2  - \sin^{k -2   } \frac \theta 2 \ge \sin^2  \frac \theta 2 (1 - \sin^{k -4 }  \frac \theta 2) >0,
\]
so the polynomial case follows by interpolation. Then we come to prove the convergence rate. By combing the results in \cite{SG2, SG, K, DYZ2} we have
\[
\Vert (I-P) S_L(t) f \Vert_{L^2_x L^2(\mu^{-1/2})} \lesssim e^{-\lambda t}\Vert (I - P ) S_L(t) f \Vert_{L^2_x L^2(\mu^{-1/2})} , \quad \hbox{if} \quad \gamma \ge 0,
\]
and 
\[
\Vert (I-P) S_L(t) f \Vert_{L^2_xL^2(\mu^{-1/2})} \lesssim e^{-\lambda t^{\frac {2} {2-\gamma} }}\Vert (I - P) S_L(t) f \Vert_{L^2_x L^2(\mu^{-3/4})} , \quad \hbox{if} \quad -3 <  \gamma < 0.
\]
Define two operators $L=A+B$ with $A=M\chi_{R_1}$ and $B=L-M\chi_{R_1}$ for some $M, R_1>0$ large, where $\chi_{R_1}$ is the truncation function in ball with center zero and radius $R_1>0$. Denote $S_L$ and $S_B$ semigroups generated by $L$ and $B$ respectively. Then if $M, R_1$ is large we have
\[
(Bf, f)_{L^2_x L^2_k} \lesssim - \Vert f \Vert_{L^2_xL^2_{k+\gamma/2}} , \quad \Vert A f  \Vert_{L^2_xL^2(\mu^{-3/4})} \lesssim  \Vert f \Vert_{L^2_x L^2_v}. 
\]
For the case $\gamma \ge 0$, for the polynomial case we have
\[
\Vert S_B(t) f \Vert_{L^2_x L^2_k} \lesssim e^{-\lambda t} \Vert  f \Vert_{L^2_x L^2_k} .
\]
By Duhamel's principle we have
\begin{equation*}
\begin{aligned} 
\Vert (I - P) S_L(t) \Vert_{L^2_x L^2_k \to L^2_x L^2_k } \lesssim  & \int_0^t \Vert (I - P) S_L(t-s) \Vert_{L^2_x L^2(\mu^{-1/2}) \to L^2_x L^2(\mu^{-1/2}) }  \Vert A \Vert_{L^2_x L^2_k \to L^2_x L^2(\mu^{-1/2}) }  \Vert S_B(s) \Vert_{L^2_xL^2_k \to L^2_xL^2_k }  ds
\\
&+ \Vert (I - P) S_B(t) \Vert_{L^2_xL^2_k \to L^2_xL^2_k } \lesssim e^{-\lambda t },
\end{aligned}
\end{equation*}
and for the exponential weight case the proof for $\gamma \ge 0$ is the same. For the case $\gamma <0$, we first prove the polynomial case, on one hand, we have
\[
\Vert S_B(t)f \Vert_{L^2_xL^2_{k}} \lesssim \Vert f\Vert_{L^2_xL^2_k}.
\]
On the other hand, for any $k_0 < k$, for any $R>0$ we have
\[
\frac{d}{dt} \|S_B(t)f\|^2_{L^2_x L^{2}_{k_0}}\le -c\|S_B(t)f\|^2_{L^2_x L^2_{k_0+\gamma/2}} \leq  -c\langle R \rangle^{\gamma}\|S_B(t)f\|^2_{L^2_x L^2_{k_0}}+C\langle R\rangle^{ - 2(k-k_0)+\gamma}\|S_B(t)f\|^2_{L^2_x L^2_{k}},
\]
where we use the following interpolation
\[
\langle R\rangle^{\gamma}\|f\|^2_{L^2_x L^2_{k_0}}\leq \| f\|^2_{L^2_x L^2_{k_0+\gamma/2}}+\langle R \rangle^{ - 2(k-k_0)+\gamma}\|f\|^2_{L^2_x L^2_{k}}.
\]
Integrating the differential inequality, we obtain
\begin{equation*}
\begin{aligned} 
\|S_B(t)f\|^2_{L^2_x L^2_{k_0}}\lesssim  e^{-c\langle R\rangle^\gamma t}\|f\|^2_{L^2_x L^2_{k_0}}+\langle R\rangle^{- 2 ( k - k_0)}\| f \|^2_{L^2_x L^2_k} \lesssim & \inf_{R>0}(e^{-c\langle R\rangle^\gamma t}+\langle R\rangle^{ - 2(k-k_0)})\|f\|^2_{L^2_x L^2_k} 
\\
\lesssim & \langle t \rangle^{-\frac {2(k-k_*)} {|\gamma|}}  \|f\|^2_{L^2_x L^2_k} , \quad \forall k_* \in (k_0, k),
\end{aligned}
\end{equation*}
where we choose $\langle R\rangle =\langle t\rangle^{-1/\gamma} [\log(1+t)^{-\frac 2 c(k - k_0 )} ]^{1/\gamma}$.  Moreover, thanks to Duhamel's formula
\begin{equation*}
\begin{aligned} 
\Vert (I - P) S_L(t) \Vert_{L^2_xL^2_k \to L^2_x L^2_{k_0} } \lesssim& \Vert (I - P) S_B(t) \Vert_{L^2_x L^2_k \to L^2_xL^2_{k_0} }  
\\
&+  \int_0^t \Vert (I - P) S_L(t-s) \Vert_{L^2_x L^2(\mu^{-3/4}) \to L^2_x L^2(\mu^{-1/2}) }  \Vert A \Vert_{L^2_x L^2_k \to L^2_x L^2(\mu^{-3/4}) }  \Vert S_B(s) \Vert_{L^2_x L^2_k \to L^2_x L^2_{k_0} }  ds 
\\
\lesssim & \langle t \rangle^{-\frac {k-k_*} {|\gamma|}} , \quad \forall k_* \in (k_0, k),
\end{aligned}
\end{equation*}
so the proof for the polynomial case is thus finished. For the exponential weight case,  if $M, R_1$ is large we have
\[
(Bf, f)_{L^2_x L^2_{k, a, b}} \le  -c \Vert  f \Vert_{L^2_x L^2_{k +\gamma/2, a, b}}^2  , \quad \Vert A f  \Vert_{L^2_x L^2(\mu^{-3/4})} \lesssim  \Vert f \Vert_{L^2_x L^2_v}.
\]
So first we have
\[
\Vert S_B(t)f \Vert_{L^2_x L^2_{k, a, b}} \lesssim \Vert f\Vert_{L^2_x L^2_{k, a , b}}. 
\]
On the other hand, for any $a_0 < a$, for any constant $R>0$ we have
\[
\frac{d}{dt} \|S_B(t)f\|_{L^2_x L^2_{k , a_0 , b}}^2 \le -c\|S_B(t)f\|_{L^2_x L^2_{k +\gamma/2, a_0 , b}}^2\leq  -c\langle R \rangle^{\gamma}\|S_B(t)f\|_{L^2_x L^2_{k , a_0 , b}}^2 +   \langle R \rangle^\gamma  e^{-(a - a_0) \langle  R \rangle^b }  \|S_B(t)f\|_{L^2_x L^2_{k , a , b}}^2,
\]
where we use the following interpolation
\[
\langle R\rangle^{\gamma}\|f\|_{L^2_x L^2_{k , a_0 , b}}^2 \le \| f\|_{L^2_x L^2_{k +\gamma/2 , a_0 , b}}^2  +   \langle R \rangle^\gamma  e^{-( a - a_0) \langle  R \rangle^b } \|f\|_{L^2_x L^2_{k , a , b}}^2,
\]
we can deduce
\begin{equation*}
\begin{aligned} 
\|S_B(t)f\|^2_{L^2_x L^2_{k, a, b}}\lesssim& e^{-c\langle R\rangle^\gamma t}\|f\|^2_{L^2_x L^2_{k, a, b}} + e^{-( a- a_0) \langle R \rangle^b }  \| f \|^2_{L^2_x L^2_{k, a, b}} 
\\
\lesssim & \inf_{R>0}(e^{-c\langle R\rangle^\gamma t} +  e^{-( a - a_0) \langle R \rangle^b } )\|f\|^2_{L^2_x L^2_{k, a, b}} \lesssim e^{- \lambda t ^{ \frac {b} {b -\gamma}} } \|f\|^2_{L^2_x L^2_{k, a, b}} ,
\end{aligned}
\end{equation*}
for some constant $\lambda>0$, where we choose $\langle R \rangle = t^{\frac {1} {b- \gamma}}$.  Using Duhamel's formula again we have
\begin{equation*}
\begin{aligned} 
&\Vert (I - P) S_L(t) \Vert_{L^2_x L^2_{k, a, b} \to L^2_x L^2_{k, a_0, b} } 
\\
\lesssim& \int_0^t \Vert (I - P) S_L(t-s) \Vert_{L^2_x L^2(\mu^{-3/4}) \to L^2_x L^2(\mu^{-1/2}) }  \Vert A \Vert_{L^2_x L^2_{k, a_0, b} \to L^2_x L^2(\mu^{-3/4}) } \Vert S_B(s) \Vert_{L^2_x L^2_{k, a, b} \to L^2_x L^2_{k, a_0, b } } ds  
\\
&+  \Vert (I - P) S_B(t) \Vert_{ L^2_xL^2_{k, a, b} \to L^2_x L^2_{k, a_0, b} }  
\lesssim   e^{- \lambda t ^{ \frac {b } { b  -\gamma}} },
\end{aligned}
\end{equation*}
so the proof is thus finished. 
\end{proof}

\section{Estimates for the inhomogeneous equation}\label{section4}

In this section we prove the estimates for the inhomogeneous Boltzmann equation. 
Recall $N$ is defined in \eqref{N}, $w(\alpha, \beta)$ is defined in \eqref{weight function}, $X_k, X_{k, a, b}$ is defined in \eqref{X k 1} and \eqref{X k 2}, and  $Y_k:=X_{k+\gamma/2}, Y_{k, a, b} :=X_{k+\gamma/2, a, b}$. We first prove an estimate for the nonlinear term. 

\begin{lem}\label{L41}
Suppose $f, g, h$ smooth function. For the polynomial weight case,  for any $k \ge 4$ large, if $\gamma \in (-\frac 3 2, 1]$, then for any indices $|\alpha| \le 2$, we have
\[
|(\partial^\alpha Q(f, g), \partial^\alpha h \langle v \rangle^{2k})_{L^2_{x, v}} | \le C_k \Vert f \Vert_{H^2_xL^2_4} \Vert g  \Vert_{Y_k} \Vert h \Vert_{Y_k} + C_k\Vert g \Vert_{H^2_xL^2_4} \Vert f  \Vert_{Y_k} \Vert h \Vert_{Y_k},
\]
for some constant $C_k>0$. If $\gamma \in (-3, -\frac 3 2]$, for any indices $|\alpha| + |\beta| \le N$ we have
\[
|(\partial^\alpha_\beta Q(f, g), \partial^\alpha_\beta h w^2 (\alpha, \beta) )_{L^2_{x, v}} | \le C_k \Vert f  \langle v \rangle^4 \Vert_{H^N_{x, v} } \Vert g  \Vert_{Y_k} \Vert h \Vert_{Y_k} + C_k\Vert g \langle v \rangle^4 \Vert_{H^N_{x, v}} \Vert f  \Vert_{Y_k} \Vert h \Vert_{Y_k},
\]
for some constant $C_k>0$. For the exponential weight case, for any $k \in \R, a >0, b \in (0, 2)$, if $\gamma \in (-\frac 3 2, 1]$, then for any indices $|\alpha| \le 2$, we have
\[
|(\partial^\alpha Q(f, g), \partial^\alpha h \langle v \rangle^{2k} e^{2a \langle v \rangle^b}  )_{L^2_{x, v}} | \le C_k \Vert f \Vert_{X_{k, a, b}} \Vert g  \Vert_{Y_{k, a, b}} \Vert h \Vert_{Y_{k, a, b}} + C_k\Vert g \Vert_{X_{k, a, b}} \Vert f  \Vert_{Y_{k, a, b}} \Vert h \Vert_{Y_{k, a, b}},
\]
for some constant $C_{k, a, b}>0$. If $\gamma \in (-3, -\frac 3 2]$ for any indices $|\alpha| + |\beta| \le N$ we still have
\[
|(\partial^\alpha_\beta Q(f, g), \partial^\alpha_\beta h w^2 (\alpha, \beta) e^{2a \langle v \rangle^b} )_{L^2_{x, v}} | \le C_{k,a,  b} \Vert f  \Vert_{X_{k, a, b}  } \Vert g  \Vert_{Y_{k,a ,b}} \Vert h \Vert_{Y_{k, a, b}} + C_{k,a ,b} \Vert g  \Vert_{X_{k, a, b} } \Vert f  \Vert_{Y_{k, a, b}} \Vert h \Vert_{Y_{k, a, b}}, 
\]
for some constant $C_{k, a, b}>0$.
\end{lem}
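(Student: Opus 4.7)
The plan is to expand $\partial^\alpha_\beta Q(f,g)$ via the Leibniz rule, apply the bilinear collision estimates of Corollary \ref{C35} (polynomial case) or Lemma \ref{L37} (exponential case) in the $v$-variable pointwise in $x$, and close the $x$-integral by Sobolev embedding $H^2_x\hookrightarrow L^\infty_x$. Since $Q$ commutes with $\partial_x$, and a change of variable $v_*\mapsto v-v_*$ shows that the cutoff collision operator $Q$ also satisfies the product rule in $v$, one has $\partial^\alpha_\beta Q(f,g) = \sum_{\alpha_1\le\alpha,\ \beta_1\le\beta} \binom{\alpha}{\alpha_1}\binom{\beta}{\beta_1}\, Q(\partial^{\alpha_1}_{\beta_1} f, \partial^{\alpha-\alpha_1}_{\beta-\beta_1} g)$, reducing everything to the bilinear collision estimates already in hand.

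For the case $\gamma \in (-3/2, 1]$ (where $N=2$ and only $x$-derivatives appear), I would apply Corollary \ref{C35} with $N-2=0$ pointwise in $x$ to each term, integrate in $x$, and use H\"older with one factor placed in $L^\infty_x L^2_v$ via $H^2_x\hookrightarrow L^\infty_x$. Since $|\alpha_1|+|\alpha_2|\le 2$, at least one of the two factors carries at most one $x$-derivative; placing that one in $L^\infty_x$ produces either the $\|f\|_{H^2_x L^2_4}$ or the $\|g\|_{H^2_x L^2_4}$ factor, while the other, together with $\partial^\alpha h$, lies in $L^2_x L^2_{k+\gamma/2} \subset Y_k$.

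For the case $\gamma \in (-3,-3/2]$ (where $N\in\{3,4\}$ and $v$-derivatives are present), after Leibniz I would distribute the weight $w(\alpha,\beta)$ onto $\partial^{\alpha_i}_{\beta_i} f$ and $\partial^{\alpha-\alpha_i}_{\beta-\beta_i} g$ using the monotonicity $w(\alpha,\beta)\le w(\alpha_i,\beta_i)$ established earlier and, when needed, the exchange \eqref{w v nabla x} that trades a $v$-derivative for an $x$-derivative at a cost of $\langle v\rangle^{\min\{\gamma,0\}}$. I would then apply Corollary \ref{C35} with a choice of $m+n=N-2$ tailored to which factor is designated as the weak one; a short bookkeeping shows that the choice can always be made so that the factor placed in $L^\infty_x$ has total $x$- plus $v$-derivative count at most $N$, hence is controlled by $\|f\langle v\rangle^4\|_{H^N_{x,v}}$ (or its $g$-analogue), while the other two slots feed into $Y_k$ via the weight monotonicity. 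The exponential-weighted cases (iii)--(iv) are treated identically, with Corollary \ref{C35} replaced by Lemma \ref{L37}, the weight $\langle v\rangle^{2k}$ by $\langle v\rangle^{2k} e^{2a\langle v\rangle^b}$, and the exponential factor split across the two factors by Lemma \ref{L216}.

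The main obstacle is the bookkeeping in the $v$-derivative case: for every Leibniz decomposition with $|\alpha_1|+|\beta_1|+|\alpha_2|+|\beta_2|\le N$, one must select $(m,n)$ with $m+n=N-2$ so that the factor designated for $L^\infty_x$ satisfies the derivative-count constraint $|\alpha_i|+|\beta_i|+m+2\le N$ (from the $H^2_x$ embedding), and simultaneously the weight-distribution is compatible with \eqref{w v nabla x}. The ordering conditions \eqref{constant} imposed on the constants $C_{|\alpha|,|\beta|}$ are precisely what allows each such cross-term to be absorbed into the $X_k$ and $Y_k$ norms without leakage to higher-order seminorms.
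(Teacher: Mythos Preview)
The overall strategy (Leibniz rule, then Corollary \ref{C35}/Lemma \ref{L37} pointwise in $x$, then Sobolev in $x$) matches the paper, but the execution you sketch has real gaps in the derivative bookkeeping.

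First, even in the easy case $\gamma\in(-3/2,1]$, your claim that placing the factor with at most one $x$-derivative in $L^\infty_x$ yields $\|f\|_{H^2_xL^2_4}$ fails when $|\alpha_1|=|\alpha-\alpha_1|=1$: in three dimensions $H^1_x\not\hookrightarrow L^\infty_x$, so $\|\partial^{\alpha_1}f\|_{L^\infty_xL^2_4}$ is not controlled by $\|f\|_{H^2_xL^2_4}$. The paper instead uses the graded product estimate $\int_{\T^3} A\,B\,C\,dx \lesssim \|A\|_{H^{2-j}_x}\|B\|_{H^j_x}\|C\|_{L^2_x}$ with $j=|\alpha-\alpha_1|$, which for $j=1$ is the $L^6_x\times L^3_x\times L^2_x$ H\"older split.

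Second, and more seriously, for $\gamma\in(-3,-3/2]$ your ``short bookkeeping'' does not close with the tools you name. Take $N=4$ and a Leibniz term with $|\alpha-\alpha_1|+|\beta-\beta_1|=3$, $|\alpha_1|+|\beta_1|=1$. Neither factor can be placed in $L^\infty_x$ via $H^2_x$ without exceeding the $H^N_{x,v}$ budget: the factor carrying $3$ derivatives would need $3+2=5>N$; the factor carrying $1$ derivative, after absorbing $H^2_v$ from Corollary \ref{C35} (taking $n=2$), would need $1+2+2=5>N$ as well. The paper resolves these intermediate cases via the $L^6_x\times L^3_x$ split together with the interpolation \eqref{L3}, and the top case $|\alpha-\alpha_1|+|\beta-\beta_1|=4$ via the sharpened $L^\infty_x$ embedding \eqref{Linfty}, both of which trade $x$-regularity against $v$-weight. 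Crucially, these interpolations require the weight product inequalities \eqref{ab3}--\eqref{ab4} (for instance $\max_{|\alpha|+|\beta|=3}w^2(\alpha,\beta)\le w(0,2)w(1,2)$), which are strictly stronger than the monotonicity $w(\alpha,\beta)\le w(\alpha_1,\beta_1)$ and the exchange \eqref{w v nabla x} that you invoke; the specific exponents in the definition \eqref{weight function} were chosen precisely so that \eqref{ab3}--\eqref{ab4} hold.

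Finally, the hierarchy constants $C_{|\alpha|,|\beta|}$ of \eqref{constant} play no role in this lemma; they enter only later, in Lemma \ref{L46}, to absorb the lower-order cross terms coming from the \emph{linearized} operator and from $v\cdot\nabla_x$, not from the bilinear term treated here.
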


\begin{proof} We first prove the polynomial case, for the case $\gamma \in (-\frac 3 2, 1]$, by
\[
(\partial^\alpha Q(f, g), \partial^\alpha h \langle v \rangle^{2k})_{L^2_{x, v}}= \sum_{\alpha_1 \le \alpha} ( Q(\partial^{\alpha_1} f, \partial^{\alpha - \alpha_1} g), \partial^{\alpha} h \langle v \rangle^{2k})_{L^2_{x, v}},
\]
together with Lemma \ref{C35} we have
\begin{equation*}
\begin{aligned} 
& ( Q(\partial^{\alpha_1} f, \partial^{\alpha - \alpha_1} g), \partial^{\alpha} h \langle v \rangle^{2k})_{L^2_{x, v}}
\\
\le &C_k \int_{\T^3}    \Vert \partial^{\alpha - \alpha_1} f \Vert_{L^2_{4}} \Vert \partial^{\alpha_1} g \Vert_{L^2_{ k+\gamma/2 }}\Vert \partial^{\alpha}  h \Vert_{L^2_{ k + \gamma/2}} + \Vert \partial^{\alpha_1} g \Vert_{L^2_{4}} \Vert \partial^{\alpha - \alpha_1} f \Vert_{L^2_{ k + \gamma/2}}\Vert \partial^{\alpha} h \Vert_{L^2_{ k + \gamma/2}} dx.
\end{aligned}
\end{equation*}
By symmetry we only estimate the first term, we easily compute
\begin{equation*}
\begin{aligned} 
\int_{\T^3}  \Vert \partial^{\alpha - \alpha_1} f \Vert_{L^2_{4}} \Vert \partial^{\alpha_1} g \Vert_{L^2_{ k+\gamma/2 }}\Vert \partial^{\alpha}  h \Vert_{L^2_{ k + \gamma/2}} dx
\lesssim&  \Vert \partial^{\alpha - \alpha_1} f \Vert_{H^{2-|\alpha -\alpha_1| }_x L^2_{4}} \Vert \partial^{\alpha_1} g \Vert_{H^{|\alpha - \alpha_1|}_x L^2_{ k+\gamma/2 }}\Vert \partial^{\alpha}  h \Vert_{L^2_x L^2_{ k + \gamma/2}} 
\\
\lesssim & \Vert  f \Vert_{H^2_x L^2_{4}} \Vert g \Vert_{H^2_x L^2_{ k+\gamma/2 }}\Vert   h \Vert_{H^2_x  L^2_{ k + \gamma/2}} ,
\end{aligned}
\end{equation*}
so the case $\gamma \in (-\frac 3 2 , 1]$ is proved. We then prove the case $ (-3, - \frac 3 2]$, we only prove the case $\gamma \in (-3, - \frac 5 2]$, the case $\gamma \in (- \frac 5 2, -\frac  3 2] $ can be proved similarly. First we have
\[
(\partial^\alpha_\beta Q(f, g), \partial^\alpha h w^2(\alpha, \beta))_{L^2_{x, v}}= \sum_{\alpha_1 \le \alpha, \beta_1 \le \beta} ( Q(\partial^{\alpha_1}_{\beta_1} f, \partial^{\alpha - \alpha_1}_{\beta- \beta_1} g), \partial^{\alpha}_\beta h w^2(\alpha, \beta))_{L^2_{x, v}},
\]
together with Lemma \ref{C35} we have
\begin{equation*}
\begin{aligned} 
&  ( Q(\partial^{\alpha_1}_{\beta_1} f, \partial^{\alpha - \alpha_1}_{\beta- \beta_1} g), \partial^{\alpha}_\beta h w^2(\alpha, \beta))_{L^2_{x, v}}
\\
\le &C_k \int_{\T^3} \min_{m + n =  2} \{ \Vert \partial^{\alpha - \alpha_1}_{\beta - \beta_1} f \Vert_{H^m_{4}} \Vert \partial^{\alpha_1}_{\beta_1} g w(\alpha, \beta)\Vert_{H^n_{ \gamma/2 }}    \}   \Vert \partial^{\alpha}_{\beta}  h w(\alpha, \beta) \Vert_{L^2_{  \gamma/2}} dx
\\
&+C_k\int_{\T^3}  \min_{m+n =  2 }  \{ \Vert \partial^{\alpha_1}_{\beta_1} g \Vert_{H^n_{4}} \Vert \partial^{\alpha - \alpha_1}_{\beta - \beta_1} f w(\alpha, \beta)\Vert_{H^m_{  \gamma/2}}    \}   \Vert \partial^{\alpha}_{\beta}  h w(\alpha, \beta) \Vert_{L^2_{ \gamma/2}}   dx.
\end{aligned}
\end{equation*}
By symmetry we only need to prove that 
\[
 \int_{\T^3}  \min_{m+n =2} \{ \Vert \partial^{\alpha - \alpha_1}_{\beta - \beta_1} f \Vert_{H^m_{4}} \Vert \partial^{\alpha_1}_{\beta_1} g w(\alpha, \beta)\Vert_{H^n_{ \gamma/2 }}     \}  \Vert \partial^{\alpha}_{\beta}  h  w(\alpha, \beta) \Vert_{L^2_{  \gamma/2}} dx \lesssim \Vert \langle v \rangle^{4} f \Vert_{H^4_{x, v} } \Vert g \Vert_{Y_k}\Vert h \Vert_{Y_k}, \quad \forall \alpha_1 \le \alpha, \beta_1 \le \beta.
\]
First we split it into three cases, $|\alpha-\alpha_1| + |\beta-\beta_1| \le 2 $, $|\alpha-\alpha_1| + |\beta-\beta_1| =3 $, $|\alpha-\alpha_1| + |\beta-\beta_1| = 4 $. For the case $|\alpha-\alpha_1| + |\beta-\beta_1| \le 2 $, take $m=2, n=0$ we have
\begin{equation*}
\begin{aligned} 
& \int_{\T^3}  \min_{m+n =2}  \{ \Vert \partial^{\alpha - \alpha_1}_{\beta - \beta_1} f \Vert_{H^m_{4}} \Vert \partial^{\alpha_1}_{\beta_1} g w(\alpha, \beta)\Vert_{H^n_{ \gamma/2 }}    \}   \Vert \partial^{\alpha}_{\beta}  h  w(\alpha, \beta) \Vert_{L^2_{  \gamma/2}} dx 
\\
\le &   \Vert \partial^{\alpha - \alpha_1}_{\beta - \beta_1} f \Vert_{H^c_x H^2_{4}} \Vert \partial^{\alpha_1}_{\beta_1} g w(\alpha, \beta)\Vert_{H^d_x L^2_{ \gamma/2 }}\Vert \partial^{\alpha}_{\beta}  h  w(\alpha, \beta) \Vert_{L^2_x L^2_{  \gamma/2}},
\end{aligned}
\end{equation*}
with $c, d$ nonnegative integers satisfying  $c+d =2$.  Taking 
\[
c = 2- |\beta-\beta_1| -  |\alpha-\alpha_1|, \quad d = |\beta-\beta_1| + |\alpha-\alpha_1|,
\]
such that
\[
|\beta-\beta_1| + |\alpha-\alpha_1| + 2 + c =4, \quad |\alpha_1 | +|\beta_1| +d =|\alpha| +|\beta|,
\]
so first we have
\[
\Vert \partial^{\alpha - \alpha_1}_{\beta - \beta_1} f \Vert_{H^2_x H^2_{4}}  \le  \Vert \langle v \rangle^{4} f  \Vert_{H^{4}_{x, v}} , \quad \Vert \partial^{\alpha}_{\beta}  h  w(\alpha, \beta) \Vert_{L^2_x L^2_{  \gamma/2}}  \le \Vert  h \Vert_{Y_k}.
\]
Using the fact that 
\[
|\alpha_1| + |\beta_1| = |\alpha|+|\beta| \quad \&\quad |\beta_1| \le |\beta| \quad \Rightarrow \quad  w(\alpha, \beta) \le w(\alpha_1, \beta_1),
\]
we deduce 
\[
\Vert \partial^{\alpha_1}_{\beta_1} g w(\alpha, \beta)\Vert_{H^d_x L^2_{ \gamma/2 }} = \sum_{|d|\le |\beta-\beta_1| + |\alpha-\alpha_1| } \Vert \partial^{\alpha_1}_{\beta_1} g w(\alpha, \beta)\Vert_{H^d_x L^2_{ \gamma/2 }} \le \Vert g \Vert_{Y_k}.
\]
For the case $|\alpha-\alpha_1| + |\beta-\beta_1| = 3 $, by $\Vert f g\Vert_{L^2_x} \le \Vert  f \Vert_{L^6_x} \Vert g \Vert_{L^3_x}$
and taking $ m = 0,  n = 2$ we have
\begin{equation*}
\begin{aligned} 
& \int_{\T^3}  \min_{m+n =2}  \{ \Vert \partial^{\alpha - \alpha_1}_{\beta - \beta_1} f \Vert_{H^m_{4}} \Vert \partial^{\alpha_1}_{\beta_1} g w(\alpha, \beta)\Vert_{H^n_{ \gamma/2 }}   \} \Vert \partial^{\alpha}_{\beta}  h  w(\alpha, \beta) \Vert_{L^2_{  \gamma/2}} dx 
\\
\le &   \Vert \partial^{\alpha - \alpha_1}_{\beta - \beta_1} f \Vert_{L^6_x L^2_{4}} \Vert \partial^{\alpha_1}_{\beta_1} g w(\alpha, \beta)\Vert_{L^3_x H^2_{ \gamma/2 }}\Vert \partial^{\alpha}_{\beta}  h  w(\alpha, \beta) \Vert_{L^2_x L^2_{  \gamma/2}} 
\\
\le &   \Vert \partial^{\alpha - \alpha_1}_{\beta - \beta_1} f \Vert_{H^1_x L^2_{4}} \Vert \partial^{\alpha_1}_{\beta_1} g w(\alpha, \beta)\Vert_{L^3_x H^2_{ \gamma/2 }}\Vert \partial^{\alpha}_{\beta}  h  w(\alpha, \beta) \Vert_{L^2_x L^2_{  \gamma/2}}.
\end{aligned}
\end{equation*}
Since $|\alpha-\alpha_1| + |\beta-\beta_1| = 3 $, we have
\[
\Vert \partial^{\alpha - \alpha_1}_{\beta - \beta_1} f \Vert_{H^1_x H^a_{4}}  \le  \Vert \langle v \rangle^{4} f  \Vert_{H^{4}_{x, v}} , \quad \Vert \partial^{\alpha}_{\beta}  h  w(\alpha, \beta) \Vert_{L^2_x L^2_{  \gamma/2}}  \le \Vert  h \Vert_{Y_k}.
\]
For the $g$ term, since $|\alpha|+|\beta| \ge 3$, we split it into two cases $|\alpha|+|\beta| = 3$ and  $|\alpha|+|\beta| = 4$. For the case  $|\alpha-\alpha_1| + |\beta-\beta_1| = |\alpha|+|\beta| =3$ we have $|\alpha_1| =|\beta_1| =0 $, by \eqref{ab3}
\[
\max_{|\alpha|+|\beta| =3}w^2 (\alpha, \beta) \le w(0, 2) w(1, 2), 
\]
together with \eqref{L3} we have
\[
\Vert  g w(\alpha, \beta)\Vert_{L^3_x H^2_{ \gamma/2 }} \le \Vert g w(0, 2)\Vert_{L^2_x H^2_{ \gamma/2 }} + \Vert g w( 1 , 2)\Vert_{H^1_x H^2_{ \gamma/2 }}\lesssim \Vert g \Vert_{Y_k}.
\]
For the case $|\alpha-\alpha_1| + |\beta-\beta_1| = 3, |\alpha| +|\beta|=4$, this time we have $|\alpha_1| =1, |\beta_1|=0$ or $|\alpha_1| =0, |\beta_1| =1$.  For the first case by \eqref{ab4}
\[
\max_{|\alpha|+|\beta| =4}w^2 (\alpha, \beta) \le w(1, 2) w(2, 2), 
\]
together with \eqref{L3} we have
\[
\Vert  \partial^{\alpha_1}g w(\alpha, \beta)\Vert_{L^3_x H^2_{ \gamma/2 }} \lesssim \Vert g w(1, 2)\Vert_{H^1_x H^2_{ \gamma/2 }} + \Vert g w( 2 , 2)\Vert_{H^2_x H^2_{ \gamma/2 }}\lesssim \Vert g \Vert_{Y_k}.
\]
For the second case by \eqref{ab4}
\[
\max_{|\alpha|+|\beta| =4}w^2 (\alpha, \beta) \le w(0, 3) w(1, 3), 
\]
together with \eqref{L3} we deduce 
\[
\Vert  \partial_{\beta_1}g w(\alpha, \beta)\Vert_{L^3_x H^2_{ \gamma/2 }} \lesssim \Vert g w(0, 3)\Vert_{L^2_x H^3_{ \gamma/2 }} + \Vert g w( 1 , 3)\Vert_{H^1_x H^3_{ \gamma/2 }}\lesssim \Vert g \Vert_{Y_k}.
\]
Finally for  $|\alpha-\alpha_1| + |\beta-\beta_1| = 4 $, we have $|\alpha_1| = |\beta_1|=0$, taking $ m = 0, n = 2$ we have
\begin{equation*}
\begin{aligned} 
&  \int_{\T^3} \min_{m+n =2}   \{   \Vert \partial^{\alpha - \alpha_1}_{\beta - \beta_1} f \Vert_{H^m_{4}} \Vert \partial^{\alpha_1}_{\beta_1} g w(\alpha, \beta)\Vert_{H^n_{ \gamma/2 }}   \}    \Vert \partial^{\alpha}_{\beta}  h  w(\alpha, \beta) \Vert_{L^2_{  \gamma/2}} dx 
\\
\le &   \Vert \partial^{\alpha }_{\beta} f \Vert_{L^2_x L^2_{4}} \Vert  g w(\alpha, \beta)\Vert_{L^\infty_x H^2_{ \gamma/2 }}\Vert \partial^{\alpha}_{\beta}  h  w(\alpha, \beta) \Vert_{L^2_x L^2_{  \gamma/2}} 
\\
\le &  \Vert \langle v \rangle^{4} f \Vert_{H^4_{x, v}}\Vert  g w(\alpha, \beta)\Vert_{L^\infty_x H^2_{ \gamma/2 }}\Vert   h   \Vert_{Y_k} ,
\end{aligned}
\end{equation*}
by \eqref{ab4}
\[
\max_{|\alpha|+|\beta| =4}w^2 (\alpha, \beta) \le  w(1, 2)^{4/5} w(2, 2)^{6/5}, 
\]
together with \eqref{Linfty} we deduce
\[
\Vert  g w(\alpha, \beta)\Vert_{L^\infty_x H^2_{ \gamma/2 }} \lesssim \Vert g w(1, 2)\Vert_{H^1_x H^2_{ \gamma/2 }} + \Vert g w(2 , 2)\Vert_{H^2_x H^2_{ \gamma/2 }}\lesssim \Vert g \Vert_{Y_k},
\]
so the polynomial case is thus proved by gathering all the case. For the exponential weight case, for $\gamma \in (-\frac 3 2, 1]$ by Lemma \ref{L37} we have 
\begin{equation*}
\begin{aligned} 
& ( Q(\partial^{\alpha_1} f, \partial^{\alpha - \alpha_1} g), \partial^{\alpha} h \langle v \rangle^{2k} e^{2a \langle v \rangle^b } )_{L^2_{x, v}}
\\
\le &C_{k, a, b} \int_{\T^3}   \Vert \partial^{\alpha - \alpha_1} f \Vert_{L^2_{k, a, b}} \Vert \partial^{\alpha_1} g \Vert_{L^2_{ k+\gamma/2, a, b }}\Vert \partial^{\alpha}  h \Vert_{L^2_{ k + \gamma/2, a, b}} +  \Vert \partial^{\alpha_1} g \Vert_{L^2_{k, a, b}} \Vert \partial^{\alpha - \alpha_1} f \Vert_{L^2_{ k + \gamma/2, a, b}}\Vert \partial^{\alpha} h \Vert_{L^2_{ k + \gamma/2, a , b}} dx.
\end{aligned}
\end{equation*}
For the first term we have
\begin{equation*}
\begin{aligned} 
&\int_{\T^3}  \Vert \partial^{\alpha - \alpha_1} f \Vert_{L^2_{k, a, b}} \Vert \partial^{\alpha_1} g \Vert_{L^2_{ k+\gamma/2, a, b }}\Vert \partial^{\alpha}  h \Vert_{L^2_{ k + \gamma/2, a, b}} dx
\\
\lesssim &\Vert \partial^{\alpha - \alpha_1} f \Vert_{H^{2-|\alpha -\alpha_1| }_x L^2_{k, a, b}} \Vert \partial^{\alpha_1} g \Vert_{H^{|\alpha - \alpha_1|}_x L^2_{ k+\gamma/2, a, b }}\Vert \partial^{\alpha}  h \Vert_{L^2_x L^2_{ k + \gamma/2, a, b}} 
\\
\lesssim  &\Vert  f \Vert_{H^2_x L^2_{k, a, b}} \Vert g \Vert_{H^2_x L^2_{ k+\gamma/2, a, b }}\Vert   h \Vert_{H^2_x  L^2_{ k + \gamma/2, a, b}}  \lesssim \Vert f \Vert_{X_{k, a, b} }  \Vert g \Vert_{Y_{k, a, b} } \Vert h \Vert_{Y_{k, a, b} },
\end{aligned}
\end{equation*}
and the second term follows by symmetry. For the case $\gamma \in (-3, -\frac 3 2]$, for simplicity we only prove the case $\gamma \in (-3, -\frac 5 2]$, by Lemma \ref{L37} we have
\begin{equation*}
\begin{aligned} 
&  ( Q(\partial^{\alpha_1}_{\beta_1} f, \partial^{\alpha - \alpha_1}_{\beta- \beta_1} g), \partial^{\alpha}_\beta h w^2(\alpha, \beta) e^{2 a \langle v \rangle^b})_{L^2_{x, v}}
\\
\le &C_{k, a, b} \int_{\T^3} \min_{m + n =  2} \{ \Vert \partial^{\alpha - \alpha_1}_{\beta - \beta_1}  f w(\alpha, \beta) \Vert_{H^m_{-96, a, b}} \Vert \partial^{\alpha_1}_{\beta_1} g w(\alpha, \beta)\Vert_{H^n_{ \gamma/2, a, b }}     \}    \Vert \partial^{\alpha}_{\beta}  h w(\alpha, \beta) \Vert_{L^2_{  \gamma/2, a, b}} dx
\\
&+C_{k, a, b} \int_{\T^3}  \min_{m+n =  2 }  \{ \Vert \partial^{\alpha_1}_{\beta_1} g w(\alpha, \beta) \Vert_{H^n_{-96, a, b}} \Vert \partial^{\alpha - \alpha_1}_{\beta - \beta_1} f w(\alpha, \beta)\Vert_{H^m_{  \gamma/2, a, b }}   \} \Vert \partial^{\alpha}_{\beta}  h w(\alpha, \beta) \Vert_{L^2_{ \gamma/2, a, b}}    dx.
\end{aligned}
\end{equation*}
By symmetry we only need to prove that 
\begin{equation*}
\begin{aligned} 
& \int_{\T^3} \min_{m+n =2}  \{ \Vert \partial^{\alpha - \alpha_1}_{\beta - \beta_1} f w(\alpha, \beta)  \Vert_{H^m_{- 9 6, a, b}} \Vert \partial^{\alpha_1}_{\beta_1} g w(\alpha, \beta)\Vert_{H^n_{ \gamma/2, a, b}}    \} \Vert \partial^{\alpha}_{\beta}  h  w(\alpha, \beta) \Vert_{L^2_{  \gamma/2. a, b}} dx 
\\
\lesssim & \Vert  f \Vert_{X_{k, a, b}  } \Vert g \Vert_{Y_{k, a, b} }\Vert h \Vert_{Y_{k, a, b} }, \quad \forall \alpha_1 \le \alpha, \beta_1 \le \beta.
\end{aligned}
\end{equation*}
By \eqref{weight function} we have
\[
w(\alpha, \beta)  \langle v \rangle^{-96} \le w( \alpha_2, \beta_2), \quad \forall |\alpha|  + |\beta| \le 4 , \quad \forall  |\alpha_2| +|\beta_2| \le 4 ,
\]
so for any nonnegative integer $m, n$ satisfies $m  + n + |\alpha-\alpha_1| + |\beta -\beta_1| \le 4$ we have
\[
\Vert \partial^{\alpha - \alpha_1}_{\beta - \beta_1} f w(\alpha, \beta)  \Vert_{H^m_x H^n_{ - 96 , a, b}}  \le \Vert f\Vert_{X_{k, a, b}},
\]
the remaining proof is the same as the polynomial case thus omitted, so the proof is thus finished. 
\end{proof}

Then we come to prove estimate for the linearized part, we first prove the polynomial case. 

\begin{lem}\label{L42}  For any smooth function $f$, if $\gamma \in (-\frac 3 2, 1 ]$, for any indices  $|\alpha| \le 2$ we have
\begin{equation*}
\begin{aligned} 
|(\partial^\alpha Q (f, \mu),  \partial^\alpha g \langle  v \rangle^{2k} )_{L^2_{x, v} }| \le & \Vert b(\cos \theta) \sin^{k - 2} \frac \theta 2 \Vert_{L^1_\theta}\Vert \partial^\alpha f \Vert_{L^2_x L^2_{k+\gamma/2, * }}\Vert \partial^\alpha g \Vert_{L^2_x L^2_{k+\gamma/2, * }}
\\
&+ C_k \Vert \partial^\alpha f \Vert_{L^2_x L^2_{k+\gamma/2-1/2}}\Vert \partial^\alpha g \Vert_{L^2_x L^2_{k+\gamma/2-1/2}} ,
\end{aligned}
\end{equation*}
for some constant $C_k >0$. For the case $\gamma \in (-3, -\frac 3 2]$, for any indices  $|\alpha| +|\beta| \le N$ we have
\begin{equation*}
\begin{aligned} 
|(\partial^\alpha_\beta Q (f, \mu),  \partial^\alpha_\beta g  w^2(\alpha, \beta)  )_{L^2_{x, v} }| \le & \Vert b(\cos \theta) \sin^{k- 2} \frac \theta 2 \Vert_{L^1_\theta}\Vert \partial^\alpha_\beta f w(\alpha, \beta)  \Vert_{L^2_x L^2_{\gamma/2, *}}\Vert \partial^\alpha_\beta g w(\alpha, \beta)  \Vert_{L^2_x L^2_{\gamma/2, *}} 
\\
&+ C_k \Vert \partial^\alpha_\beta f w(\alpha, \beta)  \Vert_{L^2_x L^2_{\gamma/2-1/2}}\Vert \partial^\alpha_\beta g w(\alpha, \beta)  \Vert_{L^2_x L^2_{\gamma/2-1/2}} 
\\
&+ C_k \sum_{\beta_1 < \beta} \Vert \partial^{\alpha}_{\beta_1} f w(\alpha, \beta_1)  \Vert_{L^2_x L^2_{\gamma/2}}\Vert \partial^\alpha_\beta g w(\alpha, \beta)  \Vert_{L^2_x L^2_{\gamma/2}} .
\end{aligned}
\end{equation*}
\end{lem}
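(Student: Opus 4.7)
The plan is to reduce the estimate to the pointwise-in-$x$ bound of Lemma \ref{L31} after commuting $\partial^\alpha_\beta$ through the bilinear operator $Q(\cdot, \mu)$. Since $\mu = \mu(v)$ does not depend on $x$, spatial derivatives commute directly: $\partial^\alpha Q(f, \mu) = Q(\partial^\alpha f, \mu)$. For the case $\gamma \in (-\frac{3}{2}, 1]$ I thus apply Lemma \ref{L31} pointwise in $x$ with $h = \partial^\alpha f(x, \cdot)$ tested against $\partial^\alpha g(x, \cdot)$, and conclude by Cauchy--Schwarz in $x$ to turn the two pointwise $v$-norms into $L^2_x L^2_v$-type products; this gives exactly the first two terms on the right-hand side.

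For $\gamma \in (-3, -\frac{3}{2}]$ velocity derivatives are also involved, and the key ingredient is the Leibniz formula
\begin{equation*}
\partial^\alpha_\beta Q(f, \mu) = \sum_{\beta_1 \le \beta} \binom{\beta}{\beta_1} Q(\partial^\alpha_{\beta_1} f, \partial^{\beta - \beta_1} \mu).
\end{equation*}
I would justify this by the change of variable $u = v - v_*$, which puts $Q$ in the form
\begin{equation*}
Q(g, f)(v) = \int_{\R^3} \int_{\mathbb{S}^2} B(|u|, \cos\theta) \bigl[g(v - u'') f(v - u') - g(v - u) f(v)\bigr] d\sigma du,
\end{equation*}
with $u' = (u - |u|\sigma)/2$ and $u'' = (u + |u|\sigma)/2$ depending only on $(u, \sigma)$. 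In this representation $v$ enters solely as a translation variable in both arguments, so the classical product rule gives $\partial_{v_i} Q(g, f) = Q(\partial_{v_i} g, f) + Q(g, \partial_{v_i} f)$, and iteration (together with trivial commutation of $\partial_x$) produces the displayed identity.

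I then split the sum at $\beta_1 = \beta$ versus $\beta_1 < \beta$. For the leading term $Q(\partial^\alpha_\beta f, \mu)$, I apply Lemma \ref{L31} with the polynomial weight $w(\alpha, \beta)^2$ in place of $\langle v \rangle^{2k}$: by \eqref{weight function} the exponent $k - a|\alpha| - b|\beta| + c$ is at least $k$, hence $\ge 4$ once $k$ is taken large, so the proof of Lemma \ref{L31} goes through verbatim (noting that $\sin^{m-2}(\theta/2) \le \sin^{k-2}(\theta/2)$ for $m \ge k$) and produces the first two terms of the right-hand side. For $\beta_1 < \beta$, the factor $\partial^{\beta - \beta_1}\mu$ is a Schwartz function with $|\beta - \beta_1| \le N \le 4$, so Corollary \ref{C36} with $\langle v\rangle^k$ replaced by $w(\alpha,\beta)$ gives, after integrating in $x$ and applying Cauchy--Schwarz,
\begin{equation*}
|(Q(\partial^\alpha_{\beta_1} f, \partial^{\beta - \beta_1}\mu), \partial^\alpha_\beta g\, w(\alpha,\beta)^2)_{L^2_{x,v}}| \le C_k \|\partial^\alpha_{\beta_1} f\, w(\alpha,\beta)\|_{L^2_x L^2_{\gamma/2}} \|\partial^\alpha_\beta g\, w(\alpha,\beta)\|_{L^2_x L^2_{\gamma/2}}.
\end{equation*}
The monotonicity $w(\alpha, \beta) \le w(\alpha, \beta_1)$ from \eqref{weight function} lets me bound the first factor by $\|\partial^\alpha_{\beta_1} f\, w(\alpha,\beta_1)\|_{L^2_x L^2_{\gamma/2}}$, producing the third term. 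The main delicate point is the honest verification of the Leibniz formula above; once that identity is in place the rest reduces to bookkeeping of the weights combined with pointwise-in-$x$ application of the scalar estimates from Section \ref{section3}.
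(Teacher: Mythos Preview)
Your proposal is correct and follows essentially the same approach as the paper: both reduce the first case to Lemma \ref{L31} via $\partial^\alpha Q(f,\mu)=Q(\partial^\alpha f,\mu)$, and in the second case both use the Leibniz expansion in $\beta$, applying Lemma \ref{L31} to the $\beta_1=\beta$ term and Corollary \ref{C36} to the $\beta_1<\beta$ terms, followed by the weight monotonicity $w(\alpha,\beta)\le w(\alpha,\beta_1)$. Your additional remarks (the justification of the Leibniz rule via the $u=v-v_*$ representation and the observation $\sin^{m-2}(\theta/2)\le\sin^{k-2}(\theta/2)$ since the exponent of $w$ is $\ge k$) are correct elaborations of steps the paper leaves implicit.
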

\begin{proof} Since $\partial^{\alpha} \mu =0$ if $ |\alpha| > 0$, the case $\gamma \in (-\frac 3 2, 1]$ is just Lemma \ref{L31} and thus omitted. For the 
case $\gamma \in (-3, -\frac 3 2]$, by
\[
(\partial^\alpha_\beta Q(f, \mu), \partial^\alpha_\beta g w^2(\alpha, \beta) )_{L^2_{x, v} } = \sum_{\beta_1 \le \beta} ( Q(\partial^\alpha_{\beta_1} f, \partial_{\beta - \beta_1} \mu), \partial^\alpha_\beta g w^2(\alpha, \beta) )_{L^2_{x, v} }   ,
\]
we split it into two cases. For the case $\beta_1 = \beta$, by Lemma \ref{L31} we have
\begin{equation*}
\begin{aligned} 
 |( Q(\partial^\alpha_\beta f,  \mu), \partial^\alpha_\beta g w^2(\alpha, \beta))_{L^2_{x, v} }|  \le  &\Vert b(\cos \theta) \sin^{k-2 } \frac \theta 2 \Vert_{L^1_\theta}\Vert \partial^\alpha_\beta f w(\alpha, \beta) \Vert_{L^2_x L^2_{\gamma/2, *}}\Vert \partial^\alpha_\beta g w(\alpha, \beta) \Vert_{L^2_x L^2_{\gamma/2, *}} 
\\
&+ C_k \Vert \partial^\alpha_\beta    f w(\alpha, \beta) \Vert_{L^2_x L^2_{\gamma/2-1/2}}\Vert \partial^\alpha_\beta g w(\alpha, \beta) \Vert_{L^2_x L^2_{k+\gamma/2-1/2}}.
\end{aligned}
\end{equation*}
For the case $\beta_1 < \beta$, by Corollary \ref{C36}   we have
\[
|  ( Q(\partial^\alpha_{\beta_1} f, \partial_{\beta - \beta_1} \mu), \partial^\alpha_\beta g w^2(\alpha, \beta) )_{L^2_{x, v} }   |  \le   C_k \sum_{\beta_1 < \beta} \Vert \partial^\alpha_{\beta_1} f w(\alpha, \beta) \Vert_{L^2_x L^2_{\gamma/2}}\Vert \partial^\alpha_\beta g w(\alpha, \beta) \Vert_{L^2_x L^2_{\gamma/2}} ,
\]
so  the proof is thus finished since $w(\alpha, \beta) \le w(\alpha, \beta_1)$ if $|\beta_1| \le |\beta|$.  
\end{proof}

\begin{lem}\label{L43} For any smooth function $f$, for any $k \ge 4$. If $\gamma \in (-\frac 3 2, 1 ]$, for any indices $|\alpha| \le 2$ we have
\begin{equation*}
\begin{aligned} 
( \partial^\alpha Q ( \mu, f),  \partial^\alpha f \langle  v \rangle^{2k} )_{L^2_{x, v}} 
\le - \Vert  b(\cos \theta)  \sin^2 \frac \theta 2  \Vert_{L^1_\theta}\Vert \partial^\alpha f \Vert_{L^2_x L^2_{k+\gamma/2, *}}^2 + C_{k}  \Vert   \partial^\alpha f \Vert_{L^2_x L^2_{k+\gamma/2-1/2}}^2,
\end{aligned}
\end{equation*}
for some constant $C_k \ge 0$. If $\gamma \in (-3, -\frac 3 2 ]$, for any indices $|\alpha| +|\beta| \le N$ we have
\begin{equation*}
\begin{aligned} 
(\partial^\alpha_\beta Q ( \mu, f),  \partial^\alpha_\beta f w^2(\alpha, \beta) )_{L^2_{x, v} }   &\le  - \Vert  b(\cos \theta)  \sin^2 \frac \theta 2  \Vert_{L^1_\theta} \Vert  \partial^\alpha_\beta f w(\alpha, \beta)\Vert_{L^2_x   L^2_{\gamma/2, *}}^2 + C_{k}  \Vert \partial^\alpha_\beta f w(\alpha, \beta) \Vert_{L^2_x   L^2_{\gamma/2-1/2}}^2
\\
& + C_k \sum_{\beta_1 < \beta} \Vert \partial^\alpha_{\beta_1}  f w(\alpha, \beta_1) \Vert_{L^2_x   L^2_{\gamma/2}}\Vert \partial^\alpha_\beta f w(\alpha, \beta)\Vert_{L^2_x   L^2_{\gamma/2}}, 
\end{aligned}
\end{equation*}
for some constant $C_k \ge 0$. 
\end{lem}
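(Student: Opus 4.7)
My plan is to mirror the strategy of Lemma \ref{L42}, but with the coercivity statement of Lemma \ref{L32} replacing the bilinear bound of Lemma \ref{L31} on the diagonal term, and Corollary \ref{C36} handling the remainders. Recall that since $\mu$ is independent of $x$, Leibniz in the $x$-variable collapses and we only need to distribute $\partial_\beta$ on the first slot of $Q(\mu,\cdot)$:
\[
\partial^\alpha_\beta Q(\mu, f) = \sum_{\beta_1 \le \beta} \binom{\beta}{\beta_1} Q\bigl(\partial_{\beta_1}\mu,\;\partial^\alpha_{\beta-\beta_1} f\bigr).
\]

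\emph{Case $\gamma\in(-3/2,1]$ (no $\beta$-derivatives).} Here the sum collapses to the single term $Q(\mu,\partial^\alpha f)$. Apply Lemma \ref{L32} pointwise in $x$ to $\partial^\alpha f(x,\cdot)$ with the same exponent $k$ (which satisfies $k\ge 4$, so $1-\cos^{k-(3+\gamma)/2}(\theta/2)\ge \sin^2(\theta/2)$), then integrate in $x$. This directly yields the claimed inequality.

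\emph{Case $\gamma\in(-3,-3/2]$ (main case).} Split the Leibniz sum into the \emph{diagonal} term $\beta_1=0$ and the \emph{off-diagonal} remainders $0<\beta_1\le\beta$. For the diagonal term we observe that $w^2(\alpha,\beta)=\langle v\rangle^{2\tilde k}$ with $\tilde k := k-a|\alpha|-b|\beta|+c$, and for $k$ sufficiently large $\tilde k\ge 4$. Applying Lemma \ref{L32} with exponent $\tilde k$ to $\partial^\alpha_\beta f(x,\cdot)$ and integrating in $x$ gives
\[
(Q(\mu,\partial^\alpha_\beta f),\partial^\alpha_\beta f\,w^2(\alpha,\beta))_{L^2_{x,v}} \le -\|b\sin^2(\theta/2)\|_{L^1_\theta}\,\|\partial^\alpha_\beta f\,w(\alpha,\beta)\|_{L^2_xL^2_{\gamma/2,*}}^2 + C_k\,\|\partial^\alpha_\beta f\,w(\alpha,\beta)\|_{L^2_xL^2_{\gamma/2-1/2}}^2,
\]
after absorbing $\langle v\rangle^{\tilde k}$ into the function and shifting the Japanese bracket by $\gamma/2$. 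For each off-diagonal term with $\beta_1>0$, $\partial_{\beta_1}\mu$ is a polynomial multiple of $\mu$, hence Schwartz. Corollary \ref{C36} (with $|\beta_1|\le 4$, which holds since $|\alpha|+|\beta|\le N\le 4$) then gives
\[
|(Q(\partial_{\beta_1}\mu,\partial^\alpha_{\beta-\beta_1}f),\partial^\alpha_\beta f\,w^2(\alpha,\beta))_{L^2_{x,v}}| \le C_k\,\|\partial^\alpha_{\beta-\beta_1}f\,w(\alpha,\beta)\|_{L^2_xL^2_{\gamma/2}}\,\|\partial^\alpha_\beta f\,w(\alpha,\beta)\|_{L^2_xL^2_{\gamma/2}}.
\]
Since $|\beta-\beta_1|<|\beta|$, the monotonicity $w(\alpha,\beta)\le w(\alpha,\beta-\beta_1)$ established in the weight lemma allows us to replace $w(\alpha,\beta)$ by $w(\alpha,\beta-\beta_1)$ on the first factor, producing exactly the third sum in the statement (after relabeling $\beta-\beta_1$ as $\beta_1'<\beta$).

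The only delicate point is the diagonal step: one must verify that Lemma \ref{L32}, which is stated for $(Q(\mu,g),g\langle v\rangle^{2\tilde k})$, transfers cleanly to $(Q(\mu,\partial^\alpha_\beta f),\partial^\alpha_\beta f\,w^2(\alpha,\beta))$ \emph{with the same coercivity constant} $\|b\sin^2(\theta/2)\|_{L^1_\theta}$, independent of $\alpha,\beta$. This is immediate because Lemma \ref{L32} is purely a pointwise-in-$x$ velocity estimate and $w^2(\alpha,\beta)$ is merely a power of $\langle v\rangle$; no further analysis is needed. Gathering the diagonal and off-diagonal contributions finishes the proof.
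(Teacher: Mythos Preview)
Your proposal is correct and follows the same approach as the paper: Leibniz in $v$ only (since $\partial^\alpha\mu=0$ for $|\alpha|>0$), the diagonal term handled by Lemma~\ref{L32} with exponent $\tilde k$ coming from $w(\alpha,\beta)=\langle v\rangle^{\tilde k}$, the off-diagonal terms by Corollary~\ref{C36}, and the weight monotonicity $w(\alpha,\beta)\le w(\alpha,\beta_1)$ for $|\beta_1|<|\beta|$ to close. One small sharpening: you do not need ``$k$ sufficiently large'' for $\tilde k\ge 4$, since the weight lemma already gives $\langle v\rangle^k\le w(\alpha,\beta)$ and hence $\tilde k\ge k\ge 4$ for every admissible $(\alpha,\beta)$.
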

\begin{proof}  Since $\partial^{\alpha} \mu =0$ if $ |\alpha| > 0$, the case $\gamma \in (-\frac 3 2, 1]$ is just Lemma \ref{L32} and thus omitted.  For the case $\gamma \in (-3, -\frac 3 2] $, first we have
\[
(\partial^\alpha_\beta Q( \mu , f), \partial^\alpha f w^2(\alpha, \beta))_{L^2_{x, v}}= \sum_{\beta_1 \le \beta} ( Q(\partial_{\beta - \beta_1} \mu, \partial^{\alpha }_{ \beta_1} f), \partial^{\alpha}_\beta f w^2(\alpha, \beta))_{L^2_{x, v}},
\]
we split it into two cases. For the case $\beta_1 =\beta$, by Lemma \ref{L32} we have
\[
( Q( \mu, \partial^{\alpha }_\beta f), \partial^{\alpha}_\beta f w^2(\alpha, \beta))_{L^2_{x, v}}  \le - \Vert  b(\cos \theta) \sin^2  \frac \theta 2  \Vert_{L^1_\theta}\Vert \partial^\alpha_\beta f w(\alpha, \beta)\Vert_{L^2_x L^2_{\gamma/2, *}}^2 + C_{k}  \Vert \partial^\alpha_\beta f w(\alpha, \beta) \Vert_{L^2_x L^2_{\gamma/2-1/2}}^2.
\]
For the case $|\beta_1| < |\beta|$, by Corollary \ref{C36}  we have
\[
|( Q(\partial_{ \beta - \beta_1} \mu, \partial^\alpha_{\beta_1} f), \partial^{\alpha}_{\beta} f w^2 (\alpha, \beta))_{L^2_{x, v}} |\le C_k \Vert \partial^\alpha_{  \beta_1}  f w(\alpha, \beta) \Vert_{L^2_{\gamma/2}}\Vert \partial^\alpha_\beta f w(\alpha, \beta) \Vert_{L^2_{\gamma/2}} ,
\]
so the lemma is thus proved since $w(\alpha, \beta) \le w(\alpha, \beta_1) $ if $|\beta_1| \le |\beta|$.  
\end{proof}

Then we prove the estimate for the exponential weight case. 

\begin{lem}\label{L44} For any smooth function $f$, for any $k \in \R, a >0, b \in (0, 2)$. If $\gamma \in (-\frac 3 2, 1 ]$, for any indices $|\alpha| \le 2$ we have
\[
(\partial^\alpha Q (f, \mu),  \partial^\alpha f \langle  v \rangle^{2k} e^{2 a  \langle v \rangle^b }     )_{L^2_{x, v} } + (\partial^\alpha Q (\mu, f),  \partial^\alpha f \langle  v \rangle^{2k} e^{2 a  \langle v \rangle^b }     )_{L^2_{x, v} }
\le -C_1 \Vert \partial^\alpha f  \Vert_{L^2_x L^2_{k+\gamma/2, a, b}}^2 + C_{k, a, b}  \Vert \partial^\alpha f \Vert_{L^2_{x, v}}^2,
\]
for some constants $C_1, C_{k, a, b}>0$. If $\gamma \in (-3, -\frac 3 2 ]$, for any indices $|\alpha| +|\beta| \le N$ we have
\begin{equation*}
\begin{aligned} 
&(\partial^\alpha_\beta Q (f, \mu),  \partial^\alpha_\beta f w^2(\alpha, \beta) e^{2 a  \langle v \rangle^b }     )_{L^2_{x, v} } + (\partial^\alpha_\beta Q (\mu, f),  \partial^\alpha_\beta f w^2(\alpha, \beta) e^{2 a  \langle v \rangle^b }     )_{L^2_{x, v} }
\\
\le &-C_1 \Vert \partial^\alpha_\beta f w(\alpha, \beta) \Vert_{L^2_x L^2_{\gamma/2, a, b}}^2 + C_{k, a, b}  \Vert \partial^\alpha_\beta f \Vert_{L^2_{x, v}}^2 +  C_{k, a, b} \sum_{\beta_1 < \beta} \Vert \partial^{\alpha}_{\beta_1} f w(\alpha, \beta_1)  \Vert_{L^2_x L^2_{\gamma/2, a, b }}\Vert \partial^\alpha_\beta f w(\alpha, \beta) \Vert_{L^2_x L^2_{\gamma/2, a, b  }} ,
\end{aligned}
\end{equation*}
for some constants $C_1, C_{k, a, b} >0$.
\end{lem}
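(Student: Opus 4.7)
The plan is to mirror the two-step decomposition used for Lemma~\ref{L43}, but with the coercivity input Lemma~\ref{L32} replaced by its exponential-weighted counterpart Corollary~\ref{C311}, and Corollary~\ref{C36} replaced by Corollary~\ref{C38}. For the case $\gamma \in (-3/2, 1]$ with $|\alpha| \le 2$, since $\mu$ is independent of $x$, one has $\partial^\alpha \mu = 0$ whenever $|\alpha| \ge 1$, so Leibniz collapses the derivative onto $f$: $\partial^\alpha Q(f,\mu) = Q(\partial^\alpha f, \mu)$ and $\partial^\alpha Q(\mu,f) = Q(\mu, \partial^\alpha f)$. Applying Corollary~\ref{C311} pointwise in $x$ to $\partial^\alpha f(x,\cdot)$ and integrating over $\T^3$ yields the stated inequality at once.

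For the case $\gamma \in (-3, -3/2]$ with $|\alpha|+|\beta| \le N$, $v$-derivatives can land on $\mu$, and Leibniz gives
\[
\partial^\alpha_\beta Q(f,\mu) = \sum_{\beta_1 \le \beta} C^\beta_{\beta_1}\, Q\!\left(\partial^\alpha_{\beta_1} f,\, \partial_{\beta-\beta_1}\mu\right),
\]
together with the analogous expansion for $\partial^\alpha_\beta Q(\mu,f)$. I then isolate the top-order contribution $\beta_1 = \beta$. Since $w(\alpha,\beta)$ is a pure power of $\langle v\rangle$, the weight $w^2(\alpha,\beta)\, e^{2a\langle v\rangle^b}$ has exactly the shape $\langle v\rangle^{2k'} e^{2a\langle v\rangle^b}$ required by Corollary~\ref{C311} with shifted exponent $k' = k - a|\alpha| - b|\beta| + c$. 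This delivers $-C_1\|\partial^\alpha_\beta f\, w(\alpha,\beta)\|_{L^2_x L^2_{\gamma/2,a,b}}^2 + C_{k,a,b}\|\partial^\alpha_\beta f\|_{L^2_{x,v}}^2$, with constants uniform over the finitely many $(\alpha,\beta)$ with $|\alpha|+|\beta|\le N$.

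For the lower-order terms $\beta_1 < \beta$, I invoke Corollary~\ref{C38} with $\partial_{\beta-\beta_1}\mu$ in the derivative-of-Maxwellian slot and $\partial^\alpha_{\beta_1} f$ as the other input, producing a bound of the form $C_{k,a,b}\,\|\partial^\alpha_{\beta_1} f\, w(\alpha,\beta)\|_{L^2_x L^2_{\gamma/2,a,b}}\, \|\partial^\alpha_\beta f\, w(\alpha,\beta)\|_{L^2_x L^2_{\gamma/2,a,b}}$. Using the monotonicity $w(\alpha,\beta) \le w(\alpha,\beta_1)$ whenever $|\beta_1| < |\beta|$, I then upgrade the first factor to the $(\alpha,\beta_1)$-weighted norm that appears in the statement, and summing over $\beta_1 < \beta$ produces the last term in the conclusion.

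The main obstacle is essentially notational housekeeping: the symbols $a,b$ appearing in $w(\alpha,\beta)$ denote the fixed coefficients depending only on $\gamma$ from~\eqref{weight function}, whereas the $a,b$ in the exponential weight $e^{2a\langle v\rangle^b}$ are the user-supplied parameters of the statement. Once these two uses are kept carefully distinct, Corollaries~\ref{C311} and~\ref{C38} apply directly to the shifted polynomial exponent $k'$ and the same Leibniz splitting as in Lemma~\ref{L43} closes the estimate, with all constants uniform in the finitely many multi-indices involved.
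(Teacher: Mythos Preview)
Your proposal is correct and follows essentially the same approach as the paper: for $\gamma \in (-\tfrac32,1]$ reduce to Corollary~\ref{C311} since $\partial^\alpha\mu=0$, and for $\gamma \in (-3,-\tfrac32]$ expand by Leibniz, apply Corollary~\ref{C311} to the top-order term $\beta_1=\beta$, apply Corollary~\ref{C38} to the terms with $\beta_1<\beta$, and finish with the monotonicity $w(\alpha,\beta)\le w(\alpha,\beta_1)$. Your remark on the overloaded symbols $a,b$ is well taken and worth flagging.
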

\begin{proof}
Since $\partial^{\alpha} \mu =0$ if $ |\alpha| > 0$, the case $\gamma \in (-\frac 3 2, 1]$ is just Corollary \ref{C311} and thus omitted.  For the case $\gamma \in (-3, -\frac 3 2] $, first we have
\begin{equation*}
\begin{aligned} 
&(\partial^\alpha_\beta Q(f, \mu), \partial^\alpha_\beta f w^2(\alpha, \beta) e^{2 a  \langle v \rangle^b  }   )_{L^2_{x, v} }   + (\partial^\alpha_\beta Q (\mu, f),  \partial^\alpha_\beta f w^2 (\alpha, \beta)e^{2 a  \langle v \rangle^b }     )_{L^2_{x, v} }  
\\
= & \sum_{\beta_1 \le \beta} ( Q(\partial^\alpha_{\beta_1} f, \partial_{\beta - \beta_1} \mu), \partial^\alpha_\beta f w^2(\alpha, \beta) e^{2 a \langle v \rangle^b }  )_{L^2_{x, v} }   + ( Q(\partial_{\beta - \beta_1} \mu,  \partial^\alpha_{\beta_1} f), \partial^\alpha_\beta f w^2(\alpha, \beta)  e^{2 a\langle v \rangle^b  }  )_{L^2_{x, v} },
\end{aligned}
\end{equation*}
we split it into two cases. For the case $\beta_1 = \beta$, by Corollary \ref{C311}  we have
\begin{equation*}
\begin{aligned} 
&( Q(\partial^\alpha_\beta f,  \mu), \partial^\alpha_\beta f w^2(\alpha, \beta)  e^{2 a  \langle v \rangle^b }   )_{L^2_{x, v} }  +  ( Q(\mu, \partial^\alpha_\beta f), \partial^\alpha_\beta f w^2(\alpha, \beta) e^{2 a \langle v \rangle^b }   )_{L^2_{x, v} }  
\\
 \le & -C_1 \Vert \partial^\alpha_\beta f w(\alpha, \beta) \Vert_{L^2_x L^2_{\gamma/2, a, b }}^2   + C_{k, a, b} \Vert \partial^\alpha_\beta    f  \Vert_{L^2_{x, v}}^2.
\end{aligned}
\end{equation*}
For the case $\beta_1 < \beta$, by Corollary \ref{C38}   we have
\begin{equation*}
\begin{aligned} 
&| ( Q(\partial^\alpha_{\beta_1} f, \partial_{\beta - \beta_1} \mu), \partial^\alpha_\beta f w^2 (\alpha, \beta) e^{2 a  \langle v \rangle^b }  )_{L^2_{x, v} }   |  + | ( Q(\partial_{\beta - \beta_1} \mu, \partial^\alpha_{\beta_1} f), \partial^\alpha_\beta f w^2(\alpha, \beta) e^{2 a  \langle v \rangle^b }  )_{L^2_{x, v} }   |  
\\
\le & C_{k, a, b}  \sum_{\beta_1 < \beta} \Vert \partial^\alpha_{\beta_1} f w(\alpha, \beta)  \Vert_{L^2_x L^2_{\gamma/2, a, b}}\Vert \partial^\alpha_\beta f    w(\alpha, \beta) \Vert_{L^2_x L^2_{ \gamma/2, a, b}},
\end{aligned}
\end{equation*}
so the lemma is thus proved since $w(\alpha, \beta) \le w(\alpha, \beta_1) $ if $\beta_1 \le \beta$.  
\end{proof}

For the transport  term $v \cdot \nabla_x f$ we need the following estimate.

\begin{lem}\label{L45}
Suppose $|\beta|>0$, for any smooth function $f$ we have
\[
(\partial^\alpha_{\beta} (v \cdot \nabla_x f)  ,\partial^\alpha_{\beta}f  w^2(\alpha, \beta)  )_{L^2_{x, v}} \le C(\sum_{|\beta_2|=|\beta|-1,  |\alpha_2| =|\alpha|+1} \Vert \partial^{\alpha_2}_{\beta_2}  f w(\alpha_2, \beta_2)\Vert_{L^2_xL^2_{\gamma/2}}) \Vert \partial^\alpha_{\beta} f w(\alpha, \beta) \Vert_{L^2_{x}L^2_{\gamma/2} }.
\]
\end{lem}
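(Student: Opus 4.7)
The plan is to apply the Leibniz rule to expand $\partial^\alpha_\beta(v\cdot\nabla_x f)$, eliminate the top-order transport term by integration by parts on $\T^3$, and control the remaining commutator by Cauchy--Schwarz together with the weight identity for $w(\alpha,\beta)$.

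First I would write
\[
\partial^\alpha_\beta (v\cdot\nabla_x f)=v\cdot\nabla_x\partial^\alpha_\beta f+\sum_{i=1}^{3}\beta_i\,\partial^{\alpha+e_i}_{\beta-e_i}f,
\]
where $e_i$ denotes the multi-index with $1$ in position $i$, using the fact that $\partial_{v_j}v_i=\delta_{ij}$ and that $\partial^\alpha$ acts only on $x$. Pairing with $\partial^\alpha_\beta f\,w^2(\alpha,\beta)$ in $L^2_{x,v}$, the transport term contributes
\[
\int_{\R^3}w^2(\alpha,\beta)\Big(\tfrac12\int_{\T^3}v\cdot\nabla_x|\partial^\alpha_\beta f|^2\,dx\Big)dv=0,
\]
since $w$ is $x$-independent and the spatial integral of a divergence on $\T^3$ vanishes.

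For each commutator term I would use the key algebraic identity from \eqref{weight function}, namely $w(\alpha,\beta)=w(\alpha+e_i,\beta-e_i)\,\langle v\rangle^{a-b}=w(\alpha+e_i,\beta-e_i)\,\langle v\rangle^{\min\{\gamma,0\}}$. Since $\langle v\rangle^{\min\{\gamma,0\}}\le\langle v\rangle^{\gamma}$ (trivially when $\gamma\ge 0$, and with equality when $\gamma<0$), I can distribute one factor of $\langle v\rangle^{\gamma/2}$ onto each side of the Cauchy--Schwarz splitting:
\begin{align*}
\big|(\partial^{\alpha+e_i}_{\beta-e_i}f,\partial^\alpha_\beta f\,w^2(\alpha,\beta))_{L^2_{x,v}}\big|
&\le\int |\partial^{\alpha+e_i}_{\beta-e_i}f|\,w(\alpha+e_i,\beta-e_i)\,|\partial^\alpha_\beta f|\,w(\alpha,\beta)\,\langle v\rangle^{\min\{\gamma,0\}}\,dx\,dv\\
&\le\int \big(|\partial^{\alpha+e_i}_{\beta-e_i}f|\,w(\alpha+e_i,\beta-e_i)\langle v\rangle^{\gamma/2}\big)\big(|\partial^\alpha_\beta f|\,w(\alpha,\beta)\langle v\rangle^{\gamma/2}\big)dx\,dv\\
&\le\|\partial^{\alpha+e_i}_{\beta-e_i}f\,w(\alpha+e_i,\beta-e_i)\|_{L^2_xL^2_{\gamma/2}}\|\partial^\alpha_\beta f\,w(\alpha,\beta)\|_{L^2_xL^2_{\gamma/2}}.
\end{align*}
Summing over $i$ with $\beta_i\ge 1$ and absorbing $\beta_i\le|\beta|$ into the constant $C$, and noting that the indices $(\alpha+e_i,\beta-e_i)$ range exactly over pairs $(\alpha_2,\beta_2)$ with $|\alpha_2|=|\alpha|+1$, $|\beta_2|=|\beta|-1$, yields the claim.

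The only subtle point is the weight bookkeeping---making sure that the shift $(\alpha,\beta)\mapsto(\alpha+e_i,\beta-e_i)$ produces exactly one extra power of $\langle v\rangle^{-\min\{\gamma,0\}}$ in $w$, which is then compensated by the two $\langle v\rangle^{\gamma/2}$ factors required on the right-hand side. This is handled uniformly in the sign of $\gamma$ by the identity $b-a=\max\{-\gamma,0\}$ built into the definition \eqref{weight function}. No integrations by parts in $v$ are needed since the Leibniz commutator has already absorbed all derivatives of $v$.
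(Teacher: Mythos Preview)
Your proof is correct and follows exactly the same approach as the paper: the paper's two-line proof simply cites the Leibniz identity $\partial_{v_i}(v\cdot\nabla_x f)=\partial_{x_i}f+v\cdot\nabla_x\partial_{v_i}f$ together with the weight inequality \eqref{w v nabla x}, and you have spelled out precisely the details (vanishing of the transport term by periodicity, Cauchy--Schwarz splitting of $\langle v\rangle^{\gamma}$) that the paper leaves implicit.
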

\begin{proof}
By \eqref{w v nabla x} and 
\[
\partial_{v_i} (v \cdot \nabla_x f) = \partial_{x_i} f + v \cdot \nabla_x \partial_{v_i} f, \quad \forall i =1, 2, 3,
\]
the theorem is thus proved.
\end{proof}

Recall the definition of $\bar {X}_0$. $\bar {X}_0 :=H^2_xL^2_v$ if $\gamma \in (-\frac 3 2, 1]$, $ \bar{X}_0= H^N_{x, v} $,  if $\gamma \in (-3 , -\frac 3 2]$, and $Y_{k, *}$ is defined in \eqref{Y k *}. Gathering the estimates above, we obtian following estimate.

\begin{lem}\label{L46}
For any smooth function $f, g, h$ smooth, for the polynomial case,  for any $k>4$ large, for the nonlinear term we have
\begin{equation}
\label{estimate nonlinear X k}
|(Q(f, g), h)_{X_k}| \lesssim   (\Vert f \Vert_{X_{4} } \Vert g \Vert_{Y_k} +\Vert f \Vert_{Y_k} \Vert g \Vert_{ X_{4} } )\Vert h \Vert_{Y_k}.
\end{equation}
For the linearized term we have
\begin{equation}
\label{estimate linearized X k 1}
|( Q (f, \mu),   g  )_{X_k}| \le \Vert b(\cos \theta) \sin^{k- 2  -\frac {k-4} 3} \frac \theta 2 \Vert_{L^1_\theta}\Vert f \Vert_{Y_{k, *} }\Vert  g \Vert_{Y_{k, *}} 
+ C_k \Vert  f \Vert_{Y_{k-1/2}}\Vert  g \Vert_{Y_{k-1/2}},
\end{equation}
and
\begin{equation}
\label{estimate linearized X k 2}
( Q ( \mu, f),  f )_{X_k} \le  - \Vert  b(\cos \theta) \sin^{2 + \frac {k-4} 3} \frac \theta 2  \Vert_{L^1_\theta}\Vert  f \Vert_{Y_{k, *} }^2 + C_{k}  \Vert  f \Vert_{Y_{k-1/2}}^2.
\end{equation}
In particular gathering the two terms we have 
\begin{equation}
\label{estimate linearized X k 3}
( Q ( \mu, f),  f )_{X_k} + ( Q (f,  \mu),  f )_{X_k}  \le  - \Vert  b(\cos \theta)\sin^2 \frac \theta 2 (  \sin^{\frac {k-4} 3}\frac \theta 2  -  \sin^{\frac {2(k-4)} 3}\frac \theta 2) \Vert_{L^1_\theta} \Vert  f \Vert_{Y_{k, *} }^2 + C_{k}  \Vert  f \Vert_{Y_{k-1/2}}^2 .
\end{equation}
For the exponential weight case, for the nonlinear term we have 
\begin{equation}
\label{estimate nonlinear X k a b}
|(Q(f, g), h)_{X_{k, a, b}}| \lesssim   (\Vert f \Vert_{X_{k, a, b} } \Vert g \Vert_{Y_{k, a ,b}} +\Vert f \Vert_{Y_{k, a, b}} \Vert g \Vert_{ X_{k, a, b} } )\Vert h \Vert_{Y_{k, a, b}},
\end{equation}
for the linearized term we have
\begin{equation}
\label{estimate linearized X k a b}
( Q ( \mu, f),  f )_{X_{k, a, b} } + ( Q (f,  \mu),  f )_{X_{k, a, b} } \le  - C_2 \Vert  f \Vert_{Y_{k, a, b}}^2 + C_k  \Vert  f \Vert_{\bar{X_{0}} }^2.
\end{equation}
For the $v \cdot \nabla_x f$ term, for the polynomial case we have
\begin{equation}
\label{estimate polynomial v nabla x}
|(- v \cdot \nabla_x f, f )_{X_k}| \le \frac 1 4 \Vert  b(\cos \theta)\sin^2 \frac \theta 2 (  \sin^{\frac {k-4} 3}\frac \theta 2  -  \sin^{\frac {2(k-4)} 3}\frac \theta 2) \Vert_{L^1_\theta} \Vert  f \Vert_{Y_{k, *}}^2, 
\end{equation}
for the exponential weight case we have
\begin{equation}
\label{estimate exponential v nabla x}
|(- v \cdot \nabla_x f, f )_{X_{k, a, b}}| \le \frac {C_2} {4} \Vert  f \Vert_{Y_{k, a, b}  }^2 .
\end{equation}
\end{lem}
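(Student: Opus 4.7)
The plan is to derive each of \eqref{estimate nonlinear X k}--\eqref{estimate exponential v nabla x} by assembling the pointwise-in-indices inequalities of Lemmas~\ref{L41}--\ref{L45} into the graph norms $X_k, Y_{k,*}, X_{k,a,b}, Y_{k,a,b}$, summing over all multi-indices with the squared constants $C_{|\alpha|,|\beta|}^2$ from \eqref{constant}. The nonlinear estimates \eqref{estimate nonlinear X k} and \eqref{estimate nonlinear X k a b} fall out directly from Lemma~\ref{L41}, so the substance lies in the linearized and transport parts.

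For $\gamma\in(-\tfrac32,1]$ the norm $X_k=H^2_xL^2_k$ carries no $v$-derivatives and no weight constants, so I would simply sum Lemmas~\ref{L42}--\ref{L44} over $|\alpha|\le 2$. For \eqref{estimate linearized X k 1}, I sum Lemma~\ref{L42} and use $\sin^{k-2}(\theta/2)\le \sin^{(2k-2)/3}(\theta/2)$ (valid since $k\ge 4$ and $\sin(\theta/2)\le 1$); for \eqref{estimate linearized X k 2}, I sum Lemma~\ref{L43} and weaken the coefficient via $\sin^{2+(k-4)/3}(\theta/2)\le\sin^{2}(\theta/2)$. Setting $g=f$ in \eqref{estimate linearized X k 1}, applying $2ab\le a^2+b^2$ and adding to \eqref{estimate linearized X k 2} produces \eqref{estimate linearized X k 3}: the exponents match since $2+2(k-4)/3=(2k-2)/3$, so the two contributions combine into exactly $-\sin^{2}\bigl(\sin^{(k-4)/3}-\sin^{2(k-4)/3}\bigr)$, which is nonnegative on $[0,\pi/2]$ for $k\ge 4$. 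The transport estimate \eqref{estimate polynomial v nabla x} is immediate in this range because $(v\cdot\nabla_x\partial^\alpha f,\partial^\alpha f\langle v\rangle^{2k})_{L^2_{x,v}}=0$ by integration by parts in $x$. The exponential versions \eqref{estimate linearized X k a b} and \eqref{estimate exponential v nabla x} are handled analogously using Lemma~\ref{L44} (which already provides a direct coercivity without any $\sin$-balancing).

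For $\gamma\in(-3,-\tfrac32]$ the norm $X_k$ involves the constants $C_{|\alpha|,|\beta|}^2$ and allows $v$-derivatives of total order $\le N$. Multiplying each pointwise estimate by $C_{|\alpha|,|\beta|}^2$ and summing, the main terms collapse into $\Vert f\Vert_{Y_{k,*}}^2$ (resp.\ $\Vert f\Vert_{Y_{k,a,b}}^2$), and only two families of cross terms remain: the lower-order terms $\sum_{\beta_1<\beta}\Vert\partial^\alpha_{\beta_1}f\,w(\alpha,\beta_1)\Vert\,\Vert\partial^\alpha_\beta f\,w(\alpha,\beta)\Vert$ produced by Lemmas~\ref{L42}, \ref{L43}, \ref{L44}, and the transport cross term $\Vert\partial^{\alpha+1}_{\beta-1}f\,w(\alpha+1,\beta-1)\Vert\,\Vert\partial^\alpha_\beta f\,w(\alpha,\beta)\Vert$ from Lemma~\ref{L45}. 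I would absorb both via Cauchy--Schwarz together with the hierarchy \eqref{constant}: after regrouping so that each factor carries its proper constant, each cross term acquires a prefactor $C_{|\alpha|,|\beta|}/C_{|\alpha|,|\beta_1|}\ll 1$ or $C_{|\alpha|,|\beta|}/C_{|\alpha|+1,|\beta|-1}\ll 1$, which can be taken small enough both to fit inside the $\tfrac14$ in \eqref{estimate polynomial v nabla x}, \eqref{estimate exponential v nabla x}, and not to spoil the sign structure of \eqref{estimate linearized X k 3} and \eqref{estimate linearized X k a b}.

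The main obstacle is the joint choice of the constants $C_{|\alpha|,|\beta|}$, since they must satisfy the smallness ratios required simultaneously by the linearized estimate, the transport estimate, and (in the exponential case) by the extra factors in Lemma~\ref{L44}. Because only finitely many indices with $|\alpha|+|\beta|\le N$ appear, this reduces to a finite collection of inequalities that can be solved by induction on $|\beta|$: fix $C_{|\alpha|,0}$ first and then, for each $|\beta|\ge 1$, pick $C_{|\alpha|,|\beta|}$ small enough relative to all $C_{|\alpha|,|\beta_1|}$ with $|\beta_1|<|\beta|$ and to $C_{|\alpha|+1,|\beta|-1}$. Once this constant scheme is in place, the remaining work is routine bookkeeping with Cauchy--Schwarz.
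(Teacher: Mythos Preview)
Your proposal is correct and follows essentially the same route as the paper: summing Lemmas~\ref{L41}--\ref{L45} over the index set defining $X_k$ (or $X_{k,a,b}$), then absorbing the lower-$|\beta|$ cross terms and the transport commutator via the constant hierarchy \eqref{constant}. The paper makes the absorption slightly more explicit by introducing an auxiliary parameter $\eta$ (splitting $C_{|\alpha|,|\beta|}^2=(C_{|\alpha|,|\beta|}/\eta)\cdot(\eta C_{|\alpha|,|\beta|})$ and requiring $\eta\ll \Vert b(\sin^{(2k-2)/3}-\sin^{k-2})\Vert_{L^1_\theta}$ together with $C_kC_{|\alpha|,|\beta|}/\eta\ll C_{|\alpha|,|\beta_1|}$), whereas you phrase the same mechanism directly in terms of the ratios $C_{|\alpha|,|\beta|}/C_{|\alpha|,|\beta_1|}$ and $C_{|\alpha|,|\beta|}/C_{|\alpha|+1,|\beta|-1}$; the content is identical.
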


\begin{proof}
\eqref{estimate nonlinear X k} and \eqref{estimate nonlinear X k a b} can be  proved by summing on $|\alpha| +|\beta| \le N$ ($|\alpha| \le 2$ if $\gamma \in(-\frac 3 2 ,1]$) in  Lemma \ref{L41}. For any $\eta>0$ we have
\begin{equation*}
\begin{aligned} 
&C_{|\alpha|, |\beta|}^2C_k \sum_{|\beta_1| < |\beta|} \Vert \partial^\alpha_{\beta_1} f  w(\alpha, \beta_1) \Vert_{L^2_{\gamma/2}}\Vert \partial^\alpha_{\beta} g  w(\alpha, \beta) \Vert_{L^2_{\gamma/2}} 
\\
\le & \sum_{|\beta_1| < |\beta|}  \frac  {C_k C_{|\alpha|, |\beta|} } {\eta} \Vert \partial^\alpha_{\beta_1} f  w(\alpha, \beta_1) \Vert_{L^2_{\gamma/2}}  \eta C_{|\alpha|, |\beta|} \Vert \partial^\alpha_{\beta} g  w(\alpha, \beta) \Vert_{\gamma/2}.
\end{aligned}
\end{equation*}
By \eqref{constant} we have $C_{|\alpha|, |\beta|} \ll C_{|\alpha|, |\beta_1|}$, so \eqref{estimate linearized X k 1} follows from summing on $|\alpha| +|\beta| \le N$ ($|\alpha| \le 2$ if $\gamma \in(-\frac 3 2 ,1]$) in Lemma \ref{L42} and  taking  suitable small constants $\eta>0$  such that
\[
\eta \ll   \Vert b(\cos \theta) (\sin^{k-2  -\frac {k-4} 3}  \frac \theta 2  -  \sin^{k-2}  \frac \theta 2) \Vert_{L^1_\theta}, \quad \frac  {C_k C_{|\alpha|, |\beta|} } {\eta} \ll C_{|\alpha|, |\beta_1|},
\]
the estimate \eqref{estimate linearized X k 2} and  \eqref{estimate linearized X k a b} can be proved similarly. For the $v \cdot \nabla_x f$ term, if $\gamma \in (-\frac 3 2, 1]$, it is easily seen that
\[
(v \cdot \nabla_x \partial^\alpha  f, \partial^\alpha f \langle v \rangle^{2k})_{L^2_{x, v}} = 0.
\]
For the case $\gamma \in (-3, -\frac 3 2 ]$ we have
\begin{equation*}
\begin{aligned} 
& C_{|\alpha|, |\beta|}^2 C_k(\sum_{|\beta_2|=|\beta|-1,  |\alpha_2| =|\alpha|+1} \Vert \partial^{\alpha_2}_{\beta_2}  f w(\alpha_2, \beta_2)\Vert_{L^2_xL^2_{\gamma/2}}) \Vert \partial^\alpha_{\beta} f w(\alpha, \beta) \Vert_{L^2_{x}L^2_{\gamma/2} } 
\\
\le&  (\frac {C_{|\alpha|, |\beta|} C_k} \eta   \sum_{|\beta_2|=|\beta|-1,  |\alpha_2| =|\alpha|+1} \Vert \partial^{\alpha_2}_{\beta_2}  f w(\alpha_2, \beta_2)\Vert_{L^2_xL^2_{\gamma/2}})  \eta C_{|\alpha|, |\beta|} \Vert \partial^\alpha_{\beta} f w(\alpha, \beta) \Vert_{L^2_{x}L^2_{\gamma/2} } .
\end{aligned}
\end{equation*}
By \eqref{constant} we have $C_{|\alpha|, |\beta|} \ll C_{|\alpha|+1, |\beta|-1}$, so \eqref{estimate polynomial v nabla x} follows by summing on $|\alpha| +|\beta| \le N$ in Lemma \ref{L45} and taking suitable $\eta$ such that 
\[
\frac {C_{|\alpha|, |\beta|} C_k} \eta  \ll C_{|\alpha|+1, |\beta|-1} , \quad \eta \ll \frac 1 2 \Vert  b(\cos \theta)\sin^2 \frac \theta 2 (  \sin^{\frac {k-4} 3}\frac \theta 2  -  \sin^{\frac {2(k-4)} 3}\frac \theta 2) \Vert_{L^1_\theta}.
\]
And \eqref{estimate exponential v nabla x} can be proved similarly.
\end{proof}

Taking $g=f$ in Lemma \ref{L46} we can easily obtain the following estimate. 

\begin{cor}\label{C47}
Suppose that $-3 < \gamma \le 1$, $f$ smooth. For the polynomial case, for any $k > 4$ large, there exists constants $c_0, C_k>0$ such that
\begin{equation*}
\begin{aligned}
( Q(\mu+f, \mu+f) , f  )_{X_k} &\le -2c_0 \Vert f \Vert_{Y_k}^2 + C_{k}  \Vert f \Vert_{Y_{k-1/2}}^2 + C_k\Vert f \Vert_{\bar{X}_4} \Vert f \Vert_{Y_k}^2
\\
&\le -c_0 \Vert f \Vert_{Y_k}^2 + C_{k}  \Vert f \Vert_{\bar{X_0}}^2 + C_k\Vert f \Vert_{X_4} \Vert f \Vert_{Y_k}^2.
\end{aligned}
\end{equation*}
For the exponential weight case, for any $k \in \R, a>0, b \in (0, 2)$ we have
\[
( Q(\mu+f, \mu+f) , f  )_{X_{k, a, b}} \le -c_0 \Vert f \Vert_{Y_{k, a, b} }^2 + C_{k, a, b}    \Vert f \Vert_{\bar{X_0}}^2  + C_{k, a, b}\Vert f \Vert_{X_{k, a, b}} \Vert f \Vert_{Y_{k, a, b}}^2,
\]
for some constants $c_0, C_{k, a, b} > 0$.
\end{cor}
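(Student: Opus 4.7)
The plan is to expand
\[
Q(\mu+f,\mu+f) = Q(\mu,\mu) + Q(\mu,f) + Q(f,\mu) + Q(f,f),
\]
use the fact $Q(\mu,\mu)=0$, and estimate each remaining piece via Lemma \ref{L46}. For the polynomial case, I first apply \eqref{estimate linearized X k 3} (which already bundles $Q(\mu,\cdot)$ and $Q(\cdot,\mu)$) to obtain
\[
(Q(\mu,f)+Q(f,\mu),f)_{X_k} \le -\eta_k\, \|f\|_{Y_{k,*}}^2 + C_k\|f\|_{Y_{k-1/2}}^2,
\]
where $\eta_k := \|b(\cos\theta)\sin^2\tfrac{\theta}{2}\bigl(\sin^{(k-4)/3}\tfrac{\theta}{2} - \sin^{2(k-4)/3}\tfrac{\theta}{2}\bigr)\|_{L^1_\theta}$. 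For $k>4$ the integrand is nonnegative and strictly positive on a set of positive measure in $(0,\pi/2)$, and assumption (A2) gives $b\ge K>0$ there, so $\eta_k>0$. Combined with the equivalence $\|f\|_{Y_{k,*}}\sim\|f\|_{Y_k}$ inherited summand-wise from the fact $\|\cdot\|_{L^2_{k+\gamma/2,*}}\sim\|\cdot\|_{L^2_{k+\gamma/2}}$ recorded in Section 1, this yields a coercive term $-2c_0\|f\|_{Y_k}^2$ for some $c_0>0$.

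Next, I apply \eqref{estimate nonlinear X k} to the genuinely quadratic contribution with $g=h=f$, which gives
\[
|(Q(f,f),f)_{X_k}| \lesssim \|f\|_{X_4}\|f\|_{Y_k}^2
\]
since the two symmetric summands on the right of \eqref{estimate nonlinear X k} collapse to the same expression. Adding the two estimates produces the first inequality of the statement. For the second inequality I use the elementary velocity interpolation
\[
\|f\|_{Y_{k-1/2}}^2 \le \varepsilon\|f\|_{Y_k}^2 + C_\varepsilon\|f\|_{\bar{X}_0}^2,
\]
which follows term-by-term from $\langle v\rangle^{-1}\le \varepsilon + C_\varepsilon\langle v\rangle^{-M}$ for $M$ large and the fact that $\bar{X}_0$ controls the unweighted $L^2_{x,v}$ component of each relevant derivative. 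Choosing $\varepsilon$ small enough that $C_k\varepsilon\le c_0$ absorbs one half of the coercive term and relegates the remainder into the $\bar{X}_0$ norm, yielding the second stated inequality.

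For the exponential case I proceed identically but invoke \eqref{estimate linearized X k a b} and \eqref{estimate nonlinear X k a b} in place of their polynomial counterparts. The former directly produces $-C_2\|f\|_{Y_{k,a,b}}^2 + C_{k,a,b}\|f\|_{\bar{X}_0}^2$ in a single step, so no weight interpolation is needed; the latter gives the trilinear upper bound $\lesssim \|f\|_{X_{k,a,b}}\|f\|_{Y_{k,a,b}}^2$ after collapsing the symmetric pair with $g=h=f$. Summing, setting $c_0 = C_2/2$ and relabeling constants completes the proof. The only non-bookkeeping point is the verification $\eta_k>0$ for $k>4$; everything else is a direct and mechanical combination of the estimates of Lemma \ref{L46} with routine weight interpolation, so I do not expect any genuine obstacle.
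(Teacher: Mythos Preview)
Your proposal is correct and follows essentially the same route as the paper: the paper's proof is literally ``Taking $g=f$ in Lemma~\ref{L46} we can easily obtain the following estimate,'' and you have simply spelled out those steps---apply \eqref{estimate linearized X k 3} (resp.\ \eqref{estimate linearized X k a b}) for the linearized part, \eqref{estimate nonlinear X k} (resp.\ \eqref{estimate nonlinear X k a b}) with $g=h=f$ for the quadratic part, and then a routine weight interpolation to pass from the first to the second inequality in the polynomial case. The verification that $\eta_k>0$ for $k>4$ and the equivalence $\|\cdot\|_{Y_{k,*}}\sim\|\cdot\|_{Y_k}$ are exactly the ingredients the paper takes for granted.
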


\begin{cor}\label{C48} Suppose $\gamma \in (-3, 1]$. For any smooth function $f$, suppose $f$ is the solution of 
\[
\partial_t f  = \bar{L} f :=  -v \cdot \nabla_x f + Q(f, \mu) + Q(\mu, f) , \quad f|_{t=0} = f_0.
\]
If $\gamma \in [0, 1]$, for any $k >4$ we have
\[
\Vert f - P f\Vert_{X_k} \lesssim e^{-\lambda t}   \Vert f_0 - P f_0 \Vert_{X_k},
\]
for some constant $\lambda>0$.  If $\gamma \in (-3, 0)$, for any $4 < k_0 <  k$  we have
\[
\Vert f - P f\Vert_{X_{k_0}} \lesssim \langle  t \rangle^{-\frac {k -k_* } {|\gamma| }}\Vert f_0 - P f_0 \Vert_{X_k} ,\quad \forall k_* \in (k_0, k).
\]
For the exponential weight, if $\gamma \in [0, 1]$, for any $k \in \R, 0< a, b \in (0, 2)$ we have
\[
\Vert f - P f\Vert_{X_{k, a , b}} \lesssim  e^{- \lambda t }\Vert f_0 - P f_0 \Vert_{X_{k, a, b}},
\]
for some constant $\lambda>0$.  If $\gamma \in (-3, 0)$, for any $k \in \R, 0 < a_0 < a, b \in (0, 2)$ we have
\[
\Vert f - P f\Vert_{X_{k, a_0 , b}} \lesssim  e^{- \lambda t^{\frac b {b - \gamma}  } }\Vert f_0 - P f_0 \Vert_{X_{k, a, b}},
\]
for some constant $ \lambda >0$.
\end{cor}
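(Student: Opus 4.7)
The plan is to lift the $L^2_x L^2_k$-decay already established in Lemma \ref{L312} to the Sobolev-type space $X_k$ (respectively $X_{k,a,b}$) by means of the semigroup enlargement/factorization method of Gualdani--Mischler--Mouhot, using the energy estimates collected in Lemma \ref{L46} as the core dissipation input.

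First I would establish the dissipativity of $\bar L$ in $X_k$. Combining \eqref{estimate linearized X k 3} (resp.\ \eqref{estimate linearized X k a b}) with the transport bound \eqref{estimate polynomial v nabla x} (resp.\ \eqref{estimate exponential v nabla x}), and using the definition of the constants $C_{|\alpha|,|\beta|}$ in \eqref{constant}, I get
\[
(\bar L f,f)_{X_k}\le -c_1\|f\|_{Y_k}^2 + C_k\|f\|_{\bar X_0}^2,
\qquad
(\bar L f,f)_{X_{k,a,b}}\le -c_1\|f\|_{Y_{k,a,b}}^2 + C_{k,a,b}\|f\|_{\bar X_0}^2.
\]
I then introduce the splitting $\bar L=A+B$ with $A:=M\chi_R$ a velocity cutoff (truncating both multiplication and all lower-order $v$-derivative contributions after integrating by parts, exactly as in Lemma \ref{L312}) and $B:=\bar L-A$. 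For $M,R$ large enough the term $C_k\|f\|_{\bar X_0}^2$ is absorbed by the truncated part, yielding
\[
(Bf,f)_{X_k}\le -c_2\|f\|_{Y_k}^2,\qquad (Bf,f)_{X_{k,a,b}}\le -c_2\|f\|_{Y_{k,a,b}}^2,
\]
while $\|Af\|_{L^2_xL^2(\mu^{-1/2})}\lesssim \|f\|_{X_k}$, and similarly for $X_{k,a,b}$, because $A$ is localized on a compact velocity set and commutes up to harmless lower-order terms with $\partial^\alpha_\beta$ multiplied by $w(\alpha,\beta)$.

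For the hard-potential case $\gamma\in[0,1]$ the dissipation $\|f\|_{Y_k}$ controls $\|f\|_{X_k}$ directly (since $\langle v\rangle^{\gamma/2}\ge 1$), so $\|S_B(t)f\|_{X_k}\lesssim e^{-\lambda t}\|f\|_{X_k}$. Projecting on $(\mathrm{Ker}\,L)^\perp$ and applying Duhamel's formula
\[
(I-P)S_{\bar L}(t)=(I-P)S_B(t)+\int_0^t (I-P)S_{\bar L}(t-s)\,A\,S_B(s)\,ds,
\]
together with the already-known bound $\|(I-P)S_{\bar L}(t)\|_{L^2_xL^2(\mu^{-1/2})\to L^2_xL^2(\mu^{-1/2})}\lesssim e^{-\lambda t}$ (which is the input behind Lemma \ref{L312}), closes the estimate. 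The exponential-weight case $\gamma\in[0,1]$ is identical with $X_k$ replaced by $X_{k,a,b}$.

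For the soft-potential case $\gamma\in(-3,0)$ I would reproduce the interpolation argument of Lemma \ref{L312} one step higher in regularity. Using
\[
\langle R\rangle^{\gamma}\|f\|_{X_{k_0}}^2 \le \|f\|_{Y_{k_0}}^2 + \langle R\rangle^{-2(k-k_0)+\gamma}\|f\|_{X_k}^2
\]
(in the polynomial case) or
\[
\langle R\rangle^{\gamma}\|f\|_{X_{k,a_0,b}}^2 \le \|f\|_{Y_{k,a_0,b}}^2 + \langle R\rangle^{\gamma}e^{-(a-a_0)\langle R\rangle^b}\|f\|_{X_{k,a,b}}^2
\]
(in the exponential-weight case) and integrating the resulting differential inequality for $\|S_B(t)f\|$ with the optimal choice $\langle R\rangle=\langle t\rangle^{-1/\gamma}[\log(1+t)]^{1/\gamma}$ or $\langle R\rangle=t^{1/(b-\gamma)}$ yields respectively algebraic decay $\langle t\rangle^{-(k-k_*)/|\gamma|}$ and stretched-exponential decay $e^{-\lambda t^{b/(b-\gamma)}}$ for $S_B$. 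Plugging this into Duhamel together with Lemma \ref{L312} then upgrades the same rate to $(I-P)S_{\bar L}$ in the target space. The main obstacle in the plan is purely bookkeeping: ensuring the hierarchy \eqref{constant} of constants is compatible with the splitting $\bar L=A+B$ at every level $|\alpha|+|\beta|\le N$, and that the commutators of $A$ with $\partial^\alpha_\beta$ and $w(\alpha,\beta)$ remain lower order so they are absorbed into the negative part of the energy inequality; once this is set up, the rest is a direct transcription of the Lemma \ref{L312} argument.
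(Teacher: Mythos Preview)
Your proposal is correct and follows the same approach as the paper, which simply says ``The proof is similar as Lemma \ref{L312} thus omitted.'' You have accurately reconstructed what that means: use the dissipativity in $X_k$ coming from Lemma \ref{L46} (replacing the $L^2_xL^2_k$ bound \eqref{poly weight estimate}), split $\bar L=A+B$ with $A=M\chi_R$, obtain decay of $S_B$ in $X_k$ by the same differential inequality/interpolation as in Lemma \ref{L312}, and close via Duhamel.

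One small point to sharpen: for $\gamma\in(-3,-\tfrac32]$ the space $X_k$ carries $v$-derivatives, so the small-space input you invoke in the Duhamel step cannot be just $L^2_xL^2(\mu^{-1/2})$---you need the Sobolev version $H^N_{x,v}(\mu^{-1/2})$ (or $\mu^{-3/4}$) of the classical decay (the Strain--Guo results in \cite{SG,SG2,DYZ2} are stated at that regularity). With that in hand, $A$ maps $X_k$ into $H^N_{x,v}(\mu^{-3/4})$ because $\chi_R$ is smooth and compactly supported in $v$ (its commutators with $\partial_\beta$ stay compactly supported, which is exactly the ``harmless lower-order terms'' you mention), and $H^N_{x,v}(\mu^{-1/2})\hookrightarrow X_k$ since the exponential weight dominates every $w(\alpha,\beta)$. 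For $\gamma\in(-\tfrac32,1]$ this issue does not even arise: $\bar L$ commutes with $\partial^\alpha_x$ and $P(\partial^\alpha f)=0$ for $|\alpha|\ge1$, so the $X_k$ decay is just a direct sum of the $L^2_xL^2_k$ statement of Lemma \ref{L312} over $|\alpha|\le2$. Your bookkeeping concern about the constants $C_{|\alpha|,|\beta|}$ is legitimate but already built into the definition of $X_k$ in \eqref{X k 2}--\eqref{constant}, so no new choice is needed.
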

\begin{proof}

The proof is similar as Lemma \ref{L312} thus omitted. 
\end{proof}

\begin{cor}\label{C49} Denote $Z_k=X_{k - \gamma / 2}$, then we have
\[
\Vert Q(f, g) \Vert_{Z_k} \lesssim   \Vert f \Vert_{X_4} \Vert g \Vert_{Y_k} +\Vert f \Vert_{Y_k} \Vert g \Vert_{X_4} .
\]
\end{cor}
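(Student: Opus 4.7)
The estimate is the dual version of the nonlinear bound \eqref{estimate nonlinear X k} in Lemma \ref{L46}; my plan is to reproduce the argument of Lemma \ref{L41} with the test function $h$ replaced by an $L^2_v$ supremum obtained from pointwise-in-$x$ duality.

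For each multi-index $(\alpha,\beta)$ contributing to the $Z_k=X_{k-\gamma/2}$ norm --- that is, $|\alpha|\le 2$ if $\gamma\in(-3/2,1]$ and $|\alpha|+|\beta|\le N$ otherwise --- Leibniz expands
\begin{equation*}
\partial^\alpha_\beta Q(f,g)=\sum_{\alpha_1\le\alpha,\,\beta_1\le\beta}C^{\alpha,\beta}_{\alpha_1,\beta_1}\,Q(\partial^{\alpha_1}_{\beta_1}f,\partial^{\alpha-\alpha_1}_{\beta-\beta_1}g),
\end{equation*}
so it suffices to estimate $\mathfrak{w}(\alpha,\beta)\langle v\rangle^{-\gamma/2}Q(F,G)$ in $L^2_{x,v}$, where $\mathfrak{w}(\alpha,\beta)$ denotes the $X_k$-weight ($\langle v\rangle^k$ in the first case, $w(\alpha,\beta)$ as in \eqref{weight function} in the second) and $F,G$ are the corresponding derivatives of $f,g$. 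Writing $\mathfrak{w}(\alpha,\beta)=\langle v\rangle^\ell$ for the relevant polynomial power $\ell=\ell(\alpha,\beta,k)$, I dualize the $L^2_v$ norm pointwise in $x$: any $\phi$ in the unit ball of $L^2_v$ can be written $\phi=h\langle v\rangle^{\ell+\gamma/2}$ with $\|h\|_{L^2_{\ell+\gamma/2}}\le 1$, which yields
\begin{equation*}
\|\mathfrak{w}(\alpha,\beta)\langle v\rangle^{-\gamma/2}Q(F,G)(x,\cdot)\|_{L^2_v}=\sup_{\|h\|_{L^2_{\ell+\gamma/2}}\le 1}\bigl|(Q(F,G)(x,\cdot),h\langle v\rangle^{2\ell})_{L^2_v}\bigr|.
\end{equation*}

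Applying Corollary \ref{C35} pointwise in $x$ to the right-hand side converts this into
\begin{equation*}
\|\mathfrak{w}(\alpha,\beta)\langle v\rangle^{-\gamma/2}Q(F,G)(x,\cdot)\|_{L^2_v}\le C_k\min_{m+n=N-2}\bigl\{\|F(x,\cdot)\|_{H^m_{\ell+\gamma/2}}\|G(x,\cdot)\|_{H^n_4}+\|G(x,\cdot)\|_{H^m_{\ell+\gamma/2}}\|F(x,\cdot)\|_{H^n_4}\bigr\}.
\end{equation*}
Integration in $x$ then proceeds exactly as in the proof of Lemma \ref{L41}: H\"older's inequality together with the Sobolev embeddings \eqref{Linfty}, \eqref{L3} distribute the $x$-derivatives between $f$ and $g$, while the weight hierarchies \eqref{ab3}, \eqref{ab4} and the coefficient choice \eqref{constant} ensure that each resulting term is controlled by $\|f\|_{X_4}\|g\|_{Y_k}+\|f\|_{Y_k}\|g\|_{X_4}$.

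The main obstacle is the bookkeeping in the case $\gamma\in(-3,-3/2]$ with $N\in\{3,4\}$: terms for which $|\alpha-\alpha_1|+|\beta-\beta_1|\in\{3,4\}$ force one of the factors into $L^3_x$ or $L^\infty_x$, and one must dominate the composite weight $\mathfrak{w}(\alpha,\beta)$ by a product of two lower-order weights admissible for the $X_4$ and $Y_k$ norms respectively, via \eqref{ab3}, \eqref{ab4}. This is precisely the book-keeping already carried out in Lemma \ref{L41}, so beyond the opening duality step no new ideas are needed.
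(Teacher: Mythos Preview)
Your proposal is correct and follows essentially the same duality idea as the paper, which simply observes that $Z_k$ is dual to $Y_k$ with respect to the $X_k$ pairing and then invokes \eqref{estimate nonlinear X k} in Lemma~\ref{L46}. What you have written is a careful unfolding of that one-line argument: since the $h$-dependence in Lemma~\ref{L41} enters only through $\|\partial^\alpha_\beta h\, w(\alpha,\beta)\|_{L^2_xL^2_{\gamma/2}}$ in each summand, one can dualize term by term via Corollary~\ref{C35} and then repeat the $x$-integration bookkeeping of Lemma~\ref{L41} unchanged.
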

\begin{proof}
It's easily seen that $Z_k$ is the dual of $Y_k$ with respect to $X_k$, so the corollary follows by \eqref{estimate nonlinear X k} in Lemma \ref{L46}.
\end{proof}

\section{Global existence and convergence}\label{section5}

The proof of local existence is standard once we have established estimates in Lemma \ref{L46}, we refer to  \cite{AMSY} for example.

\begin{thm} (Local existence) For any $k  > 4$, there exists $\epsilon_0, \epsilon_1, T>0$ such that if $f_0 \in X_k$ and
\[
\Vert f_0 \Vert_{X_{k}} \le \epsilon_0 ,\quad  \mu + f_0 \ge 0,
\]
then the Cauchy problem
\[
\partial_t f + v \cdot \nabla_x f= Q(\mu+f, \mu+f), \quad f|_{t=0}  =f_0(x, v),
\]
admits a unique weak solution $f \in L^\infty([0, T]; X_k )$ satisfying 
\[
\Vert f \Vert_{ L^\infty ([0, T]; X_k )} \le \epsilon_1, \quad \mu+f \ge 0, \quad  \Vert f \Vert_{L^2([0, T] ; Y_k) } \le \epsilon_1.
\]
\end{thm}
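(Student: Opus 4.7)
The plan is to construct a solution by a linear iteration that preserves nonnegativity, close uniform a priori bounds via Corollary~\ref{C47}, and extract a limit through a contraction argument in a weaker norm. Starting from $F^0 := \mu$, define $F^{n+1} = \mu + f^{n+1}$ as the solution of the linear inhomogeneous problem
\[
\partial_t F^{n+1} + v \cdot \nabla_x F^{n+1} + R[F^n]\, F^{n+1} = Q^+(F^n, F^n), \qquad F^{n+1}|_{t=0} = \mu + f_0,
\]
where $R[F^n](v) := \int_{\R^3\times\mathbb{S}^2} B(v-v_*,\sigma) F^n(v_*)\, dv_* d\sigma \ge 0$ is the loss frequency. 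Solving by characteristics and Duhamel's formula, nonnegativity propagates from $F^n$ to $F^{n+1}$, so $\mu + f^{n+1} \ge 0$ throughout. Rewriting in terms of $f^{n+1}$ yields an equation of the form $\partial_t f^{n+1} + v \cdot \nabla_x f^{n+1} = Q(\mu + f^n, \mu + f^{n+1})$ plus harmless lower-order terms from the $Q(\mu,\cdot)$ contribution.

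Next I would close uniform $X_k$ bounds by induction. Applying Lemma~\ref{L46} and the estimates behind Corollary~\ref{C47} to the $f^{n+1}$ equation gives a differential inequality of the form
\[
\tfrac{1}{2}\tfrac{d}{dt}\|f^{n+1}\|_{X_k}^2 + c_0\, \|f^{n+1}\|_{Y_k}^2 \le C_k \|f^{n+1}\|_{\bar{X}_0}^2 + C_k \|f^n\|_{X_4}\, \|f^{n+1}\|_{Y_k}^2.
\]
Assuming inductively $\|f^n\|_{L^\infty_t X_k} \le \epsilon_1$ on $[0,T]$, and choosing $\epsilon_1$ so small that $C_k \epsilon_1 \le c_0/2$, the bilinear cross term is absorbed into the dissipation; Gronwall then yields $\|f^{n+1}(t)\|_{X_k}^2 \le e^{C_k t}\|f_0\|_{X_k}^2$. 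If $\epsilon_0$ and $T$ are chosen so that $e^{C_k T}\epsilon_0^2 \le \epsilon_1^2/4$, the induction closes, and integrating the differential inequality in $t$ produces $\|f^{n+1}\|_{L^2([0,T];Y_k)} \le \epsilon_1$.

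Then I would prove contraction. Set $g^n := f^{n+1} - f^n$; it satisfies a linear inhomogeneous equation whose source is bilinear in $g^{n-1}$ with the remaining factors controlled by the uniform $X_k$ bound. Combining the bilinear bound \eqref{estimate nonlinear X k} (and its $\bar{X}_0$-variant via Corollary~\ref{C49}) with a basic energy estimate in $\bar{X}_0$ gives
\[
\tfrac{d}{dt}\|g^n\|_{\bar{X}_0}^2 \le C\|g^n\|_{\bar{X}_0}^2 + C\epsilon_1\, \|g^{n-1}\|_{\bar{X}_0}^2.
\]
For $T$ small enough this furnishes a contraction $\|g^n\|_{L^\infty_t \bar{X}_0} \le \tfrac{1}{2}\|g^{n-1}\|_{L^\infty_t \bar{X}_0}$, so $(f^n)$ converges in $L^\infty([0,T];\bar{X}_0)$ to some limit $f$. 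Uniform bounds together with weak-$\ast$ lower semicontinuity place $f \in L^\infty([0,T];X_k) \cap L^2([0,T];Y_k)$ with the desired norm estimates; a.e.\ pointwise convergence along a subsequence yields $\mu + f \ge 0$, and the same contraction argument delivers uniqueness.

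The main obstacle will be the delicate interplay between the loss of weight in the nonlinear estimate and the coercivity of the linearised operator. The bound in Corollary~\ref{C47} only closes because the nonlinear contribution carries a $\|f\|_{X_4}$ factor; since $X_4 \hookrightarrow X_k$ for $k \ge 4$, it can be absorbed once $\epsilon_1$ is chosen small relative to $c_0$. Simultaneously juggling the three parameters $\epsilon_0, \epsilon_1, T$ so that the induction, the contraction, and the Gronwall factors all compose without forcing any of them to vanish, while respecting the additional loss of one derivative/weight inherent in the transport term handled by \eqref{estimate polynomial v nabla x}, is the chief bookkeeping challenge.
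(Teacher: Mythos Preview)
The paper does not actually prove this statement: it declares the argument ``standard once we have established estimates in Lemma~\ref{L46}'' and refers to \cite{AMSY}. Your outline (positivity-preserving iteration, uniform $X_k$ bounds via the energy estimates of Section~\ref{section4}, contraction in a weaker norm) is exactly the standard scheme, so in spirit you are doing what the paper intends.

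There is one inaccuracy worth flagging. The positivity-preserving scheme you chose puts $Q^-$ on $f^{n+1}$ but $Q^+$ entirely on $f^n$; when you test against $f^{n+1}$ in $X_k$, the linear gain terms $(Q^+(\mu,f^n),f^{n+1})_{X_k}$ and $(Q^+(f^n,\mu),f^{n+1})_{X_k}$ produce cross terms of size $C\|f^n\|_{Y_k}\|f^{n+1}\|_{Y_k}$ (cf.\ Lemma~\ref{L31} and the proof of Lemma~\ref{L32}), which after Young's inequality leave an extra $C\|f^n\|_{Y_k}^2$ on the right-hand side of your differential inequality. This term is absent from what you wrote, and it is \emph{not} a ``harmless lower-order'' contribution: it is at the top dissipation order. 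It is, however, easily handled once you track $\|f^n\|_{L^2([0,T];Y_k)}\le\epsilon_1$ in the induction hypothesis (which you do), so the repair is purely bookkeeping. Also, the embedding goes the other way, $X_k\hookrightarrow X_4$ for $k\ge 4$; what you use is the corresponding norm inequality $\|f\|_{X_4}\le\|f\|_{X_k}$, which is correct.
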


\begin{thm}
Recall $\bar{L} f =  -v \cdot \nabla_x f + Q(f, \mu) + Q(\mu, f)$. For any function $f$ satisfies $P f =0$,  for any $k \ge  6  $, define the norm $||| f |||_{X_k}$ and the associate scalar product $((f, g))_{X_k}$ by
\[
||| f |||_{X_k}^2  = \eta \Vert f \Vert_{X_k}^2 + \int_0^\infty \Vert S_{\bar{L}}  (\tau ) f \Vert_{\bar{X_0}}^2 d \tau,  \quad (( f, g ))_{X_k}  = \eta (f, g )_{X_k} + \int_0^\infty (S_{\bar{L}} (\tau ) f,  S_{\bar{L}} (\tau ) g)_{\bar{X_0}} d \tau.
\]
Similarly for any $k \in \R, a > 0, b \in (0, 2)$, define the norm $||| f |||_{X_{k, a, b}}$ and the associate scalar product $((f, g))_{X_{k, a, b}}$ by
\[
||| f |||_{X_{k, a, b}}^2  = \eta \Vert f \Vert_{X_{k, a , b}}^2 + \int_0^\infty \Vert S_{\bar{L}} (\tau ) f \Vert_{\bar{X_0}}^2 d \tau, \quad (( f, g ))_{X_{k, a, b}}  = \eta (f, g )_{X_{k, a, b}} + \int_0^\infty (S_{\bar{L}} (\tau ) f,  S_{\bar{L}} (\tau ) g)_{\bar{X_0}} d \tau.
\]
Then there exists some $\eta>0$, such that the norm $||| \cdot |||_{X_k}$ $(||| \cdot |||_{X_{k, a, b}})$ is equivalent to $\Vert \cdot \Vert_{X_k}$ $(\Vert \cdot \Vert_{X_{k, a, b}})$ on the space $\{ f \in X_k| P f =0 \}$ $(\{ f \in X_{k, a, b}| P f =0 \})$. Moreover there exists  some constants $C, K>0$ such that any smooth solution to the following equation 
\begin{equation}
\label{Boltzmann equation with perturbation}
\partial_t f  = \bar{L} f +Q(f, f), \quad f(0) =f_0, \quad P f_0 = 0  ,
\end{equation}
satisfies for the polynomial case 
\begin{equation}
\label{polynomial}
\frac d {dt} ||| f |||_{X_k}^2  \le (C ||| f  |||_{X_6} - K ) \Vert f \Vert_{Y_k}^2,
\end{equation}
and for the exponential weight case
\begin{equation}
\label{exp}
\frac d {dt} ||| f |||_{X_{k, a, b}}^2  \le (C ||| f |||_{X_{k, a, b}} - K ) \Vert f \Vert_{Y_{k, a, b}}^2.
\end{equation}
As a consequence, if $ ||| f_0 |||_{X_6} \le \frac {K}{2C}$, then there exists a global solution $f \in L^\infty([0, \infty), X_6), \mu +f \ge 0$ to the Boltzmann equation \eqref{Boltzmann equation with perturbation}. Moreover for any $k \ge 6$, if we assume $\Vert f_0 \Vert_{X_k} < +\infty$, then for the case $\gamma \in [0, 1]$ we have
\[
 ||| f |||_{X_k}  \lesssim e^{-\lambda t}  ||| f |||_{X_k}, 
\]
for some constant $\lambda>0$. For the case $\gamma \in (-3, 0)$,  for any $6 \le k_1 < k$ we have
\[
 ||| f |||_{X_{k_1}}  \lesssim \langle t \rangle^{\frac {k- k_*} {|\gamma|}} ||| f |||_{X_{k}}, \quad \forall k_* \in (k_1, k).
\]
For the exponential weight case if we assume $||| f_0 |||_{X_{k, a, b}} \le \frac {K}{2C}$, then there exists a global solution $f \in L^\infty([0, \infty),   X_{k, a, b})$, $\mu +f \ge 0$ to the Boltzmann equation \eqref{Boltzmann equation with perturbation}. Moreover for the case $\gamma \in [0, 1]$ 
\[
 ||| f |||_{X_{k, a, b}}  \lesssim e^{-\lambda t}  ||| f |||_{X_{k, a, b}}, 
\]
for some constant $\lambda>0$. For the case $\gamma \in (-3, 0)$,  for any $ 0 < a_0 < a$ we have
\[
 ||| f |||_{X_{k, a_0, b}}  \lesssim e^{-\lambda t^{\frac {b} { b -\gamma} }}  ||| f |||_{X_{k, a, b}},
\]
for some constant $\lambda>0$. 
\end{thm}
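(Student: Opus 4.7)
The plan is to execute the semigroup strategy outlined in the introduction: first show that $|||\cdot|||_{X_k}$ is an equivalent norm on $\{P f = 0\}$, then differentiate along a solution of \eqref{Boltzmann equation with perturbation} to obtain \eqref{polynomial}--\eqref{exp}, and finally bootstrap these into global existence and decay. The lower bound $|||f|||_{X_k}^2 \ge \eta \Vert f \Vert_{X_k}^2$ is immediate. For the upper bound I invoke Corollary \ref{C48}: when $\gamma \in [0,1]$ the exponential decay $\Vert S_{\bar{L}}(\tau) f \Vert_{\bar{X_0}} \lesssim e^{-\lambda \tau} \Vert f \Vert_{X_k}$ renders $\int_0^\infty \Vert S_{\bar{L}}(\tau)f\Vert_{\bar{X_0}}^2 d\tau \lesssim \Vert f\Vert_{X_k}^2$; when $\gamma \in (-3,0)$ the polynomial-rate estimate gives $\Vert S_{\bar{L}}(\tau)f\Vert_{\bar{X_0}}^2 \lesssim \langle \tau\rangle^{-2(k-k_*)/|\gamma|}\Vert f\Vert_{X_k}^2$, which is integrable precisely because $k \ge 6$ permits $k_*$ with $2(k-k_*)/|\gamma| > 1$. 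The exponential-weight case is identical, with the stretched exponential $e^{-\lambda \tau^{b/(b-\gamma)}}$ replacing the algebraic rate.

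For the differential inequality, the $\eta \Vert f\Vert_{X_k}^2$ contribution is handled by Corollary \ref{C47}: it yields $-2c_0\eta\Vert f\Vert_{Y_k}^2 + C_k\eta\Vert f\Vert_{\bar{X_0}}^2 + C_k\eta\Vert f\Vert_{X_k}\Vert f\Vert_{Y_k}^2$. The semigroup integral is differentiated as
\[
\frac{d}{dt}\!\int_0^\infty\! \Vert S_{\bar{L}}(\tau) f\Vert_{\bar{X_0}}^2 d\tau = \int_0^\infty\! \frac{d}{d\tau}\Vert S_{\bar{L}}(\tau) f\Vert_{\bar{X_0}}^2 d\tau + 2\int_0^\infty\! ( S_{\bar{L}}(\tau) Q(f,f), S_{\bar{L}}(\tau) f)_{\bar{X_0}} d\tau,
\]
the first summand collapsing to $-\Vert f\Vert_{\bar{X_0}}^2$ by the fundamental theorem of calculus and decay at infinity, the second being controlled using $\Vert S_{\bar{L}}(\tau)\Vert_{\bar{X_0}\to \bar{X_0}} \lesssim 1$ (after a short-time/long-time split exploiting the decay rate) together with Corollary \ref{C49} to give a bound of the form $C\Vert f\Vert_{X_6}\Vert f\Vert_{Y_k}\Vert f\Vert_{X_k}$. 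Choosing $\eta$ small enough that $C_k\eta < 1/2$ allows the $-\Vert f\Vert_{\bar{X_0}}^2$ term to absorb the lower-order linear contribution from $X_k$, leaving a net coefficient $-K + C|||f|||_{X_6}$ in front of $\Vert f\Vert_{Y_k}^2$; this is exactly \eqref{polynomial}. The exponential version \eqref{exp} is obtained by the parallel computation using the exponential-weight estimates in Lemma \ref{L46} and Corollary \ref{C47}.

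Global existence under $|||f_0|||_{X_6} \le K/(2C)$ is then a standard continuation of the local solution, since \eqref{polynomial} forces $t\mapsto |||f(t)|||_{X_6}$ to be monotone nonincreasing. Propagating \eqref{polynomial} in $X_k$ for $k > 6$ keeps $\sup_t |||f(t)|||_{X_k}$ finite and yields $\int_0^\infty \Vert f\Vert_{Y_k}^2 dt < \infty$. For $\gamma \in [0,1]$, $\Vert \cdot \Vert_{Y_k} \gtrsim \Vert \cdot \Vert_{X_k}$ turns \eqref{polynomial} into a Gr\"onwall inequality giving exponential decay. For $\gamma \in (-3,0)$, I interpolate
\[
\Vert f\Vert_{X_{k_1}}^{2+2\theta_0} \lesssim \Vert f\Vert_{Y_{k_1}}^2 \Vert f\Vert_{X_k}^{2\theta_0}, \qquad \theta_0 = \frac{|\gamma|}{2(k-k_1)},
\]
insert this into \eqref{polynomial} and solve the resulting ODE for $y(t) = |||f(t)|||_{X_{k_1}}^2$ to recover the algebraic rate $t^{-(k-k_*)/|\gamma|}$ for any $k_* \in (k_1,k)$; the exponential-weight version interpolates between $X_{k,a,b}$ and $Y_{k,a,b} = X_{k+\gamma/2,a,b}$ and produces the stretched-exponential rate $e^{-\lambda t^{b/(b-\gamma)}}$ by optimizing over a truncation radius in $v$ just as in the proof of Corollary \ref{C48}. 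The main technical obstacle is the second (nonlinear) integral in the time-derivative of the semigroup piece: one must use the decay of $S_{\bar{L}}(\tau)$ simultaneously on $Q(f,f)$ and $f$ in the $\bar{X_0}$ pairing without losing derivatives or weights, which is where Corollary \ref{C49} enters in a load-bearing way and where the choice $k \ge 6$ is used to ensure both integrability of the decay and enough regularity to bound $Q(f,f)$ in $\bar{X_0}$.
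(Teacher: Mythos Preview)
Your overall architecture is correct and matches the paper: norm equivalence via Corollary~\ref{C48}, differentiation along the solution, the collapse of the linear semigroup piece to $-\|f\|_{\bar X_0}^2$, cancellation against $C_k\eta\|f\|_{\bar X_0}^2$ by choice of $\eta$, and the interpolation argument for decay. But there is a genuine gap in your treatment of the nonlinear semigroup term
\[
\int_0^\infty (S_{\bar L}(\tau)Q(f,f),\,S_{\bar L}(\tau)f)_{\bar X_0}\,d\tau.
\]
You assert a bound of the form $C\|f\|_{X_6}\|f\|_{Y_k}\|f\|_{X_k}$, and then claim this leaves a coefficient $-K+C|||f|||_{X_6}$ in front of $\|f\|_{Y_k}^2$. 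For $\gamma<0$ this does not follow: $X_k\not\hookrightarrow Y_k$, so $\|f\|_{Y_k}\|f\|_{X_k}$ cannot be converted into $\|f\|_{Y_k}^2$, and Young's inequality leaves an uncontrolled $\|f\|_{X_6}^2\|f\|_{X_k}^2$ term. Your proposed mechanism --- holding one factor at $\|S_{\bar L}(\tau)\|_{\bar X_0\to\bar X_0}\lesssim 1$ and putting all the $\tau$-decay on the other --- also does not give an integrable rate for all $\gamma\in(-3,0)$, nor does it say in which norm $Q(f,f)$ is being placed.

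The paper closes this by an asymmetric weight allocation between the two factors. One applies Corollary~\ref{C48} as $Z_6=X_{6+|\gamma|/2}\to\bar X_0$ on $Q(f,f)$ and as $Y_6=X_{6-|\gamma|/2}\to\bar X_0$ on $f$; the product of the two algebraic rates has exponent $(12-2k_1)/|\gamma|$, which exceeds $1$ precisely when there exists $k_1\in(4,\,6-|\gamma|/2)$ --- and this is where $k\ge 6$ is actually used. Corollary~\ref{C49} then gives $\|Q(f,f)\|_{Z_6}\lesssim\|f\|_{X_6}\|f\|_{Y_6}$, while the second factor is $\|f\|_{Y_6}$, so the integral is bounded by $C\|f\|_{X_6}\|f\|_{Y_6}^2\le C\|f\|_{X_6}\|f\|_{Y_k}^2$, which closes directly into \eqref{polynomial}. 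The same allocation (with the stretched-exponential rate) handles the exponential-weight case.
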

\begin{proof} During the proof, we will denote $X = X_k, Y=Y_k$ for the polynomial weight case and $X =X_{k, a , b}, Y =Y_{k, a, b}$ for the exponential weight case. Since $k \ge 6$ for the polynomial weight case,  by Corollary \ref{C48}, for both cases we have
\[
\Vert S_{\bar{L}}   (\tau) f\Vert_{\bar{X_0}} \le \theta(\tau) \Vert f\Vert_{X} ,    \quad \lim_{\tau \to \infty}\theta(\tau) =0, \quad  \int_0^\infty \theta^2(\tau)  d\tau < +\infty,
\]
for some function $\theta(\tau) $, which implies
\[
\int_0^\infty \Vert S_{\bar{L}}(\tau ) f \Vert_{\bar{X_0}}^2 d \tau \lesssim \Vert f \Vert_X^2  \int_0^\infty \theta^2(\tau) d\tau,
\]
the equivalence between two norms is thus proved. Then we compute
\[
\frac {d} {dt}  \frac 1 2 ||| f(t) |||^2_X  =\eta (Q(\mu + f, \mu+f) ,f )_X + \int_0^\infty ( S_{\bar{L}} (\tau)\bar{L} f, S_{\bar{L}}   (\tau) f )_{\bar{X_0}  } d\tau + \int_0^\infty ( S_{\bar{L}}   (\tau)Q(f, f ) , S_{\bar{L}}  (\tau) f )_{\bar{X_0} } d\tau.
\]
We will estimate the terms separately, first by Corollary \ref{C47} we have for the polynomial weight case
\[
(Q(\mu + f, \mu+f), f)_X  \le -c_0 \Vert f \Vert_{Y}^2 + C_{k}  \Vert f \Vert_{\bar{X_0} }^2 + C_k\Vert f \Vert_{{X_4} } \Vert f \Vert_{Y}^2,
\]
and for the exponential weight case 
\[
(Q(\mu + f, \mu+f), f)_X  \le -c_0 \Vert f \Vert_{Y}^2 + C_{k}  \Vert f \Vert_{\bar{X_0} }^2 + C_k\Vert f \Vert_{{X} } \Vert f \Vert_{Y}^2.
\]
Recall that
\[
\Vert S_L(\tau) f(t) \Vert_{\bar {X_0} } \le \theta(\tau+t) \Vert f_0\Vert_{X} ,  \quad \lim_{\tau \to \infty}\theta(\tau+t) =0, \quad \forall  t \ge 0.
\]
For the second term we have
\[
\int_0^\infty ( S_{\bar{L}}   (\tau)   \bar{L}f, S_{\bar{L}}  (\tau) f )_{ \bar {X_0} } d\tau =\int_0^\infty \frac {d} {d\tau} \Vert S_{\bar{L}} (\tau) f(t)\Vert_{ \bar {X_0} }^2 d\tau = \lim_{\tau \to \infty}\Vert S_{\bar{L}}   (\tau ) f(t)\Vert_{ \bar {X_0}}^2 - \Vert f(t) \Vert_{ \bar {X_0} }^2 = -\Vert f(t) \Vert_{ \bar {X_0} }^2.
\]
For the last term we have
\[
\int_0^\infty ( S_L(\tau)Q(f, f ) , S_L(\tau) f )_{\bar {X_0} } d\tau \le\int_0^\infty \Vert S_L(\tau)Q(f, f )\Vert_{\bar {X_0} }   \Vert S_L(\tau) f \Vert_{ \bar {X_0} } d\tau.
\]
For the case $\gamma \in [0, 1]$, by Corollary \ref{C48} and Corollary \ref{C49} we have
\[
\int_0^\infty \Vert S_{\bar{L}}  (\tau)Q(f, f )\Vert_{\bar {X_0} }   \Vert S_L(\tau) f \Vert_{ \bar {X_0} } d\tau \lesssim \Vert Q(f, f )\Vert_{{X_ 5 }}    \Vert  f \Vert_{X_{5 }}  \int_0^{\infty} e^{-\lambda \tau } d\tau  \lesssim \Vert Q(f, f )\Vert_{{Z_ 6 }}    \Vert  f \Vert_{Y_6}  \lesssim \Vert f \Vert_{X_6} \Vert  f \Vert_{Y_6}^2.
\]
For the case $\gamma \in (-3, 0)$, since $ 6 > k_1 + \frac { |\gamma| }  2 $ for some $k_1 >4$, by Corollary \ref{C48} and Corollary \ref{C49}  so we have
\begin{equation*}
\begin{aligned}
\int_0^\infty \Vert S_{\bar{L}}  (\tau)Q(f, f )\Vert_{\bar {X_0} }   \Vert S_{\bar{L}}  (\tau) f \Vert_{ \bar {X_0} } d\tau & \lesssim \Vert Q(f, f )\Vert_{{X_{ 6 + |\gamma| / 2 }}}    \Vert  f \Vert_{X_{6 - |\gamma| / 2 }}  \int_0^{\infty} \langle t  \rangle^{ - \frac {6+|\gamma|/2 - k_1  + 6 -|\gamma|/2 -k_1} {|\gamma|}  }   d\tau
\\
&  \lesssim \Vert Q(f, f )\Vert_{{Z_ 6 }}    \Vert  f \Vert_{Y_6}  \lesssim \Vert f \Vert_{X_6} \Vert  f \Vert_{Y_6}^2,
\end{aligned}
\end{equation*}
by taking a suitable $\eta$ and combining all the terms, \eqref{polynomial} and \eqref{exp} is thus proved. For the global existence and convergence rate,  if $||| f_0 |||_{X_6} \le \frac {K} {2C}$, then 
\[
\frac d {dt} ||| f |||_{X_6}^2  \le (C|||f |||_{X_6} - K ) \Vert f \Vert_{Y_6}^2,
\]
we deduce that $||| f |||_{X_6}$ is decreasing over time for all $t \ge 0$. Together with the local existence we know that there exists a global solution $f \in L^\infty ((0, \infty), X_6)$. Now we come to prove the convergence rate, for the polynomial case, for all $ k \ge 6 $ we have
\[
\frac d {dt} ||| f |||_{X_k}^2  \le (C|||f |||_{X_6} - K ) \Vert f \Vert_{Y_k}^2  \le -\frac {K} 2 \Vert f \Vert_{Y_k}^2.
\]
Thus the convergence rate can be proved similarly as Lemma \ref{L312}, the exponential weight case can be proved similarly. 
\end{proof}

\section{Global existence for the Boltzmann equation with large amplitude initial data}\label{section6}

In this section we prove the global existence for the Boltzmann equation with large amplitude initial data.  We first prove some useful lemmas. 
\begin{lem}\label{L61}
For any $\gamma \in (-3, 1], k >\max \{3+\gamma, 3\} , \epsilon>0$ small enough we have
\[
\int_{ \{|v-v_*| >\frac {\langle v \rangle} \epsilon \cup  |v-v_*| < \epsilon \langle v \rangle \} }  |v-v_*|^\gamma \langle v_* \rangle^{-k} dv_*\le C_{k, \epsilon} \langle v \rangle^\gamma , \quad \lim_{\epsilon \to 0} C_{k, \epsilon } =0,
\]
for any $v \in \R^d$. 
\end{lem}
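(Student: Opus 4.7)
The plan is to split the domain into the far region $|v-v_*| \ge \langle v \rangle/\epsilon$ and the near region $|v-v_*| \le \epsilon \langle v \rangle$ and bound each piece by a quantity of the form $C_k \epsilon^{\delta} \langle v \rangle^{\gamma}$ with an exponent $\delta = \delta(k,\gamma) > 0$ that is uniform in $v$. Then $C_{k,\epsilon} = C_k\epsilon^\delta \to 0$ as $\epsilon \to 0$.

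For the far region, I would distinguish the sign of $\gamma$. If $\gamma < 0$, the simple pointwise bound $|v-v_*|^\gamma \le (\langle v \rangle/\epsilon)^\gamma = \epsilon^{|\gamma|}\langle v \rangle^{\gamma}$ combined with $\int_{\R^3} \langle v_* \rangle^{-k}dv_* \le C_k$ (using $k > 3$) gives exactly $C_k \epsilon^{|\gamma|}\langle v \rangle^{\gamma}$. If $\gamma \ge 0$, then $|v-v_*| \ge \langle v\rangle/\epsilon$ forces $|v_*| \ge \langle v\rangle/(2\epsilon)$ for $\epsilon$ small, which in turn gives $|v-v_*| \le 2\langle v_*\rangle$ so that $|v-v_*|^\gamma \le 2^\gamma \langle v_*\rangle^{\gamma}$; passing to polar coordinates,
\[
\int_{|v_*|\ge \langle v\rangle/(2\epsilon)} \langle v_*\rangle^{\gamma-k}dv_* \lesssim \int_{\langle v\rangle/(2\epsilon)}^{\infty} r^{2+\gamma - k}dr \lesssim \epsilon^{k-3-\gamma}\langle v\rangle^{3+\gamma-k} \lesssim \epsilon^{k-3-\gamma}\langle v\rangle^{\gamma},
\]
where the last step uses $k > 3$ and the second-to-last uses $k > 3+\gamma$.

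For the near region, I would split on $|v|\le 1$ and $|v|>1$. If $|v|\le 1$ then the domain lies in a ball of radius $O(\epsilon)$ around $v$, and since $\gamma > -3$ the direct estimate $\int_{|v-v_*|\le C\epsilon}|v-v_*|^\gamma dv_* \lesssim \epsilon^{3+\gamma}$ together with $\langle v_*\rangle^{-k}\le 1$ yields a bound $\lesssim \epsilon^{3+\gamma} \lesssim \epsilon^{3+\gamma}\langle v\rangle^\gamma$. If $|v|>1$ and $\epsilon$ is sufficiently small, then for $v_*$ in the near region one has $|v_*| \ge |v| - \epsilon\langle v\rangle \ge |v|/2$, whence $\langle v_*\rangle^{-k}\lesssim \langle v\rangle^{-k}$, and pulling this out leaves
\[
\int_{|v-v_*|\le \epsilon\langle v\rangle} |v-v_*|^\gamma dv_* \lesssim (\epsilon\langle v\rangle)^{3+\gamma},
\]
producing $C_k\epsilon^{3+\gamma}\langle v\rangle^{3+\gamma-k}\le C_k \epsilon^{3+\gamma}\langle v\rangle^{\gamma}$ since $k\ge 3$.

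There is no serious obstacle here; the statement is really a quantitative refinement of Lemma \ref{L29} since, by cutting out the ``middle'' region $\epsilon\langle v\rangle \le |v-v_*|\le \langle v\rangle/\epsilon$, one removes exactly the contribution that produced the leading term $c/k\cdot \langle v\rangle^\gamma$ there. The only mild care is to track that all three prefactors $\epsilon^{|\gamma|}$, $\epsilon^{k-3-\gamma}$ and $\epsilon^{3+\gamma}$ are strictly positive powers of $\epsilon$, which is guaranteed by the hypotheses $\gamma \in (-3,1]$ and $k > \max\{3+\gamma, 3\}$. Taking the maximum of the resulting constants gives $C_{k,\epsilon}$ with $C_{k,\epsilon}\to 0$ as $\epsilon \to 0$, proving the lemma.
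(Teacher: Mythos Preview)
Your proof is correct and follows essentially the same approach as the paper: split into the near region $|v-v_*|<\epsilon\langle v\rangle$ and the far region $|v-v_*|>\langle v\rangle/\epsilon$, and in each piece extract an explicit positive power of $\epsilon$ using the hypotheses $\gamma>-3$ and $k>\max\{3,3+\gamma\}$. The only organizational difference is that the paper first isolates the case $|v|\le 1/2$ (handling both near and far together via the change of variable $v_*\mapsto v-v_*$) and then treats $|v|>1/2$, whereas you split by region first and by size of $|v|$ only inside the near region; your direct pointwise bound $|v-v_*|^\gamma\le \epsilon^{|\gamma|}\langle v\rangle^\gamma$ in the far region for $\gamma<0$ is a small simplification over the paper's uniform-in-$\gamma$ treatment.
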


\begin{proof}
If $|v| \le \frac 1 2$, then $|v_*| +\frac 1 2 \le 1 + |v-v_*|$, so we have
\begin{equation*}
\begin{aligned}
\int_{ \{|v-v_*| >\frac {\langle v \rangle} \epsilon \cup  |v-v_*| < \epsilon \langle v \rangle \}        } |v-v_*|^\gamma \langle v_* \rangle^{-k} dv_* = &\int_{\{|v_*| >\frac {\langle v \rangle} \epsilon \cup  |v_*| < \epsilon \langle v \rangle \} } |v_*|^\gamma \langle v- v_* \rangle^{-k} dv_*
\\
\le  & C_k \int_{\{|v_*| >\frac {\langle v \rangle} \epsilon \cup  |v_*| < \epsilon \langle v \rangle \}  } |v_*|^\gamma \langle  v_* \rangle^{-k} dv_*.
\end{aligned}
\end{equation*}
We easily compute that
\[
C_k \int_{ \{ |v_*| < \epsilon \langle v \rangle \}  } |v_*|^\gamma \langle  v_* \rangle^{-k} dv_* \le C_k \int_{ \{ |v_*| < \epsilon \langle v \rangle \}  } |v_*|^\gamma dv_*\le C_k \epsilon^{\gamma+3} \langle v \rangle^{\gamma+3} \le C_k \epsilon^{\gamma+3} \langle v \rangle^{\gamma},
\]
and
\[
C_k \int_{ \{ |v_*| >\frac {\langle v \rangle} \epsilon  \}  } |v_*|^\gamma \langle  v_* \rangle^{-k} dv_* \le C_k \int_{ \{   |v_*| >\frac {1} \epsilon\}  } |v_*|^{\gamma-k} dv_*\le C_k \epsilon^{ k -\gamma -3  } \le C_k \epsilon^{ k-\gamma -3} \langle v \rangle^{\gamma},
\]
so the case $|v| \le \frac 1 2$ is thus proved. Consider now $|v| > 1/2$, we split the integral into two regions $|v - v_*| > \langle v \rangle /\epsilon$ and $|v - v_*| \le \epsilon \langle v \rangle $. For the first region, since $|v| \le \frac 1 2 |v_*|$ implies $|v-v_*| \ge \frac 1 2 |v_*|$, so we have
\begin{equation*}
\begin{aligned}
&\int_{ \{|v-v_*| >\frac {\langle v \rangle} \epsilon \}        } |v-v_*|^\gamma \langle v_* \rangle^{-k} dv_* =\int_{\{|v_*| >\frac {\langle v \rangle} \epsilon \} } |v_*|^\gamma \langle v- v_* \rangle^{-k} dv_*
\\
\le  & C_k \int_{\{|v_*| >\frac {\langle v \rangle} \epsilon  \}  } |v_*|^\gamma \langle  v_* \rangle^{-k} dv_*  \le C_k \int_{\{|v_*| >\frac {\langle v \rangle} \epsilon  \}  } |  v_*  |^{-k+\gamma} dv_*  \le C_k\epsilon^{k-\gamma-3} \langle v \rangle^{-k+\gamma+3 } \le C_k\epsilon^{k-\gamma-3} \langle v \rangle^{\gamma }. 
\end{aligned}
\end{equation*}
For the second region, since $|v| > 1/2$ and $|v - v_*| \le \epsilon \langle v \rangle $ imply $| v_*| \ge \langle v \rangle /4$, hence
\[
\int_{|v-v_*| \le \epsilon \langle v \rangle } |v-v_*|^\gamma \langle v_* \rangle^{-k} dv_* \le C_k \langle v \rangle^{-k} \int_{|v-v_*| \le \epsilon \langle v \rangle } |v-v_*|^\gamma dv_*   \le C_k \epsilon^{3+\gamma} \langle v \rangle^{-k+\gamma+3} ,
\]
so the theorem is thus proved by gathering all the terms together. 
\end{proof}

For  the linearized part of the polynomial case we are able to prove a better estimate.
\begin{lem}\label{L62}
For any $-3 < \gamma \le 1$, for any constant $k> \max\{3, 3+\gamma\}$ we have
\[
I := \int_{\R^3} \int_{\mathbb{S}^2} |v-v_*|^\gamma \frac {\langle v \rangle^k } {\langle v' \rangle^k  } e^{-\frac 1 2 |v_*'|^2 } dv_* d\sigma \le \frac {c} {k^{\frac {\gamma+3} 4}} \langle v \rangle^{\gamma}+ C_k \langle v \rangle^{\gamma-2},
\]
for some constant $c>0$ (independent of k) and for all $v \in \R^d$. Moreover for any $\epsilon>0$ small we have
\[
J = \int_{\R^3} \int_{\mathbb{S}^2} 1_{ \{|v- v'| >\frac {\langle v \rangle} \epsilon \cup  |v- v'| < \epsilon \langle v \rangle \} }   |v-v_*|^\gamma \frac {\langle v \rangle^k } {\langle v' \rangle^k  } e^{-\frac 1 2 |v_*'|^2 } dv_* d\sigma \le C_{k, \epsilon} \langle v \rangle^{\gamma}, \quad \lim_{\epsilon \to 0} C_{k ,\epsilon } =0.
\]
We also have
\[
K := \int_{\R^3} \int_{\mathbb{S}^2} |v-v_*|^\gamma \frac {\langle v \rangle^k } {\langle v' \rangle^k  } e^{-\frac 1 2 |v_*'|^2 }  \langle v' \rangle^{-2} dv_* d\sigma \le  C_k \langle v \rangle^{\gamma-2}. 
\]
\end{lem}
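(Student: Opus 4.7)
The plan is to apply Lemma~\ref{L210} to convert the collisional integral into one over $(v',w)$ with $w\perp(v'-v)$, and then exploit a sharper interpolation for the factor $r^{\gamma-1}$ (with $r=\sqrt{|v'-v|^2+|w|^2}$) than the one used in Lemma~\ref{L211}. Precisely, the change of variables gives
\[
I=4\int_{\R^3}\int_{w\perp(v'-v)} r^{\gamma-1}\frac{1}{|v'-v|}\frac{\langle v\rangle^k}{\langle v'\rangle^k}e^{-|v+w|^2/2}\,dw\,dv',
\]
and since $\gamma-1\le 0$, AM-GM gives $r^2\ge 2|v'-v||w|$, hence $r^{\gamma-1}\lesssim|v'-v|^{(\gamma-1)/2}|w|^{(\gamma-1)/2}$. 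This refinement is the source of the exponent $(\gamma+3)/4$; the naive bound $r^{\gamma-1}\le|v'-v|^{\gamma-1}$ used in Lemma~\ref{L211} would only yield $(\gamma+1)/2$.

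Next, decompose $v=v_\perp+v_\parallel$ with $v_\perp\parallel(v'-v)$, so that $|v+w|^2=|v_\perp|^2+|v_\parallel+w|^2$ for $w$ in the plane perpendicular to $v'-v$. Setting $z=w+v_\parallel$, the 2D integral becomes
\[
e^{-|v_\perp|^2/2}\int_{\R^2}|z-v_\parallel|^{(\gamma-1)/2}e^{-|z|^2/2}dz\lesssim e^{-|v_\perp|^2/2}\langle v_\parallel\rangle^{(\gamma-1)/2},
\]
where the last estimate follows by splitting $|z|\le|v_\parallel|/2$ versus $|z|>|v_\parallel|/2$ and noting that the singularity at $z=v_\parallel$ is integrable since $(\gamma-1)/2>-2$. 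Therefore
\[
I\lesssim\int_{\R^3}|v'-v|^{(\gamma-3)/2}\frac{\langle v\rangle^k}{\langle v'\rangle^k}\langle v_\parallel\rangle^{(\gamma-1)/2}e^{-|v_\perp|^2/2}dv'.
\]

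For $|v|\le 1$ this gives $I\le C_k\le C_k\langle v\rangle^{\gamma-2}$. For $|v|>1$, use spherical coordinates with polar axis along $v$: write $v'-v=\rho\omega$, $|v_\perp|=|v||\cos\alpha|$, and substitute $y=|v|\cos\alpha$, so that $|v_\parallel|^2=|v|^2-y^2$ and $|v'|^2=1+|v|^2+2\rho y+\rho^2$. Split the $y$-integration: on $|y|>|v|/2$, the Gaussian $e^{-y^2/2}\le e^{-|v|^2/8}$ produces exponential decay giving at most $C_k\langle v\rangle^{\gamma-2}$; on $|y|\le|v|/2$ one has $\langle v_\parallel\rangle^{(\gamma-1)/2}\lesssim|v|^{(\gamma-1)/2}$. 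Performing the Gaussian integral in $y$ (factor $\sqrt{2\pi}/|v|$) and scaling $\rho=\langle v\rangle s$ yields
\[
I\lesssim\langle v\rangle^\gamma\int_0^\infty s^{(\gamma+1)/2}(1+s^2)^{-k/2}ds=\tfrac{1}{2}\langle v\rangle^\gamma B\!\left(\tfrac{\gamma+3}{4},\tfrac{2k-\gamma-3}{4}\right).
\]
Since $p=(\gamma+3)/4\in(0,1]$ for $\gamma\in(-3,1]$, the asymptotic $B(p,q)\sim q^{-p}$ from \eqref{beta function} yields the claimed bound $\frac{c}{k^{(\gamma+3)/4}}\langle v\rangle^\gamma+C_k\langle v\rangle^{\gamma-2}$, with $c$ independent of $k$.

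The bound $J$ follows by repeating the same argument but restricting the $s$-integral to $\{s<\epsilon\}\cup\{s>1/\epsilon\}$: truncating the Beta-type integrand away from its main contribution produces a constant $C_{k,\epsilon}$ with $C_{k,\epsilon}\to 0$ as $\epsilon\to 0$. The bound $K$ follows at once by writing $\langle v'\rangle^{-2}=\langle v\rangle^{-2}\cdot\langle v\rangle^2/\langle v'\rangle^2$ and applying the $I$-bound with $k$ replaced by $k+2$, obtaining $K\le\langle v\rangle^{-2}\bigl(c(k+2)^{-(\gamma+3)/4}\langle v\rangle^\gamma+C_{k+2}\langle v\rangle^{\gamma-2}\bigr)\le C_k\langle v\rangle^{\gamma-2}$. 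The main obstacle will be justifying the Gaussian reduction in the $y$-integration uniformly in $k$: a priori the rational factor $(1+|v|^2+2\rho y+\rho^2)^{-k/2}$ varies over the support of $e^{-y^2/2}$ in a $k$-dependent way, so one needs a careful case analysis based on the relative sizes of $\rho$, $|v|$, and $k$ (isolating the peak $\rho\sim\langle v\rangle$ from the tails) to show that the AM-GM interpolation for $r^{\gamma-1}$ combines correctly with the equatorial Gaussian concentration (factor $1/|v|$) to produce the exponent $(\gamma+3)/4$ rather than the weaker $(\gamma+1)/2$.
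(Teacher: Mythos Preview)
Your approach is essentially the same as the paper's: the same change of variables via Lemma~\ref{L210}, the same AM--GM interpolation $r^{\gamma-1}\le|v'-v|^{(\gamma-1)/2}|w|^{(\gamma-1)/2}$, the same orthogonal decomposition $v=v_\perp+v_\parallel$, and the same reductions for $J$ and $K$. The one genuine gap is precisely the ``main obstacle'' you flag at the end, and your suggested resolution (a case analysis on $\rho$, $|v|$, $k$) is not the efficient fix.

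The problem with splitting at $|y|\le|v|/2$ is that on this region one only obtains $|v'|^2\ge\tfrac12(|v-v'|^2+|v|^2)$, so the ratio $\langle v\rangle^k/\langle v'\rangle^k$ carries a prefactor $2^{k/2}$, destroying the $k$-independence of $c$. The paper's remedy is to place the threshold at $|v_\perp|\le|v|/k$ instead. Then $2|v\cdot(v'-v)|\le\tfrac{1}{k}(|v-v'|^2+|v|^2)$, hence $|v'|^2\ge(1-\tfrac1k)(|v-v'|^2+|v|^2)$, and the crucial point is that
\[
\frac{\langle v\rangle^k}{\langle v'\rangle^k}\le\Bigl(1+\tfrac{1}{k-1}\Bigr)^{k/2}\Bigl(1+\tfrac{|v-v'|^2}{1+|v|^2}\Bigr)^{-k/2}\le e\,(1+s^2)^{-k/2}
\]
with a universal constant. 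After this, your scaling $\rho=\langle v\rangle s$ and the Beta asymptotic go through exactly as you wrote. On the complementary region $|v_\perp|>|v|/k$ one has $e^{-|v_\perp|^2/2}\le e^{-|v|^2/(2k^2)}$, which absorbs the factor $\langle v\rangle^k$ at the cost of a $k$-dependent constant, landing safely in the $C_k\langle v\rangle^{\gamma-2}$ term. With this single modification your argument is complete and matches the paper's proof.
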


\begin{proof}
Since $\gamma -1 \le 0$, by Lemma  \ref{L210} we have
\begin{equation*}
\begin{aligned}
I = & 4\int_{\R^3} \frac 1 {|v' -v|} \frac {\langle v \rangle^k } {\langle v' \rangle^k  }  \int_{ \{ \omega: \omega \cdot   ( v'-v)  =0 \}  }  \frac 1 {\sqrt{|v'-v|^2 +|w|^2 }} (\sqrt{|v'-v|^2 + |w|^2})^{\gamma}  e^{- \frac {|v+w|^2} 2}  d w  dv'
\\
\le & 4\int_{\R^3} \frac 1 {|v' -v|^{\frac {3-\gamma} 2 }} \frac {\langle v \rangle^k } {\langle v' \rangle^k  }   \int_{ \{ \omega: \omega \cdot   ( v'-v)  =0 \}  } |w|^{\frac {\gamma-1} 2 } e^{- \frac {|v+w|^2} 2}  d w dv'.
\end{aligned}
\end{equation*}
Recall the decomposition \eqref{decomposition v} we have
\[
I \le  4\int_{\R^3} \frac 1 {|v' -v|^{\frac {3-\gamma} 2 }} \frac {\langle v \rangle^k } {\langle v' \rangle^k  } e^{- \frac {|v_\perp|^2}  2 }  \int_{ \{ \omega: \omega \cdot   ( v'-v)  =0 \}  } |w|^{\frac {\gamma-1} 2 } e^{- \frac {|v_\parallel +w|^2}  2 } d w  dv'.
\]
Since $\frac {\gamma-1} 2 > -2$, we have
\[
\int_{ \{ \omega: \omega \cdot   ( v'-v)  =0 \}  } |w|^{\frac {\gamma-1} 2 } e^{- \frac {|v_\parallel +w|^2}  2 }  d w = \int_{\R^2 } |w - v_\parallel |^{\frac {\gamma-1} 2 } e^{- \frac {|w|^2}  2 }  d  w \le C \langle v_\parallel \rangle^{\frac {\gamma-1} 2 },
\]
hence
\[
I \le  C \int_{\R^3}  |v' -v|^{\frac {\gamma-3} 2 } \frac {\langle v \rangle^k } {\langle v' \rangle^k  } e^{- \frac {|v_\perp|^2}  2 } \langle v_\parallel \rangle^{\frac {\gamma-1} 2 } dv' . 
\]
Similarly we have
\[
J \le  C\int_{\{|v- v'| >\frac {\langle v \rangle} \epsilon \cup  |v- v'| < \epsilon \langle v \rangle \} }    |v' -v|^{\frac {\gamma-3} 2 } \frac {\langle v \rangle^k } {\langle v' \rangle^k  } e^{- \frac {|v_\perp|^2}  2 } \langle v_\parallel \rangle^{\frac {\gamma-1} 2 } dv' . 
\]
We split into two regions $|v| \le 1$ and $|v| > 1$. If $|v| \le 1$, since $\frac {\gamma-3} 2  > -3$, hence
\[
I \le  C_k \int_{\R^3}  |v' -v|^{\frac {\gamma-3} 2 }\frac {1 } {\langle v' \rangle^k  }dv' \le C_k \langle v \rangle^{\frac {\gamma-3} 2 } \le  C_k \langle v \rangle^{-100}.
\]
Similarly by  Lemma \ref{L61} we have
\[
J \le  C_k \int_{\{|v- v'| >\frac {\langle v \rangle} \epsilon \cup  |v- v'| < \epsilon \langle v \rangle \} }  |v' -v|^{\frac {\gamma-3} 2 }\frac {1 } {\langle v' \rangle^k  }dv' \le C_{k, \epsilon} \langle v \rangle^\gamma. 
\]
For the case $|v|>1$, since  $|v_\perp| \le |v|$, we split it into two cases $|v_\perp| > \frac {|v|} k$ and $|v_\perp| \le \frac {|v|} k$. For the case $|v_\perp| > \frac {|v|} k$ we have
\[
I \le  C_k \int_{\R^3}  |v' -v|^{\frac {\gamma-3} 2 }  \frac {\langle v \rangle^k } {\langle v' \rangle^k  } e^{ -\frac {|v|^2} {2k^2}} dv' 
\le C_k  e^{ -\frac {|v|^2} {4k^2}} \int_{\R^3}  |v' -v|^{\frac {\gamma-3} 2 } \langle v '\rangle^{-k} d v' \le  C_k  \langle v \rangle^{-100},
\]
and by Lemma \ref{L61}
\[
J \le  C_k e^{ -\frac {|v|^2} {4k^2}} \int_{\{|v- v'| >\frac {\langle v \rangle} \epsilon \cup  |v- v'| < \epsilon \langle v \rangle \} }  |v' -v|^{\frac {\gamma-3} 2 }\frac {1 } {\langle v' \rangle^k  }dv' \le C_{k, \epsilon} \langle v \rangle^\gamma.
\]
For the case $|v_\perp| \le \frac {|v|} k$, $|v|>1$ we have
\[
|v'|^2 = |v-v'|^2 +|v|^2 +2 v\cdot(v'-v ) \ge  |v-v'|^2 +|v|^2 - 2 |v-v'| |v_\perp|  \ge  (1-\frac 1 k) (|v-v'|^2 +|v|^2 ),
\]
and since
\[
(1+\frac 1 k)^k \le e, \quad |v_\parallel|^2 \ge |v|^2-\frac 1 {k^2} |v|^2 \ge \frac 1 2 |v|^2,
\]
we deduce
\begin{equation*}
\begin{aligned}
I \le&  C \int_{\R^3}  |v' -v|^{\frac {\gamma-3} 2 } \frac {\langle v \rangle^k } {\langle v' \rangle^k  }  e^{- \frac {|v_\perp|^2}  2 } \langle v_\parallel \rangle^{\frac {\gamma-1} 2 } dv' 
\\
\le& C   \langle v \rangle^{\frac {\gamma-1} 2 } \int_{\R^3}  |v' -v|^{\frac {\gamma-3} 2 } \frac {\langle v \rangle^{k} }  {\langle |v-v'|^2 +|v|^2 \rangle^{k}} e^{- \frac {|v_\perp|^2}  2 }   dv' 
\\
\le &  C   \langle v\rangle^{\frac {\gamma-1} 2 }    \int_{\R^3}  |v' -v|^{\frac {\gamma-3} 2 }\frac {1} { (1+  \frac   {|v-v'|^2} {1+|v|^2}   )^{\frac {k} 2 } }   e^{- \frac {|v_\perp|^2}  2 }   dv' .
\end{aligned}
\end{equation*}
If we take the change of variables \eqref{change of variable r theta} we have
\[
I \le C 2\pi \langle v \rangle^{\frac {\gamma-1} 2 }  \int_{0}^\infty r^{\frac {\gamma+ 1} 2} \frac {1} { (1+  \frac   {r^2} {1+|v|^2}   )^{\frac {k} 2} }   \int_0^\pi e^{ - \frac {|v|^2\cos^2\theta} 2 } \sin \theta dr d\theta. 
\]
Taking another change of variables \eqref{change of variable x y}, recall  \eqref{beta function} we deduce
\begin{equation*}
\begin{aligned}
I \le& C \frac 1 {|v|} \langle v \rangle^{\gamma+1 } \int_{0}^\infty x^{\frac {\gamma+1 } {2}} \frac {1} { (1+ |x|^2 )^{\frac {k} 2} } \int_{-|v|}^{|v|} e^{-\frac {|y|^2} 2} dy dx 
\\
\le & C \langle v \rangle^{\gamma }   \int_{0}^\infty  x^{\frac {\gamma+1 } {2}}   \frac {1} { (1+ |x|^2 )^{\frac {k} 2} } dx \int_{-\infty}^{+\infty} e^{-\frac {|y|^2} 2} dy 
\\
\le & C\langle v \rangle^{\gamma }  \int_{0}^\infty  z^{\frac {\gamma-1 } {4}}   \frac {1} { (1+ z )^{\frac {k} 2} } dx
\\
\le & C \langle v \rangle^{\gamma }   k^{ -\frac {\gamma+3} 4},
\end{aligned}
\end{equation*}
so the term $I$ is estimated.  By \eqref{change of variable x y} and \eqref{change of variable r theta} we have $x = \frac {|v-v'|} {\langle v \rangle}$, which implies
\[
\{|v- v'| >\frac {\langle v \rangle} \epsilon \cup  |v- v'| < \epsilon \langle v \rangle \} = \{ x \le \epsilon \cup x \ge \frac 1 \epsilon \},
\]
thus  for $J$  we have
\[
J \le C \frac 1 {|v|} \langle v \rangle^{\gamma+1 } \left( \int_{0}^\epsilon + \int_{\frac 1 \epsilon}^\infty \right) x^{\frac {\gamma+1 } {2}} \frac {1} { (1+ |x|^2 )^{\frac {k} 2} } \int_{-|v|}^{|v|} e^{-\frac {|y|^2} 2} dy dx  \le C_{k, \epsilon} \langle v \rangle^\gamma,
\]
so the proof for $J$ is thus finished. For the $K$ term since
\[
K  =\langle v \rangle^{- 2}  \int_{\R^3} \int_{\mathbb{S}^2} |v-v_*|^\gamma \frac {\langle v \rangle^{k+2} } {\langle v' \rangle^{k+2}  } e^{-\frac 1 2 |v_*'|^2 }  dv_* d\sigma ,
\]
the estimate for term $K$  just follows by the estimate for term $I$. 
\end{proof}

We introduce the mild solution to the Boltzmann equation. For any $k \ge 0$, let $f(t, x, v) =  \langle v \rangle^{-k} (F(t, x, v) -\mu (v))$ in \eqref{Boltzmann equation}, then $f$ satisfies
\begin{equation}
\label{Boltzmann equation f}
\partial_t f + v \cdot \nabla_v f  +L_k  f = \Gamma_k (f, f),
\end{equation}
where 
\[
L_k f := \langle v \rangle^k Q(\mu,  f \langle v \rangle^{-k}  ) + \langle v \rangle^k Q(  f \langle v \rangle^{-k}, \mu), 
\]
and
\[
\Gamma^{\pm}_k (f , f) := \langle v \rangle^k Q^{\pm} (  f \langle v \rangle^{-k},  f \langle v \rangle^{-k} ) , \quad \Gamma_k   (f, f) := \Gamma^{+}_k (f, f) - \Gamma^{-}_k (f, f).
\]
We also have
\[
L_k f = K_k f - \nu(v) f ,\quad \nu(v) :  = \int_{\R^3} \int_{\mathbb{S}^2} |v-v_*|^\gamma b (\cos \theta) \mu(v_*)  dv_* d\sigma \ \sim \langle v \rangle^\gamma, 
\]
where $K_k : =K_{2, k} -K_{1, k}$ is defined as
\[
(K_{1, k} f) (v)  := \mu(v) \langle v \rangle^k \int_{\R^3} \int_{\mathbb{S}^2} |v-v_*|^\gamma b (\cos \theta) f(v_* ) \langle v_* \rangle^{-k}  dv_* d\sigma,
\]
and
\begin{equation*}
\begin{aligned}
(K_{2, k} f) (v) :=& \int_{\R^3} \int_{\mathbb{S}^2} |v-v_*|^\gamma b (\cos \theta) \langle v \rangle^k f(v_*' ) \langle v_*' \rangle^{-k}  \mu(v')dv_* d\sigma 
\\
&+ \int_{\R^3} \int_{\mathbb{S}^2} |v-v_*|^\gamma b (\cos \theta) \langle v \rangle^k \mu(v_*')  f(v' ) \langle v' \rangle^{-k}  dv_* d\sigma 
\\
=&2\int_{\R^3} \int_{\mathbb{S}^2} |v-v_*|^\gamma b (\cos \theta) \langle v \rangle^k \mu(v_*')  f(v' ) \langle v' \rangle^{-k}  dv_* d\sigma .
\end{aligned}
\end{equation*}
Thus the mild solution of \eqref{Boltzmann equation f} is given by 
\begin{align}
\label{mild solution}
\nonumber
f(t, x, v) =& e^{-\nu(v) t} f_0(x-vt, v) + \int_0^t e^{-\nu(v) (t-s) } (K_k f) (s, x-v(t-s), v) ds 
\\
&+  \int_0^t e^{-\nu(v) (t-s) } \Gamma_k (f, f) (s, x-v(t-s), v) ds.
\end{align}
If we define  $l_k (v, v') $ is the kernel with respect to $K_k$ such that 
\[
K_k f(v) = \int_{\R^3} l_k (v, v') f(v') d v'.
\]
For the kernel $l_k$ we have the following estimate. 
\begin{lem}\label{L63}
For any $\gamma \in (-3, 1]$, for any $k > \max \{3, 3+\gamma \}$ we have
\[
\int_{\R^3} | l_k (v, v')| dv' \le \frac {c} {k^{\frac {\gamma+3} 4}} \langle v \rangle^{\gamma}+ C_k \langle v \rangle^{\gamma-2}, \quad  \int_{\R^3} |l_k (v, v')|\langle v' \rangle^{-2} dv' \le  C_k \langle v \rangle^{\gamma-2},
\]
for some constant $C_k >0$. Moreover, for $\epsilon >0$ small enough, we have
\[
\int_{ \{|v-v'| >\frac {\langle v \rangle} \epsilon \cup  | v- v' | < \epsilon \langle v \rangle \} } |l_k (v, v')| dv' \le C_{k, \epsilon} \langle v \rangle^{\gamma }, \quad \lim_{\epsilon \to 0} C_{k, \epsilon } =0.
\]
\end{lem}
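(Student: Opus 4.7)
The plan is to compute the kernel $l_k$ explicitly by splitting $K_k = K_{2,k} - K_{1,k}$, so $l_k = l_{2,k} - l_{1,k}$, and then estimate each piece using the results of Lemma \ref{L62}.

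For $K_{1,k}$, the kernel is immediate: relabeling $v_*$ as $v'$,
\[
l_{1,k}(v,v') = \mu(v)\langle v\rangle^k \Vert b\Vert_{L^1_\theta}\, |v-v'|^\gamma \langle v'\rangle^{-k}.
\]
All three claimed estimates for the $l_{1,k}$ part follow trivially from the Gaussian factor $\mu(v)$ combined with Lemma \ref{L29}: for any of the three integrals one bounds $\mu(v)\langle v\rangle^k$ by $C_k e^{-|v|^2/4}$ which is $\ll \langle v\rangle^{\gamma-2}$ and $\ll C_{k,\epsilon}\langle v\rangle^\gamma$.

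The main work is $l_{2,k}$. I would use the Carleman change of variables (Lemma \ref{L210}) to recast
\[
K_{2,k} f(v) = 2\int_{\R^3}\!\!\int_{\mathbb{S}^2} |v-v_*|^\gamma b(\cos\theta)\,\langle v\rangle^k \mu(v_*')\langle v'\rangle^{-k} f(v')\, dv_* d\sigma
\]
as a genuine integral against $f(v')$, obtaining
\[
l_{2,k}(v,v') = \tfrac{8\langle v\rangle^k}{|v'-v|\langle v'\rangle^k}\int_{w\perp(v'-v)}\!\! r^{\gamma-1} b(\cos\theta)\mu(v+w)\, dw,
\]
with $r=\sqrt{|v'-v|^2+|w|^2}$, $\cos(\theta/2)=|w|/r$. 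Since $l_{2,k}\ge 0$, integrating against any non-negative weight $\phi(v')$ and reversing the Carleman change of variables gives
\[
\int_{\R^3} l_{2,k}(v,v')\phi(v')\,dv' = 2\int_{\R^3}\!\!\int_{\mathbb{S}^2} |v-v_*|^\gamma b(\cos\theta)\,\tfrac{\langle v\rangle^k}{\langle v'\rangle^k} \mu(v_*')\phi(v')\,dv_* d\sigma.
\]
Using the cutoff bound $b\le K^{-1}$ and $\mu(v_*') \lesssim e^{-|v_*'|^2/2}$, the three claimed bounds follow from the three estimates of Lemma \ref{L62}: taking $\phi\equiv 1$ yields the bound $\tfrac{c}{k^{(\gamma+3)/4}}\langle v\rangle^\gamma + C_k\langle v\rangle^{\gamma-2}$ via the estimate on $I$; taking $\phi(v')=\langle v'\rangle^{-2}$ yields $C_k\langle v\rangle^{\gamma-2}$ via $K$; and restricting to $\{|v-v'|>\langle v\rangle/\epsilon\}\cup\{|v-v'|<\epsilon\langle v\rangle\}$ yields $C_{k,\epsilon}\langle v\rangle^\gamma$ with $C_{k,\epsilon}\to 0$ via $J$.

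The main obstacle, if any, is the careful bookkeeping of the Carleman change of variables in Lemma \ref{L210} to ensure that (i) $l_{2,k}$ has been correctly identified as the integral kernel and (ii) the region $\{|v-v'|>\langle v\rangle/\epsilon\}\cup\{|v-v'|<\epsilon\langle v\rangle\}$ in the Carleman variable $v'$ is literally the same region expressed in terms of the post-collisional velocity in the original $(v_*,\sigma)$ formulation. Both are essentially tautological because $v'$ denotes the same post-collisional velocity on either side of the change of variables, so once the representation is written down the estimates reduce mechanically to Lemma \ref{L62}.
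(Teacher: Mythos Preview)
Your approach is essentially the paper's: split $l_k=l_{2,k}-l_{1,k}$, use non-negativity of each piece so that $\int|l_k|\phi\,dv'\le K_{2,k}[\phi]+K_{1,k}[\phi]$, and then read off the three bounds from Lemma~\ref{L62} (for $l_{2,k}$) and Lemma~\ref{L29} (for $l_{1,k}$). The Carleman detour is unnecessary---once you know $l_{2,k}\ge 0$ (obvious from the non-negative integrand), $\int l_{2,k}(v,v')\phi(v')\,dv'$ \emph{is} $K_{2,k}[\phi](v)$, already written as a $(v_*,\sigma)$ integral---but it is not wrong.

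There is one genuine slip. For the third estimate on the $l_{1,k}$ piece you claim the Gaussian factor $\mu(v)\langle v\rangle^k\le C_k e^{-|v|^2/4}$ suffices. It does not: combining with Lemma~\ref{L29} you only get
\[
\int_{\{|v-v'|>\langle v\rangle/\epsilon\}\cup\{|v-v'|<\epsilon\langle v\rangle\}}|l_{1,k}(v,v')|\,dv' \le C_k e^{-|v|^2/4}\langle v\rangle^\gamma \le C_k\langle v\rangle^\gamma,
\]
with a constant independent of $\epsilon$; at $v=0$ the left side need not be small. To obtain a constant $C_{k,\epsilon}\to 0$ you must exploit the smallness of the integration region, which is exactly Lemma~\ref{L61} (and this is why the paper cites it). With that correction the argument is complete and matches the paper's proof.
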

\begin{proof}
It is easily seen that
\begin{equation*}
\begin{aligned}
\int_{\R^3} | l_k (v, v')| dv' \le& 2 \int_{\R^3} \int_{\mathbb{S}^2}    |v-v_*|^\gamma b(\cos \theta) \frac {\langle v \rangle^k } {\langle v' \rangle^k  } e^{-\frac 1 2 |v_*'|^2 } dv_* d\sigma
\\
&+\mu(v) \langle v \rangle^k \int_{\R^3} \int_{\mathbb{S}^2} |v-v_*|^\gamma b (\cos \theta)  \langle v_* \rangle^{-k}  dv_* d\sigma,
\end{aligned}
\end{equation*}
we easily conclude by Lemma \ref{L29}, Lemma \ref{L61} and Lemma \ref{L62}.
\end{proof}

For the mild solution $f$,  we have the following lemma on local existence. 

\begin{lem}\label{L64}
(Local existence) Suppose $\gamma \in (-3, 1]$, $F_0 =\mu +f_0 \ge 0$.  For any $k> \max \{3, 3+\gamma \}$ suppose $\Vert \langle v \rangle^k f_0 \Vert_{L^\infty} < +\infty$. Then there exists a positive time 
\[
t_1 = C_k (1+\Vert \langle v \rangle^k f_0 \Vert_{L^\infty})^{-1},
\]
such that the Boltzmann equation \eqref{Boltzmann equation} has a unique mild solution $F = \mu +f \ge 0$ in $[0, t_1]$ satisfies 
\[
\sup_{0 \le s \le t_1}\Vert \langle v \rangle^k f (s) \Vert_{L^\infty} \le 2 \Vert \langle v \rangle^k f_0 \Vert_{L^\infty}.
\]
\end{lem}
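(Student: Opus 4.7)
The plan is to construct the solution as the limit of a positivity-preserving Picard iteration and to close the weighted $L^\infty$ estimate using the mild formulation \eqref{mild solution}. Set $M := \Vert \langle v \rangle^k f_0 \Vert_{L^\infty_{x,v}}$ and $R(G)(v) := \int_{\R^3} \int_{\mathbb{S}^2} |v-v_*|^\gamma b(\cos\theta) G(v_*)\, d\sigma dv_*$. Starting from $F^0 \equiv \mu$, I would define $F^{n+1} = \mu + f^{n+1}$ by the linear transport-absorption problem
\[
\partial_t F^{n+1} + v \cdot \nabla_x F^{n+1} + R(F^n)\, F^{n+1} = Q^+(F^n, F^n), \quad F^{n+1}|_{t=0} = F_0.
\]
Integration along characteristics represents $F^{n+1}$ as a nonnegative combination of $F_0 \ge 0$ and $Q^+(F^n, F^n) \ge 0$, so $F^n \ge 0$ is propagated inductively. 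Subtracting $\mu$ from both sides and using $Q(\mu,\mu) = 0$ converts the iteration into the mild formula \eqref{mild solution} for $f^{n+1}$ with $f^n$ in the $K_k$ and $\Gamma_k$ slots.

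The first quantitative step is the uniform weighted $L^\infty$ closure. The two key pointwise estimates are
\[
|\langle v \rangle^k \Gamma_k(f,g)(v)| \le C_k \langle v \rangle^\gamma \Vert \langle v \rangle^k f \Vert_{L^\infty_v} \Vert \langle v \rangle^k g \Vert_{L^\infty_v}, \quad |\langle v \rangle^k K_k f(v)| \le C_k \langle v \rangle^\gamma \Vert \langle v \rangle^k f \Vert_{L^\infty_v}.
\]
The bilinear bound on $\Gamma_k^+$ is obtained by factoring out the $L^\infty$ norms and applying Lemma \ref{L211} to the residual kernel $\int |v-v_*|^\gamma b \langle v \rangle^k (\langle v' \rangle^k \langle v_*' \rangle^k)^{-1}\, d\sigma dv_*$; the loss piece $\Gamma_k^-$ is handled by Lemma \ref{L29}; and the $K_k$ bound is the first estimate of Lemma \ref{L63}. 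Inserting these into \eqref{mild solution} and using $\nu(v) \sim \langle v \rangle^\gamma$ together with the Duhamel identity $\int_0^t e^{-\nu(v)(t-s)} \nu(v)\, ds = 1 - e^{-\nu(v)t}$, the weighted mild formula yields, with $U_n(t) := \Vert \langle v \rangle^k f^n \Vert_{L^\infty_{[0,t]\times \T^3 \times \R^3}}$, a recursion of the form
\[
|\langle v \rangle^k f^{n+1}(t,x,v)| \le M e^{-\nu(v)t} + C_k\bigl(U_n(t) + U_n(t)^2\bigr)\bigl(1 - e^{-\nu(v) t}\bigr).
\]
Taking the supremum in $(x,v)$ and feeding in the inductive hypothesis $U_n(t_1) \le 2M$, the right-hand side is a convex combination of $M$ and $C_k(U_n + U_n^2)$; choosing $t_1 = c_k(1+M)^{-1}$ with $c_k > 0$ small enough (the regimes $\nu(v) t \lesssim 1$ and $\nu(v) t \gtrsim 1$ being handled separately to cover both soft and hard potentials) propagates $U_{n+1}(t_1) \le 2M$.

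The second step is the contraction. Applying the same bilinear bound to $f^{n+1} - f^n$ gives
\[
\Vert \langle v \rangle^k (f^{n+1} - f^n) \Vert_{L^\infty_{[0,t_1]\times\T^3\times\R^3}} \le C_k (1 + M)\Psi(t_1)\, \Vert \langle v \rangle^k (f^n - f^{n-1}) \Vert_{L^\infty_{[0,t_1]\times\T^3\times\R^3}}
\]
with $\Psi(t_1) \to 0$ as $t_1 \to 0$; shrinking $c_k$ once more makes this a strict contraction. The sequence then converges in the weighted $L^\infty$ space to a mild solution $f$ with $F = \mu + f \ge 0$ (nonnegativity passing to the limit from the iterates) and the asserted bound, and uniqueness within the ball of radius $2M$ follows from the same contraction estimate.

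The main technical point, and the reason assumption (A2) matters (rather than the weaker integrated cutoff \eqref{cutoff assumption 2}), is the pointwise bilinear bound on $\Gamma_k^+$: one must gain the full factor $\langle v \rangle^\gamma$ after pulling the $L^\infty$ norms out of the $Q^+$ integral, and Lemma \ref{L211}, whose proof leans on (A2) through the explicit kernel representation in Lemma \ref{L210}, is what delivers this. The matching $\langle v \rangle^\gamma$ is exactly what pairs with the absorption rate $\nu(v) \sim \langle v \rangle^\gamma$ in the exponential factor $e^{-\nu(v)(t-s)}$ to allow the $v$-uniform short-time closure of the iteration; without it, the bound on $\int_0^t e^{-\nu(v)(t-s)} |\Gamma_k(f,f)|\, ds$ would not close uniformly in $v$.
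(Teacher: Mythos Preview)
The paper does not give its own proof of this lemma; it simply writes ``The proof is similar to Proposition 2.1 in \cite{DHWY} thus omitted.'' Your proposal is exactly the standard Picard iteration argument from \cite{DHWY}, with the polynomial-weight kernel estimates of this paper (Lemmas \ref{L211}, \ref{L29}, \ref{L63}) substituted for the Gaussian-weight analogues used there. So you have supplied precisely the omitted argument, and your use of the identity $\int_0^t e^{-\nu(v)(t-s)}\nu(v)\,ds = 1-e^{-\nu(v)t}$ to pair the gain of $\langle v\rangle^\gamma$ with the absorption $\nu(v)\sim\langle v\rangle^\gamma$ is the right mechanism.

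Two small corrections worth noting. First, your remark on assumption (A2) is slightly misattributed: Lemma \ref{L211} has no $b(\cos\theta)$ in its statement and its proof (via Lemma \ref{L210}) does not invoke (A2). The place (A2) actually enters is one step earlier, when you bound $b(\cos\theta)\le K^{-1}$ pointwise in order to reduce the $\Gamma_k^+$ kernel to the $b$-free integral that Lemma \ref{L211} controls. Second, the iteration you wrote has absorption rate $R(F^n)$, not $\nu(v)$, so its Duhamel representation is not literally \eqref{mild solution}; the passage ``subtracting $\mu$ \ldots converts the iteration into the mild formula \eqref{mild solution}'' glosses over this. In practice one either (i) keeps the $R(F^n)$-absorption formula for positivity and bounds $R(F^n)$ above and below by multiples of $\nu$ using Lemma \ref{L29}, or (ii) runs the a priori estimate directly on \eqref{mild solution} and checks positivity separately via the gain--loss splitting. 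Either route closes with the same choice $t_1 = c_k(1+M)^{-1}$.
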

\begin{proof}
The proof is similar to Proposition 2.1 in \cite{DHWY} thus omitted. 
\end{proof}

We   give an upper bound for the nonlinear term.

\begin{lem}\label{L65} Let $\gamma \in (-3, 1] $, for any $k > \max \{3, 3+\gamma\}$, $\alpha \ge 0$, for any $s \ge 0, y \in \T^3$, for any smooth function $f$  it holds that
\[
|\langle v \rangle^\alpha \Gamma_k^{-}(f, f)(s, y, v) | \le C \nu(v)\Vert \langle v \rangle^{\alpha}  f(s)\Vert_{L^\infty_{x, v}} \Vert f(s) \Vert_{L^\infty_{x, v}}^{\frac {p+1} {2p}} \left (\int_{\R^3} |f(s, y, v')| dv' \right)^{\frac {p-1} {2p}},
\]
similarly
\[
|\langle v \rangle^\alpha \Gamma_k^{+}(f, f)(s, y, v) | \le C \nu(v)\Vert \langle v \rangle^{\alpha}  f(s)\Vert_{L^\infty_{x, v}} \Vert \langle v \rangle f(s) \Vert_{L^\infty_{x, v}}^{\frac {p+1} {2p}} \left (\int_{\R^3} |f(s, y, v')| dv' \right)^{\frac {p-1} {2p}},
\]
for some constant $p>1$  close to 1 which only depends on $\gamma$ (such p is fixed and used later).
\end{lem}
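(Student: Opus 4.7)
The plan is to bound both collision pieces by a Hölder argument that produces, on the kernel side, the factor $\nu(v) \sim \langle v\rangle^\gamma$ (via Lemma~\ref{L29} for the local structure of $\Gamma_k^-$, or Lemma~\ref{L211} for $\Gamma_k^+$), and, on the remainder, an elementary $L^\infty$--$L^1$ interpolation in velocity. The single free parameter is $p>1$, to be fixed once and for all close enough to $1$ depending only on $\gamma$.

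I would first treat $\Gamma_k^-$, where the estimate is diagonal in $v$: since
\[
\Gamma_k^-(f,f)(v) = f(v)\int_{\R^3}\int_{\mathbb S^2} |v-v_*|^\gamma b(\cos\theta)\langle v_*\rangle^{-k}f(v_*)\,dv_* d\sigma,
\]
I factor out $|f(v)\langle v\rangle^\alpha|\le \|\langle v\rangle^\alpha f\|_{L^\infty_{x,v}}$, use $\int_{\mathbb S^2}b\,d\sigma\le 4\pi K^{-1}$ from assumption (A2), and apply Hölder in $v_*$ with conjugate exponents $q=2p/(p-1)$ and $q'=2p/(p+1)$:
\[
\int |v-v_*|^\gamma\langle v_*\rangle^{-k}|f(v_*)|\,dv_* \le \Bigl(\int|v-v_*|^{\gamma q'}\langle v_*\rangle^{-kq'}dv_*\Bigr)^{1/q'}\|f(s,y,\cdot)\|_{L^q_{v_*}}.
\]
Choosing $p>1$ close enough to $1$ so that $\gamma q'>-3$ and $kq'>\max\{3,3+\gamma q'\}$ (both ensured by $k>\max\{3,3+\gamma\}$ and $\gamma>-3$), Lemma~\ref{L29} applied with exponent $\gamma q'$ and weight $kq'$ bounds the first factor by $C\langle v\rangle^\gamma$. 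The standard interpolation $\|f\|_{L^q_v}\le\|f\|_{L^\infty_v}^{(q-1)/q}\|f\|_{L^1_v}^{1/q}$, combined with the identities $(q-1)/q=(p+1)/(2p)$ and $1/q=(p-1)/(2p)$, controls the second factor. Recalling $\nu(v)\sim\langle v\rangle^\gamma$, this produces the stated bound for $\Gamma_k^-$.

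For $\Gamma_k^+$, I would first redistribute the pre-collisional weight via $\langle v\rangle^{k+\alpha}\le C\langle v'\rangle^{k+\alpha}\langle v'_*\rangle^{k+\alpha}$ (which follows from $|v|^2\le|v'|^2+|v'_*|^2$), and bound one of the two post-collisional factors pointwise, $\langle v'_*\rangle^\alpha|f(v'_*)|\le\|\langle v\rangle^\alpha f\|_{L^\infty}$, reducing the task to controlling $J(v):=\int\int|v-v_*|^\gamma b\,\langle v'\rangle^\alpha|f(v')|\,dv_*d\sigma$. Applying Hölder in $(v_*,\sigma)$ with the same $q,q'$ and assigning the weight $\langle v\rangle^K/(\langle v'\rangle^K\langle v'_*\rangle^K)^{1/q'}$ to the kernel side, Lemma~\ref{L211} (whose hypothesis $\gamma q'\in(-3,2]$ is met because $p$ is close to $1$, and with $K$ chosen so that $K>\max\{3,3+\gamma q'\}$) yields the kernel factor $\lesssim\langle v\rangle^\gamma$. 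The remaining $L^q$-type factor, after the Carleman change of variables of Lemma~\ref{L210} converts the integral into one over $dv'$ weighted by $\langle v'\rangle^{q-1}$, is estimated by the elementary interpolation
\[
\int(\langle v'\rangle f(v'))^{q-1}f(v')\,dv' \le \|\langle v\rangle f\|_{L^\infty}^{q-1}\|f\|_{L^1_{v'}},
\]
whose $1/q$-th root is precisely $\|\langle v\rangle f\|_{L^\infty}^{(p+1)/(2p)}\|f\|_{L^1_{v'}}^{(p-1)/(2p)}$.

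The main obstacle is the weight accounting in the $\Gamma_k^+$ estimate, since three conditions must be met simultaneously on the single parameter $p$: (i) the admissibility range $\gamma q'\in(-3,2]$ of Lemma~\ref{L211}; (ii) the integrability condition $K>\max\{3,3+\gamma q'\}$ that makes Lemma~\ref{L211} applicable and that the redistributed post-collisional weights can accommodate (using $k>\max\{3,3+\gamma\}$ together with $\alpha\ge 0$); and (iii) the exact choice of $\langle v'\rangle^{q-1}$ in the residual $L^q$-norm that produces the \emph{unweighted} $\|f\|_{L^1}$ on the right-hand side. Taking $p>1$ close enough to $1$ makes all three conditions compatible for every $\gamma\in(-3,1]$; the extra factor $\langle v\rangle$ in the middle $L^\infty$-norm for $\Gamma_k^+$ (as opposed to the bare $\|f\|_\infty$ for $\Gamma_k^-$) is precisely the price paid for absorbing the Jacobian $1/|v-v'|$ that arises from the Carleman representation.
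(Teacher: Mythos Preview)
Your treatment of $\Gamma_k^-$ is correct and is essentially a one-step repackaging of the paper's two-step argument (H\"older with exponent $p$ followed by Cauchy--Schwarz); nothing is lost or gained there.

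The $\Gamma_k^+$ argument, however, has a genuine gap in the Carleman step. After your product bound $\langle v\rangle^{k+\alpha}\le C\langle v'\rangle^{k+\alpha}\langle v'_*\rangle^{k+\alpha}$ and the extraction of $\|\langle v\rangle^\alpha f\|_{L^\infty}$, the quantity $J(v)$ carries no decay whatsoever in $v'_*$. If you then multiply and divide by $\bigl(\langle v\rangle^K/(\langle v'\rangle^K\langle v'_*\rangle^K)\bigr)^{1/q'}$ and put this weight on the kernel side so that Lemma~\ref{L211} applies, the compensating factor on the $f$ side is $\bigl(\langle v'\rangle^K\langle v'_*\rangle^K/\langle v\rangle^K\bigr)^{1/q'}$, a \emph{growing} power of $\langle v'_*\rangle$. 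In the Carleman representation of Lemma~\ref{L210} one has $v'_*=v+w$ with $w$ ranging over a two-dimensional plane, and the inner $dw$ integral behaves like $\int_{\R^2}r^{-1}\langle v+w\rangle^{Kq/q'}\,dw$ with $r=\sqrt{|v'-v|^2+|w|^2}$, which diverges. Without any negative power of $\langle v'_*\rangle$ or of $|v-v_*|$ left on the $f$ side, the hyperplane integral simply does not converge, so the reduction ``to an integral over $dv'$ weighted by $\langle v'\rangle^{q-1}$'' fails.

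The paper avoids this by an $\epsilon$-splitting of the kernel, writing $|v-v_*|^\gamma=|v-v_*|^{\gamma+\epsilon}\cdot|v-v_*|^{-\epsilon}$ and applying H\"older with exponents $p$ and $p/(p-1)$. The factor $|v-v_*|^{p(\gamma+\epsilon)}$ goes with Lemma~\ref{L211} (still applicable since $p(\gamma+\epsilon)\in(-3,2]$ for $p$ close to $1$ and $\epsilon$ small), while the factor $|v-v_*|^{-\epsilon p/(p-1)}$ stays on the $f$ side; after Carleman this produces the convergent integral $\int_{\R^2}(|v'-v|^2+|w|^2)^{-(1+\epsilon p/(p-1))/2}\,dw$, equation~\eqref{inequality epsilon g}. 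A subsequent Cauchy--Schwarz in $v'$ against $\langle v'\rangle^{-4}$ yields the $L^1_v$ factor and the extra $\|\langle v\rangle f\|_{L^\infty}$ weight (through $4(p-1)/(p+1)\le 1$), and the slight loss $\langle v\rangle^{\epsilon}$ on the kernel side is exactly cancelled by the gain $\langle v\rangle^{-\epsilon}$ from the convolution estimate. Your intuition that the extra $\langle v\rangle$ is ``the price of the Jacobian'' is in the right direction, but the mechanism that actually makes the argument close is this $\epsilon$-regularization, which your proposal is missing.
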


\begin{proof}
Fix $p>1$ close enough to 1 and $\epsilon>0$ small enough such that
\begin{equation}
\label{requirement p}
-3< p\gamma <\frac 3 2, \quad \frac {4(p-1)} {p+1} \le 1, \quad \frac {p-1} {2p} \le \frac 1 2 + \frac \gamma 6, \quad -3<  p\gamma+\epsilon \gamma \le -2 ,\quad \epsilon \frac {2p} {p-1} \le 1.
\end{equation}
For the term $\Gamma_k^-(f, f) $, we easily compute
\begin{equation*}
\begin{aligned}
|\langle v \rangle^\alpha \Gamma_k^{-}(f, f)(s, y, v) | \le & C \Vert \langle v \rangle^\alpha f(s)\Vert_{L^\infty_{x, v}} \int_{\R^3} |v-v_*|^\gamma \langle v \rangle^{-k}  f(s, y, v_*) dv_*
\\
\le &  C  \Vert \langle v \rangle^\alpha f(s)\Vert_{L^\infty_{x, v}} \left(\int_{\R^3} |v-v_*|^{p\gamma} \langle v_* \rangle^{-k}  dv_*  \right)^{\frac 1 p} \left(\int_{\R^3}  \langle v_* \rangle^{-k} f(s, y, v_*)^{\frac {p} {p-1}} dv_*\right)^{\frac {p -1} p}
\\
\le &  C  \Vert \langle v \rangle^\alpha f(s)\Vert_{L^\infty_{x, v}} \nu(v)   \left(\int_{\R^3}  \langle v_* \rangle^{-2k} dv_*\right)^{\frac {p -1} {2p}}  \left(\int_{\R^3}   f(s, y, v_*)^{\frac {2p} {p-1}} dv_*\right)^{\frac {p -1} {2p}}
\\
\le &C \nu(v)\Vert \langle v \rangle^\alpha  f(s)\Vert_{L^\infty_{x, v}} \Vert f(s) \Vert_{L^\infty_{x, v}}^{\frac {p+1} {2p}} \left (\int_{\R^3} |f(s, y, v')| dv' \right)^{\frac {p-1} {2p}}.
\end{aligned}
\end{equation*}
For the term $\Gamma_k^+(f, f)$, since $\langle v \rangle^\alpha \le C_\alpha \langle v_* \rangle^\alpha +C_\alpha \langle v_*' \rangle^\alpha$, we  have
\begin{equation*}
\begin{aligned}
|\langle v \rangle^\alpha \Gamma_k^{+}(f, f)(s, y, v) | \le &  C \int_{\R^3} \int_{\mathbb{S}^2 } |v-v_*|^\gamma \frac {\langle v \rangle^k} {\langle v' \rangle^k \langle v_*' \rangle^k  } |f(s, y, v_*' ) \langle v' \rangle^\alpha f(s, y, v' )| dv_* d\sigma
\\
&+ C\int_{\R^3} \int_{\mathbb{S}^2 } |v-v_*|^\gamma \frac {\langle v \rangle^k} {\langle v' \rangle^k \langle v_*' \rangle^k  } | \langle v_*' \rangle^\alpha f(s, y, v_*' ) f(s, y, v' )| dv_* d\sigma : =I_1 +I_2.
\end{aligned}
\end{equation*}
Without loss of generality we only prove $I_2$ in the following, we have
\begin{equation*}
\begin{aligned}
I_2 =& \int_{\R^3} \int_{\mathbb{S}^2 } |v-v_*|^\gamma \frac {\langle v \rangle^k} {\langle v' \rangle^k \langle v_*' \rangle^k  } |\langle v_*' \rangle^\alpha   f(s, y, v_*' ) f(s, y, v' )| dv_* d\sigma
\\
\le & C \Vert \langle v \rangle^\alpha  f(s) \Vert_{L^\infty_{x, v}} \left(\int_{\R^3} \int_{\mathbb{S}^2} |v-v_*|^{p\gamma +\epsilon p} \frac {\langle v \rangle^{pk}} {\langle v' \rangle^{pk} \langle v_*' \rangle^{pk}  }dv d\sigma \right)^{\frac 1 p} \left(\int_{\R^3} \int_{\mathbb{S}^2} |v-v_*|^{-\epsilon \frac p {p-1} }| f(s, y, v') |^{\frac p {p-1}} dv_* d\sigma  \right)^{1-\frac 1 p}.
\end{aligned}
\end{equation*}
By Lemma \ref{L210}, for any function $g$ we have
\begin{equation*}
\begin{aligned}
&\int_{\R^3} \int_{\mathbb{S}^2} |v-v_*|^{-\epsilon \frac p {p-1} } | g (v')| dv_* d\sigma  
\\
\le &4 \int_{\R^3}  \frac 1 {|v' -v|} |g (v')| \int_{ \{ w: w \cdot   ( v'-v)  =0 \}  }  \frac 1 {\sqrt{|v'-v|^2 +|w|^2 }} (\sqrt{|v'-v|^2 + |w|^2})^{-\epsilon \frac p {p-1} }   d w  dv'
\\
\le &4 \int_{\R^3}  \frac 1 {|v' -v|} | g(v') | \int_{ \R^2 }  \frac 1 {(|v'-v|^2 +|w|^2 )^{\frac {1+ \epsilon \frac p {p-1} } 2} }   d w  dv'.
\end{aligned}
\end{equation*}
By a change of variable $w = |v-v'| x$, and since  $1+ \epsilon \frac p {p-1} >1$,  the integral is integrable and we have
\begin{align}
\label{inequality epsilon g}
\nonumber
\int_{\R^3} \int_{\mathbb{S}^2} |v-v_*|^{-\epsilon \frac p {p-1} } | g (v')| dv_* d\sigma   \le & C  \int_{\R^3}  |v' -v|^{-\epsilon \frac p {p-1}} |g(v')|   \int_{ \R^2 }  \frac 1 {(1+|x|^2 )^{\frac {1+ \epsilon \frac p {p-1} } 2} }   dx  dv' 
\\
\le& C \int_{\R^3}  |v' -v|^{-\epsilon \frac p {p-1} }  | g(v') |    dv'.
\end{align}
So using \eqref{inequality epsilon g} and Cauchy-Schwarz inequality we have
\begin{equation*}
\begin{aligned}
&\int_{\R^3} \int_{\mathbb{S}^2} |v-v_*|^{-\epsilon \frac p {p-1} }| f(s, y, v') |^{\frac p {p-1}} dv_* d\sigma  
\\
\le& C \left(\int_{\R^3}  |v-v'|^{-\epsilon \frac {2p} {p-1} } \langle v' \rangle^{-4} dv'  \right)^{\frac 1 2} \left( \int_{\R^3}  \langle v' \rangle^{4}  | f(s, y, v') |^{\frac {2p} {p-1}}  dv' \right)^{\frac 1 2} 
\\
\le & C\langle v \rangle^{-\epsilon \frac {p} {p-1} } \Vert \langle v \rangle^{4} |f|^{\frac {p+1} {p-1}} \Vert_{L^\infty_{x, v}}^{1/2}  \left( \int_{\R^3}  | f(s, y, v') | dv' \right)^{\frac 1 2}  
\\
\le& C\langle v \rangle^{-\epsilon \frac {p} {p-1} } \Vert \langle v \rangle^{\frac {4(p-1)} {p+1}} |f|\Vert_{L^\infty_{x, v}}^{\frac {p+1} {2(p-1)} }  \left( \int_{\R^3}  | f(s, y, v') | dv' \right)^{\frac 1 2} .
\end{aligned}
\end{equation*}
By Lemma \ref{L211} we have
\[
\left(\int_{\R^3} \int_{\mathbb{S}^2} |v-v_*|^{p\gamma +\epsilon p} \frac {\langle v \rangle^{pk}} {\langle v' \rangle^{pk} \langle v_*' \rangle^{pk}  }dv d\sigma \right)^{\frac 1 p}  \lesssim \langle v \rangle^{\gamma+\epsilon}.
\]
Gathering the terms two we have
\[
I_2 \le C \langle v \rangle^{\gamma+\epsilon} \langle v \rangle^{-\epsilon}\Vert \langle v \rangle^\alpha  f(s) \Vert_{L^\infty_{x, v}} \Vert \langle v \rangle f(s) \Vert_{L^\infty_{x, v}}^{\frac {p+1} {2p} } \left( \int_{\R^3}  | f(s, y, v') | dv' \right)^{\frac {p-1} {2p}}, 
\]
the proof  is thus finished. 
\end{proof}

For any $\beta \ge 0$, let  $h(t, x, v) = \langle v \rangle^\beta f(t, x, v)$, where $f$ is a solution to \eqref{Boltzmann equation f}. it is easily seen that $h$ satisfies 
\begin{equation}
\label{Boltzmann equation h}
\partial_t h + v \cdot \nabla_x h + \nu(v) h =\Gamma_{k+ \beta} (h, h) + K_{k+ \beta} h,
\end{equation}
with 
\[
K_{k + \beta}  h (v) = \langle v \rangle^\beta K_k(  \langle v \rangle^{-\beta} h )  (v), \quad \Gamma_{k + \beta} (h, h) = \langle v \rangle^\beta \Gamma_k (  \langle v \rangle^{- \beta} h , \langle v \rangle^{-\beta} h )= \langle v \rangle^\beta \Gamma_k  (f, f). 
\]
For the kernel of $K_{k +\beta}$ we have
\[
K_{k+\beta} f(v) = \int_{\R^3} l_{k +\beta} (v, v') f(v') d v',\quad  l_{k+ \beta} (v, v') = l_k (v, v') \langle v \rangle^\beta \langle v' \rangle^{-\beta},
\]
it is easily seen that $l_{k +\beta}$  still satisfies Lemma \ref{L65}, with $C_k$ replaced by $C_{k+\beta} =C_{k, \beta} $.  The mild solution to \eqref{Boltzmann equation h} is given by 
\begin{align}
\label{mild solution h}
\nonumber
h(t, x, v) =& e^{-\nu(v) t} h_0(x-vt, v) + \int_0^t e^{-\nu(v) (t-s) } (K_{k +\beta}  h) (s, x-v(t-s), v) ds 
\\ 
&+  \int_0^t e^{-\nu(v) (t-s) } \Gamma_{k + \beta} (h, h) (s, x-v(t-s), v) ds.
\end{align}

For the mild solution $h$ we have the following estimate.

\begin{lem}\label{L66}
Suppose $f$ and $h$ satisfy \eqref{Boltzmann equation f} and \eqref{Boltzmann equation h}. For any $\gamma \in (-3, 1]$, there exists a constant $k_0 > \max \{ 3, 3+\gamma \}$ such that  for any $k \ge k_0, \beta \ge \max \{3, 3+\gamma\}  $  it holds that
\begin{equation*}
\begin{aligned}
\sup_{0 \le s \le t} \Vert h(s) \Vert_{L^\infty_{x, v}}  \le& C_{k, \beta} (\Vert h_0 \Vert_{L^\infty_{x, v}}  +\Vert h_0 \Vert_{L^\infty_{x, v}} ^2 +  \sqrt{H (F_0 )}  +  H (F_0) ) 
\\
&+ C_{k, \beta} \sup_{t_1 \le s \le t, y \in \T^3 } \left \{ \Vert h(s) \Vert_{L^\infty_{x, v}}^{\frac {3p+1} {2p}}\int_{\R^3} \left ( |f(s, y, v')| dv' \right)^{\frac {p-1} {2p}} \right\},
\end{aligned}
\end{equation*}
for some constant $C_{k, \beta} \ge 1$, where $t_1$ is defined in Lemma \ref{L64}.
\end{lem}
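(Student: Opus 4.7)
The plan is to estimate $|h(t,x,v)|$ pointwise from the mild representation \eqref{mild solution h}, writing $|h|\le I_1+I_2+I_3$ where $I_1=e^{-\nu(v)t}|h_0(x-vt,v)|$, and $I_2,I_3$ are the $K_{k+\beta}$- and $\Gamma_{k+\beta}$-Duhamel contributions. Immediately $I_1\le \|h_0\|_{L^\infty_{x,v}}$. For $I_3$ I split $\int_0^t = \int_0^{t_1} + \int_{t_1}^t$: on $[0,t_1]$ local existence (Lemma~\ref{L64}) gives $\|h(s)\|_{L^\infty_{x,v}}\le 2\|h_0\|_{L^\infty_{x,v}}$, and applying Lemma~\ref{L65} with $\alpha=\beta$ (noting $\|\langle v\rangle f\|_{L^\infty}\le\|h\|_{L^\infty_{x,v}}$ since $\beta\ge 3$) together with $\int_0^{t_1}\nu(v)e^{-\nu(v)(t-s)}ds\le 1$ yields the $\|h_0\|_{L^\infty_{x,v}}^2$ contribution; on $[t_1,t]$ the same Lemma~\ref{L65} and $\nu$-absorption produce exactly the nonlinear supremum term in the conclusion.

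The heart of the proof is $I_2$. In $\int l_{k+\beta}(v,v')h(s,x-v(t-s),v')\,dv'$ I split the $v'$-region into $A_\epsilon=\{|v-v'|>\langle v\rangle/\epsilon\}\cup\{|v-v'|<\epsilon\langle v\rangle\}$ and its complement $B_\epsilon$. On $A_\epsilon$, Lemma~\ref{L63} gives $\int_{A_\epsilon}|l_{k+\beta}|\,dv'\le C_{k,\beta,\epsilon}\langle v\rangle^\gamma$ with $C_{k,\beta,\epsilon}\to 0$, and together with $\int_0^t\nu(v)e^{-\nu(v)(t-s)}ds\le 1$ (using $\langle v\rangle^\gamma\sim \nu(v)$) this produces $C_{k,\beta,\epsilon}\sup_{0\le s\le t}\|h(s)\|_{L^\infty_{x,v}}$, absorbable on the left for $\epsilon$ small. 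On $B_\epsilon$ we have $\langle v'\rangle$ comparable to $\langle v\rangle$ (with $\epsilon$-dependent constants) and $|v'|\ge c_\epsilon>0$; substituting \eqref{mild solution h} again for $h(s,x-v(t-s),v')$ produces three sub-pieces $I_2^{(1)}$ (initial data), $I_2^{(2)}$ (double $K$), and $I_2^{(3)}$ ($K$ of $\Gamma$). The sub-piece $I_2^{(3)}$ is bounded exactly as $I_3$ after absorbing both $\nu$-factors with their two exponentials. For $I_2^{(1)}$ and $I_2^{(2)}$ I split the inner time integral at $s'=s-\delta$; on $[s-\delta,s]$ a direct $L^\infty$ bound with prefactor $\delta$ suffices, while on $[0,s-\delta]$ the change of variable $v'\mapsto y=x-v(t-s)-v'(s-s')$ is non-degenerate on $B_\epsilon$ with Jacobian bounded by $C_{\epsilon,\delta}$, converting the $v'$-integral into an $L^2_y(\mathbb{T}^3)$ integral on a bounded image. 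For $I_2^{(1)}$ this gives $C_{\epsilon,\delta}\|h_0\|_{L^\infty_{x,v}}$. For $I_2^{(2)}$, after Cauchy--Schwarz in $(y,v'')$, the estimate reduces to controlling $\|h(s')\|_{L^2(\mathbb{T}^3\times B_\epsilon)}$, which is the point where the relative entropy from Lemma~\ref{L213} enters: writing $h=\langle v\rangle^{\beta-k}(F-\mu)$ and decomposing by $\{|F-\mu|\le\mu\}$ vs $\{|F-\mu|>\mu\}$, the small part directly yields $C_{k,\beta,\epsilon}\sqrt{H(F_0)}$ (since $\langle v\rangle^{\beta-k}\mu^{1/2}$ and $\mu^{-1/2}$ are bounded on $B_\epsilon$), while the large part satisfies $\|h^{(\ell)}\|_{L^2}^2\le \|h\|_{L^\infty_{x,v}}\|h^{(\ell)}\|_{L^1}\le C_{k,\beta,\epsilon}\|h\|_{L^\infty_{x,v}}H(F_0)$, and Young's inequality separates this into $\tfrac12\sup\|h\|_{L^\infty_{x,v}}$ (absorbed left) plus a multiple of $H(F_0)$.

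To conclude, I fix $\epsilon$ first so the $A_\epsilon$-contribution absorbs at most $\tfrac14\sup\|h\|_{L^\infty_{x,v}}$, then $\delta$ so the short-time pieces in the $(s-\delta,s)$ splits absorb at most another $\tfrac14\sup\|h\|_{L^\infty_{x,v}}$; moving all $\sup\|h\|$ terms from the right onto the left and collecting constants yields the claimed inequality with $C_{k,\beta}\ge 1$ depending on $(k,\beta)$ through $(\epsilon,\delta)$. The main obstacle I expect is the double-$K$ piece $I_2^{(2)}$: one must simultaneously manage the Jacobian blow-up of $v'\mapsto y$ as $s'\to s$, rely on $B_\epsilon$ for compactness in $v'$ (and thereby make sense of the auxiliary $v''$-integration via Lemma~\ref{L63} uniformly in $v'\in B_\epsilon$), and extract precisely the combination $\sqrt{H(F_0)}+H(F_0)$ from the entropy splitting of Lemma~\ref{L213}, since the solution $h$ carries no \emph{a priori} $L^2$ control from the equation itself.
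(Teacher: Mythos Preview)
Your overall architecture is right---iterate the mild formula once, bound the initial-data and $K\Gamma$ pieces directly, and for the double-$K$ piece perform a time splitting, change variables $v'\mapsto y$, and invoke the entropy bound of Lemma~\ref{L213}. But the way you try to obtain compactness is where the argument breaks.

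Your $B_\epsilon=\{\epsilon\langle v\rangle\le |v-v'|\le \langle v\rangle/\epsilon\}$ is a set \emph{relative to} $v$; it does not confine $v'$ to a fixed compact set because $v$ itself is unrestricted. In particular the claim ``$|v'|\ge c_\epsilon>0$'' is false (take $v=e_1$, $v'=0$), and more importantly nothing in your scheme bounds $|v''|$. The entropy step genuinely needs $|v''|$ bounded: with $h=\langle v\rangle^{k+\beta}(F-\mu)$, on the set $\{|F-\mu|>\mu\}$ one must pass from $|h|$ to $|F-\mu|$, which requires the polynomial weight $\langle v''\rangle^{k+\beta}$ to be bounded; and your Cauchy--Schwarz in $(y,v'')$ produces $\|h(s')\|_{L^2(\mathbb T^3\times\R^3_{v''})}$, not an $L^2$ norm over the $v'$-annulus $B_\epsilon$. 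So the reduction to $\sqrt{H(F_0)}+H(F_0)$ cannot be carried out as written.

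What the paper does instead is first split on $|v|$, not on $|v-v'|$. For $|v|\ge N$ one iterates the kernel bound of Lemma~\ref{L63} twice, which yields the small prefactor
\[
\frac{c^2}{k^{(\gamma+3)/2}}+\frac{C_{k,\beta}}{N^2},
\]
and \emph{this} is precisely why a threshold $k_0$ appears in the statement---your proposal never uses largeness of $k$ and hence cannot account for the existence of $k_0$. For $|v|\le N$ the paper truncates \emph{both} kernel factors to the compactly supported $l_{k,N,\beta}$ of \eqref{l k N beta}; the two-fold truncation forces $|v'|,|v''|\le C_{k,N,\beta}'$, after which your time-splitting/change-of-variables/entropy argument goes through exactly as in \eqref{entropy for h}. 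So the fix is: keep your $I_3$ and $I_2^{(1)},I_2^{(3)}$ estimates (they are fine, and in fact $I_2^{(1)}$ needs no change of variables at all), but for $I_2^{(2)}$ replace the $A_\epsilon/B_\epsilon$ split by the $|v|\lessgtr N$ split together with the double truncation $l_{k+\beta}\to l_{k,N,\beta}$.
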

\begin{proof} By \eqref{mild solution h} we have
\begin{equation*}
\begin{aligned}
|h(t, x, v)| \le & e^{-\nu(v) t} \Vert h_0\Vert_{L^\infty_{x, v}} + \int_0^t e^{-\nu(v) (t-s) } |(K_{k + \beta} h) (s, x-v(t-s), v) |ds 
\\ 
&+  \int_0^t e^{-\nu(v) (t-s) } |\Gamma_{k +\beta} (h, h) (s, x-v(t-s), v)| ds :=  e^{-\nu(v) t} \Vert h_0\Vert_{L^\infty_{x, v}} +J_2 +J_3.
\end{aligned}
\end{equation*}
For the $J_3$ term, since $\beta \ge 1$, by Lemma \ref{L65} we have
\[
|\Gamma_{k +\beta} (h, h)(s, y, v)| = |\langle v \rangle^\beta \Gamma_k ( f, f)(s, y, v)|  \le C_{k, \beta}  \langle v \rangle^\gamma  \sup_{0 \le s \le t, y \in \T^3 } \left \{ \Vert h(s) \Vert_{L^\infty_{x, v}}^{\frac {3p+1} {2p}} \left ( \int_{\R^3}  |f(s, y, v')| dv' \right)^{\frac {p-1} {2p}} \right\},
\]
hence
\begin{equation*}
\begin{aligned}
J_3 \le & C_{k, \beta} \int_0^t e^{-\nu(v) (t-s)} \langle v \rangle^\gamma ds  \sup_{0 \le s \le t, y \in \T^3 } \left \{ \Vert h(s) \Vert_{L^\infty_{x, v}}^{\frac {3p+1} {2p}}    \left ( \int_{\R^3} |f(s, y, v')| dv' \right)^{\frac {p-1} {2p}} \right\} 
\\
\le & C_{k, \beta}   \sup_{0 \le s \le t, y \in \T^3} \left \{ \Vert h(s) \Vert_{L^\infty_{x, v}}^{\frac {3p+1} {2p}}  \left (   \int_{\R^3}  |f(s, y, v')| dv' \right)^{\frac {p-1} {2p}} \right\}. 
\end{aligned}
\end{equation*}
For the $J_2$ term, denote $\tilde{x} = x - v(t-s) $, we have
\[
J_2 \le \int_0^t e^{-\nu(v) (t-s) }\int_{\R^3}| l_{k +\beta} (v, v') h(s, \tilde{x}, v')| dv' ds ,
\] 
by  \eqref{mild solution h} again we have 
\begin{equation*}
\begin{aligned}
J_2 \le& \int_0^t e^{-\nu(v) (t-s) }\int_{\R^3}| l_{k +\beta} (v, v')|  e^{-\nu(v') s}  | h_0 (\tilde{x} -v' s , v') | dv' ds  
\\
&+  \int_0^t e^{-\nu(v) (t-s) }\int_{\R^3} | l_{k+ \beta} (v, v')| \int_0^s  e^{-\nu(v') (s-\tau )} | \Gamma_{k +\beta} (h, h)| (\tau, \tilde{x} -v' (s-\tau), v') d \tau dv' ds 
\\
&+  \int_0^t e^{-\nu(v) (t-s) }  \int_{\R^3}  \int_{\R^3} | l_{k +\beta} (v, v')  l_{k +\beta} (v', v'')  |  \int_0^s  e^{-\nu(v') (s-\tau )}   | h(\tau, \tilde{x} -v' (s-\tau), v'') | d v''d \tau dv' ds 
\\
:=& J_{21} +J_{22} +J_{23}.
\end{aligned}
\end{equation*}
For the $J_{21}$ term by Lemma \ref{L63} we have
\[
J_{21}  \le C_{k, \beta} \Vert h_0 \Vert_{L^\infty_{x, v}} \int_0^t e^{-\nu(v) (t-s) } \langle v \rangle^\gamma ds \le C_{k, \beta}  \Vert h_0 \Vert_{L^\infty_{x, v}}. 
\]
For the $J_{22}$ term  by Lemma \ref{L65} we have
\begin{equation*}
\begin{aligned}
J_{22} \le & C_{k, \beta}  \sup_{0 \le s \le t, y \in \T^3 }  \left \{ \Vert h(s) \Vert_{L^\infty_{x, v}}^{\frac {3p+1} {2p}}  \left (   \int_{\R^3} |f(s, y, v')| dv' \right)^{\frac {p-1} {2p}} \right\}    \int_0^t e^{-\nu(v) (t-s) }\int_{\R^3} | l_{k +\beta} (v, v')|  \int_0^s e^{-\nu(v') ( s - \tau )} \langle v'  \rangle^\gamma d\tau dv' ds 
\\
\le &  C_{k, \beta} \sup_{0 \le s \le t, y \in \T^3 }  \left \{ \Vert h(s) \Vert_{L^\infty_{x, v}}^{\frac {3p+1} {2p}} \left (  \int_{\R^3}  |f(s, y, v')| dv' \right)^{\frac {p-1} {2p}} \right\} \int_0^t e^{-\nu(v) (t-s)} \langle v \rangle^\gamma ds 
\\
\le& C_{k, \beta} \sup_{0 \le s \le t, y \in \T^3 }  \left \{ \Vert h(s) \Vert_{L^\infty_{x, v}}^{\frac {3p+1} {2p}}   \left ( \int_{\R^3}  |f(s, y, v')| dv' \right)^{\frac {p-1} {2p}} \right\}. 
\end{aligned}
\end{equation*}
For  term $J_{23}$, we first split it into two parts $|v| \le N$ and $|v| \ge N$ for some constant $N>0$ large to be fixed later. For the case $|v| \ge N$, by Lemma \ref{L63} we have
\[
\int_{\R^3} |l_{k + \beta} (v', v'')| dv'' \le \frac {c} {(k+\beta)^{\frac {\gamma+3} 4}} \langle v' \rangle^{\gamma}+ C_{k, \beta}  \langle v'  \rangle^{\gamma-2},
\]
which implies
\[
\int_0^s  e^{-\nu(v') (s-\tau )}  \int_{\R^3} |l_{k + \beta} (v', v'')| dv'' d\tau \le  \frac {c} {k^{\frac {\gamma+3} 4}} + C_{k, \beta}  \langle v'  \rangle^{-2}.
\]
Since $|v| \ge N$ implies $\langle v \rangle^{-2} \le \frac 1 {N^2}$, using Lemma \ref{L63} again we have
\begin{align}
\label{v ge N}
\nonumber
&\int_{\R^3}  |l_{k + \beta} (v, v')| \int_0^s  e^{-\nu(v') (s-\tau )}  \int_{\R^3} |l_{k +\beta} (v', v'')| dv'' d\tau dv' 
\\
\le& \frac {c} {k^{\frac {\gamma+3} 4}} \int_{\R^3}  |l_{k +\beta} (v, v')| dv' +  C_{k, \beta}  \int_{\R^3} |l_{k +\beta} (v, v')| \langle v' \rangle^{-2} dv' 
\le  \frac {c^2} {k^{\frac {\gamma+3} 2}}   \langle v \rangle^{\gamma}+ C_{k, \beta}  \langle v \rangle^{\gamma-2} 
\le \langle v \rangle^\gamma \left ( \frac {c^2} {k^{\frac {\gamma+3} 2}}   + \frac {C_{k, \beta}  }{N^2}  \right), 
\end{align}
we deduce 
\[
J_{23} \le \left( \frac {c^2} {k^{\frac {\gamma+3} 2}}   + \frac {C_{k, \beta}}{N^2} \right)  \sup_{0 \le s \le t }\Vert h(s) \Vert_{L^\infty_{x, v}}\int_0^t e^{-\nu(v) (t-s) } \langle v \rangle^\gamma ds \le \left( \frac {c^2} {k^{\frac {\gamma+3} 2}}   + \frac {C_{k, \beta} }{N^2} \right) \sup_{0 \le s \le t }\Vert h(s) \Vert_{L^\infty_{x, v}}.
\]
For the case $|v| \le N$, since $ l_{k + \beta}  (v, v')$ is unbounded, by  Lemma \ref{L63} we have for any $N, k,\beta$  we can find a bounded compact support function $l_{k, N, \beta}$ such that
\begin{equation}
\label{l k N beta}
l_{k, N, \beta} (v, v')  : = l_{k+ \beta} (v, v') 1_{\frac {\langle v \rangle} {C_{k, N, \beta}} \le  |v-v'|  \le C_{k, N, \beta} \langle v \rangle  }, \quad \int_{\R^3} | l_{k+ \beta} (v,  v') -l_{k, N, \beta} (v, v')|  dv' \le \frac {C_{k, \beta}} {N } \langle v \rangle^\gamma,\quad \forall v \in \R^3,
\end{equation}
for some large constant $C_{k, N, \beta} >0 $. By
\begin{align}
\label{l k beta decomposition}
\nonumber
l_{k + \beta} (v, v') l_{k + \beta}  (v', v'') =& ( l_{k + \beta} (v, v') - l_{k, N, \beta}(v, v') ) l_{k + \beta} (v', v'') +l_{k, N, \beta} (v, v')  ( l_{k+ \beta} (v', v'') - l_{k, N, \beta}(v', v'') )  
\\
&+  l_{k, N, \beta}(v, v')  l_{k, N, \beta}(v', v''),
\end{align}
we split $J_{23}$ into three terms respectively. For the first term we have
\begin{equation*}
\begin{aligned}
&\int_0^t e^{-\nu(v) (t-s) }  \int_{\R^3}  \int_{\R^3} |( l_{k +\beta} (v, v') - l_{k, N, \beta}(v, v') ) l_{k +\beta} (v', v'')  |  \int_0^s  e^{-\nu(v') (s-\tau )}   | h(\tau, \tilde{x} -v' (s-\tau), v'') | d v''d \tau dv' ds 
\\
\le &\frac {C_{k, \beta}} {N} \sup_{0 \le s \le t }\Vert h(s) \Vert_{L^\infty_{x, v}}\int_0^t e^{-\nu(v) (t-s) }  \langle v \rangle^\gamma ds \int_0^{s} e^{-\nu(v') (s-\tau) }  \langle v' \rangle^\gamma d\tau   \le \frac {C_{k, \beta}} {N} \sup_{0 \le s \le t }\Vert h(s) \Vert_{L^\infty_{x, v}},
\end{aligned}
\end{equation*}
the second term can be estimated similarly.  For the third term, by \eqref{l k N beta} we have $l_{k, N, \beta}(v, v')$ and  $l_{k, N, \beta }(v', v'')$ is supported in 
\[
\frac {\langle v \rangle} {C_{k, N, \beta}} \le  |v-v'|  \le C_{k, N, \beta}  \langle v \rangle , \quad \frac {\langle v' \rangle} {C_{k, N, \beta}} \le  |v'-v''|  \le C_{k, N, \beta} \langle v' \rangle,
\]
since $|v| \le N$, which implies $l_{k, N, \beta}(v', v'') l_{k, N, \beta}(v', v'')$ is supported in  $|v| \le N, |v'|\le C_{k, N, \beta}',  |v''|\le C_{k, N, \beta}'$
for some constant $C_{k, N, \beta}'>0$. We split it into two parts, $\tau \in [s-\lambda, s]$ and $\tau \in [0, s-\lambda ]$, where $\lambda>0$ is a small constant to be fixed later. For the case $\tau \in [s-\lambda, s]$, since $|v'| \le C_{k, N, \beta}'$,   we have 
\begin{equation*}
\begin{aligned}
&\int_0^t e^{-\nu(v) (t-s) }  \int_{\R^3}  \int_{\R^3} | l_{k, N, \beta} (v, v')  l_{k, N, \beta}(v', v'')  |  \int_{s-\lambda}^s  e^{-\nu(v') (s-\tau )}   | h(\tau, \tilde{x} -v' (s-\tau), v'') | d v''d \tau dv' ds 
\\
\le &C_{k, N, \beta} \sup_{0 \le s \le t }\Vert h(s) \Vert_{L^\infty_{x, v}} \int_0^t e^{-\nu(v) (t-s) }  \langle v \rangle^\gamma ds \int_{s-\lambda}^{s} e^{-\nu(v') (s-\tau) }  \langle v' \rangle^\gamma d\tau 
\\
\le &C_{k, N,  \beta} \sup_{0 \le s \le t }\Vert h(s) \Vert_{L^\infty_{x, v}} (1- e^{ - \nu(v') \lambda})  \le C_{k, N, \beta} \lambda \sup_{0 \le s \le t }\Vert h(s) \Vert_{L^\infty_{x, v}}.
\end{aligned}
\end{equation*}
For the case $\tau \in [0, s-\lambda ]$, first we have
\begin{equation}
\label{l k N beta bound}
l_{k, N, \beta} (v, v') \le C_{k, N,\beta},\quad l_{k, N, \beta}  (v', v'') \le C_{k, N, \beta},\quad \frac 1 {C_{k, N, \beta}} \le \nu(v) \le C_{k, N, \beta},\quad \frac 1 {C_{k, N, \beta}} \le  \nu(v' ) \le C_{k, N, \beta},
\end{equation}
and by Lemma \ref{L213} we have
\begin{align}
\label{entropy for h}
\nonumber
&\int_{|v'| \le C_{k, N, \beta}', |v''| \le C_{k, N, \beta}' } | h(\tau, \tilde{x} -v' (s-\tau), v'') | dv' dv''
\\ \nonumber
 = &  \int_{|v'| \le C_{k, N, \beta}', |v''| \le C_{k, N, \beta}' }  \frac {| F(\tau, \tilde{x} -v' (s-\tau), v'') -\mu(v'') | } {\langle v'' \rangle^{k+\beta}} dv' dv''
\\ \nonumber
\le &C_{k, N, \beta} \int_{|v'| \le C_{k, N, \beta}', |v''| \le C_{k, N, \beta}' }  \frac {| F(\tau, \tilde{x} -v' (s-\tau), v'') -\mu(v'') | } {\sqrt{\mu(v'' )  }} I_{ \{ | F(\tau, \tilde{x} -v' (s-\tau), v'') -\mu(v'') | \le \mu (v'') \}} dv' dv''
\\ \nonumber
& + C_{k, N, \beta} \int_{|v'| \le C_{k, N, \beta}', |v''| \le C_{k, N, \beta}' }  | F(\tau, \tilde{x} -v' (s-\tau), v'') -\mu(v'') |  I_{ \{ | F(\tau, \tilde{x} -v' (s-\tau), v'') -\mu(v'') | \ge \mu (v'') \}} dv' dv''
\\ \nonumber
\le & C_{k, N, \beta} \frac 1 {(s-\tau)^\frac 3 2}  \left( \int_{\T^3 } \int_{|v''| \le C_{k, N, \beta}' } \frac {| F(\tau, y, v'') -\mu(v'') |^2 } {\mu(v'' )  } I_{ \{ | F(\tau, y, v'') -\mu(v'') | \le \mu (v'') \}} dv'' dy \right)^{\frac 1 2}
\\ \nonumber
&+ C_{k, N, \beta} \frac 1 {(s-\tau)^3} \int_{\T^3 } \int_{|v''| \le C_{k, N, \beta}' }  | F(\tau, y, v'') -\mu(v'') | I_{ \{ | F(\tau, y, v'') -\mu(v'') | \ge \mu (v'') \}} dv'' dy
\\ 
\le & C_{k, N, \beta} \frac 1 {(s-\tau)^\frac 3 2} \sqrt{ H (F_0)} +C_{k, N, \beta} \frac 1 {(s-\tau)^3}  H(F_0),
\end{align}
where we have made a change of variable $ y = \tilde{x} -v' (s-\tau) $.  Since $s-\tau \ge \lambda$, so we have
\begin{equation*}
\begin{aligned}
&\int_0^t e^{-\nu(v) (t-s) }  \int_{\R^3}  \int_{\R^3} | l_{k, N, \beta} (v, v')  l_{k, N, \beta}(v', v'')  |  \int_{0}^{s-\lambda}  e^{-\nu(v') (s-\tau )}   | h(\tau, \tilde{x} -v' (s-\tau, v'')) | d v''d \tau dv' ds 
\\
\le &C_{k, N, \beta} \int_0^t e^{-c(t-s) } \int_{0}^{s-\lambda}  e^{-  c(s-\tau )}  \int_{|v'| \le C_{k, N, \beta}', |v''| \le C_{k, N, \beta}' }   | h  (\tau, \tilde{x} -v' (s-\tau), v'') | d v''d \tau dv' ds 
\\
\le &C_{k, N, \beta} \lambda^{-\frac 3 2} \sqrt{ H (F_0)} +C_{k, N, \beta} \lambda^{-3}   H (F_0). 
\end{aligned}
\end{equation*}
Gathering all the terms and taking supremum we have
\begin{equation*}
\begin{aligned}
\sup_{0 \le s \le t} \Vert h(s) \Vert_{L^\infty_{x, v}}  \le & C_{k, \beta} \Vert h_0 \Vert_{L^\infty_{x, v}}  +   \left ( \frac {c^2} {k^{\frac {\gamma+3} 2}}   + \frac {C_{k, \beta}} {N} + C_{k, N, \beta} \lambda \right)\sup_{0 \le s \le t }\Vert h(s) \Vert_{L^\infty_{x, v}}
\\
& + C_{k, N, \beta} \lambda^{-\frac 3 2} \sqrt{H ( F_0 )} +C_{k, N, \beta} \lambda^{-3}   H (F_0) +  C_{k, \beta}   \sup_{0 \le s \le t, y \in \T^3  } \left \{ \Vert h(s) \Vert_{L^\infty_{x, v}}^{\frac {3p+1} {2p}}\int_{\R^3} \left ( |f(s, y, v')| dv' \right)^{\frac {p-1} {2p}} \right\}.
\end{aligned}
\end{equation*}
First fix $\beta \ge 0$, then choose $k$ large, then let $N$ be sufficiently large and finally let $\lambda$ be sufficiently small such that
\[
\frac {c^2} {k^{\frac {\gamma+3} 2}}   + \frac {C_{k, \beta}} {N} + C_{k, N, \beta} \lambda \le \frac 1 2, 
\]
which implies
\[
\sup_{0 \le s \le t} \Vert h(s) \Vert_{L^\infty_{x, v}}  \le  C_{k, \beta} (\Vert h_0 \Vert_{L^\infty_{x, v}}  +  \sqrt{ H (F_0)} +   H (F_0) )+  C_{k, \beta}   \sup_{0 \le s \le t, y \in \T^3 } \left \{ \Vert h(s) \Vert_{L^\infty_{x, v}}^{\frac {3p+1} {2p}}\int_{\R^3} \left ( |f(s, y, v')| dv' \right)^{\frac {p-1} {2p}} \right\},
\]
using Lemma \ref{L64} we have
\[
\sup_{0 \le s \le t_1, y \in \T^3} \left \{ \Vert h(s) \Vert_{L^\infty_{x, v}}^{\frac {3p+1} {2p}}\int_{\R^3} \left ( |f(s, y, v')| dv' \right)^{\frac {p-1} {2p}}  \right\} \le C \sup_{0 \le s \le t_1} \Vert h(s)\Vert_{L^\infty_{x, v}}^2 \le C \Vert h_0\Vert_{L^\infty_{x, v}}^2,
\]
so the proof is thus finished. 
\end{proof}

\begin{lem}\label{L67} Suppose $\gamma \in (-3, 1]$ and $k, \beta >\max \{3, 3+\gamma \}$, then for any smooth function  $f$ and $h$ satisfy \eqref{Boltzmann equation f} and \eqref{Boltzmann equation h} we have
 \begin{equation*}
\begin{aligned}
\int_{\R^3} |f(t, x, v)|dv \le & \int_{\R^3} e^{-\nu(v)t} |f_0(x-vt, v ) | dv +C_{k, N, \beta} \lambda^{-\frac 3 2 } \sqrt{H (F_0)} +C_{k, N, \beta} \lambda^{- 3 } H (F_0) 
\\
&+ C_{k, \beta}(\lambda + \frac {1} {N^{\frac {\beta-3} 2}} ) (\sup_{0 \le s \le t} \Vert h(s) \Vert_{L^\infty_{x, v}} + \sup_{0 \le s \le t} \Vert h(s) \Vert_{L^\infty_{x, v}}^2)
\\
&+ C_{k, N, \beta} \lambda^{- 3}  (\sqrt{ H(F_0)}  + H (F_0) )^{1-\frac 1 p} \sup_{0 \le s \le t} \Vert h(s) \Vert_{L^\infty_{x, v}}^{1+\frac 1 p},
\end{aligned}
\end{equation*}
where $\lambda>0, N \ge 1$ are to be chosen later. Recall that $p>1$ is defined in \eqref{requirement p}. 
\end{lem}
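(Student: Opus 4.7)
The plan is to start from the mild solution formula \eqref{mild solution}, take absolute values and integrate in $v$, so that
\[
\int_{\R^3} |f(t,x,v)|\,dv \le \int_{\R^3} e^{-\nu(v)t}|f_0(x-vt,v)|\,dv + I_K + I_\Gamma,
\]
with $I_K$ and $I_\Gamma$ denoting the $K_k f$ and $\Gamma_k(f,f)$ contributions respectively. The first term matches the stated estimate, so the task is to bound $I_K$ and $I_\Gamma$.

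For $I_K$, I would first split the $v$-integration into $|v|\ge N$ and $|v|\le N$. On the tail $|v|\ge N$, I write $f = \langle v\rangle^{-\beta} h$ and combine the polynomial decay of the weight with the kernel estimate of Lemma \ref{L63} to extract a factor $N^{-(\beta-3)/2}\|h\|_\infty$. On $|v|\le N$ I iterate the mild formulation once more by substituting \eqref{mild solution} for $f(s,\tilde x,v')$ inside the $l_k(v,v')$-integral, producing three subterms; the $K_k K_k f$ subterm is then treated exactly as the analogous $J_{23}$ term in the proof of Lemma \ref{L66}: the kernel product $l_k(v,v')\,l_k(v',v'')$ is decomposed via \eqref{l k N beta} and \eqref{l k beta decomposition}, keeping $l_{k,N,\beta}(v,v')\,l_{k,N,\beta}(v',v'')$ as the main piece (bounded and compactly supported in $(v,v',v'')$), while the remainders contribute only $O(1/N)\cdot\sup\|h\|_\infty$. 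On the compactly supported piece I split the inner time integral into $[s-\lambda,s]$ (yielding $\lambda\sup\|h\|_\infty$) and $[0,s-\lambda]$, and on the latter perform the change of variable $y=\tilde x-v'(s-\tau)$ and apply Lemma \ref{L213} as in \eqref{entropy for h} to extract the $\lambda^{-3/2}\sqrt{H(F_0)}$ and $\lambda^{-3}H(F_0)$ contributions.

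For $I_\Gamma$, I would apply Lemma \ref{L65} pointwise with weight $\alpha$ large enough that $\nu(v)\langle v\rangle^{-\alpha}$ is integrable in $v$, giving
\[
|\Gamma_k(f,f)(s,y,v)| \le C\nu(v)\langle v\rangle^{-\alpha} \|h(s)\|_{L^\infty_{x,v}}^{(3p+1)/(2p)} \Big(\int_{\R^3}|f(s,y,v')|\,dv'\Big)^{(p-1)/(2p)}.
\]
The inner $L^1_v$-factor is then bounded, for each fixed $(s,y)$, by the same $|v|\le N$/$|v|>N$ splitting and entropy change-of-variable used above, and a final application of Young's inequality to balance the $\|h\|_\infty$ factor against the entropy factor produces exactly $C_{k,N,\beta}\lambda^{-3}(\sqrt{H(F_0)}+H(F_0))^{1-1/p}\sup\|h\|_\infty^{1+1/p}$.

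The main obstacle will be the bookkeeping in this double-Duhamel iteration: arranging the kernel truncations and time splittings so that (i) the tail $|v|\ge N$ yields the $N^{-(\beta-3)/2}$ smallness from the velocity weight on $f = h/\langle v\rangle^\beta$, (ii) the non-truncated kernel remainder from \eqref{l k N beta} yields only $O(1/N)$ error, and (iii) the complementary time wedge $[0,s-\lambda]$ produces exactly the $\lambda^{-3/2}$ and $\lambda^{-3}$ powers in front of $\sqrt{H(F_0)}$ and $H(F_0)$, in line with the computation \eqref{entropy for h}.
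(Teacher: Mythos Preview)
Your plan misses the central simplification that distinguishes Lemma~\ref{L67} from Lemma~\ref{L66}: here you are estimating $\int_{\R^3}|f(t,x,v)|\,dv$, so the outer $v$-integral is already available for the change of variable $y=x-v(t-s)$ that feeds into Lemma~\ref{L213}. Consequently, for $I_K$ the paper does \emph{not} iterate the mild formulation a second time; one Duhamel level suffices. After writing $f=\langle v\rangle^{-\beta}h$, the paper splits $I_K$ first in time ($s\in[t-\lambda,t]$ versus $s\in[0,t-\lambda]$), then in $|v|$, then replaces $l_{k+\beta}$ by $l_{k,N,\beta}$; on the compact piece with $t-s\ge\lambda$, the change $y=x-v(t-s)$ and \eqref{entropy for h} give the entropy terms directly. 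Your double-Duhamel route would generate two further subterms (the $f_0$ piece and a $K_k\Gamma_k$ piece) that you do not address; the $f_0$ subterm in particular produces $C_{k,\beta}\|h_0\|_{L^\infty}$ with no smallness factor, which does not fit the stated inequality.

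Your treatment of $I_\Gamma$ has a genuine gap. Applying Lemma~\ref{L65} yields a factor $\bigl(\int_{\R^3}|f(s,y,v')|\,dv'\bigr)^{(p-1)/(2p)}$ with $y=x-v(t-s)$, which is precisely the quantity the lemma is trying to bound (at a different space-time point); bounding it ``for each fixed $(s,y)$'' by the entropy argument is not possible without an integration in the spatial variable, so the step is circular as written. Moreover, the exponents you obtain are $(3p+1)/(2p)$ on $\|h\|_\infty$ and $(p-1)/(2p)$ on the entropy factor, and no Young inequality converts these into the stated $1+1/p$ and $1-1/p$. The paper instead expands $\Gamma_k^{\pm}$ explicitly and splits the $(s,v,v_*)$-domain into four regions; on the bounded region $\{|v|\le N,\,|v_*|\le N,\,s\in[0,t-\lambda]\}$ it applies H\"older in $(v,v_*)$ with exponent $p$ to the kernel and $p/(p-1)$ to $|f|^{p/(p-1)}\le\|h\|_\infty^{1/(p-1)}|f|$, which produces the factor $\|h\|_\infty^{1+1/p}\bigl(\iint|f(s,x-v(t-s),\cdot)|\bigr)^{1-1/p}$ directly; the outer $v$-integral then allows the entropy change of variable.
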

\begin{proof} By \eqref{mild solution} we have
\begin{equation*}
\begin{aligned}
\int_{\R^3} |f(t, x, v)|dv  \le& \int_{\R^3} e^{-\nu(v) t} |f_0(x-vt, v ) | dv  + \int_{0}^t \int_{\R^3} e^{-\nu(v) (t-s)} |(K_k f) (s, x-v (t-s),  v ) | dv ds
\\
& + \int_{0}^t \int_{\R^3} e^{-\nu(v) (t-s)} |\Gamma_k ( f, f) (s, x-v (t-s),  v ) | dv ds 
\\
:=& \int_{\R^3} e^{-\nu(v) t} |f_0(x-vt, v ) | dv  + H_1+H_2 .
\end{aligned}
\end{equation*}
For the term $H_1$, recall
\[
h(t, x, v) = \langle v \rangle^\beta f(t, x, v), \quad  l_{k+ \beta}  (v, v') = l_k(v, v') \langle v \rangle^\beta \langle v' \rangle^{-\beta}, 
\]
hence
\[
H_1 \le \int_{0}^t \int_{\R^3} e^{-\nu(v)(t-s)} \langle v \rangle^{-\beta} \int_{\R^3}| l_{k +\beta} (v, v')  h(s, x-v(t-s), v' )   | dv' dv ds.
\]
We split it into two  case $s \in [ t-\lambda, t]$ and $s \in [0, t-\lambda]$, where $\lambda$ is a small constant to be fixed later. For the case $s \in [ t-\lambda, t]$, since $\beta -\gamma>3$ we have
\begin{equation*}
\begin{aligned}
&\int_{t-\lambda}^t \int_{\R^3} e^{-\nu(v)(t-s)} \langle v \rangle^{-\beta} \int_{\R^3}| l_{k+ \beta}  (v, v')  h(s, x-v(t-s), v' )   | dv' dv ds 
\\
\le&\sup_{0\le s \le t} \Vert h(s) \Vert_{L^\infty_{x, v}} \int_{t-\lambda}^t  \int_{\R^3} e^{-\nu(v )(t-s)} \langle v \rangle^{-\beta}  \int_{\R^3}| l_{k+ \beta} (v, v')    | dv' dv ds 
\\
\le&C_{k, \beta} \sup_{0\le s \le t} \Vert h(s) \Vert_{L^\infty_{x, v}} \int_{t-\lambda}^t  \int_{\R^3}  \langle v \rangle^{-\beta}  \langle v \rangle^\gamma dv ds \le C_{k, \beta}  \lambda\sup_{0\le s \le t} \Vert h(s) \Vert_{L^\infty_{x, v}}. 
\end{aligned}
\end{equation*}
For the case $s \in [0, t-\lambda]$,  we split it into two cases $|v|\ge N$ and $|v|\le N$ for some large constant $N$ to be fixed later. For the case $|v| \ge N$, since $\beta>3$ we have
\begin{equation*}
\begin{aligned}
&\int_{0}^{t-\lambda} \int_{|v| \ge N} e^{-\nu(v)(t-s)} \langle v \rangle^{-\beta} \int_{\R^3}| l_{k +\beta}  (v, v')  h(s, x-v(t-s), v' )   | dv' dv ds 
\\
\le&\sup_{0\le s \le t} \Vert h(s) \Vert_{L^\infty_{x, v}} \int_{ 0 }^t  \int_{|v| \ge N} e^{-\nu(v )(t-s)} \langle v \rangle^{-\beta}  \int_{\R^3}| l_{k + \beta} (v, v')    | dv' dv ds 
\\
\le&C_{k, \beta} \sup_{0\le s \le t} \Vert h(s) \Vert_{L^\infty_{x, v}} \int_{0}^t  e^{-\nu(v )(t-s)}  \langle v \rangle^\gamma ds  \int_{|v| \ge N}  \langle v \rangle^{-\beta}  dv ds \le C_{k, \beta}  \frac 1 {N^{\beta-3}}\sup_{0\le s \le t} \Vert h(s) \Vert_{L^\infty_{x, v}} .
\end{aligned}
\end{equation*}
For the case $|v| \le N$, using decomposition \eqref{l k N beta} we split it into two terms respectively. For the first term since $\beta >3$ we have
\begin{equation*}
\begin{aligned}
 &\int_{0}^{t-\lambda} \int_{|v| \le N} e^{-\nu(v)(t-s)}   \langle v \rangle^{-\beta}\int_{\R^3}| l_{k + \beta}  (v, v') - l_{k, N, \beta}(v, v') || h(s, x-v(t-s), v' )   | dv' dv ds 
\\
\le&C_{k, \beta} \frac{1} N \sup_{0\le s \le t} \Vert h(s) \Vert_{L^\infty_{x, v}} \int_{0}^t  e^{-\nu(v )(t-s)}  \langle v \rangle^\gamma ds  \int_{\R^3}  \langle v \rangle^{-\beta}  dv ds \le C_{k, \beta}  \frac 1 {N}\sup_{0\le s \le t} \Vert h(s) \Vert_{L^\infty_{x, v}}.
\end{aligned}
\end{equation*}
For the last term since $|v| \le N$, by \eqref{l k N beta bound} and \eqref{entropy for h} we have
\begin{equation*}
\begin{aligned}
 &\int_{0}^{t-\lambda} \int_{|v| \le N} e^{-\nu(v)(t-s)}   \langle v \rangle^{-\beta}\int_{\R^3}|  l_{k, N, \beta}(v, v') || h(s, x-v(t-s), v' )   | dv' dv ds 
\\
\le&C_{k, N, \beta} \int_{0}^{t-\lambda}  e^{c(t-s)} \int_{|v| \le N}   \int_{|v'| \le C_{k, N, \beta}'} | h(s, x-v(t-s), v' )   | dv' dv ds \le C_{k, N, \beta} \lambda^{-\frac 3 2} \sqrt{ H (F_0)} +C_{k, N, \beta} \lambda^{-3}   H (F_0). 
\end{aligned}
\end{equation*}
Then we come to the $H_2$ term, we have
\begin{equation*}
\begin{aligned}
|H_2| \le& \int_{0}^t \int_{\R^3} e^{-\nu(v) (t-s)} |\Gamma_k^- ( f, f) (s, x-v (t-s),  v ) | dv ds 
\\
&+  \int_{0}^t \int_{\R^3} e^{-\nu(v) (t-s)} |\Gamma_k^+ ( f, f) (s, x-v (t-s),  v ) | dv ds := H_{21}+ H_{22}.
\end{aligned}
\end{equation*}
For the  $H_{21} $ term, we split it into four terms for some constant $\lambda, N>0$ to be fixed later
\begin{equation*}
\begin{aligned}
H_{21} \le& C\int_0^t \int_{\R^3} e^{-\nu(v)(t - s) } \int_{\R^3} |v-v_*|^\gamma \langle v_* \rangle^{-k}  |f (s, x-v(t-s), v_* ) || f (s, x-v(t-s), v ) |  dv_*  dv ds
\\
\le &C \sup_{0\le s \le t} \Vert h(s) \Vert_{L^\infty_{x, v}}\int_0^t \int_{\R^3} e^{-\nu(v)(t - s) } \int_{\R^3} |v-v_*|^\gamma \langle v_* \rangle^{-k-\beta}  \langle v \rangle^{-\beta} | h (s, x-v(t-s), v_* )| dv_*  dv ds
\\
=&C \sup_{0\le s \le t} \Vert h(s) \Vert_{L^\infty_{x, v}} \left( \int_{t-\lambda}^{t} \int_{\R^3} \int_{\R^3}  + \int_{0}^{t-\lambda} \int_{|v| \ge N} \int_{\R^3} + \int_0^{t-\lambda} \int_{\R^3} \int_{|v_*| \ge N}  +\int_0^{t-\lambda} \int_{|v| \le N} \int_{|v_*| \le N}  \right)\{ \cdot \cdot \cdot\} dv_* dv ds
\\
:=& I_1 +I_2+I_3+I_4. 
\end{aligned}
\end{equation*}
For the term $I_1$, since $\beta, k >\max \{ 3, 3+\gamma \}$
\[
I_1 \le  C \sup_{0 \le s \le t} \Vert h(s) \Vert_{L^\infty_{x, v}}^2 \int_{t-\lambda}^t \int_{\R^3}  \int_{\R^3} |v-v_*|^\gamma \langle v_* \rangle^{-k-\beta}  \langle v \rangle^{-\beta}dv_*  dv ds \le C_{k, \beta} \lambda  \sup_{0 \le s \le t} \Vert h(s) \Vert_{L^\infty_{x, v}}^2. 
\]
For the term $I_2$, since  $\beta, k >\max \{ 3, 3+\gamma \}$
\[
I_2 \le  C \sup_{0 \le s \le t} \Vert h(s) \Vert_{L^\infty_{x, v}}^2 \int_{0}^t e^{-\nu(v)(t - s) }  \langle v \rangle^{\gamma}  d s \int_{|v| \ge N}    \langle v \rangle^{-\beta}  dv \le C_{k, \beta } \frac 1 {N^{\beta-3}}  \sup_{0 \le s \le t} \Vert h(s) \Vert_{L^\infty_{x, v}}^2. 
\]
For the term $I_3$, since $\beta, k >\max \{ 3, 3+\gamma \}$
\begin{equation*}
\begin{aligned}
I_3 \le& C \sup_{0 \le s \le t} \Vert h(s) \Vert_{L^\infty_{x, v}}^2 \int_0^t \int_{\R^3} e^{-\nu(v)(t - s) } \int_{|v_*| \ge N} |v-v_*|^\gamma \langle v_* \rangle^{-k-\beta}  \langle v \rangle^{-\beta} dv_*  dv ds 
\\
\le& C  \frac 1 {N^{\beta}}  \sup_{0 \le s \le t} \Vert h(s) \Vert_{L^\infty_{x, v}}^2 \int_0^t \int_{\R^3} e^{-\nu(v)(t - s) } \int_{|v_*| \ge N} |v-v_*|^\gamma \langle v_* \rangle^{-k}  \langle v \rangle^{-\beta} dv_*  dv ds 
\\
\le&   C_k \frac 1 {N^{\beta}}    \sup_{0 \le s \le t} \Vert h(s) \Vert_{L^\infty_{x, v}}^2 \int_{0}^t e^{-\nu(v)(t - s) }  \langle v \rangle^{\gamma}  d s \int_{\R^3 }    \langle v \rangle^{-\beta}  dv 
\le C_{k, \beta } \frac 1 {N^{\beta}}   \sup_{0 \le s \le t} \Vert h(s) \Vert_{L^\infty_{x, v}}^2.
\end{aligned}
\end{equation*}
For the $I_4$ term, since $\beta, k >\max \{ 3, 3+\gamma \}$, similar as \eqref{entropy for h} we have
\begin{equation*}
\begin{aligned}
I_4 \le & C \sup_{0 \le s \le t} \Vert h(s) \Vert_{L^\infty_{x, v}} \int_0^{t-\lambda}  e^{- c (t - s) } \int_{|v| \le N}\int_{|v_*| \le N} |v-v_*|^\gamma \langle v_* \rangle^{-k-\beta}  \langle v \rangle^{-\beta} |h (s, x-v(t-s), v_* ) |dv_*  dv ds
\\
\le &\sup_{0 \le s \le t} \Vert h(s) \Vert_{L^\infty_{x, v}} \int_0^{t-\lambda}  e^{- c (t - s) } \left(\int_{|v| \le N}\int_{|v_*| \le N} |v-v_*|^{\gamma p}  \langle v_* \rangle^{- k p -\beta p}  \langle v \rangle^{-\beta p}  dv_*  dv  \right)^{\frac 1 p} 
\\
&\left(\int_{|v| \le N}\int_{|v_*| \le N}    |f (s, x-v(t-s), v_* ) |^{\frac p {p-1} }  dv_*  dv  \right)^{1-\frac 1 p}
\\
 \le &C_{k, N, \beta}  \sup_{0 \le s \le t} \Vert h(s) \Vert_{L^\infty_{x, v}}^{1+\frac 1 p}\left(\int_{|v| \le N}\int_{|v_*| \le N}    |f (s, x-v(t-s), v_* ) |dv_*  dv  \right)^{1-\frac 1 p}
\\
 \le &C_{k, N, \beta} \lambda^{- 3}  (\sqrt{ H (F_0) }  + H (F_0) )^{1-\frac 1 p} \sup_{0 \le s \le t} \Vert h(s) \Vert_{L^\infty_{x, v}}^{1+\frac 1 p}, 
\end{aligned}
\end{equation*}
where $p$ is defined in \eqref{requirement p}. For the  $H_{22}$ term, we split it into four terms for some constant $\lambda, N>0$ to be fixed later
\begin{equation*}
\begin{aligned}
H_{22} \le& C\int_0^t \int_{\R^3} e^{-\nu(v)(t - s) } \int_{\R^3} \int_{\mathbb{S}^2}  |v-v_*|^\gamma \frac { \langle v \rangle^{k}} { \langle v' \rangle^{k} \langle v_*' \rangle^{k}   }  | f (s, x-v(t-s), v_*' ) | | f (s, x-v(t-s), v' ) | dv_* d\sigma dv ds
\\
 \le& C\int_0^t \int_{\R^3} e^{-\nu(v)(t - s) } \int_{\R^3} \int_{\mathbb{S}^2}  |v-v_*|^\gamma \frac { \langle v \rangle^{k}} { \langle v' \rangle^{k +\beta } \langle v_*' \rangle^{k+\beta}   }  | h (s, x-v(t-s), v_*' )  | | h (s, x-v(t-s), v' ) | dv_* d\sigma dv ds
\\
=&C \left( \int_{t-\lambda}^{t} \int_{\R^3} \int_{\R^3}  + \int_{0}^{t-\lambda} \int_{|v| \ge N} \int_{\R^3} + \int_0^{t-\lambda} \int_{\R^3} \int_{|v_*| \ge N}  +\int_0^{t-\lambda} \int_{|v| \le N} \int_{|v_*| \le N}  \right)\{ \cdot \cdot \cdot \} dv_* dv ds:= I_1 +I_2+I_3+I_4.
\end{aligned}
\end{equation*}
For the $I_1$ term, since $\beta, k >\max \{ 3, 3+\gamma \}$, by Lemma \ref{L211}
\[
I_1 \le  C \sup_{0 \le s \le t} \Vert h(s) \Vert_{L^\infty_{x, v}}^2 \int_{t-\lambda}^t \int_{\R^3}  \langle v \rangle^{-\beta + \gamma}   dv ds \le C_{k, \beta} \lambda  \sup_{0 \le s \le t} \Vert h(s) \Vert_{L^\infty_{x, v}}^2.
\]
For the $I_2$ term, since  $\beta, k >\max \{ 3, 3+\gamma \}$ still by Lemma \ref{L211} 
\[
I_2 \le  C \sup_{0 \le s \le t} \Vert h(s) \Vert_{L^\infty_{x, v}}^2 \int_{0}^t e^{-\nu(v)(t - s) }  \langle v \rangle^{\gamma}  d s \int_{|v| \ge N}    \langle v \rangle^{-\beta}  dv \le C_{k, \beta } \frac 1 {N^{\beta-3}}  \sup_{0 \le s \le t} \Vert h(s) \Vert_{L^\infty_{x, v}}^2.
\]
For the $I_3$ term, since $\beta, k >\max \{ 3, 3+\gamma \}$, since $\langle v_* \rangle \le \langle v' \rangle\langle v_*' \rangle$, still by Lemma \ref{L211} apply for $k+\beta- \frac {\beta-3} 2$ we have
\begin{equation*}
\begin{aligned}
I_3 \le & C \sup_{0 \le s \le t} \Vert h(s) \Vert_{L^\infty_{x, v}}^2\int_0^t \int_{\R^3} e^{-\nu(v)(t - s) } \int_{|v_*| \ge N} |v-v_*|^\gamma    \frac { \langle v \rangle^{k}} { \langle v' \rangle^{k +\beta -\frac {\beta-3} 2 } \langle v_*' \rangle^{k+\beta -\frac {\beta-3} 2}  \langle v_* \rangle^{\frac {\beta-3} 2}  } dv_*  dv ds 
\\
\le & C_{k, \beta } \frac 1 {N^{\frac {\beta-3} 2}}  \sup_{0 \le s \le t} \Vert h(s) \Vert_{L^\infty_{x, v}}^2 \int_{0}^t e^{-\nu(v)(t - s) }  \langle v \rangle^{\gamma}  d s \int_{\R^3}    \langle v \rangle^{-\beta + \frac {\beta-3} 2}  dv \le  C_{k, \beta } \frac 1 {N^{\frac {\beta-3} 2 }} \sup_{0 \le s \le t} \Vert h(s) \Vert_{L^\infty_{x, v}}^2.
\end{aligned}
\end{equation*}
For the $I_4$ term, if $\beta, k >\max \{ 3, 3+\gamma \}$,  similar as \eqref{entropy for h} we have
\begin{equation*}
\begin{aligned}
I_4 \le & C \sup_{0 \le s \le t} \Vert h(s) \Vert_{L^\infty_{x, v}} \int_0^{t-\lambda}  e^{-c(t - s) } \int_{|v| \le N}\int_{|v_*| \le N} \int_{\mathbb{S}^2 } |v-v_*|^\gamma\frac { \langle v \rangle^{  k}} { \langle v' \rangle^{  k +\beta} \langle v_*' \rangle^{ k + \beta  }   } | h (s, x-v(t-s), v' )| dv_*  dv ds
\\
\le & C_{k, N, \beta} \sup_{0 \le s \le t} \Vert h(s) \Vert_{L^\infty_{x, v}} \int_0^{t-\lambda}  e^{-c(t - s) } \left(\int_{|v| \le N}\int_{|v_*| \le N}\int_{\mathbb{S}^2 }  |v-v_*|^{\gamma p}  \frac { \langle v \rangle^{ p k}} { \langle v' \rangle^{ p k} \langle v_*' \rangle^{p k}   }  d \sigma dv_*  dv  \right)^{\frac 1 p} 
\\
&\left(\int_{|v| \le N}\int_{|v_*| \le N}  \int_{\mathbb{S}^2 }  |f (s, x-v(t-s), v' ) |^{\frac p {p-1} } d \sigma dv_*  dv  \right)^{1-\frac 1 p}
\\
\le & C_{k, N, \beta} \sup_{0 \le s \le t} \Vert h(s) \Vert_{L^\infty_{x, v}}  \int_0^{t-\lambda}  e^{-c(t - s) } \left(\int_{|v' | \le 2 N}\int_{|v_*'| \le 2 N}  \int_{\mathbb{S}^2 }  |f (s, x-v(t-s), v' ) |^{\frac p {p-1} } d \sigma dv_*'  dv'  \right)^{1-\frac 1 p}
\\
 \le &C_{k, N, \beta} \lambda^{- 3}  (\sqrt{H (F_0)}  + H (F_0) )^{1-\frac 1 p} \sup_{0 \le s \le t} \Vert h(s) \Vert_{L^\infty_{x, v } }^{1+\frac 1 p},
\end{aligned}
\end{equation*}
since 
\[
\{ |v| \le N, |v_*| \le N,  \sigma \in \mathbb{S}^2  \} \quad \subset \quad  \{ |v'| \le 2N, |v_*'| \le 2N, \sigma \in \mathbb{S}^2  \},
\]
and $p$ is defined in \eqref{requirement p}. The theorem is thus proved by gathering all the terms. 
\end{proof}

Then we come to the proof for the global existence for the Boltzmann equation with large amplitude initial data. 

\begin{proof} ({\bf Proof of Theorem \ref{T13}} ) Fix $\beta, k >\max \{3, 3+\gamma\}$ satisfies the assumption in Lemma \ref{L66} and Lemma \ref{L67}. By the assumption in Theorem \ref{T13} we have $\Vert h_0 \Vert_{L^\infty_{x, v}} \le M$. We first assume that
\begin{equation}
\label{priori assumption}
\Vert h(t) \Vert_{L^\infty_{x, v}} \le 2A_0 := 2C_{k, \beta} (2M^2   +\sqrt{ H (F_0) } + H (F_0)  ),
\end{equation}
where $C_{k, \beta}$ is defined in Lemma \ref{L66}. By Lemma \ref{L66} and the priori assumption \eqref{priori assumption} we have
\begin{equation}
\label{priori assumption 2}
 \Vert h(t) \Vert_{L^\infty_{x, v}} \le A_0 + C_{k, \beta} (2A_0)^{\frac {3p+1} {2p}} \cdot \sup_{t_1 \le s \le t , y \in \T^3 } \left( \int_{\R^3} |f(s, y, \eta) | d \eta \right) ^{\frac {p-1} {2p}}. 
\end{equation}
Since $x \in \T^3$, for any $t \ge t_1$ we have
\begin{equation*}
\begin{aligned}
\int_{\R^3} e^{-\nu(t)} |f_0(x-vt, v) | dv  \le& \left( \int_{|v| \ge \lambda} + \int_{|v| \le \lambda}    \right)  |f_0(x-vt, v) | dv
\\
\le& C \Vert w_\beta f_0\Vert_{L^\infty_{x, v}}^{\frac 3 \beta} \Vert f_0 \Vert_{L^1_x L^\infty_v}^{1-\frac 3 \beta } + \frac C {t_1^3}\Vert f_0 \Vert_{L^1_x L^\infty_v}
\\
\le& C M^{\frac 3 \beta} \Vert f_0 \Vert_{L^1_x L^\infty_v}^{1-\frac 3 \beta } + C  M^3 \Vert f_0 \Vert_{L^1_x L^\infty_v}, 
\end{aligned}
\end{equation*}
where we have chosen  $\lambda = \Vert w_\beta f_0\Vert_{L^\infty_{x, v}}^{\frac 1 \beta} \Vert f_0 \Vert_{L^1_x L^\infty_v}^{-\frac 1\beta } $ and $t_1$ is defined in Lemma \ref{L64}. By Lemma \ref{L67} and the priori assumption \eqref{priori assumption} we have
\begin{equation*}
\begin{aligned}
 \sup_{t_1 \le s \le t , y \in \T^3  }  \int_{\R^3} |f(s, y, \eta) | d \eta \le & CM^3 \Vert f_0 \Vert_{L^1_xL^\infty_v} + C M^{\frac 3 \beta} \Vert f_0 \Vert_{L^1_x L^\infty_v}^{1-\frac 3 \beta }  +C_{k, N, \beta} \lambda^{-\frac 3 2 } \sqrt{H (F_0)} +C_{k, N, \beta} \lambda^{- 3 } H (F_0) 
\\
&+ C_{k, \beta}(\lambda + \frac {1} {N^{\frac {\beta-3} 2}} ) (2A_0)^2 + C_{k, N, \beta} \lambda^{- 3}  (\sqrt{H (F_0)}  + H (F_0) )^{1-\frac 1 p}   (2A_0)^{1+\frac 1 p}. 
\end{aligned}
\end{equation*}
First choose $N$ large then choose $\lambda$ small, finally choose $H(F_0)$  and $\Vert f_0\Vert_{L^1_x L^\infty_v}$ small we deduce
\begin{equation}
\label{small term large amplitude}
4C_{k, \beta} A_0^{\frac {p+1} {2p} }\sup_{t_1 \le s \le t }  \left( \int_{\R^3} |f(s, y, \eta) | d \eta \right) ^{\frac {p-1} {2p}} \le \frac 1 2,
\end{equation}
together with \eqref{priori assumption 2} implies that 
\[
\Vert h(t ) \Vert_{L^\infty_{x, v}} \le \frac 7 4 A_0, \quad \forall t \ge 0,
\]
hence we have closed the priori assumption \eqref{priori assumption},  the proof for  the global existence is thus finished.
\end{proof}

\section{Convergence rate for the Boltzmann equation with large amplitude initial data}\label{section7}
 
In this section, we consider the time-decay estimates for the global solution we obtained in Section \ref{section6}. In the whole section we are under the assumption in Theorem \ref{T13} such that all the results  in Section \ref{section6} remain true. In this section we will denote $f$ the solution to \eqref{Boltzmann equation f}  and denote $h$ the solution to \eqref{Boltzmann equation h}. 
\subsection{Convergence rate for Hard Potentials }
In this subsection we consider the decay estimate for hard potential. 
We first  recall a lemma on the convolution of semigroups. 
\begin{lem}\label{L71}
For $\lambda_2> \lambda_1 > 0, t >0$. we have 
\[
\int_0^t e^{-\lambda_1 (t-s) }  e^{ - \lambda_2s} ds = \int_0^t e^{-\lambda_1 s }  e^{ - \lambda_2 (t-s ) } ds  = e^{-\lambda_1 t } \int_{0}^t e^{-(\lambda_2-\lambda_1) s} ds \le  \frac {1} {\lambda_2 - \lambda_1}  e^{ - \lambda_1 t  }.
\]
\end{lem}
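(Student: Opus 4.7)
The statement is a direct computation, so the plan is essentially to justify the two equalities and the final inequality in turn. First, I would establish the identity of the two integrals by applying the change of variable $s \mapsto t - s$ in one of them; this substitution preserves the interval $[0,t]$ and swaps the roles of $\lambda_1$ and $\lambda_2$ in the exponentials, which gives the first equality.

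Next, I would factor $e^{-\lambda_1 t}$ out of $\int_0^t e^{-\lambda_1(t-s)} e^{-\lambda_2 s}\,ds$ by writing $e^{-\lambda_1(t-s)} e^{-\lambda_2 s} = e^{-\lambda_1 t} e^{-(\lambda_2 - \lambda_1) s}$, which yields the displayed identity $e^{-\lambda_1 t}\int_0^t e^{-(\lambda_2 - \lambda_1)s}\,ds$. Since $\lambda_2 - \lambda_1 > 0$ by hypothesis, the remaining integral evaluates to $(1 - e^{-(\lambda_2 - \lambda_1)t})/(\lambda_2 - \lambda_1)$, which is bounded above by $1/(\lambda_2 - \lambda_1)$. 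Combining these gives the final bound $\frac{1}{\lambda_2 - \lambda_1} e^{-\lambda_1 t}$.

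There is no real obstacle here; the lemma is a one-line calculation that will be used as a convenient reference when handling Duhamel-type terms in the convergence estimates that follow. The only thing to watch is the strict inequality $\lambda_2 > \lambda_1$, which ensures both that $\lambda_2 - \lambda_1$ appears in the denominator without issue and that $e^{-(\lambda_2 - \lambda_1) s}$ is integrable on $[0, \infty)$, giving the uniform bound in $t$.
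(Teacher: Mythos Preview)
Your proposal is correct and matches the paper's treatment: the lemma is stated as a self-contained one-line computation with no separate proof, and your justification of the change of variable, the factoring of $e^{-\lambda_1 t}$, and the bound on the remaining integral is exactly the intended reading.
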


For the case $\gamma \ge 0$, we first prove that the linearized equation converges. We consider  the following linearized equation
\begin{equation}
\label{linearized equation large amplitude}
\xi_t + v \cdot \nabla_x \xi + \nu(v) \xi + K_k \xi = 0, \quad \xi(0, x, v ) = \xi_0(x, v).
\end{equation}
For the linearized equation \eqref{linearized equation large amplitude}, the corresponding mild solution is 
\begin{equation}
\label{mild solution linear}
\xi (t, x, v) = e^{-\nu(v) t} \xi _0(x-vt, v) + \int_0^t e^{-\nu(v) (t-s) } (K_{k } \xi ) (s, x-v(t-s), v) ds.
\end{equation}

\begin{lem}\label{L72}
There exists a $k_0  \ge 6 $ large such that for any $k \ge k_0 , \gamma \in [0, 1]$, suppose $\xi(t)$ is the solution to the linearized equation \eqref{linearized equation large amplitude}, we have
\[
\Vert  \xi(t) \Vert_{L^\infty_{x, v}} \le C_k e^{-\frac {\lambda_2} 2 t} \Vert \xi_0 \Vert_{L^\infty_{x, v}},  \quad \forall t \ge 0,
\]
for some constants $C_k, \lambda_2 >0$. 
\end{lem}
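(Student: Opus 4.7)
The plan is to mimic the double-Duhamel / $L^2$–$L^\infty$ iteration of Lemma \ref{L66}, but without the entropy input, using instead the uniform lower bound $\nu(v)\ge \nu_0>0$ available for $\gamma\in[0,1]$ together with the sharp kernel estimate of Lemma \ref{L63}.  Set $\lambda_2\in(0,\nu_0)$ (for instance the spectral gap of the linearised semigroup $S_{\bar L}$ on $L^2_xL^2(\mu^{-1/2})$ produced in Lemma \ref{L312}) and define $A(t):=\sup_{0\le s\le t}e^{\lambda_2 s/2}\|\xi(s)\|_{L^\infty_{x,v}}$.

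First, starting from \eqref{mild solution linear}, substitute the mild formula once more inside the $K_k\xi$ term, producing
\[
\xi(t,x,v)= e^{-\nu(v)t}\xi_0(x-vt,v)+G_1(t,x,v)+G_2(t,x,v),
\]
where $G_1$ is a single kernel integral against $\xi_0$ and $G_2$ is the double kernel integral
\[
G_2= \int_0^t\!e^{-\nu(v)(t-s)}\!\!\int_{\R^3}\!l_k(v,v')\!\int_0^s\!e^{-\nu(v')(s-\tau)}\!\!\int_{\R^3}\!l_k(v',v'')\,\xi(\tau,\tilde y,v'')\,dv''d\tau dv'ds.
\]
By Lemma \ref{L63}, $|G_1|\le C_k e^{-\nu_0 t/2}\|\xi_0\|_{L^\infty_{x,v}}$, which is harmless.

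Next, I split $G_2$ in the three regimes already used in Lemma \ref{L66}: (i) $|v|\ge N$; (ii) $|v|\le N$ with the tail kernel $l_k-l_{k,N}$ of \eqref{l k N beta}; (iii) $|v|\le N$ with the truncated kernel $l_{k,N}(v,v')l_{k,N}(v',v'')$, which is bounded and compactly supported in $(v',v'')$.  The double application of Lemma \ref{L63} as in \eqref{v ge N} gives, in regimes (i) and (ii), the pointwise estimate
\[
\int_{\R^3}|l_k(v,v')|\!\int_0^s\!e^{-\nu(v')(s-\tau)}\!\int_{\R^3}|l_k(v',v'')|\,dv''d\tau\,dv'\le\Big(\frac{c^2}{k^{(\gamma+3)/2}}+\frac{C_k}{N^2}+\frac{C_k}{N}\Big)\langle v\rangle^\gamma,
\]
which, after pairing with $\int_0^t e^{-\nu(v)(t-s)}\langle v\rangle^\gamma\,ds\le \nu(v)^{-1}\langle v\rangle^\gamma\lesssim1$ and carrying the $e^{\lambda_2 t/2}$ factor (using $\nu(v)-\lambda_2/2\ge\nu_0/2>0$), contributes to $A(t)$ a term of size $(k^{-(\gamma+3)/2}+N^{-1})A(t)$, absorbable for $k_0,N$ large.

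For regime (iii), split the inner time integral as $\tau\in[s-\lambda,s]\cup[0,s-\lambda]$.  The near-diagonal piece contributes $C_{k,N}\lambda\,A(t)$, also absorbable for $\lambda$ small.  On the remaining piece $\tau\le s-\lambda$, the change of variable $y=\tilde x-v'(s-\tau)\in\T^3$ (admissible because $s-\tau\ge\lambda>0$ is bounded below) converts the pointwise $L^\infty_v$ bound into the $L^2_xL^2_v$ norm of $\xi$ on the compact set $\{|v''|\le C'_{k,N}\}$, up to a factor $\lambda^{-3/2}$.  Since $\gamma\ge0$, the linearised semigroup decomposition $L=A+B$ already established in Lemma \ref{L312} gives $\|\xi(\tau)\|_{L^2_xL^2_v}\lesssim e^{-\lambda_2\tau}\|\xi_0\|_{L^2_xL^2_v}\le e^{-\lambda_2\tau}\|\xi_0\|_{L^\infty_{x,v}}$, and this exponential decay matches the target rate after absorbing $\lambda^{-3/2}$ into the constant $C_k$.

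Gathering the three regimes yields
\[
A(t)\le C_k\|\xi_0\|_{L^\infty_{x,v}}+\Big(\tfrac{Cc^2}{k_0^{(\gamma+3)/2}}+\tfrac{C_k}{N}+C_{k,N}\lambda\Big)A(t),
\]
and choosing first $k_0$, then $N$, then $\lambda$ so that the bracket is $\le\tfrac12$ closes the estimate to $A(t)\le 2C_k\|\xi_0\|_{L^\infty_{x,v}}$, which is exactly the stated decay.  The delicate step is regime (iii): one must transfer the $L^\infty_{x,v}$ information of $\xi$ into a decaying $L^2$ quantity, and for this the invariance of the conservation laws under the flow (or equivalently the orthogonality $P\xi(t)=P\xi_0$ inherited from the structure of $L_k$) must be handled compatibly with the polynomial weight.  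This is exactly where the semigroup decomposition built in Lemma \ref{L312} is indispensable; everything else reduces to the kernel bookkeeping already developed in Section \ref{section6}.
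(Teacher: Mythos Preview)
Your approach is exactly the paper's: the same double-Duhamel iteration, the same three-regime splitting of the double kernel integral, and the same $L^2$--$L^\infty$ closure via the change of variables $y=\tilde x-v'(s-\tau)$.  There is, however, one concrete step that fails as written.  In regime (iii) you assert
\[
\|\xi(\tau)\|_{L^2_xL^2_v}\lesssim e^{-\lambda_2\tau}\|\xi_0\|_{L^2_xL^2_v}\le e^{-\lambda_2\tau}\|\xi_0\|_{L^\infty_{x,v}},
\]
but the last inequality is false on $\T^3\times\R^3$: an $L^\infty_{x,v}$ function need not belong to $L^2_v$, and Lemma~\ref{L312} only propagates decay once the initial $L^2$ norm is finite.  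The paper handles this by passing to the auxiliary function $\xi_1:=\langle v\rangle^{-7/4}\xi$, which solves the linearised equation with index $k-\tfrac74>4$; on the compact set $\{|v''|\le C'_{k,N}\}$ one has $|\xi|\le C_{k,N}|\xi_1|$, Lemma~\ref{L312} gives $\|\xi_1(\tau)\|_{L^2_{x,v}}\le e^{-\lambda_1\tau}\|\xi_1(0)\|_{L^2_{x,v}}$, and now $\|\xi_1(0)\|_{L^2_{x,v}}=\|\langle v\rangle^{-7/4}\xi_0\|_{L^2_{x,v}}\le C\|\xi_0\|_{L^\infty_{x,v}}$ because $\langle v\rangle^{-7/4}\in L^2_v(\R^3)$.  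Once you insert this weight the rest of your argument goes through verbatim, and your remark about the projection $P\xi(t)=P\xi_0$ is exactly the compatibility condition needed to invoke Lemma~\ref{L312}.
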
 
\begin{proof} The proof is similar to Lemma \ref{L66}. First  \eqref{mild solution linear} implies
\[
|\xi (t, x, v)| \le  e^{-\nu(v) t} \Vert \xi _0\Vert_{L^\infty_{x, v}} + \int_0^t e^{-\nu(v) (t-s) } |(K_{k } \xi ) (s, x-v(t-s), v) |ds  :=e^{-\nu(v) t} \Vert \xi_0\Vert_{L^\infty_{x, v}} +J_2.
\]
For the $J_2$ term,  denote $\tilde{x} = x - v(t-s)$, we have
\[
J_2 \le \int_0^t e^{-\nu(v) (t-s) }\int_{\R^3}| l_k (v, v') \xi (s, \tilde{x}, v')| dv' ds.
\] 
by  \eqref{mild solution linear}  again we have 
\begin{equation*}
\begin{aligned}
J_2 \le& \int_0^t e^{-\nu(v) (t-s) }\int_{\R^3}| l_k (v, v')|  e^{-\nu(v') s} | \xi_0 (\tilde{x} -v' s , v')| dv' ds  
\\
&+  \int_0^t e^{-\nu(v) (t-s) }  \int_{\R^3}  \int_{\R^3} | l_k (v, v')  l_k (v', v'')  |  \int_0^s  e^{-\nu(v') (s-\tau )}   | \xi (\tau, \tilde{x} -v' (s-\tau), v'') | d v''d \tau dv' ds :=J_{21} +J_{22}. 
\end{aligned}
\end{equation*}
Denote $\lambda_2 := \min \{\lambda_1, \nu(v)| v\in \R^d \}$, where $\lambda_1$ is the exponential convergence rate for the linearized semigroup in $L^2$ proved in Lemma \ref{L312}. For the $J_{21}$ term we have
\begin{equation}
\label{new J21}
J_{21} \le C_k e^{-\frac {\lambda_2} 2  t} \Vert  \xi_0 \Vert_{L^\infty_{x, v}} \int_0^t e^{-\frac {\nu(v)} 2 (t-s) } \langle v \rangle^\gamma ds \le C_k e^{-\frac {\lambda_2} 2  t}   \Vert \xi_0 \Vert_{L^\infty_{x, v}}.
\end{equation}
For the $J_{22}$ term, if $|v| \ge N$, by \eqref{v ge N} we have
\begin{equation*}
\begin{aligned}
J_{22} \le&  \sup_{ 0 \le \tau \le t} \{ e^{\frac {\lambda_2} 2  \tau} \Vert \xi (\tau )  \Vert_{L^\infty_{x, v}} \}  \int_0^t e^{-\nu(v) (t-s) }  \int_{\R^3}  \int_{\R^3} | l_k (v, v')  l_k (v', v'')  |  \int_0^s  e^{-\nu(v') (s-\tau )}  e^{-\frac {\lambda_2} 2 \tau} d v''d \tau dv' ds 
\\
\le &  e^{-\frac {\lambda_2} 2 t}\sup_{ 0 \le \tau \le t} \{ e^{\frac {\lambda_2} 2  \tau} \Vert \xi (\tau )  \Vert_{L^\infty_{x, v}} \} \int_0^t e^{-\frac {\nu(v)} 2 (t-s) }  \int_{\R^3}  \int_{\R^3} | l_k   (v, v')  l_k (v', v'')  |  \int_0^s  e^{-\frac {\nu(v')} 2 (s-\tau )}d v''d \tau dv' ds 
\\
\le &\left( \frac {c^2} {k^{\frac {\gamma+3} 2}}   + \frac {C_{k}}{N^2} \right) e^{-\frac {\lambda_2} 2 t}\sup_{ 0 \le \tau \le t} \{ e^{\frac {\lambda_2} 2  \tau} \Vert \xi (\tau )  \Vert_{L^\infty_{x, v}} \}. 
\end{aligned}
\end{equation*}
For the case $|v| \le N$, similarly as decomposition \eqref{l k beta decomposition}  we have, for any $N>0, k \ge 6$   we can find a bounded compact support function $l_{k, N}$ such that
\begin{equation}
\label{l k N}
l_{k, N} (v, v')  : = l_{k} (v, v') 1_{\frac {\langle v \rangle} {C_{k, N}} \le  |v-v'|  \le C_{k, N} \langle v \rangle  }, \quad \int_{\R^3} | l_{k} (v,  v') -l_{k, N} (v, v')|  dv' \le \frac {C_{k}} {N } \langle v \rangle^\gamma,\quad \forall v \in \R^3,
\end{equation}
for some large constant $C_{k, N} >0 $. By
\begin{equation}
\label{l k decomposition}
l_{k} (v, v') l_{k }  (v', v'') = ( l_{k} (v, v') - l_{k, N}(v, v') ) l_{k} (v', v'') +l_{k, N} (v, v')  ( l_{k} (v', v'') - l_{k, N}(v', v'') )  +  l_{k, N}(v, v')  l_{k, N}(v', v''),
\end{equation}
we split $J$ into three terms respectively. For the first term we have
\begin{equation*}
\begin{aligned}
&\int_0^t e^{-\nu(v) (t-s) }  \int_{\R^3}  \int_{\R^3} |( l_k (v, v') - l_{k, N}(v, v') ) l_k (v', v'')  |  \int_0^s  e^{-\nu(v') (s-\tau )}   | \xi (\tau, \tilde{x} -v' (s-\tau), v'') | d v''d \tau dv' ds 
\\
\le &\frac {C_{k}} {N} \sup_{ 0 \le \tau \le t} \{ e^{\frac {\lambda_2} 2  \tau} \Vert \xi (\tau )  \Vert_{L^\infty_{x, v} } \}    \int_0^t e^{-  \nu(v)  (t-s) }  \langle v \rangle^\gamma ds \int_0^{s} e^{-    \nu(v') (s-\tau) }  \langle v' \rangle^\gamma   e^{-\frac {\lambda_2} 2 \tau} d\tau  
\\
\le & \frac {C_{k}} {N} e^{ - \frac {\lambda_2} 2 t}  \sup_{ 0 \le \tau \le t} \{ e^{\frac {\lambda_2} 2  \tau} \Vert \xi (\tau )  \Vert_{L^\infty_{x, v} } \},
\end{aligned}
\end{equation*}
and the second term can be estimated similarly. For the last term,  since $l_{k, N}(v', v'') l_{k, N}(v', v'')$ is supported in 
$|v| \le N,  |v'|\le C_{k, N}', |v''|\le C_{k, N}'$ for some constant $C_{k, N}'>0$. We again split it into two cases, $\tau \in [s-\lambda, s]$ and $\tau \in [0, s-\lambda ]$,  where $\lambda>0$ is a small constant to be fixed later. For the case $\tau \in [s-\lambda, s]$, since $|v'| \le C_{k, N}'$,   we have 
\begin{equation*}
\begin{aligned}
&\int_0^t e^{-\nu(v) (t-s) }  \int_{\R^3}  \int_{\R^3} | l_{k, N} (v, v')  l_{k, N}(v', v'')  |  \int_{s-\lambda}^s  e^{-\nu(v') (s-\tau )}   | \xi (\tau, \tilde{x} -v' (s-\tau), v'') | d v''d \tau dv' ds 
\\
\le &C_{k, N}  \sup_{ 0 \le \tau \le t} \{ e^{\frac {\lambda_2} 2  \tau} \Vert \xi (\tau )  \Vert_{L^\infty_{x, v} } \}  \int_0^t e^{-\nu(v) (t-s) }  \langle v \rangle^\gamma ds \int_{s-\lambda}^{s} e^{-\nu(v') (s-\tau) }  \langle v' \rangle^\gamma e^{-\frac {\lambda_2} 2 \tau } d\tau 
\\
\le &C_{k, N}  e^{-\frac {\lambda_2} 2 t}   \sup_{ 0 \le \tau \le t} \{ e^{\frac {\lambda_2} 2  \tau} \Vert \xi (\tau )  \Vert_{L^\infty_{x, v} } \}  \int_0^t e^{-\frac {\nu(v)} 2 (t-s) }  \langle v \rangle^\gamma ds \int_{s-\lambda}^{s} e^{-\frac {\nu(v')} 2 (s-\tau) }  \langle v' \rangle^\gamma  d\tau 
\\
\le &C_{k, N}    e^{-\frac {\lambda_2} 2 t}   \sup_{ 0 \le \tau \le t} \{ e^{\frac {\lambda_2} 2  \tau} \Vert \xi (\tau )  \Vert_{L^\infty_{x, v} } \}    (1- e^{ - \frac {\nu(v')} 2 \lambda})  \le C_{k, N} \lambda  e^{-\frac {\lambda_2} 2 t}   \sup_{ 0 \le \tau \le t} \{ e^{\frac {\lambda_2} 2  \tau} \Vert \xi (\tau )  \Vert_{L^\infty_{x, v} } \}. 
\end{aligned}
\end{equation*}
For the case $\tau \in [0, s-\lambda ]$ since 
\begin{equation}
\label{l k N bound}
l_{k, N}(v, v') \le C_{k, N},\quad l_{k, N}(v', v'') \le C_{k, N},\quad \frac 1 { C_{k, N} } \le \nu(v) \le C_{k, N},\quad  \frac 1 { C_{k, N} }  \le  \nu(v' ) \le C_{k, N},
\end{equation}
denote $\xi_1(t, x, v) = \langle v \rangle^{-\frac 7 4} \xi(t, x, v) $, since $k- \frac 7 4 > 4$, by Lemma \ref{L312} apply for $\xi_1$ we have
\begin{equation*}
\begin{aligned}
&\int_{|v'| \le C_{k, N}', |v''| \le C_{k, N}' } | \xi (\tau, \tilde{x} -v' (s-\tau), v'') | dv' dv''
\\
\le &C_{k, N} \int_{|v'| \le C_{k, N}', |v''| \le C_{k, N}' } | \xi_1(\tau, \tilde{x} -v' (s-\tau), v'') | dv' dv''
\\
\le & C_{k, N} \frac 1 {(s-\tau)^\frac 3 2}  \left( \int_{\T^3} \int_{|v''| \le C_{k, N}' }  |\xi_1(\tau, y, v'') |^2   dv'' dy \right)^{\frac 1 2}
\\
\le & C_{k, N} \frac 1 {(s-\tau)^\frac 3 2} \Vert \xi_1(\tau) \Vert_{L^2_{x, v} } \le C_{k, N} \frac 1 {(s-\tau)^\frac 3 2} e^{-\lambda_1 \tau} \Vert \xi_1(0)  \Vert_{L^2_{x, v}}   \le C_{k, N} \frac 1 {(s-\tau)^\frac 3 2} e^{-\lambda_1 \tau} \Vert \xi_0 \Vert_{L^\infty_{x, v}},
\end{aligned}
\end{equation*}
where we have made a change of variable $ y = \tilde{x} -v' (s-\tau) $.  Since $s-\tau \ge \lambda$, we have
\begin{equation*}
\begin{aligned}
&\int_0^t e^{-\nu(v) (t-s) }  \int_{\R^3}  \int_{\R^3} | l_{k, N} (v, v')  l_{k, N}(v', v'')  |  \int_{0}^{s-\lambda}  e^{-\nu(v') (s-\tau )}   | \xi (\tau, \tilde{x} -v' (s-\tau, v'')) | d v''d \tau dv' ds 
\\
\le &C_{k, N} \int_0^t e^{-\lambda_2 (t-s) } \int_{0}^{s-\lambda}  e^{-  \lambda_2 (s-\tau )}  \int_{|v'| \le C_{k, N}', |v''| \le C_{k, N}' }   | \xi  (\tau, \tilde{x} -v' (s-\tau), v'') | d v''d \tau dv' ds 
\\
\le &C_{k, N} \lambda^{-\frac 3 2 }  \Vert \xi_0 \Vert_{L^\infty_{x, v}} \int_0^t e^{-\lambda_2 (t-s) } \int_{0}^{s-\lambda} e^{-  \lambda_2 (s-\tau )}   e^{-\lambda_1 \tau} d \tau ds  \le C_{k, N} \lambda^{-\frac 3 2 } e^{-\frac {\lambda_2} 2 t} \Vert \xi_0 \Vert_{L^\infty_{x, v}}.
\end{aligned}
\end{equation*}
Gathering all the terms we have
\begin{equation*}
\begin{aligned}
 \Vert \xi (t) \Vert_{L^\infty_{x, v}}  \le & C_{k, N, \lambda} e^{-\frac {\lambda_2} 2  t} \Vert \xi_0 \Vert_{L^\infty_{x, v}}  +  \left ( \frac {c^2} {k^{\frac {\gamma+3} 2}}    + \frac {C_{k}} {N} + C_{k, N} \lambda   \right)    e^{-\frac {\lambda_2} 2 t}   \sup_{ 0 \le \tau \le t} \{ e^{\frac {\lambda_2} 2  \tau} \Vert \xi (\tau  )  \Vert_{L^\infty_{x, v}} \}.
\end{aligned}
\end{equation*}
First choose $k$ large, then let $N$ be sufficiently large and finally let $\lambda$ be sufficiently small such that 
\[
\frac {c^2} {k^{\frac {\gamma+3} 2}}    + \frac {C_{k}} {N} + C_{k, N} \lambda \le \frac 1 2.
\]
Multiply both side by $e^{\frac {\lambda_2} 2  t}$ and taking supremum  we have
\[
\sup_{ 0 \le \tau \le t} \{ e^{\frac {\lambda_2} 2  \tau} \Vert \xi (\tau )  \Vert_{L^\infty_{x, v}} \} \le C_k \Vert \xi_0 \Vert_{L^\infty_{x, v}}  + \frac 1 2 \sup_{ 0 \le \tau \le t} \{ e^{\frac {\lambda_2} 2  \tau} \Vert \xi (\tau )  \Vert_{L^\infty_{x, v}} \} ,
\]
which implies
\[
\Vert \xi (t) \Vert_{L^\infty_{x, v}}   \le C_k e^{-\frac {\lambda_2} 2  t} \Vert \xi_0 \Vert_{L^\infty_{x, v}},
\]
so the convergence for the linear semigroup is thus proved. 
\end{proof}

Denote $g(t, x, v) = \langle v \rangle^{-2} h(t, x, v) = \langle v \rangle^{\beta-2} f(t, x, v)$, it is easily seen that $g$ satisfies 
\begin{equation}
\label{equation large amplitude ge 0 g}
\partial_t g + v \cdot \nabla_x g + \nu(v) g =\Gamma_{k + \beta-2} (g, g) + K_{k + \beta-2} g. 
\end{equation}
We first prove the rate of convergence for $g$. 

\begin{lem}\label{L73} There exists a $k_0 \ge  8 $ large such that for any $\gamma \in [0, 1], k \ge k_0 , \beta >\max \{ 3, 3+\gamma\}$,  suppose $g$ is the solution to the nonlinear equation \eqref{equation large amplitude ge 0 g}, we have
\[
\Vert g(t) \Vert_{L^\infty_{x, v}} \le C_{k, \beta} e^{-\frac {\lambda_2} 4  t} \Vert g \Vert_{L^\infty_{x, v}}, \quad \forall t \ge 0,
\]
for some constant $C_{k, \beta} >0 $, where $\lambda_2$ is defined in Lemma \ref{L72}.  
\end{lem}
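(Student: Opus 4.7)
The strategy is to combine the double Duhamel iteration from Lemma \ref{L72} for the linear semigroup with a treatment of the nonlinear term that exploits both the uniform $L^\infty$ bound on $h$ obtained in Theorem \ref{T13} and the (eventually) small $L^1_v$ integral controlled in Lemma \ref{L67}. Writing the mild formula for $g$,
\[
g(t,x,v) = e^{-\nu(v) t} g_0(x-vt, v) + \int_0^t e^{-\nu(v)(t-s)} (K_{k+\beta-2} g)(s, \tilde x, v)\,ds + \int_0^t e^{-\nu(v)(t-s)} \Gamma_{k+\beta-2}(g,g)(s, \tilde x, v)\,ds,
\]
with $\tilde x = x - v(t-s)$, I would set $M(t) := \sup_{0\le s\le t} e^{\lambda_2 s/4} \|g(s)\|_{L^\infty_{x,v}}$ and aim to close an inequality of the form $M(t) \le C_{k,\beta}\|g_0\|_{L^\infty_{x,v}} + \tfrac12 M(t)$.

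For the initial datum term, $e^{-\nu(v) t}\|g_0\|_\infty \le e^{-\lambda_2 t}\|g_0\|_\infty$ trivially. For the linear $K$ term, I would first apply the mild formula once more inside $K_{k+\beta-2}g$, producing a single-kernel contribution from $g_0$ and a double-kernel contribution from $g$, exactly as in $J_{21}$, $J_{22}$ of Lemma \ref{L72} (with $k$ replaced by $k+\beta-2$; the kernel estimates of Lemma \ref{L63} are insensitive to this shift). The single-kernel piece is handled by the pointwise kernel bound and yields $C_{k,\beta} e^{-\lambda_2 t/2} \|g_0\|_\infty$. For the double-kernel piece, I would split $|v|\ge N$ versus $|v|\le N$: the first region contributes $(c^2/(k+\beta-2)^{(\gamma+3)/2} + C_{k,\beta}/N^2) e^{-\lambda_2 t/4} M(t)$ via \eqref{v ge N}; the second region uses the decomposition \eqref{l k decomposition} into a smooth compactly supported piece $l_{k+\beta-2,N}$ plus a remainder, and on the short time interval $[s-\lambda, s]$ one trades the $\nu$-integration for a factor of $\lambda$, while on $[0, s-\lambda]$ one invokes the $L^2_{x,v}$ exponential decay of the linearized flow (Lemma \ref{L312}, applied after extracting a $\langle v\rangle^{-7/4}$ weight) to obtain the crucial $e^{-\lambda_1 \tau}$ factor. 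Combining via Lemma \ref{L71}, this gives a term $(c^2/(k+\beta-2)^{(\gamma+3)/2} + C_{k,\beta}/N + C_{k,N,\beta}\lambda) e^{-\lambda_2 t/4} M(t)$ plus $C_{k,N,\beta,\lambda} e^{-\lambda_2 t/2} \|g_0\|_\infty$, and the prefactor can be made $\le 1/4$ by choosing $k_0$ large, then $N$ large, then $\lambda$ small.

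The genuinely new ingredient is the nonlinear term $\Gamma_{k+\beta-2}(g,g) = \langle v\rangle^{\beta-2}\Gamma_k(f,f)$. Here I apply Lemma \ref{L65} with $\alpha = \beta-2$ to get
\[
|\Gamma_{k+\beta-2}(g,g)(s,y,v)| \le C\nu(v)\,\|h(s)\|_{L^\infty_{x,v}}^{(p+1)/(2p)} \|g(s)\|_{L^\infty_{x,v}}\, \Bigl(\int_{\R^3} |f(s,y,v')|\,dv'\Bigr)^{(p-1)/(2p)}.
\]
The factor $\|h\|_\infty^{(p+1)/(2p)}$ is uniformly bounded by $CM^2$ by Theorem \ref{T13}, and by the smallness estimate \eqref{small term large amplitude} obtained in the proof of Theorem \ref{T13} the $L^1_v$ integral can be made as small as desired, uniformly in $s\ge t_1$, under the standing entropy/$L^1_x L^\infty_v$ smallness hypothesis \eqref{smallness condition entropy}. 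On $[0,t_1]$ the short-time bound of Lemma \ref{L64} lets me absorb this contribution into the $C_{k,\beta}\|g_0\|_\infty$ term. Since $\nu(v) \ge \lambda_2 > \lambda_2/4$ for hard potentials, Lemma \ref{L71} gives $\int_0^t e^{-\nu(v)(t-s)}\nu(v) e^{-\lambda_2 s/4}\,ds \le (4/3) e^{-\lambda_2 t/4}$, so the nonlinear contribution is at most $C\,\epsilon\, e^{-\lambda_2 t/4} M(t)$ with $\epsilon$ arbitrarily small. Choosing $\epsilon$ so that $C\epsilon \le 1/4$ and combining with the linear bounds closes $M(t) \le C_{k,\beta}\|g_0\|_\infty + \tfrac12 M(t)$, whence the claimed exponential decay.

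The main obstacle I anticipate is bookkeeping the order in which constants are chosen: the parameter $\lambda$ used to split the time interval in the $K$ iteration must be fixed \emph{after} $k$ and $N$ in order to make the smallness factor $C_{k,N}\lambda$ beat the already-fixed $1/k^{(\gamma+3)/2}$ and $1/N$; and the smallness of $\int|f|\,dv$ from Lemma \ref{L67} is itself coupled to these same parameters through the constants appearing in \eqref{small term large amplitude}, so one must verify the choice of $\epsilon_0$ in the hypothesis is compatible with the thresholds demanded by the present lemma. Once the parameters are ordered correctly, all other steps are direct adaptations of Lemmas \ref{L66} and \ref{L72}.
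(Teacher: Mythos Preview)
Your overall strategy is close in spirit to the paper's but diverges at a crucial point, and there is a genuine gap.

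The paper's proof does \emph{not} redo the double-kernel analysis of Lemma~\ref{L72} on the nonlinear equation. Instead, it treats the linearized semigroup $V(t)$ (whose $L^\infty$ decay is already established in Lemma~\ref{L72}) as a black box and writes
\[
g(t) = V(t)g_0 + \int_0^t V(t-s)\,\Gamma_{k+\beta-2}(g,g)(s)\,ds.
\]
Only the nonlinear Duhamel integral is then expanded once more, via the Duhamel formula for $V$, into a piece carrying $e^{-\nu(v)(t-s)}$ (handled pointwise by Lemma~\ref{L65}) and a piece carrying $K_{k+\beta-2}V(\tau-s)$ (handled by the $L^\infty$ decay of $V$ together with the interpolation $\|\langle v\rangle^\gamma g\|_\infty \le \|h\|_\infty^{(p+1)/(2p)}\|g\|_\infty^{(p-1)/(2p)}$).

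Your plan instead iterates the mild formula for the \emph{nonlinear} solution $g$ inside the $K$ term. Two problems arise. First, that iteration produces not two but three pieces: the $g_0$ piece, the double-kernel piece, \emph{and} a piece in which $K_{k+\beta-2}$ acts on the nonlinear term $\Gamma_{k+\beta-2}(g,g)$. You mention only the first two (``exactly as in $J_{21}$, $J_{22}$ of Lemma~\ref{L72}''), but Lemma~\ref{L72} treats a purely linear equation with no $\Gamma$. This omission is fixable --- the extra term can be bounded like your direct $\Gamma$ term with one additional kernel convolution --- but it is missing from your sketch.

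The more serious gap is in the double-kernel piece on $[0,s-\lambda]$. There you propose to ``invoke the $L^2_{x,v}$ exponential decay of the linearized flow (Lemma~\ref{L312})''. But the quantity being integrated is $g(\tau,\cdot,\cdot)$, which solves the \emph{nonlinear} equation \eqref{equation large amplitude ge 0 g}, not the linearized one; Lemma~\ref{L312} gives no decay for it. In Lemma~\ref{L72} this step works precisely because $\xi$ (and hence $\xi_1=\langle v\rangle^{-7/4}\xi$) is a linearized solution. To run your argument you would first need a nonlinear $L^2_{x,v}$ decay estimate for $g$ (or $f$) in the large-amplitude hard-potential setting --- an analogue of Lemma~\ref{L78} --- and no such result is available in the paper at this stage. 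The paper's route sidesteps the issue entirely: by factoring through the linearized semigroup $V$, the only $L^2$ decay ever needed is that of the \emph{linearized} flow, already built into Lemma~\ref{L72}.
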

\begin{proof}
Denote the solution to the linearized equation
\[
\partial_t g + v \cdot \nabla_x g + \nu(v) g = K_{k + \beta-2} g, \quad g(0, x, v) =g_0(x, v),
\]
by $g(t) =V(t) g_0$. By Lemma \ref{L72} we have
\[
\Vert V(t) g \Vert_{L^\infty_{x, v}} \le C_{k, \beta} e^{-\frac {\lambda_2} 2  t} \Vert g \Vert_{L^\infty_{x, v}}. 
\]
By Duhamel's principle we have
\[
g(t) = V(t) g_0 +\int_0^t V(t-s ) \{\Gamma_{k+\beta-2} (g, g) (s) \} ds = V(t) g_0 +\int_0^t V(t-s ) \{\langle v \rangle^{\beta-2} \Gamma_k (f, f) (s) \} ds.
\]
We easily compute
\[
\Vert g(t)  \Vert_{L^\infty_{x, v}} \le C_{k, \beta} e^{-\frac {\lambda_2} 2  t}  \Vert g_0 \Vert_{L^\infty_{x, v}} +  \left \Vert \int_0^t V(t-s ) \{\langle v \rangle^{\beta-2} \Gamma_k (f, f) (s) \} ds  \right \Vert_{L^\infty_{x, v}}. 
\]
For the second  term, using  Duhamel's principle again we have
\begin{equation*}
\begin{aligned}
\int_0^t V(t-s ) \{\langle v \rangle^{\beta-2} \Gamma_k (f, f) (s) \} ds =& \int_0^t e^{-\nu(v)(t-s)} \{\langle v \rangle^{\beta-2} \Gamma_k (f, f) (s) \} ds 
\\
&+ \int_0^t \int_s^t e^{-\nu(v) (t-\tau) } K_{k+ \beta-2} \{V(\tau-s) (\langle v \rangle^{\beta-2} \Gamma_k (f, f) (s)) \}d\tau  ds  
\\
=& \int_0^t e^{-\nu(v)(t-s)} \{\langle v \rangle^{\beta-2} \Gamma_k (f, f) (s) \} ds 
\\
&+ \int_0^t \int_0^\tau e^{-\nu(v) (t-\tau) } K_{k+ \beta-2} \{V(\tau-s) (\langle v \rangle^{\beta-2} \Gamma_k (f, f) (s)) \}ds  d\tau  : =A_1+A_2 . 
\end{aligned}
\end{equation*}
For the term $A_1$, since $\beta - 2 \ge 1$, by Lemma \ref{L65} we have
\begin{equation*}
\begin{aligned}
|A_1| &\le C_{k, \beta} \int_0^t e^{-\nu(v) (t-s) } \nu (v) \Vert g(s) \Vert_{L^\infty_{x, v} }^{\frac {3p+1} {2p}} \sup_{y \in \T^3 } \left ( \int_{\R^3} |f(s, y, v')| dv' \right)^{\frac {p-1} {2p}}  ds
\\
&\le  C_{k, \beta} \int_0^t e^{-\nu(v) (t-s) } \nu (v)  e^{-\frac {\lambda_2} 4 s } \sup_{0 \le s \le t, y \in \T^3 } \left \{ \left[e^{\frac {\lambda_2} 4 s  }\Vert g(s)\Vert_{L^\infty_{x, v} } \right]  \Vert g(s) \Vert_{L^\infty_{x, v} }^{\frac {p+1} {2p}} \left ( \int_{\R^3} |f(s, y, v')| dv' \right)^{\frac {p-1} {2p}}  \right\} ds
\\
 &\le  C_{k, \beta}   e^{-\frac {\lambda_2} 4 t } \sup_{0 \le s \le t, y \in \T^3 } \left \{ \left[e^{\frac {\lambda_2} 4 s  }\Vert g(s)\Vert_{L^\infty_{x, v} } \right]  \Vert g(s) \Vert_{L^\infty_{x, v} }^{\frac {p+1} {2p}} \left ( \int_{\R^3} |f(s, y, v')| dv' \right)^{\frac {p-1} {2p}}  \right\}. 
\end{aligned}
\end{equation*}
For the $A_2$ term, since $\gamma \le 1$ and $\frac {p+1} {2p} \ge \frac 1 2$ we have
\[
\Vert \langle v \rangle^{\gamma} g (s )\Vert_{L^\infty_{x, v}} \le \Vert \langle v \rangle^2 g(s) \Vert_{L^\infty_{x, v}}^{\frac {p+1} {2p}} \Vert g(s) \Vert_{L^\infty_{x, v}}^{\frac {p-1} {2p}} = \Vert  h(s) \Vert_{L^\infty_{x, v}}^{\frac {p+1} {2p}} \Vert g(s) \Vert_{L^\infty_{x, v}}^{\frac {p-1} {2p}},
\]
together with  Lemma \ref{L65} we have
\begin{equation*}
\begin{aligned}
\Vert V(\tau-s) (\langle v \rangle^{\beta-2} \Gamma_k (f, f) (s))  \Vert_{L^\infty_{x, v}} \le&C  e^{-\frac {\lambda_2} 2 (\tau- s)} \Vert\langle v \rangle^{\beta-2} \Gamma_k (f, f) (s)  \Vert_{L^\infty_{x, v}}  
\\
\le& C  e^{-\frac {\lambda_2} 2 (\tau- s)}  \Vert \langle v \rangle^{\gamma} g(s)  \Vert_{L^\infty_{x, v}} \Vert g(s) \Vert_{L^\infty_{x, v}}^{\frac {p+1} {2p}} \sup_{y \in \T^3 }\left ( \int_{\R^3} |f(s, y, v')| dv' \right)^{\frac {p-1} {2p}}
\\
\le& C  e^{-\frac {\lambda_2} 2 (\tau- s)}   \Vert h(s) \Vert_{L^\infty_{x, v}}^{\frac {p+1} {2p}} \Vert g(s) \Vert_{L^\infty_{x, v}}     \sup_{y \in \T^3 }\left ( \int_{\R^3} |f(s, y, v')| dv' \right)^{\frac {p-1} {2p}} ,
\end{aligned}
\end{equation*}
which implies
\begin{equation*}
\begin{aligned}
|A_2| \le& \int_0^t \int_0^\tau e^{-\nu(v) (t - \tau ) } \int_{\R^3} |l_{k + \beta-2} (v, v')| dv' \Vert  V(\tau-s) (\langle v \rangle^{\beta-2} \Gamma(f, f) (s)) \Vert_{L^\infty_{x, v}}  ds     d\tau 
\\
\le&  C_{k, \beta} \int_0^t \int_0^\tau e^{-\nu(v) (  t  -  \tau ) } \nu (v)     e^{-\frac {\lambda_2} 2 (\tau- s)}   \Vert h(s) \Vert_{L^\infty_{x, v}}^{\frac {p+1} {2p}} \Vert g(s) \Vert_{L^\infty_{x, v}}     \sup_{y \in \T^3 }\left ( \int_{\R^3} |f(s, y, v')| dv' \right)^{\frac {p-1} {2p}}   ds  d \tau 
\\
\le&  C_{k, \beta} \int_0^t \int_0^\tau e^{-\nu(v) (  t  -  \tau ) } \nu (v)     e^{-\frac {\lambda_2} 2 (\tau- s)}  e^{-\frac {\lambda_2} 4 s} \sup_{0 \le s \le t, y \in \T^3 } \left \{ \left[e^{\frac {\lambda_2} 4 s} \Vert g(s) \Vert_{L^\infty_{x, v} } \right]  \Vert h(s) \Vert_{L^\infty_{x, v} }^{\frac {p+1} {2p}}     \left ( \int_{\R^3} |f(s, y, v')| dv' \right)^{\frac {p-1} {2p}}   \right\}   ds  d \tau 
\\
\le &  C_{k, \beta} e^{-\frac {\lambda_2} 4 t} \sup_{0 \le s \le t, y \in \T^3 } \left \{ \left[e^{\frac {\lambda_2} 4 s} \Vert g(s) \Vert_{L^\infty_{x, v} } \right]  \Vert h(s) \Vert_{L^\infty_{x, v} }^{\frac {p+1} {2p}}     \left ( \int_{\R^3} |f(s, y, v')| dv' \right)^{\frac {p-1} {2p}}   \right\}.
\end{aligned}
\end{equation*}
Gathering the terms and taking supremum we have
\begin{equation*}
\begin{aligned}
\sup_{0 \le s \le t} \left[e^{\frac {\lambda_2} 4 s} \Vert g(s) \Vert_{L^\infty_{x, v} } \right] \le&  C_{k, \beta} \Vert g_0 \Vert_{L^\infty_{x, v} } +  C_{k, \beta} \sup_{0 \le s \le t, y \in \T^3 } \left \{ \left[e^{\frac {\lambda_2} 4 s} \Vert g(s) \Vert_{L^\infty_{x, v} } \right]  \Vert h(s) \Vert_{L^\infty_{x, v} }^{\frac {p+1} {2p}}     \left ( \int_{\R^3} |f(s, y, v')| dv' \right)^{\frac {p-1} {2p}}   \right\}  
\\
\le&  C_{k, \beta} \Vert g_0 \Vert_{L^\infty_{x, v} } +  C_{k, \beta} \sup_{0 \le s \le t_1}  \Vert h(s) \Vert_{L^\infty_{x, v} }^2  + C_{k, \beta} \sup_{0 \le s \le t} \left[e^{\frac {\lambda_2} 4 s} \Vert g(s) \Vert_{L^\infty_{x, v} } \right]   
\\
& \times \sup_{t_1 \le s \le t, y \in \T^3} \left \{ \Vert h(s) \Vert_{L^\infty_{x, v} }^{\frac {p+1} {2p}}     \left ( \int_{\R^3} |f(s, y, v')| dv' \right)^{\frac {p-1} {2p}}   \right\}  
\\
\le&  C_{k, \beta}  M^4 + C_{k, \beta} \sup_{0 \le s \le t} \left[e^{\frac {\lambda_2} 4 s} \Vert g(s) \Vert_{L^\infty_{x, v} } \right]   \sup_{t_1 \le s \le t, y \in \T^3 } \left \{ \Vert h(s) \Vert_{L^\infty_{x, v} }^{\frac {p+1} {2p}}     \left ( \int_{\R^3} |f(s, y, v')| dv' \right)^{\frac {p-1} {2p}}   \right\},
\end{aligned}
\end{equation*}
together with  \eqref{small term large amplitude} we deduce
\[
e^{\frac {\lambda_2} 4 t} \Vert g(t) \Vert_{L^\infty_{x, v} } \le 2 C_{k, \beta} M^4, 
\]
so the lemma is thus proved. 
\end{proof}

\begin{proof} ({\bf Proof of Theorem \ref{T13}})
Finally we come to prove the rate of convergence for $h$. By \eqref{mild solution h} we have
\begin{equation*}
\begin{aligned}
|h(t, x, v)| \le & e^{-\nu(v) t} \Vert h_0\Vert_{L^\infty_{x, v}} + \int_0^t e^{-\nu(v) (t-s) } |(K_{k + \beta} h) (s, x-v(t-s), v) |ds 
\\ 
&+  \int_0^t e^{-\nu(v) (t-s) } |\Gamma_{k +\beta} (h, h) (s, x-v(t-s), v)| ds :=  e^{-\nu(v) t} \Vert h_0\Vert_{L^\infty_{x, v}} +J_2 +J_3. 
\end{aligned}
\end{equation*}
For the $J_{3}$ term by Lemma \ref{L65} we have
\begin{equation*}
\begin{aligned}
J_3\le & C_{k, \beta} \int_0^t e^{-\nu(v) (t-s)} \langle v \rangle^\gamma    \Vert h(s) \Vert_{L^\infty_{x, v}}  \Vert h(s) \Vert_{L^\infty_{x, v} }^{\frac {p+1} {2p}}\sup_{y \in \T^3 }  \left ( \int_{\R^3}  |f(s, y, v')| dv' \right)^{\frac {p-1} {2p}}  ds
\\
\le & C_{k, \beta}  e^{-\frac  {\lambda_2 } 4 t } \sup_{0 \le s \le t, y \in \T^3 } \left \{ \left[e^{\frac {\lambda_2} 4 s} \Vert h(s) \Vert_{L^\infty_{x, v} } \right]  \Vert h(s) \Vert_{L^\infty_{x, v} }^{\frac {p+1} {2p}}     \left ( \int_{\R^3} |f(s, y, v')| dv' \right)^{\frac {p-1} {2p}}   \right\}.
\end{aligned}
\end{equation*}
For the $J_2$ term, by  \eqref{mild solution h} again we have 
\begin{equation*}
\begin{aligned}
J_2 \le& \int_0^t e^{-\nu(v) (t-s) }\int_{\R^3}| l_{k +\beta} (v, v')|  e^{-\nu(v') s}  | h_0 (\tilde{x} -v' s , v') | dv' ds  
\\
&+  \int_0^t e^{-\nu(v) (t-s) }\int_{\R^3} | l_{k+ \beta} (v, v')| \int_0^s  e^{-\nu(v') (s-\tau )} | \Gamma_{k +\beta} (h, h)| (\tau, \tilde{x} -v' (s-\tau), v') d \tau dv' ds 
\\
&+  \int_0^t e^{-\nu(v) (t-s) }  \int_{\R^3}  \int_{\R^3} | l_{k +\beta} (v, v')  l_{k +\beta} (v', v'')  |  \int_0^s  e^{-\nu(v') (s-\tau )}   | h(\tau, \tilde{x} -v' (s-\tau), v'') | d v''d \tau dv' ds 
\\
&:=J_{21} +J_{22} +J_{23}.
\end{aligned}
\end{equation*}
For the $J_{21}$ term, similarly as  \eqref{new J21}
\[
J_{21} \le C_k e^{-\frac {\lambda_2} 2  t} \Vert  h_0 \Vert_{L^\infty_{x, v}} \int_0^t e^{-\frac {\nu(v)} 2 (t-s) } \langle v \rangle^\gamma ds \le C_k e^{-\frac {\lambda_2} 2  t}   \Vert h_0 \Vert_{L^\infty_{x, v}}.
\]
For the $J_{22}$ term by Lemma \ref{L65} we have
\begin{equation*}
\begin{aligned}
J_{22}\le & C_{k, \beta}    \int_0^t e^{-\nu(v) (t-s) }\int_{\R^3} | l_{k+\beta} (v, v')|  \int_0^s e^{-\nu(v') (s -\tau )} \langle v'  \rangle^\gamma e^{-\frac  {\lambda_2 } 4 \tau } 
\\
& \times \sup_{0 \le s \le t, y \in \T^3 } \left \{ \left[e^{\frac {\lambda_2} 4 s} \Vert h(s) \Vert_{L^\infty_{x, v} } \right]  \Vert h(s) \Vert_{L^\infty_{x, v} }^{\frac {p+1} {2p}}     \left ( \int_{\R^3} |f(s, y, v')| dv' \right)^{\frac {p-1} {2p}}   \right\}   d\tau dv'   ds
\\
\le &  C_{k, \beta} e^{-\frac {\lambda_2} 4 t}    \sup_{0 \le s \le t, y \in \T^3 } \left \{ \left[e^{\frac {\lambda_2} 4 s} \Vert h(s) \Vert_{L^\infty_{x, v} } \right]  \Vert h(s) \Vert_{L^\infty_{x, v} }^{\frac {p+1} {2p}}     \left ( \int_{\R^3} |f(s, y, v')| dv' \right)^{\frac {p-1} {2p}}   \right\}  .
\end{aligned}
\end{equation*}
For the term $J_{23}$, we split into two case $|v| \ge N$ and $|v| \le N$, for the case $|v| \ge N$, by \eqref{v ge N} we have
\begin{equation*}
\begin{aligned}
J_{23} \le&  \sup_{ 0 \le s\le t} \{ e^{\frac {\lambda_2} 4  s} \Vert h (s )  \Vert_{L^\infty_{x, v}} \}  \int_0^t e^{-\nu(v) (t-s) }  \int_{\R^3}  \int_{\R^3} | l_{k + \beta}  (v, v')  l_{k + \beta} (v', v'')  |  \int_0^s  e^{-\nu(v') (s-\tau )}  e^{-\frac {\lambda_2} 4 \tau} d v''d \tau dv' ds 
\\
\le &\left( \frac {c^2} {k^{\frac {\gamma+3} 2}}   + \frac {C_{k, \beta}}{N^2} \right) e^{-\frac {\lambda_2} 4 t}   \sup_{ 0 \le s \le t} \{ e^{\frac {\lambda_2} 4  s } \Vert h (s )  \Vert_{L^\infty_{x, v}} \}. 
\end{aligned}
\end{equation*}
For the case $|v| \le N$, by \eqref{l k N beta} and \eqref{l k beta decomposition} we split $J_{23}$ into three terms respectively. For the first term we have
\begin{equation*}
\begin{aligned}
&\int_0^t e^{-\nu(v) (t-s) }  \int_{\R^3}  \int_{\R^3} |( l_{k +\beta} (v, v') - l_{k, N, \beta}(v, v') ) l_{k +\beta} (v', v'')  |  \int_0^s  e^{-\nu(v') (s-\tau )}   | h(\tau, \tilde{x} -v' (s-\tau), v'') | d v''d \tau dv' ds 
\\
\le &\frac {C_{k, \beta}} {N}  \sup_{ 0 \le s \le t} \{ e^{\frac {\lambda_2} 4 s} \Vert h (s )  \Vert_{L^\infty_{x, v}} \}      \int_0^t e^{-\nu(v) (t-s) }  \langle v \rangle^\gamma ds \int_0^{s} e^{-\nu(v') (s-\tau) }  \langle v' \rangle^\gamma  e^{-\frac {\lambda_2} 4 \tau }   d\tau   \le e^{-\frac {\lambda_2} 4 t}   \frac {C_{k, \beta}} {N}  \sup_{ 0 \le s\le t} \{ e^{\frac {\lambda_2} 4  s} \Vert h (s )  \Vert_{L^\infty_{x, v}} \}   ,
\end{aligned}
\end{equation*}
the second term can be estimated similarly.  For the third term, we have $l_{k, N, \beta}(v', v'') l_{k, N, \beta}(v', v'')$ is supported in $|v| \le N,  |v'|\le C_{k, N, \beta}',  |v''|\le C_{k, N, \beta}',$ for some constant $C_{k, N, \beta}'>0$. By Lemma \ref{L72} we have
\[
\int_{|v'| \le C_{k, N, \beta}', |v''| \le C_{k, N, \beta}' } | h(\tau, \tilde{x} -v' (s-\tau), v'') | dv' dv''
\le  C_{k, N, \beta} \Vert g(\tau) \Vert_{L^\infty_{x, v} } \le C_{k, N, \beta} e^{-\frac {\lambda_2} 4 \tau} \Vert g_0 \Vert_{L^\infty_{x, v} }   \le C_{k, N, \beta}  e^{-\frac {\lambda_2} 4 \tau} \Vert h_0 \Vert_{L^\infty_{x, v} },
\]
which implies
\begin{equation*}
\begin{aligned}
&\int_0^t e^{-\nu(v) (t-s) }  \int_{\R^3}  \int_{\R^3} | l_{k, N, \beta} (v, v')  l_{k, N, \beta}(v', v'')  |  \int_{0}^{s}  e^{-\nu(v') (s-\tau )}   | h(\tau, \tilde{x} -v' (s-\tau, v'')) | d v''d \tau dv' ds 
\\
\le &C_{k, N, \beta} \int_0^t e^{-\lambda_2 (t-s) } \int_{0}^{s}  e^{-  \lambda_2 (s-\tau )}  \int_{|v'| \le C_{k, N, \beta}', |v''| \le C_{k, N, \beta}' }   | h  (\tau, \tilde{x} -v' (s-\tau), v'') | d v''d \tau dv' ds 
\\
\le &C_{k, N, \beta} \Vert h_0 \Vert_{L^\infty_{x, v}} \int_0^t e^{-\lambda_2 (t-s) } \int_{0}^{s} e^{-  \lambda_2 (s-\tau )}   e^{-\frac {\lambda_2} 4 \tau} d \tau ds  \le C_{k, N, \beta}  e^{-\frac {\lambda_2} 4 t} \Vert h_0 \Vert_{L^\infty_{x, v}}. 
\end{aligned}
\end{equation*}
Gathering all the terms we have
\begin{equation*}
\begin{aligned}
\Vert h(t) \Vert_{L^\infty_{x, v} }  \le & C_{k, \beta, N} e^{-\frac {\lambda_2} 4  t} \Vert h_0 \Vert_{L^\infty_{x, v} }  +  \left   ( \frac {c^2} {k^{\frac {\gamma+3} 2}}   + \frac {C_{k, \beta}} {N}  \right)    e^{-\frac {\lambda_2} 4 t}   \sup_{ 0 \le s \le t} \{ e^{\frac {\lambda_2} 4 s } \Vert h(s )  \Vert_{L^\infty_{x, v} } \}
\\
&  +C_{k, \beta} e^{-\frac {\lambda_2} 4 t}    \sup_{0 \le s \le t, y \in \T^3 } \left \{ \left[e^{\frac {\lambda_2} 4 s} \Vert h(s) \Vert_{L^\infty_{x, v} } \right]  \Vert h(s) \Vert_{L^\infty_{x, v} }^{\frac {p+1} {2p}}     \left ( \int_{\R^3} |f(s, y, v')| dv' \right)^{\frac {p-1} {2p}}   \right\}.
\end{aligned}
\end{equation*}
Taking suitable $k, \beta, N>0$ such that
\[
 \frac {c^2} {k^{\frac {\gamma+3} 2}}  + \frac {C_{k, \beta}} {N}   \le \frac 1 4,
\]
together with  \eqref{small term large amplitude} we conclude that
\[
e^{\frac {\lambda_2} 4 t} \Vert h(t) \Vert_{L^\infty_{x, v} } \le 4C_{k, \beta}  M^4, 
\]
for some constant $C_{k, \beta}>0$. The rate of convergence for $\gamma \in [0, 1]$ is thus proved. 
\end{proof}

\subsection{Convergence rate for the case $-3 < \gamma <0$}

Next we come to prove the convergence for the case $-3< \gamma <0$. Before going to the proof, we first prove some useful lemmas. 

\begin{lem}\label{L74}
For any $a, b, k >  0$ we have 
\[
\sup_{0 < x \le b } x^k e^{-a x } \le C_{b, k} (1+a)^{-k},
\]
for some constant $C_{b, k} > 0$. As a consequence we have for $-3< \gamma \le 0$, for all $t>0 , k \ge 0$
\[
\sup_{v \in \R^d} \{ e^{-\nu(v) t} \nu(v)^k \} \le C (1+t)^{-k}, 
\]
for some constant $C>0$. 
\end{lem}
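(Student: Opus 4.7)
The plan is to prove the first statement by a direct calculus argument and then specialize it to $x=\nu(v)$ for the consequence.

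For the first inequality, I would study the function $f(x) = x^k e^{-ax}$ on $(0,\infty)$. Differentiating gives $f'(x) = x^{k-1} e^{-ax}(k - ax)$, so the unique critical point is $x^\ast = k/a$ with maximum value $f(x^\ast) = (k/a)^k e^{-k}$. I then split into two cases depending on whether this critical point lies in the interval $(0,b]$:

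\begin{itemize}
\item If $a \ge k/b$ (so that $x^\ast \le b$), the supremum on $(0,b]$ is attained at $x^\ast$ and equals $C_k a^{-k}$; since $a \ge k/b$, one has $a \sim 1+a$ with constants depending only on $b,k$, yielding the bound $C_{b,k}(1+a)^{-k}$.
\item If $a < k/b$ (so $x^\ast > b$), the function $f$ is increasing on $(0,b]$ and the supremum equals $b^k e^{-ab} \le b^k$; since $a$ is bounded by $k/b$, $(1+a)^{-k} \ge (1+k/b)^{-k}$, so the bound $b^k \le C_{b,k}(1+a)^{-k}$ follows trivially.
\end{itemize}

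For the consequence, I use the fact established in Section~\ref{section6} that $\nu(v) \sim \langle v\rangle^\gamma$. Since $\gamma \le 0$, the collision frequency $\nu(v)$ is uniformly bounded above by some constant $b>0$ (and positive for all $v\in\R^3$). Taking $x = \nu(v) \in (0,b]$ and $a = t$ in the first statement gives
\[
\sup_{v\in\R^3}\{\nu(v)^k e^{-\nu(v)t}\} \le \sup_{0<x\le b} x^k e^{-xt} \le C_{b,k}(1+t)^{-k},
\]
which is exactly the desired estimate with a constant depending only on $k$ (the parameter $b$ is absorbed since $\gamma$ is fixed).

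There is no real obstacle here; the lemma is an elementary calculus fact whose whole purpose is to serve as a technical tool to convert the pointwise exponential decay $e^{-\nu(v)t}$ (which degenerates as $|v|\to\infty$ when $\gamma<0$) into a uniform polynomial decay in $t$, something that will be used repeatedly in the soft potential convergence argument that follows. The only minor point to be careful about is making the constant truly uniform in $a$ (or $t$), which is handled cleanly by the two-case split above.
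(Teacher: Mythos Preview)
Your proof is correct and follows essentially the same approach as the paper: both differentiate $f(x)=x^k e^{-ax}$, locate the critical point $x^\ast=k/a$, split into the two cases $k/a\le b$ and $k/a>b$, and then apply the result with $x=\nu(v)$, $a=t$ using the boundedness of $\nu$ when $\gamma\le 0$. Your write-up is in fact slightly more explicit than the paper's in justifying why each case yields the $(1+a)^{-k}$ bound.
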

\begin{proof} Take $f(x) =x^k e^{-a x }$, we have
\[
f'(x) = k x^{k-1} e^{-ax} -a x^k e^{-ax}  = x^{k-1}( k - ax ) e^{-ax },
\]
so $f'(x) =0$ when $x=\frac k a$. We easily deduce that
\[
\sup_{0 \le x \le b } f(x) = f(\frac k a) = \frac {k^k} {a^k} e^{-k}, \quad \hbox{if} \quad  \frac k a \le b, \quad \sup_{0 \le x \le b } f(x) = f(b)  = b^k e^{-ab} , \quad \hbox{if} \quad  \frac k a > b,
\]
so the first statement is thus proved, the second statement just from the fact that $\gamma \le 0$ implies that $0< \nu(v) \le C$ for some constant $C \ge 0$.  
\end{proof}

\begin{lem}\label{L75}
If $-3 <\gamma < 0$, then for any $0 < r  < 1$ we have
\[
\int_0^t e^{-\nu(v) (t-s) } \nu(v) (1+s)^{-r} ds \le C_r (1+t)^{-r}, \quad \forall v \in \R^d, \quad \forall t \ge 0,
\]
for some constant $C_r >0$ independent of $v$.
\end{lem}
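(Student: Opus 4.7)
The plan is to split the time integral at the midpoint $s = t/2$ and control each piece separately, relying on Lemma \ref{L74} (which applies here because $\nu(v)$ is uniformly bounded on $\R^d$ when $\gamma < 0$) to extract the needed polynomial decay from the exponential factor.

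First, for the near-endpoint piece $s \in [t/2, t]$, I would use the monotonicity of $s \mapsto (1+s)^{-r}$ to pull out the factor $(1+t/2)^{-r} \le 2^r (1+t)^{-r}$, and then bound what remains by
\[
\int_{t/2}^t e^{-\nu(v)(t-s)} \nu(v)\, ds \;=\; 1 - e^{-\nu(v) t/2} \;\le\; 1.
\]
This piece therefore contributes at most $2^r (1+t)^{-r}$.

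Next, for the far-endpoint piece $s \in [0, t/2]$, I would factor out the worst exponential $e^{-\nu(v)(t-s)} \le e^{-\nu(v) t/2}$ and compute
\[
\int_0^{t/2} (1+s)^{-r}\, ds \;\le\; \frac{(1+t/2)^{1-r}}{1-r} \;\le\; \frac{(1+t)^{1-r}}{1-r},
\]
which gives the bound
\[
\int_0^{t/2} e^{-\nu(v)(t-s)} \nu(v)(1+s)^{-r}\, ds \;\le\; \frac{1}{1-r}\, \nu(v)\, e^{-\nu(v) t/2}\, (1+t)^{1-r}.
\]
The key observation is that Lemma \ref{L74}, applied with $k=1$, $a = t/2$, and $b = \sup_v \nu(v) < \infty$ (finite since $\gamma < 0$), yields
\[
\nu(v)\, e^{-\nu(v) t/2} \;\le\; C\,(1+t)^{-1}
\]
uniformly in $v$. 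Multiplying by $(1+t)^{1-r}$ gives $\le C(1+t)^{-r}$, so this piece contributes at most $\tfrac{C}{1-r}(1+t)^{-r}$.

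Adding the two contributions yields the desired bound with $C_r = 2^r + C/(1-r)$. There is no real obstacle here: the main subtlety is noticing that the uniform boundedness of $\nu(v)$ (a consequence of $\gamma < 0$) is what allows Lemma \ref{L74} to be applied uniformly in $v$, converting one power of $\nu(v)$ times $e^{-\nu(v)t/2}$ into the polynomial factor $(1+t)^{-1}$ that is needed to absorb the $(1+t)^{1-r}$ coming from the integration of $(1+s)^{-r}$.
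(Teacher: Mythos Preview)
Your proof is correct and follows essentially the same approach as the paper: both split the integral at $s=t/2$, handle the piece $s\in[t/2,t]$ by pulling out $(1+s)^{-r}\le C(1+t)^{-r}$ and integrating $e^{-\nu(v)(t-s)}\nu(v)$ directly, and handle the piece $s\in[0,t/2]$ by using Lemma~\ref{L74} to convert $\nu(v)e^{-\nu(v)(t-s)}$ into $C(1+t)^{-1}$ (you factor out $e^{-\nu(v)t/2}$ first, the paper applies Lemma~\ref{L74} pointwise in $s$ first, but the effect is identical).
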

\begin{proof}
We split the integral into two parts, $0 \le s \le \frac t 2 $ and $\frac t 2 \le s \le t$. For the first part we have $t-s \ge \frac t 2$, so by  Lemma \ref{L74} we have
\[
\int_0^{\frac t 2} e^{-\nu(v) (t-s) } \nu(v) (1+s)^{-r} ds \le C \int_0^{\frac t 2} (1+t-s)^{-1} (1+s)^{-r} ds \le C (1+t)^{-1} \int_0^{\frac t 2}  (1+s)^{-r} ds  \le C_r (1+t)^{-r}.
\]
For the second part we have $s \ge \frac t 2$, this time we have
\[
\int_{\frac t 2}^t e^{-\nu(v) (t-s) } \nu(v) (1+s)^{-r} ds \le C_r (1+t)^{-r}  \int_{\frac t 2}^t e^{-\nu(v) (t-s) } \nu(v) ds  \le  C_r (1+t)^{-r} (1-e^{-\nu(v)\frac t 2 })\le  C_r (1+t)^{-r},
\]
the proof is thus finished by gathering the two cases. 
\end{proof}
\begin{rmk}
If we directly use $(1+t-s)^{-1}$ instead of $e^{-\nu(v) (t-s) } \nu(v)$, we will have
\[
\int_0^t (1+t-s)^{-1} (1+s)^{-r} ds \le C_r (1+t)^{-r} \log(1+t),
\]
where an extra term $\log(1+t)$ occurs. 
\end{rmk}

\begin{lem}\label{L77} If $\gamma \in (-3, 0)$, for any $l \ge 2, k \ge 3 $ we have
\[
\Vert \langle  v \rangle^l  \Gamma_k (f, f) \Vert_{L^2}  \le C\Vert f\Vert_{  L^2} \Vert \langle v \rangle^l f\Vert_{ L^2 }^{1+\frac {\gamma} 3} \Vert \langle v \rangle^l   f \Vert_{ L^\infty}^{-\frac {\gamma} 3}  \le C \Vert f \Vert_{L^2}  \Vert \langle v \rangle^{2l } f \Vert_{L^\infty}^{\frac {p+1} {2p}} \Vert  f \Vert_{L^1}^{\frac {p-1} {2p}},
\]
where $p$ is defined in \eqref{requirement p}. 
\end{lem}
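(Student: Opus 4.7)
The plan is to split $\Gamma_k = \Gamma_k^+ - \Gamma_k^-$ and treat each piece separately, using Lemma~\ref{L25} (in the $\gamma\in(-3,0)$ form, which gives $L^1/L^\infty$ interpolation exponents $1+\gamma/3$ and $-\gamma/3$) as the main device for controlling the kernel $|v-v_*|^\gamma$, and Lemma~\ref{L211} for the angular factor in the gain term.

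For the loss term, use the explicit factorization
\[
\Gamma_k^-(f,f)(v) = f(v)\,\tilde J_k(v), \qquad \tilde J_k(v) := C\int_{\R^3}|v-v_*|^\gamma \langle v_*\rangle^{-k}|f(v_*)|\,dv_* ,
\]
and apply Lemma~\ref{L25} pointwise with $g(v_*):=\langle v_*\rangle^{-k}|f(v_*)|$. Since $k\ge 3>3/2$, Cauchy--Schwarz gives $\Vert g\Vert_{L^1}\le C\Vert f\Vert_{L^2}$, while $\Vert g\Vert_{L^\infty}\le \Vert\langle v\rangle^l f\Vert_{L^\infty}$, so that
\[
\Vert\tilde J_k\Vert_{L^\infty}\le C\,\Vert f\Vert_{L^2}^{1+\gamma/3}\Vert\langle v\rangle^l f\Vert_{L^\infty}^{-\gamma/3},
\]
and H\"older in $v$ delivers the desired factorized bound on $\Vert\langle v\rangle^l\Gamma_k^-(f,f)\Vert_{L^2}$. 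For the gain term, apply Cauchy--Schwarz in $(v_*,\sigma)$, peeling off the factor whose integral is $\le C_k\langle v\rangle^\gamma$ by Lemma~\ref{L211}; after squaring, multiplying by $\langle v\rangle^{2l}$ and integrating in $v$, use the pre--post collisional change of variables (Lemma~\ref{L23}) to swap $(v,v_*)\leftrightarrow(v',v_*')$, and the elementary inequality $\langle v'\rangle\le \langle v\rangle\langle v_*\rangle$ to distribute the weight $\langle v'\rangle^{2l+\gamma+k}/(\langle v\rangle^k\langle v_*\rangle^k)$ symmetrically onto the two arguments. The problem then reduces to bounding
\[
\int\!\!\int |v-v_*|^\gamma \langle v\rangle^{2l+\gamma}\langle v_*\rangle^{2l+\gamma} f(v)^2 f(v_*)^2 \,dv\,dv_* ,
\]
and a final application of Lemma~\ref{L25} to the $v_*$-integral (with $g=\langle v_*\rangle^{2l+\gamma}f(v_*)^2$) closes the gain estimate consistently with the loss estimate.

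The delicate step is to obtain the precise arrangement of norms stated in the lemma, namely an unweighted $\Vert f\Vert_{L^2}$ times the weighted $\Vert\langle v\rangle^l f\Vert_{L^2}^{1+\gamma/3}$ rather than the opposite, which requires careful tracking of where the $\langle v\rangle^{-k}$ weight is absorbed in the convolution estimate; this is the main obstacle. Granted the first inequality, the second follows by standard interpolation: combining $\Vert\langle v\rangle^l f\Vert_{L^2}^2 \le \Vert\langle v\rangle^{2l}f\Vert_{L^\infty}\Vert f\Vert_{L^1}$ with $\Vert\langle v\rangle^l f\Vert_{L^\infty}\le\Vert\langle v\rangle^{2l}f\Vert_{L^\infty}$ yields the intermediate bound with exponents $(1-\gamma/3)/2$ and $(1+\gamma/3)/2$; the gap to the target exponents $(p+1)/(2p)$ and $(p-1)/(2p)$ is then absorbed using $\Vert f\Vert_{L^1}\le C\Vert\langle v\rangle^{2l}f\Vert_{L^\infty}$ (valid since $2l\ge 4>3$), and the conditions $-3<p\gamma$ and $p$ close to $1$ built into~\eqref{requirement p} ensure that the gap exponent $(1+p\gamma/3)/(2p)$ is positive so that the absorption goes in the correct direction.
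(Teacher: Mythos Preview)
Your plan has the right ingredients but the ``delicate step'' you flag is a genuine gap, not a matter of bookkeeping. Your loss-term argument gives
\[
\Vert\langle v\rangle^l\Gamma_k^-(f,f)\Vert_{L^2}\le C\,\Vert\langle v\rangle^l f\Vert_{L^2}\,\Vert f\Vert_{L^2}^{1+\gamma/3}\,\Vert\langle v\rangle^l f\Vert_{L^\infty}^{-\gamma/3},
\]
which has the weighted and unweighted $L^2$ norms swapped relative to the claim; since $\Vert f\Vert_{L^2}\le\Vert\langle v\rangle^l f\Vert_{L^2}$ and $1+\gamma/3<1$, your bound is strictly weaker and does not imply the lemma's first inequality. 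The same problem appears in your gain-term argument: bounding $\langle v'\rangle^{k+2l+\gamma}/(\langle v\rangle^k\langle v_*\rangle^k)$ by $\langle v\rangle^{2l+\gamma}\langle v_*\rangle^{2l+\gamma}$ via $\langle v'\rangle\le\langle v\rangle\langle v_*\rangle$ puts the weight on \emph{both} arguments and forces $\Vert\langle v\rangle^l f\Vert_{L^2}^{2+\gamma/3}$ rather than one clean unweighted $\Vert f\Vert_{L^2}$.

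The missing device is to argue by duality and apply Cauchy--Schwarz in \emph{all} variables $(v,v_*,\sigma)$, placing the entire kernel weight with the test function: for $\Gamma_k^+$ one puts $\langle v\rangle^{2k}/(\langle v'\rangle^{2k}\langle v_*'\rangle^{2k})$ with $|h|^2\langle v\rangle^{-\gamma}$ (this factor is then $\lesssim\Vert h\Vert_{L^2}^2$ by Lemma~\ref{L211}), and for $\Gamma_k^-$ one puts $\langle v_*\rangle^{-2k}$ with $|h|^2\langle v\rangle^{-\gamma}$ (bounded by Lemma~\ref{L29}). The remaining $f$-factor is then the clean expression $\int\!\!\int |v-v_*|^\gamma|f(v)|^2|f(v_*)|^2\langle v\rangle^{2l+\gamma}\,dv\,dv_*$ (after pre--post and the additive splitting $\langle v'\rangle^{2l+\gamma}\lesssim\langle v\rangle^{2l+\gamma}+\langle v_*\rangle^{2l+\gamma}$), in which one copy of $|f|^2$ carries no weight; Lemma~\ref{L25} applied to the weighted copy then yields exactly $\Vert f\Vert_{L^2}\Vert\langle v\rangle^l f\Vert_{L^2}^{1+\gamma/3}\Vert\langle v\rangle^l f\Vert_{L^\infty}^{-\gamma/3}$. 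Your derivation of the second inequality from the first is correct.
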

\begin{proof}
For the $\Gamma_k^+(f, f)$ term we prove by duality, for any smooth function $h$ we have
\begin{equation*}
\begin{aligned}
|( \langle  v \rangle^l  \Gamma^+_k (f, f) , h)| \le & \int_{\R^3} \int_{\R^3}\int_{\mathbb{S}^2} |v - v_* |^\gamma \frac {\langle v \rangle^k} 
{\langle v_*' \rangle^k \langle v' \rangle^k}  b(\cos \theta) |f(v_*')| | f(v')|  |h(v) | \langle v \rangle^{l}   dv dv_* d \sigma
\\
\le &\left(  \int_{\R^3} \int_{\R^3}\int_{\mathbb{S}^2} |v - v_* |^\gamma  b(\cos \theta) |f(v_*')|^2 | f(v')|^2   \langle v \rangle^{2l+\gamma}   dv dv_* d \sigma \right)^{1/2}
\\
& \left (\int_{\R^3} \int_{\R^3}\int_{\mathbb{S}^2} |v - v_* |^\gamma \frac {\langle v \rangle^{2k}} 
{\langle v_*' \rangle^{2k} \langle v' \rangle^{2k} }  b(\cos \theta)  |h(v) |^2  \langle v \rangle^{-\gamma}   dv dv_* d \sigma \right)^{1/2}. 
\end{aligned}
\end{equation*}
By Lemma \ref{L211} we have
\[
\int_{\R^3} \int_{\R^3}\int_{\mathbb{S}^2} |v - v_* |^\gamma \frac {\langle v \rangle^{2k}} 
{\langle v_*' \rangle^{2k} \langle v' \rangle^{2k} }  b(\cos \theta)  |h(v) |^2  \langle v \rangle^{-\gamma}   dv dv_* d \sigma  \le \Vert h \Vert_{L^2}.
\]
Since $2l + \gamma \ge 0$, by pre-post collisional change of variables we have
\begin{equation*}
\begin{aligned}
&\int_{\R^3} \int_{\R^3}\int_{\mathbb{S}^2} |v - v_* |^\gamma  b(\cos \theta) |f(v_*')|^2 | f(v')|^2   \langle v \rangle^{2l+\gamma}   dv dv_* d \sigma
\\
\le&C\int_{\R^3} \int_{\R^3}\int_{\mathbb{S}^2} |v - v_* |^\gamma  b(\cos \theta) |f(v_*)|^2 | f(v)|^2   \langle v_* \rangle^{2l+\gamma}   dv dv_* d \sigma 
\\
 & + C\int_{\R^3} \int_{\R^3}\int_{\mathbb{S}^2} |v - v_* |^\gamma  b(\cos \theta) |f(v_*)|^2 | f(v)|^2   \langle v \rangle^{2l+\gamma}   dv dv_* d \sigma
: =I_1 +I_2.
\end{aligned}
\end{equation*}
Without loss of generality we only compute $I_1$, by Lemma \ref{L25} we have
\begin{align}
\label{f f L2 L infty}
\nonumber
&\int_{\R^3} \int_{\R^3}\int_{\mathbb{S}^2} |v - v_* |^\gamma  b(\cos \theta) |f(v_*)|^2 | f(v)|^2   \langle v_* \rangle^{2l+\gamma}   dv dv_* d \sigma 
\\
\le & C\Vert f \Vert_{L^2}^2 \left( \sup_{v \in \R^3} \int_{\R^3} |v -v_*|^\gamma |f(v_*)|^2 \langle v_* \rangle^{2l+\gamma} dv_*  \right) \le C\Vert f \Vert_{L^2}^2  \Vert f^2 \Vert_{L^1_{2l}}^{1 + \frac \gamma 3} \Vert f^2 \Vert_{L^\infty_{2l}}^{ - \frac \gamma 3} \le C \Vert f \Vert_{L^2_{l}}^2  \Vert f \Vert_{L^2_l}^{2 +  \frac {2\gamma} 3} \Vert f \Vert_{L^\infty_l}^{ - \frac {2\gamma} 3} ,
\end{align}
the $\Gamma^{+}_k(f, f)$ term is proved by duality. For the $\Gamma_k^-(f, f)$ term, we easily compute
\begin{equation*}
\begin{aligned}
(\langle v \rangle^{l} \Gamma_k^- (f, f), h) =& \int_{\R^3}  \int_{\R^3} \int_{\mathbb{S}^2} |v-v_*|^\gamma b(\cos \theta) f(v_*) \langle v_*  \rangle^{-k } \langle v \rangle^l f(v) h(v) d v dv_*  d \sigma
\\
\le &\left(  \int_{\R^3} \int_{\R^3}\int_{\mathbb{S}^2} |v - v_* |^\gamma  b(\cos \theta) |f(v_*)|^2 | f(v)|^2   \langle v \rangle^{2l+\gamma}   dv dv_* d \sigma \right)^{1/2}
\\
& \left (\int_{\R^3} \int_{\R^3}\int_{\mathbb{S}^2} |v - v_* |^\gamma \langle v_* \rangle^{-2k} b(\cos \theta)  |h(v) |^2  \langle v \rangle^{-\gamma}   dv dv_* d \sigma \right)^{1/2}. 
\end{aligned}
\end{equation*}
The first term is the same as \eqref{f f L2 L infty}, for the second term we have
\[
\int_{\R^3} \int_{\R^3}\int_{\mathbb{S}^2} |v - v_* |^\gamma \langle v_* \rangle^{-2k} b(\cos \theta)  |h(v) |^2  \langle v \rangle^{-\gamma}   dv dv_* d \sigma  \le \Vert h \Vert_{L^2},
\]
so the first inequality is thus proved. For the last inequality by  \eqref{requirement p} we have $\frac {p-1} {2p} \le \frac 1 2 + \frac {\gamma} 6$, together with $l \ge 2$ we have
\[
\Vert \langle v \rangle^{l} f\Vert_{  L^2}^{1+\frac {\gamma} 3}  \le C \Vert f  \Vert_{L^1}^{\frac 1 2+\frac {\gamma} 6}  \Vert \langle v \rangle^{2l} f\Vert_{L^\infty}^{\frac 1 2+\frac {\gamma} 6}  \le C \Vert  f \Vert_{L^1}^{\frac {p-1} {2p}} \Vert \langle v \rangle^{2l} f \Vert_{L^\infty}^{1 + \frac \gamma 3 - \frac {p-1} {2p}},
\]
the lemma is thus proved. 
\end{proof}

We first prove that the converge in $L^2_x L^2_v$, in fact the $L^2_x L^2_v$ convergence for the linearized semigroup is proved in Lemma \ref{L312}, so we only need to prove the convergence for the nonlinear equation.

\begin{lem}\label{L78}
Suppose $f$ the solution to \eqref{Boltzmann equation f}, then there exists $k_0 >\max\{3, 3+\gamma \}$ such that if $ k \ge k_0$ large, $\beta  \ge 6$ then we have
\[
\Vert f(t)\Vert_{L^2_x L^2_v} \le 4 C_{k, \beta} M^4(1+t)^{-r_1} ,\quad \forall t \ge 0, 
\]
for some constants $C_{k, \beta}, r_1 > 1$. 
\end{lem}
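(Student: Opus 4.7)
\medskip

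\noindent\textbf{Plan for Lemma \ref{L78}.} I will prove the $L^2_x L^2_v$ decay by a continuity bootstrap that parallels the hard-potential proof of Lemma \ref{L73}, with the exponential semigroup decay replaced by the polynomial decay of Lemma \ref{L312}. Setting $g := F - \mu$ and using the elementary one-sided inequality $\|f(t)\|_{L^2_x L^2_v} \le \|g(t)\|_{L^2_x L^2_{k_0}}$ for any $k_0 \ge 0$, it suffices to control $\|g(t)\|_{L^2_x L^2_{k_0}}$ for a fixed $k_0 > 4$. The function $g$ satisfies $\partial_t g + v\cdot\nabla_x g = Lg + Q(g, g)$ with $Pg(t) \equiv 0$ preserved by conservation, so Duhamel's formula against $S_{\bar L}$ reads
\[
g(t) = S_{\bar L}(t) g_0 + \int_0^t S_{\bar L}(t-s)\, Q(g, g)(s)\, ds.
\]

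I choose three weights $4 < k_0 < k_* < k_1$ with $k_1 \le k + \beta - 2$, so that $\|g_0\|_{L^2_x L^2_{k_1}} \lesssim M$ (using $|g_0| \lesssim M\langle v\rangle^{-(k+\beta)}$ obtained from the hypothesis $\|\langle v\rangle^{k+\beta} f_0\|_{L^\infty_{x,v}} \le M$), and with $r_1 := (k_1 - k_*)/|\gamma| > 1$; both conditions are simultaneously satisfiable for $k \ge k_0$ large (forcing a threshold $k_0 \ge 8$) and $\beta \ge 6$. Lemma \ref{L312} then provides the linear decay $\|S_{\bar L}(t) g_0\|_{L^2_x L^2_{k_0}} \lesssim C_{k,\beta} M \langle t\rangle^{-r_1}$. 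For the nonlinear piece, I apply Lemma \ref{L77} pointwise in $x$ (via the identification $\Gamma_k(f,f) = \langle v\rangle^k Q(g, g)$) and integrate over $x$ using a Hölder splitting that combines three ingredients: the bootstrap quantity $\|g(s)\|_{L^2_x L^2_{k_0}}$, the uniform $L^\infty$ bound $\|\langle v\rangle^{k+\beta} g(t)\|_{L^\infty_{x,v}} \lesssim M^2$ from Theorem \ref{T13}, and the global $L^1_{x,v}$ smallness $\|g(t)\|_{L^1_{x,v}} \lesssim \sqrt{H(F_0)} + H(F_0) \lesssim \epsilon_0^{1/2}$ obtained from Lemma \ref{L213} and the H-theorem (supplemented by a velocity-splitting to control weighted $L^1$ moments by the interplay of $L^\infty_{x,v}$ decay for large $v$ and $L^1_{x,v}$ entropy smallness for small $v$).

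Under the bootstrap assumption $\|g(s)\|_{L^2_x L^2_{k_0}} \le A \langle s\rangle^{-r_1}$ on $[0, t]$ with $A := 4 C_{k,\beta} M^4$, the outcome of the nonlinear estimate takes the form
\[
\|Q(g, g)(s)\|_{L^2_x L^2_{k_1}} \le C_{k,\beta}'\, M^{\sigma_1}\, \epsilon_0^{\sigma_2}\, \|g(s)\|_{L^2_x L^2_{k_0}}
\]
for suitable exponents $\sigma_1, \sigma_2 > 0$. Inserting this into Duhamel, combined with the standard convolution inequality $\int_0^t \langle t-s\rangle^{-r_1}\langle s\rangle^{-r_1}\, ds \lesssim \langle t\rangle^{-r_1}$ (valid since $r_1 > 1$), yields
\[
\|g(t)\|_{L^2_x L^2_{k_0}} \le C_{k,\beta} M\langle t\rangle^{-r_1} + C_{k,\beta}'' M^{\sigma_1} \epsilon_0^{\sigma_2}\, A\, \langle t\rangle^{-r_1},
\]
which closes the bootstrap once $\epsilon_0$ is taken small enough (depending on $M, k, \beta$) so that $C_{k,\beta}'' M^{\sigma_1} \epsilon_0^{\sigma_2} \le 1/2$. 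A standard continuity argument, initialized by the local $L^\infty$ theory of Lemma \ref{L64} and confined throughout $[0,\infty)$ by the global bound of Theorem \ref{T13}, then extends the estimate to all $t \ge 0$.

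The main obstacle is the Hölder step converting Lemma \ref{L77}'s pointwise-in-$x$ $L^1_v$ factor into the globally small $L^1_{x,v}$ factor while preserving a linear, rather than fractional, dependence on the bootstrap quantity; a naive application produces a sub-linear power that degrades the rate under iteration. My plan is to employ a three-factor Hölder in $x$ combined with Jensen's inequality applied to the concave function $\xi \mapsto \xi^{(p-1)/p}$ (valid for the $p>1$ fixed in \eqref{requirement p}), together with the uniform interpolation $\|g\|_{L^\infty_x L^2_{k_0}} \lesssim M^2$ to absorb the mismatch, so that the bootstrap variable appears with power exactly one and the remaining factors are either entropy-small or uniformly bounded by powers of $M$. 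A secondary bookkeeping issue is the joint compatibility of the weights $k_0, k_*, k_1, \beta$, which must simultaneously satisfy $r_1 > 1$, finiteness of $\|g_0\|_{L^2_x L^2_{k_1}}$ and $\|\langle v\rangle^{k_1} g\|_{L^\infty_{x,v}}$, and the Lemma \ref{L77} admissibility $k_1 - k_0 \ge 2$; these constraints jointly force the stated hypotheses $k \ge k_0$ and $\beta \ge 6$.
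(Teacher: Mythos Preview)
Your overall architecture---Duhamel against the linear semigroup, polynomial decay from Lemma~\ref{L312}, the nonlinear bound of Lemma~\ref{L77}, and a continuity bootstrap with exponent $r_1>1$---is exactly the paper's route. The paper works directly with $f$ and the conjugated semigroup $V(t)$ rather than with $g=F-\mu$ and $S_{\bar L}$, and uses only the single weight jump $L^2_xL^2_3 \to L^2_xL^2_v$ (taking $l=3$ in Lemma~\ref{L77}, which is why $\beta\ge 6$ enters: one needs $\|\langle v\rangle^6 f\|_{L^\infty_{x,v}}\lesssim \|h\|_{L^\infty_{x,v}}$). Your three-level chain $k_0<k_*<k_1$ is unnecessary extra bookkeeping but harmless.

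The genuine gap is your smallness mechanism. You propose to close the bootstrap using the entropy-controlled quantity $\|g(t)\|_{L^1_{x,v}}\lesssim \sqrt{H(F_0)}+H(F_0)$, and you correctly identify that the obstacle is keeping power exactly one on the bootstrap variable while extracting an $L^1_{x,v}$ factor. But the H\"older/Jensen scheme you sketch does not achieve this: from Lemma~\ref{L77} applied pointwise in $x$, squaring and integrating gives
\[
\|\langle v\rangle^l\Gamma_k(f,f)\|_{L^2_xL^2_v}^2 \le C\,\|\langle v\rangle^{2l}f\|_{L^\infty_{x,v}}^{(p+1)/p}\int_{\T^3}\|f(x)\|_{L^2_v}^2\,\|f(x)\|_{L^1_v}^{(p-1)/p}\,dx,
\]
and any splitting that pulls $\|f(x)\|_{L^1_v}$ out via Jensen or H\"older (using $(p-1)/p<1$) forces you to bound at least one $\|f(x)\|_{L^2_v}$ factor by the $L^\infty_x$ norm, leaving the bootstrap quantity with power at most $1$ in the squared estimate, hence at most $1/2$ after taking the square root. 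That sublinear power yields a convolution $\int_0^t \langle t-s\rangle^{-r_1}\langle s\rangle^{-r_1/2}\,ds\sim\langle t\rangle^{-r_1/2}$, and the bootstrap collapses to the slower rate.

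The fix is immediate and is what the paper does: instead of trying to reach $\|g\|_{L^1_{x,v}}$, simply take the supremum over $x$ of the combined factor and invoke \eqref{small term large amplitude}, which was already established in Section~\ref{section6} as part of the global-existence argument. That gives
\[
\|\langle v\rangle^3\Gamma_k(f,f)(s)\|_{L^2_xL^2_v}\le C\,\|f(s)\|_{L^2_xL^2_v}\cdot\sup_{y\in\T^3}\Bigl\{\|h(s)\|_{L^\infty_{x,v}}^{(p+1)/(2p)}\Bigl(\int_{\R^3}|f(s,y,v')|\,dv'\Bigr)^{(p-1)/(2p)}\Bigr\},
\]
with power exactly one on the bootstrap variable and the remaining factor bounded by $\tfrac12$ for $s\ge t_1$ via \eqref{small term large amplitude} (and by $CM^4$ for $s\le t_1$ via Lemma~\ref{L64}). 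Once you make this substitution your proof is complete and coincides with the paper's.
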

\begin{proof}
Denote the solution to the linearized equation
\[
\partial_t \xi + v \cdot \nabla_x \xi + \nu(v) \xi = K_{k} \xi, \quad \xi(0, x, v) = \xi_0(x, v),
\]
by $\xi(t) =V(t) \xi_0$. By Lemma \ref{L312} we have
\[
\Vert V(t) \xi \Vert_{L^2_x L^2_v} \lesssim  \langle t \rangle^{ - \frac {l} {|\gamma|}}  \Vert \xi \Vert_{L^2_x L^2_3}, \quad \forall  l \in (0, 3).
\]
By Duhamel's principle we have
\[
f(t) = V(t) f_0 +\int_0^t V(t-s) \{ \Gamma_k (f, f)(s)  \}ds,
\]
which implies
\[
\Vert f(t) \Vert_{L^2_xL^2_v } \le C(1+t)^{-r_1} \Vert \langle  v \rangle^3 f_0 \Vert_{L^2_x  L^2_v} + C \int_0^t (1+t-s)^{-r_1} \Vert \langle  v \rangle^3  \Gamma_k (f, f)(s) \Vert_{L^2_xL^2_v}  ds ,
\]
for some constant $ r_1>1$. By Lemma \ref{L77} we have
\[
\Vert \langle  v \rangle^3  \Gamma_k (f, f)(s) \Vert_{L^2_xL^2_v}  \le C\Vert f(s)\Vert_{L^2_x L^2_v} \Vert \langle v \rangle^{6} f(s) \Vert_{L^\infty_{x, v}}^{\frac {p+1} {2p}} \sup_{y \in \T^3}  \left (\int_{\R^3} |f(s, y, v')| dv' \right)^{\frac {p-1} {2p}},
\]
which implies
\begin{equation*}
\begin{aligned}
&\int_0^t (1+t-s)^{-r_1} \Vert \langle  v \rangle^3  \Gamma_k (f, f)(s) \Vert_{L^2_x L^2_v}  ds 
\\
\le&C \int_0^t (1+t-s)^{-r_1} (1+s)^{-r_1}   \sup_{0 \le s \le t, y \in \T^3 } \left \{ [(1+s)^{r_1} \Vert f(s)   \Vert_{L^2_x L^2_v}  ]\Vert \langle v \rangle^{6} f(s) \Vert_{L^\infty_{x, v}  }^{\frac {p+1} {2p}}  \left (\int_{\R^3} |f(s, y, v')| dv' \right)^{\frac {p-1} {2p}} \right \} ds 
\\
\le&C (1+t)^{-r_1}  \sup_{0 \le s \le t, y \in \T^3 } \left \{ [(1+s)^{r_1} \Vert f(s)   \Vert_{L^2_x L^2_v}  ]\Vert \langle v \rangle^{6} f(s) \Vert_{L^\infty_{x, v}  }^{\frac {p+1} {2p}}  \left (\int_{\R^3} |f(s, y, v')| dv' \right)^{\frac {p-1} {2p}} \right \}.
\end{aligned}
\end{equation*}
Gathering  the two terms together we easily have
\begin{equation*}
\begin{aligned}
\sup_{0 \le s \le t}   [(1+s)^{r_1} \Vert f(s)   \Vert_{L^2_xL^2_v}  ] \le& C_{k} \Vert \langle v \rangle^{3}  f_0\Vert_{L^2_xL^2_v} 
\\
&+ C_{k}  \sup_{0 \le s \le t, y \in \T^3 }  \left \{ [(1+s)^{r_1} \Vert f(s)   \Vert_{L^2_xL^2_v}  ] \Vert \langle v \rangle^{6} f(s) \Vert_{L^\infty_{x, v}}^{\frac {p+1} {2p}}  \left (\int_{\R^3} |f(s, y, v')| dv' \right)^{\frac {p-1} {2p}} \right \}
\\
\le &C_{k, \beta} M^4 + C_{k}   \sup_{0 \le s \le t }   [(1+s)^{r_1} \Vert f(s)   \Vert_{L^2_x L^2_v}  ]  
\\
& \times \sup_{t_1 \le s \le t, y \in \T^3 } \left\{  \Vert \langle v \rangle^{6} f(s) \Vert_{L^\infty_{x, v} }^{\frac {p+1} {2p}}  \left (\int_{\R^3} |f(s, y, v')| dv' \right)^{\frac {p-1} {2p}} \right \}.
\end{aligned}
\end{equation*}
If $\beta \ge 6$, by \eqref{small term large amplitude} we deduce 
\[
\Vert f(t)\Vert_{L^2_xL^2_v} \le 2 C_{k, \beta} M^4(1+t)^{-r_1} ,\quad \forall t \ge 0, 
\]
the proof is thus finished. 
\end{proof}

\begin{proof}({\bf Proof of Theorem  \ref{T13}} ) Recall \eqref{mild solution}, the corresponding mild solution is 
\begin{equation*}
\begin{aligned}
|f(t, x, v)| \le & e^{-\nu(v) t} f_0(x-vt, v)  + \int_0^t e^{-\nu(v) (t-s) } |(K_{k} f) (s, x-v(t-s), v) |ds 
\\ 
&+  \int_0^t e^{-\nu(v) (t-s) } |\Gamma_{k} (f, f) (s, x-v(t-s), v)| ds := J_1 +J_2 +J_3.
\end{aligned}
\end{equation*}
 For any $ r \in (0, 1)$ fixed, for the $J_1$ term by Lemma \ref{L74} we have
\[
|J_1| \le e^{-\nu (v) t}  \nu(v)^r  |\nu(v)^{-r} f_0 (x-vt, v)| \le C (1+t)^{-r} \Vert \nu^{-r} f_0 \Vert_{L^\infty_{x, v} } \le C M (1+t)^{-r}. 
\]
For the $J_3$ term, by Lemma \ref{L65} we have
\begin{equation*}
\begin{aligned}
J_3 \le & C_{k} \int_0^t e^{-\nu(v) (t-s)} \langle v \rangle^\gamma \Vert f(s)\Vert_{L^\infty} \Vert h(s) \Vert_{L^\infty_{x, v} }^{\frac {p+1} {2p}}    \sup_{ y \in \T^3 } \left ( \int_{\R^3} |f(s, y, v')| dv' \right)^{\frac {p-1} {2p}}  ds  
\\
\le & C_{k} \int_0^t e^{-\nu(v) (t-s)} \langle v \rangle^\gamma (1+s)^{-r} ds  \sup_{0 \le s \le t, y \in \T^3 } \left \{ [ (1+s)^{r} \Vert f(s) \Vert_{L^\infty_{x, v}  } ]\Vert h(s) \Vert_{L^\infty_{x, v}  }^{\frac {p+1} {2p}}  \left (   \int_{\R^3}  |f(s, y, v')| dv' \right)^{\frac {p-1} {2p}} \right\}
\\
\le & C_{k} (1+t)^{-r}  \sup_{0 \le s \le t, y \in \T^3 } \left \{ [ (1+s)^{r} \Vert f(s) \Vert_{L^\infty_{x, v}  } ]\Vert h(s) \Vert_{L^\infty_{x, v}  }^{\frac {p+1} {2p}}  \left (   \int_{\R^3}  |f(s, y, v')| dv' \right)^{\frac {p-1} {2p}} \right\}.
\end{aligned}
\end{equation*}
For the $J_2$ term, denote $\tilde{x} = x - v(t-s) $ we have
\[
J_2 \le \int_0^t e^{-\nu(v) (t-s) }\int_{\R^3}| l_{k}  (v, v') f(s, \tilde{x}, v')| dv' ds. 
\] 
by  \eqref{mild solution}  again we have 
\begin{equation*}
\begin{aligned}
J_2 \le& \int_0^t e^{-\nu(v) (t-s) }\int_{\R^3}| l_{k} (v, v')|  e^{-\nu(v') s}  | f_0 (\tilde{x} -v' s , v') | dv' ds  
\\
&+  \int_0^t e^{-\nu(v) (t-s) }\int_{\R^3} | l_{k}  (v, v')| \int_0^s  e^{-\nu(v') (s-\tau )} | \Gamma_{k} (f, f)| (\tau, \tilde{x} -v' (s-\tau), v') d \tau dv' ds 
\\
&+  \int_0^t e^{-\nu(v) (t-s) }  \int_{\R^3}  \int_{\R^3} | l_{k} (v, v')  l_{k} (v', v'')  |  \int_0^s  e^{-\nu(v') (s-\tau )}   | f(\tau, \tilde{x} -v' (s-\tau), v'') | d v''d \tau dv' ds  := J_{21} +J_{22} +J_{23}.
\end{aligned}
\end{equation*}
For the $J_{21}$ term by Lemma \ref{L74} and Lemma \ref{L75} we have
\begin{equation*}
\begin{aligned}
J_{21} \le& \int_0^t e^{-\nu(v) (t-s) }\int_{\R^3}| l_k (v, v')|  e^{-\nu(v') s} \nu(v')^r  |  \nu(v')^{-r} f_0 (\tilde{x} -v' s , v')| dv' ds 
\\
\le&C \Vert \nu^{-r} f_0 \Vert_{L^\infty_{x, v}  } \int_0^t e^{-\nu(v) (t-s) }\int_{\R^3}| l_k (v, v')| (1+s)^{-r} dv' ds 
\\
\le&C_{k} \Vert \nu^{-r} f_0 \Vert_{L^\infty_{x, v}  } \int_0^t e^{-\nu(v) (t-s) } \langle v \rangle^{\gamma} (1+s)^{-r} dv' ds  
\\
\le&  C_{k} \Vert \nu^{-r} f_0 \Vert_{L^\infty_{x, v}  }  (1+t)^{-r} \le   C_{k} M (1+t)^{-r} . 
\end{aligned}
\end{equation*}
For the $J_{22}$ term, by Lemma \ref{L65} we have
\begin{equation*}
\begin{aligned}
J_{22} \le  &\int_0^t e^{-\nu(v) (t-s) }\int_{\R^3} | l_{k} (v, v')| \int_0^s  e^{-\nu(v') (s-\tau )} | \Gamma_{k} (f, f)| (\tau, \tilde{x} -v' (s-\tau), v') d \tau dv' ds
\\
\le & C_{k}    \int_0^t e^{-\nu(v) (t-s) }\int_{\R^3} | l_{k}  (v, v')|  \int_0^s e^{-\nu(v') (s -\tau )} \langle v'  \rangle^\gamma (1+\tau)^{-r} d\tau dv' ds 
\\
&\sup_{0 \le s \le t, y \in \T^3 } \left \{ \left[ ( 1+s)^{r} \Vert f(s) \Vert_{L^\infty_{x, v}  } \right]  \Vert h(s) \Vert_{L^\infty_{x, v}  }^{\frac {p+1} {2p}}     \left ( \int_{\R^3} |f(s, y, v')| dv' \right)^{\frac {p-1} {2p}}   \right\}     
\\
\le &  C_{k} (1+t)^{-r}   \sup_{0 \le s \le t, y \in \T^3 } \left \{ \left[(1+s)^{r} \Vert f(s) \Vert_{L^\infty_{x, v}  } \right]  \Vert h(s) \Vert_{L^\infty_{x, v}  }^{\frac {p+1} {2p}}     \left ( \int_{\R^3} |f(s, y, v')| dv' \right)^{\frac {p-1} {2p}}   \right\}. 
\end{aligned}
\end{equation*}
For the $J_{23}$ term,  we again split it into two parts $|v| \le N$ and $|v| \ge N$. For the case $|v| \ge N$ we have $\langle v \rangle^{-2} \le \frac 1 {N^2}$, by Lemma \ref{L63} and Lemma \ref{L75} we have
\begin{equation*}
\begin{aligned}
& \int_{\R^3}  |l_{k}  (v, v')|  \int_0^s  e^{-\nu(v') (s-\tau )}  \int_{\R^3} |l_{k}  (v', v'')|  | f(\tau, \tilde{x} -v' (s-\tau), v'') | dv'' d\tau 
\\
\le& \sup_{0 \le s \le t} \{ (1 + s)^r \Vert f(s) \Vert_{L^\infty_{x, v} } \}  \int_{\R^3}  |l_{k}  (v, v')|  \int_0^s  e^{-\nu(v') (s-\tau )}  \int_{\R^3} |l_{k} (v', v'')|  (1 + \tau )^{-r} dv'' d\tau 
\\
\le& \sup_{0 \le s \le t}  \{ (1+s )^r \Vert f(s) \Vert_{L^\infty_{x, v}  } \}  \int_{\R^3}  |l_{k}  (v, v')| \left( \frac {c} {k^{\frac {\gamma+3} 4}} + C_{k}  \langle v'  \rangle^{-2}  \right) \int_0^s  e^{-\nu(v') (s-\tau )}  \langle v' \rangle^{\gamma} (1 + \tau )^{-r} dv'' d\tau  dv'
\\
\le& \langle v \rangle^\gamma \left( \frac {c^2} {k^{\frac {\gamma+3} 2}}   + \frac {C_{k}  }{N^2} \right)  (1+s)^{-r} \sup_{0 \le s \le t}  \{ (1+s )^r \Vert f(s) \Vert_{L^\infty_{x, v}  } \}, 
\end{aligned}
\end{equation*}
which implies 
\begin{equation*}
\begin{aligned}
J_{23} \le& \left( \frac {c^2} {k^{\frac {\gamma+3} 2}}   + \frac {C_{k}}{N^2} \right) \sup_{0 \le s \le t}  \{ (1+s)^r \Vert f(s) \Vert_{L^\infty_{x, v}  } \}   \int_0^t e^{-\nu(v) (t-s) }  \langle v \rangle^{\gamma} (1+s)^{-r}  ds 
\\
\le &\left( \frac {c^2} {k^{\frac {\gamma+3} 2}}   + \frac {C_{k}}{N^2} \right) (1+t)^{-r}   \sup_{0 \le s \le t}  \{ (1+s)^r \Vert f (s) \Vert_{L^\infty_{x, v}  } \}. 
\end{aligned}
\end{equation*}
For the case $|v| \le N$, by the decomposition \eqref{l k decomposition}, we split it into three terms respectively. 
For the first term by Lemma \ref{L75} we have
\begin{equation*}
\begin{aligned}
&\int_0^t e^{-\nu(v) (t-s) }  \int_{\R^3}  \int_{\R^3} |( l_{k} (v, v') - l_{k, N}(v, v') ) l_{k } (v', v'')  |  \int_0^s  e^{-\nu(v') (s-\tau )}   | f(\tau, \tilde{x} -v' (s-\tau), v'') | d v''d \tau dv' ds 
\\
\le &\frac {C_{k}} {N} \sup_{0 \le s \le t}  \{ (1+s)^r \Vert f(s) \Vert_{L^\infty_{x, v}  } \}    \int_0^t e^{-  \nu(v)  (t-s) }  \langle v \rangle^\gamma ds \int_0^{s} e^{-    \nu(v') (s-\tau) }  \langle v' \rangle^\gamma  (1+\tau)^{-r}  d\tau  
\\
\le & \frac {C_{k}} {N}   (1+t)^{-r}  \sup_{0 \le s\le t}  \{ (1+s)^r \Vert f(s) \Vert_{L^\infty_{x, v}  } \},    
\end{aligned}
\end{equation*}
the second term can be estimated similarly. For the last term,  since $l_{k, N}(v', v'') l_{k, N}(v', v'')$ is supported in 
$|v| \le N,  |v'|\le C_{k, N}', |v''|\le C_{k, N}'$ for some constant $C_{k, N}'>0$. We again split it into two cases, $\tau \in [s-\lambda, s]$ and $\tau \in [0, s-\lambda ]$,  where $\lambda>0$ is a small constant to be fixed later. For the case $\tau \in [s-\lambda, s]$, we first prove that
\begin{equation}
\label{lambda 1 + t r}
\int_{s-\lambda}^{s} e^{-\nu(v') (s-\tau) }  \langle v' \rangle^\gamma (1+\tau)^{-r} d\tau \lesssim  C_r \lambda(1 + s)^{-r}.
\end{equation}
Similarly as Lemma \ref{L75}, we split the integral into two cases, $0 \le \tau \le \frac s 2 $ and $\frac s 2 \le \tau \le s$. For the first case we have $s - \tau \ge \frac s 2$, so by  Lemma \ref{L74} we have
\[
\int_{s-\lambda}^{s} e^{-\nu(v') (s-\tau) }  \langle v' \rangle^\gamma (1+ \tau)^{-r} d\tau \le C\int_{s-\lambda}^{s}  (1+s -\tau )^{-1} (1+\tau)^{-r} d \tau \le C (1+s)^{-1} \int_{s-\lambda}^{s} d\tau  \le C_r  \lambda ( 1+s)^{-r}.
\]
For the second case we have $\tau  \ge \frac s 2$, since $|v'| \le C_{k, N}'$, this time we have
\[
\int_{s-\lambda}^{s} e^{-\nu(v') (s-\tau) }  \langle v' \rangle^\gamma (1+ \tau)^{-r} d\tau \le  C_r (1 + s )^{-r}  \int_{s-\lambda}^{s} e^{-\nu(v') (s-\tau) }  \langle v' \rangle^\gamma d\tau  \le  C_r (1+s )^{-r} (1-e^{-\nu(v') \lambda })\le  C_r \lambda (1+s )^{-r},
\]
so \eqref{lambda 1 + t r} is proved by gathering the two cases.   Then we have 
\begin{equation*}
\begin{aligned}
&\int_0^t e^{-\nu(v) (t-s) }  \int_{\R^3}  \int_{\R^3} | l_{k, N} (v, v')  l_{k, N}(v', v'')  |  \int_{s-\lambda}^s  e^{-\nu(v') (s-\tau )}   | f (\tau, \tilde{x} -v' (s-\tau), v'') | d v''d \tau dv' ds 
\\
\le &C_{k, N}  \sup_{ 0 \le \tau \le t} \{  (1+\tau)^{r} \Vert f  (\tau )  \Vert_{L^\infty_{x, v} } \}  \int_0^t e^{-\nu(v) (t-s) }  \langle v \rangle^\gamma ds \int_{s-\lambda}^{s} e^{-\nu(v') (s-\tau) }  \langle v' \rangle^\gamma (1+\tau)^{-r} d\tau 
\\
\le &C_{k, N} \lambda  \sup_{ 0 \le \tau \le t} \{ (1+\tau)^{r}  \Vert f(\tau )  \Vert_{L^\infty_{x, v} } \}  \int_0^t e^{-\nu(v) (t-s) }  \langle v \rangle^\gamma (1 + s)^{-r} ds 
\\
\le &  C_{k, N} \lambda   (1 + t)^{-r}   \sup_{ 0 \le \tau \le t} \{  (1 + \tau )^{r} \Vert f(\tau )  \Vert_{L^\infty_{x, v} } \}. 
\end{aligned}
\end{equation*}
For the case $\tau \in [0, s-\lambda]$, by \eqref{l k N bound} and  Lemma \ref{L78} we have
\begin{equation*}
\begin{aligned}
&\int_{|v'| \le C_{k, N}', |v''| \le C_{k, N}' } | f(\tau, \tilde{x} -v' (s-\tau), v'') | dv' dv''
\\
\le & C_{k, N} \frac 1 {(s-\tau)^\frac 3 2}  \left( \int_{\T^3} \int_{|v''| \le C_{k, N}' }  |f (\tau, y, v'') |^2   dv'' dy \right)^{\frac 1 2}   \le C_{k, N} \frac 1 {(s-\tau)^\frac 3 2}  \Vert f(\tau) \Vert_{L^2_xL^2_v}   \le C_{k, N, \beta} \lambda^{-\frac 3 2} M^4 (1+\tau)^{-r} ,
\end{aligned}
\end{equation*}
where we have made a change of variable $ y = \tilde{x} -v' (s-\tau) $. So we have
\begin{equation*}
\begin{aligned}
&\int_0^t e^{-\nu(v) (t-s) }  \int_{\R^3}  \int_{\R^3} | l_{k, N} (v, v')  l_{k, N}(v', v'')  |  \int_{0}^{s-\lambda}  e^{-\nu(v') (s-\tau )}   | f (\tau, \tilde{x} -v' (s-\tau, v'')) | d v''d \tau dv' ds 
\\
\le &C_{k, N} \int_0^t e^{-\lambda_2 (t-s) } \int_{0}^{s}  e^{-  \lambda_2 (s-\tau )}  \int_{|v'| \le C_{k, N}', |v''| \le C_{k, N}' }   | f  (\tau, \tilde{x} -v' (s-\tau), v'') | d v''d \tau dv' ds 
\\
\le &C_{k, N, \beta} \lambda^{-\frac 3 2}   M^4  \int_0^t e^{-\lambda_2 (t-s) } \int_{0}^{s} e^{-  \lambda_2 (s-\tau )}   (1+\tau)^{-r}   d \tau ds  \le C_{k, N, \beta} \lambda^{-\frac 3 2}  M^4   (  1 + t )^{-r}  .
\end{aligned}
\end{equation*}
Gathering all the terms we have
\begin{equation*}
\begin{aligned}
 \Vert f(t) \Vert_{L^\infty_{x, v} }  \le & C_{k, \beta, N} \lambda^{-\frac 3 2}  M^4 (  1 + t )^{-r}  +   \left( \frac {c^2} {k^{\frac {\gamma+3} 2}}   + \frac {C_{k}} {N}   +C_{k, N} \lambda \right)     (1+t)^{-r}  \sup_{0 \le s \le t}  \{ (1 + s)^r \Vert f (s) 
 \Vert_{L^\infty_{x, v}  } \}     
 \\
 &+  C_{k} (1+t)^{-r}   \sup_{0 \le s \le t, y \in \T^3 } \left \{ \left[(1+s)^{r} \Vert f(s) \Vert_{L^\infty_{x, v}  } \right]  \Vert h(s) \Vert_{L^\infty_{x, v}  }^{\frac {p+1} {2p}}     \left ( \int_{\R^3} |f(s, y, v')| dv' \right)^{\frac {p-1} {2p}}   \right\}
\\
\le & C_{k, \beta, N} \lambda^{-\frac 3 2}  M^4 (  1 + t )^{-r}  +   \left( \frac {c^2} {k^{\frac {\gamma+3} 2}}   + \frac {C_{k}} {N} + C_{k, N} \lambda \right)     (1+t)^{-r}  \sup_{0 \le s \le t}  \{ (1+s)^r \Vert f (s) 
 \Vert_{L^\infty_{x, v}  } \}     
\\
 &+  C_{k} (1+t)^{-r}    \sup_{0 \le s \le t }   [(1+s)^{r} \Vert f(s)   \Vert_{L^\infty_{x, v}}  ]  
 \sup_{t_1 \le s \le t, y \in \T^3 } \left\{  \Vert h(s) \Vert_{L^\infty_{x, v} }^{\frac {p+1} {2p}}  \left (\int_{\R^3} |f(s, y, v')| dv' \right)^{\frac {p-1} {2p}} \right \}.
\end{aligned}
\end{equation*}
Taking suitable $k, N, \lambda$ such that 
\[
 \frac {c^2} {k^{\frac {\gamma+3} 2}}   + \frac {C_{k}} {N} +C_{k, N} \lambda  \le \frac 1 4,
\]
together with \eqref{small term large amplitude} we conclude
\[
 \Vert f(t) \Vert_{L^\infty_{x, v}} \le 4C_{k, \beta} M^4 (1+t)^{-r}, \quad \forall t \ge 0.
\]
The rate of convergence is thus proved for the  case $-3 < \gamma < 0$. 
\end{proof}

\bigskip{\bf Acknowledgments.} The author would thanks to Lingbing He and Yong Wang for fruitful talks on the paper. The author is supported by grants from Beijing Institute of Mathematical Sciences and Applications and Yau Mathematical Science Center, Tsinghua University. \\

\bigskip{\bf Declarations of interest: } None.


\begin{thebibliography}{99} % don't worry about the 99

\bibitem{ADVW}
R. Alexandre, L. Desvillettes, C. Villani, B. Wennberg.
{\em Entropy Dissipation and Long-Range Interactions.}
Arch. Rational Mech. Anal. 152, 327-355 (2000).


\bibitem{AMUXY}
R. Alexandre, Y. Morimoto, S. Ukai, C.-J. Xu, and T. Yang.
{\em Regularizing effect and local existence for the non-cutoff Boltzmann equation.} Arch. Ration. Mech. Anal., 198(1):39-123, 2010.


\bibitem{AMUXY2}
R. Alexandre, Y. Morimoto, S. Ukai, C.-J. Xu, and T. Yang,
{\em The Boltzmann equation without angular cutoff in the whole space: I, global existence for soft potential. }
 Journal of Functional Analysis 262 (2012), no. 3, 915-1010.


\bibitem{AMUXY3}
R. Alexandre, Y. Morimoto, S. Ukai, C.-J. Xu, and T. Yang.
{\em The Boltzmann equation without angular cutoff in the whole space: II, Global existence for hard potential.} Anal. Appl. (Singap.), 9(2):113-134, 2011.

\bibitem{AMUXY4}
R. Alexandre, Y. Morimoto, S. Ukai, C.-J. Xu, and T. Yang.
{\em Global existence and full regularity of the Boltzmann equation without angular cutoff.} Comm. Math. Phys., 304(2):513-581, 2011.

\bibitem{AV}
R. Alexandre and C. Villani.
{\em On the Boltzmann equation for long-range interactions.}  Comm. Pure Appl. Math., 55(1):30-70, 2002.


 \bibitem{AMSY}
 R. Alonso, Y. Morimoto, W. Sun and T. Yang.
 {\em Non-cutoff Boltzmann equation with polynomial decay perturbation}. Revista Matematica Iberoamericana,
37(2021), no. 1, 189-292.



\bibitem{AMSY2}
 R. Alonso, Y. Morimoto, W. Sun and T. Yang.
 {\em De Giorgi argument for weighted $L^2 \cap L^\infty$ solutions to the non-cutoff Boltzmann equation.}  arXiv:2010.10065, 2020.


\bibitem{CHJ}
C. Cao, L-B. He and J.Ji.
{\em Propagation of moments and sharp convergence rate for inhomogeneous non-cutoff Boltzmann equation with soft potentials. }
 arXiv:2204.01394, 2022.
 
 
\bibitem{CTW}
K. Carrapatoso, I. Tristani, and K.-C. Wu.
{\em Cauchy problem and exponential stability for the inhomogeneous Landau equation. }
Arch. Ration. Mech. Anal. 221(2016), no.1, 363-418.

\bibitem{CM}
K. Carrapatoso and S. Mischler.
{\em Landau equation for very soft and Coulomb potentials near Maxwellians.} Annals of PDE (2017), no.~3, 1-65.

 \bibitem{C2}
 S. Chaturvedi,
 {\em Stability of Vacuum for the Boltzmann Equation with Moderately Soft Potentials.} Ann. PDE 7, 15 2021.







\bibitem{DV}
 L. Desvillettes and C. Villani.
 {\em On the trend to global equilibrium for spatially inhomogeneous kinetic systems: the Boltzmann equation.}
 Invent. Math., 159(2):245-316, 2005.


\bibitem{DL}
R. J. DiPerna and P.-L. Lions.
{\em On the Fokker-Planck-Boltzmann equation.}
Comm. Math. Phys., 120(1):1-23, 1988.

\bibitem{DL2}
 R. J. DiPerna and P.-L. Lions.
 {\em On the Cauchy problem for Boltzmann equations: global existence and weak stability.}  Ann. of Math. (2), 130(2):321-366, 1989.


\bibitem{DHWY}
R. Duan, F. Huang, Y. Wang, and T. Yang.
{\em Global well-posedness of the Boltzmann equation with large amplitude initial data.}
Arch. Ration. Mech. Anal., 225(1):375?24, 2017.






\bibitem{DLYZ}
R. Duan, Y. Lei, T. Yang, Z. Zhao.
{\em The Vlasov-Maxwell-Boltzmann system near Maxwellians in the whole space with very soft potentials.} Comm Math Phys,  351: 95-153, 2017.




\bibitem{DL3}
R. Duan, S. Liu.
{\em The Vlasov-Poisson-Boltzmann system without angular cutoff.}
Commun. Math. Phys. 324(1), 1-45 (2013).

\bibitem{DLSS}
R. Duan, S. Liu, S. Sakamoto, and R. Strain. 
{\em Global mild solutions of the Landau and non-cutoff Boltzmann equations} Communications on Pure and Applied
Mathematics, 74 (2021), no. 5, 932-1020.


\bibitem{DLYZ2}
R. Duan, S. Liu, T. Yang, and H. Zhao.
{\em Stability of the nonrelativistic Vlasov-Maxwell-Boltzmann system for angular non-cutoff potentials.} Kinet. Relat. Models, 6(1): 159-204, 2013.

\bibitem{DW}
R. Duan, Y. Wang
 The Boltzmann equation with large-amplitude initial data in bounded
domains, Advances in Mathematics, 343 (2019), 36-109.


\bibitem{DYZ2}
R. Duan, T. Yang, H. Zhao. 
{\em The Vlasov-Poisson-Boltzmann system for soft potentials.} Math Models Methods Appl Sci, 23: 979-1028, 2013. 





\bibitem{GS}
P. T. Gressman and R. M. Strain.
{\em Global classical solutions of the Boltzmann equation without angular cut-off.} J. Amer. Math. Soc., 24(3):771?47, 2011.

\bibitem{GS2}
P. T. Gressman and R. M. Strain.
{\em Sharp anisotropic estimates for the Boltzmann collision operator and its entropy production.} Adv. Math, 227(6):2349-2384, 2011.



\bibitem{GMM}
M. P. Gualdani, S. Mischler, and C. Mouhot.
{\em Factorization of non-symmetric operators and exponential H-theorem.} M\'em. Soc. Math. Fr. (N.S.), (153):137, 2017.

\bibitem{G}
Y. Guo.
{\em The Landau equation in a periodic box.} Comm. Math. Phys., 231(3):391-434, 2002. .

\bibitem{G2}
Y. Guo.
{\em Classical solutions to the Boltzmann equation for molecules with an angular cutoff.} Arch. Ration. Mech. Anal., 169(4):305-353, 2003.

\bibitem{G3}
Y. Guo.
{\em The Boltzmann equation in the whole space.} Indiana Univ. Math. J., 53(4):1081-1094, 2004.


\bibitem{G4}
Y. Guo.
{\em Decay and continuity of the Boltzmann equation in bounded domains.} Arch. Ration. Mech. Anal., 197(3), 713-809,
(2010).

 \bibitem{G5}
Y. Guo.
{\em The Vlasov-Poisson-Boltzmann system near vacuum.} Comm. Math. Phys., 218(2):293-313, 2001.

\bibitem{G6}
Y. Guo.
{\em The Vlasov-Poisson-Boltzmann system near Maxwellians.} Comm. Pure Appl. Math., 55(9):1104-1135, 2002.

\bibitem{G7}
Y. Guo.
{\em The Vlasov-Maxwell-Boltzmann system near Maxwellians.} Invent. Math., 153(3):593-630, 2003.

\bibitem{G8}
Y. Guo
{\em The Vlasov-Poisson-Landau system in a periodic box.} J. Amer. Math. Soc., 25(3):759-812, 2012.

\bibitem{G9}
Y. Guo,
{\em Bounded solutions for the Boltzmann equation.} Quart. Appl. Math. 68(1), 143-148 (2010)

\bibitem{GKTT}
Y. Guo, C. Kim, D. Tonon, A. Trescases,
{\em Regularity of the Boltzmann equation in convex domains.} Invent. Math., 207(1), 115-290, 2017.



\bibitem{HTT}
F. H\'erau, and D. Tonon and I. Tristani.
{\em  Regularization estimates and Cauchy Theory for inhomogeneous Boltzmann equation for hard potentials without cut-off.}
 Commun. Math. Phys., 377, 697-771, (2020).







\bibitem{KGH}
J. Kim, Y. Guo, and H. J. Hwang.
{\em An $L^2$ to $L^\infty$ framework for the Landau equation.} Peking Math. J., 3(2):131-202, 2020.




\bibitem{K}
C. Kim.
{\em  Boltzmann equation with a large potential in a periodic box.} Comm. Partial Differ. Equ. 39, 393-423, 2014


\bibitem{LL}
E. Lieb and M. Loss. {\em Analysis 2nd}, American Mathematical Society.


\bibitem{L}
P.-L. Lions.
{\em On Boltzmann and Landau equations.}  Philos. Trans. Roy. Soc. London Ser. A, 346(1679):191-204, 1994.



\bibitem{L2}
Jonathan Luk.
{\em Stability of vacuum for the Landau equation with moderately soft potentials.} Ann. PDE, 5: 11, 2019.








\bibitem{MM}
S. Mischler , C. Mouhot.  
{\em Exponential stability of slowly decaying solutions to the kinetic Fokker-Planck
equation.} Arch. Ration. Mech. Anal. 221(2), 677-723, 2016.


\bibitem{M2}
C. Mouhot.   
{\em Rate of convergence to equilibrium for the spatially homogeneous Boltzmann equation with hard potentials.} Commun. Math. Phys. 261(3), 629-672 (2006).





\bibitem{S}
L. Silvestre.
{\em A new regularization mechanism for the Boltzmann equation without cut-off.} Comm. Math. Phys., 348(1):69-100, 2016.


\bibitem{SS}
L. Silvestre, S. Snelson.
{\em Solutions to the non-cutoff Boltzmann equation uniformly near a Maxwellian}, arXiv:2106.03909.








\bibitem{SG}
R. M. Strain  Y. Guo.
{\em Almost exponential decay near Maxwellian.} Comm. Partial Differential Equations, 31(1-3):417-429, 2006.


\bibitem{SG2}
R. M. Strain, Y. Guo.
{\em Exponential decay for soft potentials near Maxwellian.} Arch. Ration. Mech. Anal., 187(2):287-339, 2008.



\bibitem{UY}
S. Ukai, T. Yang.
{\em  The Boltzmann equation in the space $L^2 \cap L^\infty_\beta$:  global and time-periodic solutions.} Anal. Appl. 4, 263-310, 2006.


\bibitem{V}
C. Villani.
{\em A review of mathematical topics in collisional kinetic theory.} In Handbook of mathematical
fluid dynamics, Vol. I. North-Holland, Amsterdam, 71-305,  2002.


\bibitem{V2}
C. Villani.
{\em On the Cauchy problem for Landau equation: sequential stability, global existence.} Adv. Differential Equations, 1(5):793-816, 1996.

\bibitem{V3}
C. Villani.
{\em On a new class of weak solutions to the spatially homogeneous Boltzmann and Landau equations.}
Arch. Rational Mech. Anal., 143(3): 273-307, 1998.


\bibitem{W}
Y. Wang 
{\em Global well-posedness of the relativistic Boltzmann equation.}
SIAM J. Math. Anal. 50 (2018), no. 5, 5637-5694.



















\end{thebibliography}
\end{document}